       \newtheorem{lemma}{\bf Lemma}[section]
       \newtheorem{theorem}[lemma]{\bf Theorem}
       \newtheorem{proposition}[lemma]{\bf Proposition}
       \newtheorem{corollary}[lemma]{\bf Corollary}
       \numberwithin{equation}{section}
\newcommand{\md}{n} 
\newcommand{\rmd}{w} 
\newcommand{\dl}{j} 
\newcommand{\dws}{\phi} 
\newcommand{\smd}{\tilde{n}} 
\newcommand{\rsmd}{\tilde{w}} 
\newcommand{\sdl}{\tilde{j}} 
\newcommand{\sdws}{\tilde{\phi}} 
\newcommand{\prmd}{\psi} 
\newcommand{\pdl}{\eta} 
\newcommand{\pdws}{\sigma} 
\newcommand{\pc}{\varepsilon} 
\newcommand{\pcmd}[1]{n^{#1}} 
\newcommand{\pcdl}[1]{j^{#1}} 
\newcommand{\pcdws}[1]{\phi^{#1}} 
\newcommand{\pcsmd}[1]{\tilde{n}^{#1}} 
\newcommand{\pcszmd}[1]{\tilde{z}^{#1}} 
\newcommand{\pcsdl}[1]{\tilde{j}^{#1}} 
\newcommand{\pcsdws}[1]{\tilde{\phi}^{#1}} 
\newcommand{\pcpmd}{\mathcal{N}^{\pc}} 
\newcommand{\pcpdl}{\mathcal{J}^{\pc}} 
\newcommand{\pcpdws}{\varPhi^{\pc}} 
\newcommand{\pcpszmd}{\tilde{\mathcal{Z}}^{\pc}} 
\newcommand{\pcpsdl}{\tilde{\mathcal{J}}^{\pc}} 
\newcommand{\pcpsdws}{\tilde{\varPhi}^{\pc}} 
\newcommand{\pcsS}[1]{\tilde{S}^{#1}} 
\newcommand{\pcS}[1]{S^{#1}} 
\newcommand{\pcsK}[1]{\tilde{K}^{#1}} 
\newcommand{\pcR}[1]{\tilde{\mathcal{R}}^{#1}} 
\newcommand{\pcr}[1]{\tilde{\mathcal{Q}}^{#1}} 
\newcommand{\pcN}{N_\pc(T)} 
\newcommand{\pcn}{n_\pc(t)} 
\newcommand{\pd}[2]{\partial_{#1}^{#2}} 
\newcommand{\Rmnum}[1]{\expandafter\@slowromancap\romannumeral #1@} 
\newcommand{\wnd}{\theta} 
\newcommand{\swnd}{\tilde{\theta}} 
\newcommand{\pwnd}{\chi} 
\newcommand{\pcwnd}[1]{\theta^{#1}} 
\newcommand{\pcswnd}[1]{\tilde{\theta}^{#1}} 
\newcommand{\pcpswnd}{\tilde{\varTheta}^{\pc}} 
\newcommand{\pcpwnd}{\varTheta^{\pc}} 
\begin{document}

\title{{Stability and semi-classical limit in a semiconductor full quantum hydrodynamic model with non-flat doping profile}
\footnotetext{\small $*$ : The corresponding author\\
\small E-mail: huhf836@nenu.edu.cn, zhangkj201@nenu.edu.cn}}

\author{{Haifeng Hu$^{1}$ and Kaijun Zhang$^{2,*}$}\\[2mm]
\normalsize\it 1. Center for Partial Differential Equations, East China Normal University, \\
\normalsize\it Minhang, Shanghai 200241, P.R. China \\
\normalsize\it 2. School of Mathematics and Statistics, Northeast Normal University, \\
\normalsize\it Changchun 130024, P.R. China
}

\date{}

\maketitle

\noindent\textbf{Abstract.} {\small We present the new results on stability and semi-classical limit in a semiconductor full quantum hydrodynamic (FQHD) model with non-flat doping profile. The FQHD model can be used to analyze the thermal and quantum influences on the transport of carriers (electrons or holes) in semiconductor device. Inspired by the physical motivation, we consider the initial-boundary value problem of this model over the one-dimensional bounded domain and adopt the ohmic contact boundary condition and the vanishing bohmenian-type boundary condition. Firstly, the existence and asymptotic stability of a stationary solution are proved by Leray-Schauder fixed-point theorem, Schauder fixed-point theorem and the refined energy method. Secondly, we show the semi-classical limit results for both stationary solutions and global solutions by the elaborate energy estimates and the compactness argument. The strong convergence rates of the related asymptotic sequences of solutions are also obtained.}\\
\noindent{\small \textbf{Keywords.} Full quantum hydrodynamic model, dispersive velocity term, non-flat doping profile, asymptotic stability, semi-classical limit, semiconductor.}

\noindent{\small \textbf{2010 Mathematics Subject Classification.}  35A01, 35B40, 35M33, 35Q40, 76Y05, 82D37.}

\section{Introduction}\label{Sect.1}
In the mathematical modeling of the nano-size semiconductor devices (e.g. HEMTs, MOSFETs, RTDs and superlattice devices ), the quantum effects (like particle tunneling through potential barriers and particle buildup in quantum wells) take place and can not be simulated by classical hydrodynamic models. Therefore, the quantum hydrodynamical (QHD) equations are important and dominative in the description of the motion of electrons or holes transport under the self-consistent electric field.

The QHD conservation laws have the same form as the
classical hydrodynamic equations (for simplicity, we treat the flow of electrons in the self-consistent electric field for unipolar devices):
\begin{subequations}\label{qhdcl}
\begin{numcases}{}
\pd{t}{}n+\pd{x_k}{}j_k=0, \label{qhdcl1}\\
\pd{t}{}j_l+\pd{x_k}{}(u_kj_l-P_{kl})=n\pd{x_l}{}\phi-\frac{j_l}{\tau_m},\quad l=1,2,3, \label{qhdcl2}\\
\pd{t}{}e+\pd{x_k}{}(u_ke-u_lP_{kl}+q_k)=j_k\pd{x_k}{}\phi+C_e, \label{qhdcl3}\\
\lambda^2\Delta\phi=n-D(\bm{x}), \label{qhdcl4}
\end{numcases}
\end{subequations}
where $n>0$ is the electron density, $\bm{u}=(u_1,u_2,u_3)$ is the velocity, $\bm{j}=(j_1,j_2,j_3)$ is the momentum density, $\bm{P}=(P_{kl})$ is the stress tensor, $\phi$ is the self-consistent electrostatic potential, $e$ is the energy density, $\bm{q}=(q_1,q_2,q_3)$ is the heat flux. Indices $k,l$ equal $1,2,3$, and repeated indices are summed over using the Einstein convention. Equation \eqref{qhdcl1} expresses conservation of electron number, \eqref{qhdcl2} expresses conservation of momentum, and \eqref{qhdcl3} expresses conservation of energy. The last terms in \eqref{qhdcl2} and \eqref{qhdcl3} represent electron scattering (the collision terms may include the effects of electron-phonon and electron-impurity collisions, intervalley and interband scattering), which is modeled by the standard relaxation time approximation with momentum and energy relaxation times $\tau_m>0$ and $\tau_e>0$. The energy relaxation term $C_e$ is given by
\begin{equation*}
C_e=-\frac{1}{\tau_e}\Bigg(\frac{1}{2}n|\bm{u}|^2+\frac{3}{2}n(\theta-\theta_{L})\Bigg),
\end{equation*}
where $\theta>0$ is the electron temperature and $\theta_{L}>0$ is the temperature of the semiconductor lattice in energy units. The transport equations \eqref{qhdcl1}$\sim$\eqref{qhdcl3} are coupled to Poisson's equation \eqref{qhdcl4} for the self-consistent electrostatic potential, where $\lambda>0$ is the Debye length, $D=N_d-N_a$ is the doping profile, $N_d>0$ is the density of donors, and $N_a>0$ is the density of acceptors.

The QHD equations \eqref{qhdcl1}$\sim$\eqref{qhdcl3} are derived as a set of nonlinear conservation laws by a moment expansion of the Wigner-Boltzmann equation \cite{W32} and an expansion of the thermal equilibrium Wigner distribution function to $O(\pc^2)$, where $\pc>0$ is the scaled Planck constant. However, to close the moment expansion at the first three moments, we must define, for example, $\bm{j}$, $\bm{P}$, $e$ and $\bm{q}$ in terms of $n$, $\bm{u}$ and $\theta$. According to the closure assumption \cite{G94}, up to order
$O(\pc^2)$, we define the momentum density $\bm{j}$, the stress tensor $\bm{P}=(P_{kl})$, the energy density $e$ and the heat flux $\bm{q}$ as follows:
\begin{gather*}
\bm{j}=n\bm{u},\qquad P_{kl}=-n\theta\delta_{kl}+\frac{\pc^2}{2}n\pd{x_k}{}\pd{x_l}{}\ln n,\\
e=\frac{3}{2}n\theta+\frac{1}{2}n|\bm{u}|^2-\frac{\pc^2}{4}n\Delta\ln n,\qquad \bm{q}=-\kappa\nabla\theta-\frac{3\pc^2}{4}n\Delta\bm{u},
\end{gather*}
with the Kronecker symbol $\delta_{kl}$ and the heat conductivity $\kappa>0$. The quantum correction to the stress tensor was first stated in the semiconductor context by Ancona and Iafrate \cite{AI89} and Ancona and Tiersten \cite{AT87}. Since
\begin{equation*}
\frac{\pc^2}{2}\mathrm{div}(n(\nabla\otimes\nabla)\ln n)=\pc^2n\nabla\Bigg(\frac{\Delta\sqrt{n}}{\sqrt{n}}\Bigg),
\end{equation*}
it can be interpreted as a force including the Bohm potential $\pc^2\Delta\sqrt{n}/\sqrt{n}$ \cite{FZ93}. The quantum correction to the energy density was first derived by Wigner \cite{W32}. The heat conduction term consists of a classical Fourier law $-\kappa\nabla\theta$ plus a new quantum contribution $-3\pc^2n\Delta\bm{u}/4$ which can be interpreted as a dispersive heat flux \cite{G95,JMM06}. For details on the more general quantum models for semiconductor devices, one can refer to the references \cite{J01,MRS90,ZH16book}.

Interestingly, most quantum terms cancel out in the energy equation \eqref{qhdcl3}. In fact, by substituting the above expressions for $C_e$, $\bm{j}$, $\bm{P}$, $e$ and $\bm{q}$ into \eqref{qhdcl}, a computation yields the multi-dimensional full quantum hydrodynamic (FQHD) model for semiconductors as follows.
\begin{subequations}\label{fqhd}
\begin{numcases}{}
n_t+\mathrm{div}(n\bm{u})=0, \label{fqhd1}\\
(n\bm{u})_t+\mathrm{div}(n\bm{u}\otimes\bm{u})+\nabla(n\theta)-\pc^2n\nabla\Bigg(\frac{\Delta\sqrt{n}}{\sqrt{n}}\Bigg)=n\nabla\phi-\frac{n\bm{u}}{\tau_m}, \label{fqhd2}\\
n\theta_t+n\bm{u}\cdot\nabla\theta+\frac{2}{3}n\theta\mathrm{div}\bm{u}-\frac{2}{3}\mathrm{div}(\kappa\nabla\theta)\notag\\
\qquad\qquad\qquad\qquad\qquad\qquad\quad\,-\frac{\pc^2}{3}\mathrm{div}(n\Delta\bm{u})=\frac{2\tau_e-\tau_m}{3\tau_m\tau_e}n|\bm{u}|^2-\frac{n(\theta-\theta_{L})}{\tau_e}, \label{fqhd3}\\
\lambda^2\Delta\phi=n-D(\bm{x}). \label{fqhd4}
\end{numcases}
\end{subequations}
Comparing with the classical full hydrodynamic (FHD) model, the new feature of the FQHD model is the Bohm potential term
\begin{equation*}
-\pc^2n\nabla\Bigg(\frac{\Delta\sqrt{n}}{\sqrt{n}}\Bigg)
\end{equation*}
in the momentum equation \eqref{fqhd2} and the dispersive velocity term
\begin{equation*}
-\frac{\pc^2}{3}\mathrm{div}(n\Delta\bm{u})
\end{equation*}
in the energy equation \eqref{fqhd3}. Both of them are called quantum correction terms (or dispersive terms) and belong to the third-order derivative terms of the system \eqref{fqhd}.

Recently, the study concerning the semiconductor quantum models and the related quantum systems has become popular. J\"ungel and Li \cite{JL04,JL04-1} investigated the one-dimensional unipolar isentropic QHD model with the Dirichlet-Neumann boundary condition and the flat doping profile. The authors proved the existence, uniqueness and exponential stability of the subsonic stationary solution for the quite general pressure-density function. Nishibata and Suzuki \cite{NS08} reconsidered this QHD model with isothermal simplification and the vanishing bohmenian-type boundary condition. The authors generalized J\"ungel and Li's results to the non-flat doping profile case and also discussed the semi-classical limit for both stationary and global solutions. Hu, Mei and Zhang \cite{HMZ16} generalized Nishibata and Suzuki's results to the bipolar case with non-constant but flat doping profile. Huang, Li and Matsumura \cite{HLM06} proved the existence, exponential stability and semi-classical limit of stationary solution of Cauchy problem for the one-dimensional  isentropic unipolar QHD model. Li and Yong \cite{LY17} studied the nonlinear diffusion phenomena on the Cauchy problem of the one-dimensional isentropic bipolar QHD model. The authors proved the algebraic stability of the diffusion waves.

In multi-dimensional case, J\"ungel \cite{J98} first considered the unipolar stationary isothermal and isentropic QHD model for potential flows on a bounded domain. The existence of solutions was proved under the assumption that the electric energy was small compared to the thermal energy, where Dirichlet boundary conditions were addressed. This result was then generalized to bipolar case by Liang and Zhang \cite{LZ07}. Unterreiter \cite{U97} proved the existence of the thermal equilibrium solution of the bipolar isentropic QHD model confined to a bounded domain by variational analysis, and the semi-classical limit is carried out recovering the minimizer of the limiting functional. This result recently was developed by Di Michele, Mei, Rubino and Sampalmieri \cite{MMRS17} to a new model of the bipolar isentropic hybrid quantum hydrodynamics. Regarding the unipolar QHD model for irrotational fluid in spatial periodic domain, the global existence of the dynamic solutions and the exponential convergence to their equilibria were artfully proved by Li and Marcati in \cite{LM04}. Remarkably, the weak solutions with large initial data for the quantum hydrodynamic system in multiple dimensions were further obtained by Antonelli and Marcati in \cite{AM09,AM12}. Li, Zhang and Zhang \cite{LZZ08} investigated the large-time behavior of solutions to the initial value problem of the isentropic QHD model in the whole space $\mathbb{R}^3$ and obtained the algebraic time-decay rate, and further showed in \cite{ZLZ08} the semi-classical and relaxation limits of global solutions. Recently, Pu and Guo \cite{PG16} studied the Cauchy problem of a quantum hydrodynamic equations with viscosity and heat conduction in the whole space $\mathbb{R}^3$. The global existence around a constant steady-state and semi-classical limit of the global solutions were shown by the energy method. This result was developed by Pu and Xu \cite{PX17}, the authors obtained the optimal convergence rates to the constant equilibrium solution by the pure energy method and negative Sobolev space estimates.

However, all of these research results more or less have some limitations from both physical and mathematical points of view. Actually, in practical applications, the semiconductor quantum models should be treated under the following physically motivated settings which make the mathematical analysis more difficult:
\begin{itemize}
\item[(1)] The model system should be considered on a bounded domain $\Omega$ and be supplemented by physical boundary conditions.
\item[(2)] In realistic semiconductor devices, the doping profile will be a non-flat function of the spatial variable. For instance, it has two steep slops in $n^+-n-n^+$ diodes \cite{G94}. Therefore, we should assume the continuity and positivity only to cover the actual devices. Namely,
\begin{equation}\label{nonflat}
D\in C(\overline{\Omega}),\qquad \inf_{x\in\overline{\Omega}}D(x)>0.
\end{equation}
\item[(3)] To study the FQHD model which includes the quantum corrected energy equation is essentially significant for understanding the quantum transport of the hot carriers in semiconductor devices. The new feature is that one has to investigate both thermal and quantum effects in the more complex model system than the various simplified models.
\end{itemize}

In this paper, under the above physical principle settings (1)$\sim$(3), we will study the FQHD model \eqref{fqhd} in one space dimension with $\tau_m=\tau_e=\kappa=\lambda=1$. Namely, we consider
\begin{subequations}\label{1dfqhd}
\begin{numcases}{}
\md_t+\dl_x=0, \label{1dfqhd1}\\
\dl_t+\Bigg(\frac{\dl^2}{\md}+\md\wnd\Bigg)_x-\pc^2\md\Bigg[\frac{\big(\sqrt{\md}\big)_{xx}}{\sqrt{\md}}\Bigg]_x=\md\dws_x-\dl, \label{1dfqhd2}\\
\md\wnd_t+\dl\wnd_x+\frac{2}{3}\md\wnd\Bigg(\frac{\dl}{\md}\Bigg)_x-\frac{2}{3}\wnd_{xx}-\frac{\pc^2}{3}\Bigg[n\Bigg(\frac{j}{n}\Bigg)_{xx}\Bigg]_x=\frac{1}{3}\frac{\dl^2}{\md}-\md(\wnd-\theta_{L}), \label{1dfqhd3}\\
\dws_{xx}=\md-D(x), \qquad\forall t>0,\ \forall x\in\Omega:=(0,1),\label{1dfqhd4}
\end{numcases}
\end{subequations}
with the initial condition
\begin{equation}\label{ic}
(\md,\dl,\wnd)(0,x)=(\md_0,\dl_0,\wnd_0)(x),
\end{equation}
and the boundary conditions
\begin{subequations}\label{bc}
\begin{gather}
\md(t,0)=\md_{l},\qquad \md(t,1)=\md_{r},\label{bc-a}\\
\big(\sqrt{\md}\big)_{xx}(t,0)=\big(\sqrt{\md}\big)_{xx}(t,1)=0,\label{bc-b}\\
\wnd(t,0)=\wnd_{l},\qquad \wnd(t,1)=\wnd_{r},\label{bc-c}\\
\dws(t,0)=0,\qquad \dws(t,1)=\phi_r,\label{bc-d}
\end{gather}
\end{subequations}
where the boundary data $\md_{l},\md_{r},\wnd_{l},\wnd_{r}$ and $\phi_r$ are positive constants. The vanishing bohmenian-type boundary condition \eqref{bc-b} means that the quantum Bohm potential vanishes on the boundary, which is derived in \cite{G94,P99} and is also physically reasonable. The other boundary conditions in \eqref{bc} are called ohmic contact boundary condition. In order to establish the existence of a classical solution, we further assume the initial data $(\md_0,\dl_0,\wnd_0)$ is compatible with the boundary data \eqref{bc-a}$\sim$\eqref{bc-c} and $\md_t(t,0)=\md_t(t,1)=0$, namely,
\begin{gather}
\md_0(0)=\md_l,\quad \md_0(1)=\md_r,\quad \wnd_0(0)=\theta_l,\quad \wnd_0(1)=\theta_r,\notag\\
\dl_{0x}(0)=\dl_{0x}(1)=\big(\sqrt{\md_0}\big)_{xx}(0)=\big(\sqrt{\md_0}\big)_{xx}(1)=0.\label{compatibility}
\end{gather}

An explicit formula of the electrostatic potential
\begin{align}\label{efep}
\phi(t,x)=&\Phi[\md](t,x) \notag\\
:=&\int_0^x\int_0^y\big(\md(t,z)-D(z)\big)dzdy+\Bigg(\phi_r-\int_0^1\int_0^y\big(\md(t,z)-D(z)\big)dzdy\Bigg)x,
\end{align}
follows from \eqref{1dfqhd4} and \eqref{bc-d}.

In consideration of the solvability of the system \eqref{1dfqhd}, the following properties
\begin{subequations}\label{psc}
\begin{gather}
\inf_{x\in\Omega}\md>0,\qquad \inf_{x\in\Omega}\wnd>0,\label{pc}\\
\inf_{x\in\Omega} S[\md,\dl,\wnd]>0,\qquad \text{where}\  S[\md,\dl,\wnd]:=\wnd-\frac{\dl^2}{\md^2}\label{sc}
\end{gather}
\end{subequations}
attract our main interest. The condition \eqref{pc} means the positivity of the electron density and temperature. The other one \eqref{sc} is called the subsonic condition . Apparently, if we want to construct the solution in the physical region where the conditions  \eqref{psc} hold, then the initial data \eqref{ic} must satisfy the same conditions
\begin{equation}\label{ipsc}
\inf_{x\in\Omega}\md_{0}>0,\qquad \inf_{x\in\Omega}\wnd_0>0, \qquad\inf_{x\in\Omega}S[\md_{0},\dl_{0},\wnd_0]>0.
\end{equation}

The strength of the boundary data, which is defined by
\begin{equation}\label{delta}
\delta:=|n_l-n_r|+|\theta_l-\theta_{L}|+|\theta_r-\theta_{L}|+|\phi_r|,
\end{equation} 
plays a crucial role in the proofs of our main results in what follows.

The first aim in this paper is to investigate the existence, uniqueness and asymptotic stability of the stationary solution satisfying the following boundary value problem,
\begin{subequations}\label{1dsfqhd}
\begin{numcases}{}
\sdl_x=0, \label{1dsfqhd1}\\
S[\smd,\sdl,\swnd]\smd_x+\smd\swnd_x-\pc^2\smd\Bigg[\frac{\big(\sqrt{\smd}\big)_{xx}}{\sqrt{\smd}}\Bigg]_x=\smd\sdws_x-\sdl, \label{1dsfqhd2}\\
\sdl\swnd_x-\frac{2}{3}\sdl\swnd\big(\ln\smd\big)_x-\frac{2}{3}\swnd_{xx}-\frac{\pc^2}{3}\Bigg[\smd\Bigg(\frac{\sdl}{\smd}\Bigg)_{xx}\Bigg]_x=\frac{1}{3}\frac{\sdl^2}{\smd}-\smd(\swnd-\theta_{L}), \label{1dsfqhd3}\\
\sdws_{xx}=\smd-D(x), \qquad\forall x\in\Omega,\label{1dsfqhd4}
\end{numcases}
\end{subequations}
and
\begin{subequations}\label{sbc}
\begin{gather}
\smd(0)=\md_{l},\qquad \smd(1)=\md_{r},\label{sbc-a}\\
\big(\sqrt{\smd}\big)_{xx}(0)=\big(\sqrt{\smd}\big)_{xx}(1)=0,\label{sbc-b}\\
\swnd(0)=\wnd_{l},\qquad \swnd(1)=\wnd_{r},\label{sbc-c}\\
\sdws(0)=0,\qquad \sdws(1)=\phi_r.\label{sbc-d}
\end{gather}
\end{subequations}

The second aim in the present paper is to study the singular limit as the scaled Planck constant $\pc>0$ tends to zero in both the stationary problem \eqref{1dsfqhd}$\sim$\eqref{sbc} and the transient problem \eqref{1dfqhd}$\sim$\eqref{bc}. Formally, we let $\pc=0$ in the model system \eqref{1dfqhd} and its stationary counterpart \eqref{1dsfqhd}, respectively, we then obtain the following limit systems. The limit transient system can be written as
\begin{subequations}\label{1dfhd}
\begin{numcases}{}
\pcmd{0}_t+\pcdl{0}_x=0, \label{1dfhd1}\\
\pcdl{0}_t+\Bigg(\frac{(\pcdl{0})^2}{\pcmd{0}}+\pcmd{0}\pcwnd{0}\Bigg)_x=\pcmd{0}\pcdws{0}_x-\pcdl{0}, \label{1dfhd2}\\
\pcmd{0}\pcwnd{0}_t+\pcdl{0}\pcwnd{0}_x+\frac{2}{3}\pcmd{0}\pcwnd{0}\Bigg(\frac{\pcdl{0}}{\pcmd{0}}\Bigg)_x-\frac{2}{3}\pcwnd{0}_{xx}=\frac{1}{3}\frac{(\pcdl{0})^2}{\pcmd{0}}-\pcmd{0}(\pcwnd{0}-\theta_{L}), \label{1dfhd3}\\
\pcdws{0}_{xx}=\pcmd{0}-D(x), \qquad\forall t>0,\ \forall x\in\Omega,\label{1dfhd4}
\end{numcases}
\end{subequations}
and is supplemented by the same initial and ohmic contact boundary conditions with \eqref{ic} and \eqref{bc},
\begin{equation}\label{0ic}
(\pcmd{0},\pcdl{0},\pcwnd{0})(0,x)=(\md_0,\dl_0,\wnd_0)(x),
\end{equation}
and
\begin{subequations}\label{0bc}
\begin{gather}
\pcmd{0}(t,0)=\md_{l},\qquad \pcmd{0}(t,1)=\md_{r},\label{0bc-a}\\
\pcwnd{0}(t,0)=\wnd_{l},\qquad \pcwnd{0}(t,1)=\wnd_{r},\label{0bc-b}\\
\pcdws{0}(t,0)=0,\qquad \pcdws{0}(t,1)=\phi_r.\label{0bc-c}
\end{gather}
\end{subequations}
We call the limit system \eqref{1dfhd} as the full hydrodynamic (FHD) model for semiconductor devices. The limit stationary system is the stationary version of the FHD model \eqref{1dfhd}, it can be written by
\begin{subequations}\label{1dsfhd}
\begin{numcases}{}
\pcsdl{0}_x=0, \label{1dsfhd1}\\
S[\pcsmd{0},\pcsdl{0},\pcswnd{0}]\pcsmd{0}_x+\pcsmd{0}\pcswnd{0}_x=\pcsmd{0}\pcsdws{0}_x-\pcsdl{0}, \label{1dsfhd2}\\
\pcsdl{0}\pcswnd{0}_x-\frac{2}{3}\pcsdl{0}\pcswnd{0}\big(\ln\pcsmd{0}\big)_x-\frac{2}{3}\pcswnd{0}_{xx}=\frac{1}{3}\frac{(\pcsdl{0})^2}{\pcsmd{0}}-\pcsmd{0}(\pcswnd{0}-\theta_{L}), \label{1dsfhd3}\\
\pcsdws{0}_{xx}=\pcsmd{0}-D(x), \qquad\forall x\in\Omega,\label{1dsfhd4}
\end{numcases}
\end{subequations}
and is supplemented by the same ohmic contact boundary condition with \eqref{sbc},
\begin{subequations}\label{0sbc}
\begin{gather}
\pcsmd{0}(0)=\md_{l},\qquad \pcsmd{0}(1)=\md_{r},\label{0sbc-a}\\
\pcswnd{0}(0)=\wnd_{l},\qquad \pcswnd{0}(1)=\wnd_{r},\label{0sbc-b}\\
\pcsdws{0}(0)=0,\qquad \pcsdws{0}(1)=\phi_r.\label{0sbc-c}
\end{gather}
\end{subequations}

Throughout the rest of this paper, we will use the following notations. For a nonnegative integer $l\geq0$, $H^l(\Omega)$ denotes the $l$-th order Sobolev space in the $L^2$ sense, equipped with the norm $\|\cdot\|_l$. In particular, $H^0=L^2$ and $\|\cdot\|:=\|\cdot\|_0$. For a nonnegative integer $k\geq0$, $C^k(\overline{\Omega})$ denotes the $k$-times continuously differentiable function space, equipped with the norm $|f|_k:=\sum_{i=0}^k\sup_{x\in\overline{\Omega}}|\pd{x}{i}f(x)|$. The positive constants $C$, $C_1$, $\cdots$ only depend on $\md_{l}$, $\theta_L$ and $|D|_0$. If the constants  $C$, $C_1$, $\cdots$ additionally depend on some other quantities $\alpha$, $\beta$, $\cdots$, we write $C(\alpha,\beta,\cdots)$, $C_1(\alpha,\beta,\cdots)$, $\cdots$. The notations $\mathfrak{X}_m^l$, $\mathfrak{Y}_m^l$ and $\mathfrak{Z}$ denote the function spaces defined by
\begin{gather*}
\mathfrak{X}_m^l([0,T]):=\bigcap_{k=0}^m C^k([0,T];H^{l+m-k}(\Omega)),\\
\mathfrak{Y}_m^l([0,T]):=\bigcap_{k=0}^{[m/2]} C^k([0,T];H^{l+m-2k}(\Omega)),\quad\text{for}\ m,l=0,1,2,\cdots,\\
\mathfrak{Z}([0,T]):=C^2([0,T];H^2(\Omega)).
\end{gather*}

The limit problems \eqref{1dfhd}$\sim$\eqref{0bc} and \eqref{1dsfhd}$\sim$\eqref{0sbc} have been studied by Nishibata and Suzuki \cite{NS09}. The authors obtain the existence, uniqueness and asymptotic stability of the stationary solution. The corresponding results are stated in the following lemmas.
\begin{lemma}[Existence and uniqueness of the limit stationary solution, \cite{NS09}]\label{lem1}
Let the doping profile and the boundary data satisfy conditions \eqref{nonflat} and \eqref{0sbc}. For arbitrary positive constants $n_l$ and $\theta_{L}$, there exist three positive constants $\delta_1$, $c$ and $C$ such that if $\delta\leq\delta_1$, then the BVP \eqref{1dsfhd}$\sim$\eqref{0sbc} has a unique solution $(\pcsmd{0},\pcsdl{0},\pcswnd{0},\pcsdws{0})$ satisfying the condition \eqref{psc} in the space $C^2(\overline{\Omega})\times C^2(\overline{\Omega})\times H^3(\Omega)\times C^2(\overline{\Omega})$. Moreover, the stationary solution satisfies the estimates
\begin{equation}\label{0se}
0<c\leq\pcsmd{0},\pcswnd{0},S[\pcsmd{0},\pcsdl{0},\pcswnd{0}]\leq C,\quad |\pcsdl{0}|+\|\pcswnd{0}-\theta_{L}\|_3\leq C\delta,\quad |(\pcsmd{0},\pcsdws{0})|_2\leq C.
\end{equation}
\end{lemma}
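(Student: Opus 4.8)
The plan is to eliminate the potential and the current, reduce to a two--unknown problem, construct a reference (thermal--equilibrium) state, and then perturb. By \eqref{1dsfhd1} the current $\tilde j^0$ is a constant, which I denote $\bar\jmath$; by \eqref{1dsfhd4} together with \eqref{0sbc-c} the potential is recovered as $\tilde\phi^0=\Phi[\tilde n^0]$, the stationary analogue of \eqref{efep}. Substituting these into \eqref{1dsfhd2}--\eqref{1dsfhd3} leaves a first--order ODE for $\tilde n^0$, uniquely solvable wherever $S[\tilde n^0,\bar\jmath,\tilde\theta^0]>0$ and carrying one integration constant, together with a coercive second--order semilinear ODE for $\tilde\theta^0$ carrying two. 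Matching these three constants and $\bar\jmath$ against the six boundary values in \eqref{0sbc} shows the reduced problem is formally well-posed, the surplus density condition serving to fix $\bar\jmath$.

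When $\delta=0$ one has $n_l=n_r$, $\theta_l=\theta_r=\theta_L$, $\phi_r=0$, and a direct check shows the solution is then $\bar\jmath=0$, $\tilde\theta^0\equiv\theta_L$, $\tilde n^0=n^\ast$, where $n^\ast$ solves the semilinear elliptic problem $\theta_L(\ln n^\ast)_{xx}=n^\ast-D$ in $\Omega$ with $n^\ast(0)=n^\ast(1)=n_l$. Setting $v=\ln n^\ast$ turns this into $\theta_L v_{xx}=e^{v}-D$, whose nonlinearity is strictly increasing, so it has a unique solution by the monotone method with constant sub- and super-solutions (or by minimizing the associated strictly convex functional); this yields $n^\ast\in C^2(\overline\Omega)$ with $0<c_0\le n^\ast\le C_0$, where $c_0,C_0$ depend only on $n_l,\theta_L,|D|_0$. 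The quadruple $(n^\ast,\theta_L,\Phi[n^\ast],0)$ is the reference state about which the regime $\delta\le\delta_1$ is a perturbation.

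For $\delta$ small, existence follows from a fixed-point scheme. On a closed convex bounded subset of, say, $C^1(\overline\Omega)\times H^1(\Omega)$ consisting of pairs $(\tilde n,\tilde\theta)$ lying within $O(\delta)$ of $(n^\ast,\theta_L)$ --- so that $S>0$ holds automatically --- I define a map by: (i) solving the coercive linear elliptic problem obtained from \eqref{1dsfhd3} by freezing the density and current in its coefficients and forcing, with Dirichlet data $\theta_l,\theta_r$, for a new $\tilde\theta^0$; (ii) solving the first-order ODE \eqref{1dsfhd2} with $\tilde\phi^0=\Phi[\tilde n]$ and the frozen current, with $\tilde n^0(0)=n_l$, for a new $\tilde n^0$; (iii) choosing $\bar\jmath$ so that $\tilde n^0(1)=n_r$. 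Elliptic and ODE estimates make this map continuous and compact and, for $\delta_1$ small, self-mapping on the set, so Schauder's theorem (or Leray--Schauder's) supplies a fixed point, i.e.\ a solution of \eqref{1dsfhd}--\eqref{0sbc} fulfilling \eqref{psc}; step (iii) is justified by the strict monotonicity of $\bar\jmath\mapsto\tilde n^0(1;\bar\jmath)$ near the subsonic reference state. A bootstrap then upgrades regularity: \eqref{1dsfhd4} gives $\tilde\phi^0\in C^2$, which fed back into \eqref{1dsfhd2} gives first $\tilde n^0\in C^1$ and then $\tilde n^0\in C^2$, and \eqref{1dsfhd3} then gives $\tilde\theta^0\in H^3$, so the solution lies in $C^2\times C^2\times H^3\times C^2$. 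The bounds \eqref{0se} fall out of the construction: $0<c\le\tilde n^0,\tilde\theta^0,S\le C$ from membership in the neighbourhood, $|\tilde j^0|\le C\delta$ from the equation defining $\bar\jmath$, $\|\tilde\theta^0-\theta_L\|_3\le C\delta$ from elliptic estimates applied to the equation satisfied by $\tilde\theta^0-\theta_L$ (whose forcing and boundary data are $O(\delta)$), and $|(\tilde n^0,\tilde\phi^0)|_2\le C$ from the bootstrap. Uniqueness is obtained by subtracting two solutions and running an energy/ODE estimate that, using the subsonic coercivity $S\ge c>0$ and the smallness of $\delta$, forces the difference to vanish.

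The step I expect to be the main obstacle is the determination of the \emph{a priori} unknown constant current $\tilde j^0$ --- showing that $\tilde n^0(1;\bar\jmath)=n_r$ has a unique small root --- carried out while simultaneously propagating the subsonic condition $S>0$ along the first-order equation \eqref{1dsfhd2}; the merely continuous, non-flat doping profile \eqref{nonflat} rules out an explicit constant reference state and, together with the mixed first-order/elliptic structure of the reduced system, makes closing the nonlinear estimates the delicate part. Complete details are in \cite{NS09}.
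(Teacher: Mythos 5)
This lemma is not proved in the paper; it is quoted verbatim from Nishibata--Suzuki~\cite{NS09}, and you likewise defer to that reference at the end. Your sketch is a reasonable reconstruction and is consistent with how the analogous \emph{quantum} stationary problem is actually handled in this paper (Theorem~\ref{thm1}): reduce, isolate the constant current, set up a fixed-point map on a neighbourhood where the positivity and subsonic conditions hold, and bootstrap regularity. The one place where you depart from the route the paper relies on is the determination of $\tilde j^0$. You propose a shooting argument, choosing $\bar\jmath$ so that the density ODE hits $n_r$ at $x=1$ and invoking monotonicity of $\bar\jmath\mapsto\tilde n^0(1;\bar\jmath)$. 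The paper instead divides the momentum equation by the density, integrates it from $0$ to $1$, and obtains a \emph{quadratic} current--voltage relation whose root is given in closed form by \eqref{104.1}; and it explicitly exploits the fact that, thanks to the vanishing Bohm boundary condition, the limit current $\tilde j^0$ is given by the \emph{same} formula with $\varepsilon=0$ (this is used crucially in the estimate of $I_2$ in the semi-classical limit, Subsection~\ref{Subsect.4.1}). Both determinations of $\bar\jmath$ are legitimate, but the explicit formula buys cleaner dependence of $\tilde j^0$ on $(\tilde n^0,\tilde\theta^0)$, yields $|\tilde j^0|\le C\delta$ directly, and is needed later; a shooting argument would require an extra step to reproduce those facts. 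Everything else in your sketch (the thermal-equilibrium reference state $n^\ast$ solving $\theta_L(\ln n^\ast)_{xx}=n^\ast-D$ via the strictly convex functional, the Schauder/Leray--Schauder fixed point, the bootstrap to $C^2\times C^2\times H^3\times C^2$, and the energy-method uniqueness under $S\ge c>0$) matches the structure of the proof the paper gives for Theorem~\ref{thm1} and what one expects in~\cite{NS09}.
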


\begin{lemma}[Stability of the limit stationary solution, \cite{NS09}]\label{lem2}
Let the doping profile and the boundary data satisfy conditions \eqref{nonflat} and \eqref{0bc}. Assume that the initial data $(\md_0,\dl_0,\wnd_0)\in\big[H^2(\Omega)\big]^3$ and satisfies the conditions \eqref{compatibility} and \eqref{ipsc}. For arbitrary positive constants $n_l$ and $\theta_{L}$, there exist three positive constants $\delta_2$, $\gamma_1$ and $C$ such that if $\delta+\|(\md_0-\pcsmd{0},\dl_0-\pcsdl{0},\wnd_0-\pcswnd{0})\|_2\leq\delta_2$, then the IBVP \eqref{1dfhd}$\sim$\eqref{0bc} has a unique global solution $(\pcmd{0},\pcdl{0},\pcwnd{0},\pcdws{0})$ satisfying the condition \eqref{psc} in the space $\mathfrak{X}_2([0,\infty))\times\big[\mathfrak{X}_1^1([0,\infty))\cap H_{loc}^2(0,\infty;L^2(\Omega))\big]\times\big[\mathfrak{Y}_2([0,\infty))\cap H_{loc}^1(0,\infty;H^1(\Omega))\big]\times\mathfrak{Z}([0,\infty))$. Moreover, the solution verifies the additional regularity $\pcdws{0}-\pcsdws{0}\in\mathfrak{X}_2^2([0,\infty))$ and the decay estimate
\begin{multline}\label{0de}
\|(\pcmd{0}-\pcsmd{0},\pcdl{0}-\pcsdl{0},\pcwnd{0}-\pcswnd{0})(t)\|_2+\|(\pcdws{0}-\pcsdws{0})(t)\|_4\\
\leq C\|(\md_{0}-\pcsmd{0},\dl_{0}-\pcsdl{0},\wnd_{0}-\pcswnd{0})\|_2\,e^{-\gamma_1 t},\quad\forall t\in [0,\infty).
\end{multline}
\end{lemma}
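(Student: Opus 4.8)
The plan is the refined energy method together with a local-existence-plus-continuation argument. Write the perturbation $(\prmd,\pdl,\pdws,\pwnd)(t,x):=(\pcmd{0}-\pcsmd{0},\,\pcdl{0}-\pcsdl{0},\,\pcwnd{0}-\pcswnd{0},\,\pcdws{0}-\pcsdws{0})(t,x)$, where $(\pcsmd{0},\pcsdl{0},\pcswnd{0},\pcsdws{0})$ is the stationary solution from Lemma~\ref{lem1}. Subtracting \eqref{1dsfhd} from \eqref{1dfhd} gives a quasilinear system for $(\prmd,\pdl,\pdws)$ \emph{without source term}, with structure: a $2\times2$ damped hyperbolic block for $(\prmd,\pdl)$ (continuity plus momentum, the latter carrying the damping $-\pdl$ and, after linearization, the subsonic coefficient $S[\pcsmd{0},\pcsdl{0},\pcswnd{0}]$ bounded above and below by Lemma~\ref{lem1}); a uniformly parabolic equation for $\pdws$ with relaxation term $\approx-\pcsmd{0}\pdws$; and the Poisson relation $\pwnd_{xx}=\prmd$ with $\pwnd(t,0)=\pwnd(t,1)=0$. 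By \eqref{compatibility} and \eqref{0sbc}, $\prmd=\pdws=0$ at $x=0,1$, and since $\prmd_t=-\pdl_x$ also $\pdl_x=0$ at $x=0,1$.

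First I would establish local existence and uniqueness in $\mathfrak{X}_2([0,T])\times[\mathfrak{X}_1^1([0,T])\cap H^2_{loc}(0,T;L^2(\Omega))]\times[\mathfrak{Y}_2([0,T])\cap H^1_{loc}(0,T;H^1(\Omega))]\times\mathfrak{Z}([0,T])$, with $\pwnd\in\mathfrak{X}_2^2([0,T])$, by a standard iteration: solve linear transport equations for $(\prmd,\pdl)$, a linear parabolic equation for $\pdws$, and the Poisson problem for $\pwnd$ with frozen coefficients, and show contraction in a low-order norm on a short interval while the $H^2$ norms stay bounded. Since $\pcsmd{0},\pcswnd{0},S[\pcsmd{0},\pcsdl{0},\pcswnd{0}]$ are bounded below and $H^2(\Omega)\hookrightarrow C^1(\overline{\Omega})$, the positivity \eqref{psc} of $(\pcmd{0},\pcwnd{0},S[\pcmd{0},\pcdl{0},\pcwnd{0}])$ persists while $\|(\prmd,\pdl,\pdws)(t)\|_2$ is small; uniqueness follows from an $L^2$ estimate on the difference of two solutions.

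The core is a uniform-in-time a priori estimate. Put $N(T)^2:=\sup_{0\le t\le T}\|(\prmd,\pdl,\pdws)(t)\|_2^2$, assume $\delta+N(T)$ small, and note $\|\pwnd\|_{k+2}\le C\|\prmd\|_k$ by elliptic regularity and Poincar\'e. The target is $\frac{d}{dt}\mathcal{E}(t)+c_1\mathcal{D}(t)\le C(\delta+N(T))\mathcal{D}(t)$ with $\mathcal{E}\approx\|(\prmd,\pdl,\pdws)\|_2^2$ and $\mathcal{D}\approx\|(\prmd,\pdl,\pdws)\|_2^2$. For the zeroth-order step I would use a relative-energy (relative-entropy) identity for the FHD system, multiplying the continuity, momentum and energy equations by suitably weighted perturbation quantities (roughly $\prmd$-type, $\pdl/\pcmd{0}$, $\pdws$) and combining them so that the pressure-work cross terms cancel, the electric coupling $\pcmd{0}\pcdws{0}_x-\pcsmd{0}\pcsdws{0}_x$ becomes $-\int\pcsmd{0}\pwnd_x^2$ plus harmless remainders via $\pwnd_{xx}=\prmd$, and the boundary terms vanish because $\prmd=\pdws=0$ at $x=0,1$; this yields dissipation $\|\pdl\|^2+\|\pdws\|^2+\|\pdws_x\|^2$. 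There is no direct dissipation for $\prmd$: as is classical for damped Euler--Poisson systems, it is recovered by adding a small multiple (with the right sign) of the interaction functional $\int_0^1\pdl\,\pwnd_x\,dx$, whose time derivative, using $\prmd_t=-\pdl_x$ (so $\pwnd_{xt}=-\pdl$ up to a spatial constant), produces a term $\sim\int S[\pcsmd{0},\pcsdl{0},\pcswnd{0}]\,\prmd^2$ against a controlled amount of $\|\pdl\|^2+\|\pdws\|^2$. Differentiating the system once and twice in $x$ (trading space for time derivatives via the equations where the hyperbolic block would otherwise call for an extra derivative of $\pdl$), adjoining the differentiated interaction functionals and the parabolic/relaxation dissipation for $\pdws$, upgrades this to the full dissipation $\mathcal{D}\gtrsim\|(\prmd,\pdl,\pdws)\|_2^2$. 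This step is the main obstacle, for two reasons: only $\prmd,\pdws$ (and $\pdl_x$ at first order) vanish at $x=0,1$, so the boundary terms from integration by parts at second order must be disposed of by expressing the boundary values of the higher $x$-derivatives of $\pdl,\pdws$ through the equations restricted to $x=0,1$ and absorbing the ensuing products of traces by interpolation; and the stationary density gradient $\pcsmd{0}_x$ is only $O(1)$ (not $O(\delta)$, cf. \eqref{0se}), so cross terms with $\pcsmd{0}_x$ are not small and must be handled by the relative-energy structure and absorbed into $\mathcal{D}$, which forces one to fix the weights of the interaction functionals and only then the smallness of $\delta+N(T)$, in that order, so that $\mathcal{E}$ and the modified energy remain equivalent while $c_1\mathcal{D}$ still dominates $\mathcal{E}$.

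Granting the estimate, the remainder is standard. Since $\mathcal{D}\gtrsim\mathcal{E}$, smallness of $\delta+N(T)$ gives $\frac{d}{dt}\mathcal{E}+c_2\mathcal{E}\le0$, hence $\mathcal{E}(t)\le\mathcal{E}(0)e^{-c_2t}$ and $N(T)^2\le C\|(\md_0-\pcsmd{0},\dl_0-\pcsdl{0},\wnd_0-\pcswnd{0})\|_2^2$ uniformly in $T$; the usual continuation argument extends the local solution to $[0,\infty)$ once $\delta+\|(\md_0-\pcsmd{0},\dl_0-\pcsdl{0},\wnd_0-\pcswnd{0})\|_2\le\delta_2$ with $\delta_2$ small. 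Reading $\mathcal{E}(t)\le\mathcal{E}(0)e^{-c_2t}$ through the norm equivalence gives the decay of $\|(\prmd,\pdl,\pdws)(t)\|_2$ with rate $\gamma_1>0$, and $\pwnd_{xx}=\prmd$ with $\prmd\in\mathfrak{X}_2([0,\infty))$ yields the regularity $\pcdws{0}-\pcsdws{0}\in\mathfrak{X}_2^2([0,\infty))$ and $\|(\pcdws{0}-\pcsdws{0})(t)\|_4\le C\|\prmd(t)\|_2$, completing \eqref{0de}.
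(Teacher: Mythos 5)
Note first that Lemma \ref{lem2} is cited from \cite{NS09} and not proved in this paper; the closest in-house analogue is the proof of Theorem \ref{thm2} in Section \ref{Sect.3}, which replays the \cite{NS09} strategy for the quantum system. Your overall plan --- perturb about the stationary state, local existence plus continuation, and an $H^2$-level a priori estimate built from a relative-entropy energy with a small interaction functional (velocity perturbation times electric field) to recover the density dissipation --- matches that skeleton, and your zeroth-order step is on the right track. The swap of the symbols for temperature and potential perturbations relative to the paper's convention is immaterial.

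Where you diverge in substance is the higher-order estimate, and the choice you make there is the hard way around. You propose to differentiate the perturbed system once and twice in $x$ and then dispose of the boundary terms generated by integration by parts by reading off the traces of $\pdl_{xx}$ and of the second spatial derivative of the temperature perturbation from the equations at $x=0,1$, absorbing the resulting products by interpolation. The paper --- following \cite{NS09} --- does exactly the opposite: it differentiates only in $t$, so that all differentiated unknowns still vanish at $x=0,1$ (the density and temperature perturbations obey homogeneous Dirichlet conditions for all $t$, whereas $\pdl$ itself does not vanish on the boundary), obtains dissipation for the temporal derivatives, and then recovers control of the spatial derivatives from the structure of the system; see Lemma \ref{lem5} and the equivalence \eqref{46.5}, which the Introduction singles out as a key design choice. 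Your route is not obviously impossible, but you are underestimating the obstacle: the boundary traces of $\pdl_{xx}$ are not compatibility data, and the trace products to be absorbed are of the same size as the dissipation you are trying to produce, so smallness of $\delta+N(T)$ alone does not clearly close the inequality. Adopting the $t$-differentiation strategy and then converting back to spatial derivatives is what lets the argument go through, and is what \cite{NS09} actually does.
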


Now, we are in the position to state the main results in this paper. Firstly, the existence and uniqueness of the quantum stationary solution is summarized in the following theorem.
\begin{theorem}[Existence and uniqueness of the quantum stationary solution]\label{thm1}
Suppose that the doping profile and the boundary data satisfy conditions \eqref{nonflat} and \eqref{sbc}. For arbitrary positive constants $n_l$ and $\theta_{L}$, there exist three positive constants $\delta_3$, $\pc_1(\leq1)$ and $C$ such that if $\delta\leq\delta_3$ and $0<\pc\leq\pc_1$, then the BVP \eqref{1dsfqhd}$\sim$\eqref{sbc} has a unique solution $(\pcsmd{\pc},\pcsdl{\pc},\pcswnd{\pc},\pcsdws{\pc})\in H^4(\Omega)\times H^4(\Omega)\times H^3(\Omega)\times C^2(\overline{\Omega})$ satisfying the condition \eqref{psc} and the uniform estimates
\begin{subequations}\label{145.1}
\begin{gather}
0<b^2\leq\pcsmd{\pc}\leq B^2,\quad 0<\frac{1}{2}\theta_L\leq\pcswnd{\pc}\leq\frac{3}{2}\theta_L, \label{m145.1a}\\
\|\pcsmd{\pc}\|_2+\|(\pc\pd{x}{3}\pcsmd{\pc},\pc^2\pd{x}{4}\pcsmd{\pc})\|+|\pcsdws{\pc}|_2\leq C,\label{m145.1b}\\
|\pcsdl{\pc}|+\|\pcswnd{\pc}-\theta_{L}\|_3\leq C\delta,\label{m145.1c}
\end{gather}
\end{subequations}
where the positive constants $B$ and $b$ are defined as follows
\begin{equation}\label{75.1}
B:=\frac{3}{2}\sqrt{n_l}\,e^{2|D|_0/\theta_{L}},\quad b:=\frac{1}{2}\sqrt{n_l}\,e^{-(B^2+2|D|_0)/\theta_{L}}.
\end{equation}
\end{theorem}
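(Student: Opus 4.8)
The plan is to recast the boundary value problem \eqref{1dsfqhd}--\eqref{sbc} as a fixed-point problem, to establish $\pc$-uniform a priori bounds by a refined energy method, and then to conclude existence via the Schauder (or Leray--Schauder) fixed-point theorem; uniqueness will follow from an energy estimate on the difference of two solutions. First I would reduce the system. From \eqref{1dsfqhd1}, $\sdl\equiv J$ for a constant $J$ to be determined, and from \eqref{1dsfqhd4}, \eqref{sbc-d} one has $\sdws=\Phi[\smd]$ through formula \eqref{efep}, so $|\sdws|_2\le C(\|\smd\|+|D|_0)$. Setting $w:=\sqrt{\smd}$ and using the elementary identity $\smd\big[(\sqrt{\smd})_{xx}/\sqrt{\smd}\big]_x=(ww_{xx}-w_x^2)_x=\tfrac12\big(\smd(\ln\smd)_{xx}\big)_x$ together with $\smd\sdws_x=\big(\tfrac12\sdws_x^2\big)_x+D\sdws_x$, the momentum equation \eqref{1dsfqhd2} can be integrated once in $x$ to a second-order equation for $w$,
\[
\pc^2(ww_{xx}-w_x^2)=\frac{J^2}{w^2}+w^2\swnd-\frac12\sdws_x^2-\int_0^x D\,\sdws_y\,dy+Jx-c_0,
\]
to be solved with $w(0)=\sqrt{n_l}$, $w(1)=\sqrt{n_r}$ from \eqref{sbc-a}; the vanishing bohmenian condition \eqref{sbc-b}, i.e.\ $w_{xx}(0)=w_{xx}(1)=0$, becomes two scalar relations that pin down the constant $c_0$ and the current $J$. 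The energy equation \eqref{1dsfqhd3}, in which $(\sdl/\smd)_{xx}=J(1/\smd)_{xx}$, so that the dispersive velocity term carries the small factor $J=O(\delta)$, is a second-order two-point problem for $\swnd$ with data \eqref{sbc-c}.

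Next I would set up the fixed point. On the closed convex set $\mathcal{B}$ of pairs $(\hat n,\hat\theta)$ with $b^2\le\hat n\le B^2$, $\tfrac12\theta_L\le\hat\theta\le\tfrac32\theta_L$ and $\|\hat n-\pcsmd{0}\|_2+\|\hat\theta-\theta_L\|_2\le K$ (with $\pcsmd{0}$ the limit density of Lemma \ref{lem1} and $K$ to be fixed) I define a map $\mathcal{T}:(\hat n,\hat\theta)\mapsto(\smd,\swnd)$: freeze the nonlinear coefficients and $\hat\sdws=\Phi[\hat n]$ at $(\hat n,\hat\theta)$, solve the resulting linear problem for $w$ (hence $\smd=w^2$) by two integrations while fixing $(c_0,J)=(c_0,J)[\hat n,\hat\theta]$ from the two compatibility relations above (a $2\times2$ system invertible for $\pc$, $\delta$ small), and then solve the linear second-order problem for $\swnd$, which is uniquely solvable by Lax--Milgram/Fredholm theory since the subsonic sign of $S$ and the positivity of $\hat n$ give coercivity for $\delta$ small. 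A fixed point of $\mathcal{T}$ is a solution of \eqref{1dsfqhd}--\eqref{sbc}. Reading the bootstrap $\smd\in H^4$, $\swnd\in H^3$ off the equations, $\mathcal{T}$ maps $\mathcal{B}$ into a bounded subset of $H^4\times H^3$, hence is compact on $\mathcal{B}$ by Rellich's theorem, and continuous by continuity of the linear solution operators; existence of a fixed point then follows from Schauder's theorem (or, via a homotopy contracting the data toward the frozen problem, from the Leray--Schauder theorem), provided one verifies $\mathcal{T}(\mathcal{B})\subseteq\mathcal{B}$.

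The heart of the proof is the a priori estimates, and this is also where $\delta\le\delta_3$ and $0<\pc\le\pc_1$ are needed. For the pointwise bounds, I would evaluate the transformed momentum equation at an interior extremum of $w$: there $w_x=0$ and $\pc^2 ww_{xx}$ has the favourable sign, so the Bohm term never spoils the comparison argument, and with $|\sdws_x|\le C$ this yields a differential inequality for $\ln\smd$ which integrates to the explicit bounds $b^2\le\smd\le B^2$ with $B,b$ as in \eqref{75.1}. For $\swnd$, a maximum principle on \eqref{1dsfqhd3}: at an interior maximum $-\tfrac23\swnd_{xx}\ge0$, so $\smd(\swnd-\theta_L)\le\tfrac13 J^2/\smd+O(\pc^2|J|)=O(\delta^2)$, giving $\swnd\le\tfrac32\theta_L$, and symmetrically $\swnd\ge\tfrac12\theta_L$; the $H^3$ elliptic estimate for the energy equation (whose quantum source $\pc^2 J\,\pd{x}{3}\smd$ is controlled by $\pc\delta\,\|\pc\pd{x}{3}\smd\|$) then gives $\|\swnd-\theta_L\|_3\le C\delta$, while $|J|\le C\delta$ follows from the compatibility relations and the fact that $J=0$, $\swnd=\theta_L$ when $\delta=0$ (the system then reducing to the quantum thermal equilibrium). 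The remaining and hardest point is the $\pc$-\emph{uniform} bound $\|\smd\|_2+\|\pc\pd{x}{3}\smd\|+\|\pc^2\pd{x}{4}\smd\|\le C$: one multiplies the momentum equation by $\smd_x/\smd$ and $\pc^2\pd{x}{3}\smd/\smd$ and integrates by parts; the conservative structure produces the good term $\pc^2\int\smd_{xx}^2/\smd$, and the vanishing bohmenian condition \eqref{sbc-b} together with the equation at the endpoints makes the boundary contributions vanish or absorbable, which controls $\pc\pd{x}{2}\smd$, $\pc^2\pd{x}{3}\smd$, $\pc^3\pd{x}{4}\smd$; the \emph{unweighted} $H^2$ bound on $\smd$ is then extracted by comparing with the limit solution $\pcsmd{0}$ of Lemma \ref{lem1}, exploiting that the classical part of the integrated momentum equation vanishes identically on $(\pcsmd{0},\pcsdws{0})$, so a Gronwall/absorption argument bounds $\smd-\pcsmd{0}$ in $H^2$. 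Feeding these estimates back confirms $\mathcal{T}(\mathcal{B})\subseteq\mathcal{B}$ for $\delta\le\delta_3$, $\pc\le\pc_1$, hence \eqref{145.1}.

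For uniqueness, given two solutions with the above bounds, their difference satisfies a linear system, and an energy estimate of the same type --- again using \eqref{sbc-b} to handle the difference of the Bohm terms and the smallness of $\delta$, $\pc$ to absorb the remaining couplings, including the dispersive term which carries the factor $J=O(\delta)$ --- forces the difference to vanish. I expect the main obstacle to be exactly the $\pc$-uniform control of the third-order quantum terms in \eqref{1dsfqhd2}--\eqref{1dsfqhd3}: naive estimates lose powers of $\pc$ and only yield $\pc$-weighted bounds on $\pd{x}{2}\smd$. The argument must use simultaneously the conservative structure (the single integration of the momentum equation and the identity turning the Bohm term into the nonnegative quantity $\pc^2\int\smd_{xx}^2/\smd$), the vanishing bohmenian boundary condition \eqref{sbc-b} (which removes boundary losses and is the mathematical reflection of the absence of a quantum boundary layer), and the proximity to the already solved limit problem of Lemma \ref{lem1}; the temperature equation is then comparatively harmless because its quantum correction carries the extra small factor $J=O(\delta)$.
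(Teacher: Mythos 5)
Your high-level plan (reduce via $w=\sqrt{\smd}$, integrate the momentum equation, set up a fixed point, get $\pc$-uniform bounds) matches the paper's, but two of your steps have genuine gaps. First, your $L^\infty$ argument for the density does not close. After integrating without dividing by $\smd$ first, your right-hand side contains $-\tfrac12\sdws_x^2-\int_0^x D\,\sdws_y\,dy+\ldots-c_0$, and at an interior maximum of $w$ you would conclude $w^2\swnd\le\tfrac12|\sdws_x|_0^2+|c_0|+\ldots$. But $\sdws_x$ and the integration constant $c_0$ both depend on $\smd$ (indeed $|\sdws_x|_0$ grows linearly in $\|\smd\|_{L^1}$), so the bound on $w$ is stated in terms of $w$ itself and does not yield the explicit, $\smd$-independent constants in \eqref{75.1}. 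The paper avoids this circularity in two ways you have bypassed: it divides \eqref{1dsfqhd2} by $\smd$ before integrating, so that the quantum term becomes the perfect derivative $\pc^2[\rsmd_{xx}/\rsmd]_x$ (no $-w_x^2$, no free $c_0$, the bohmenian condition \eqref{sbc-b} then makes the $x=0$ evaluation trivially consistent and $x=1$ gives the single current-voltage relation \eqref{ucvr}); and in the $L^\infty$ estimate it works with $\sdws=G[\smd]$ itself, where the Green kernel satisfies $G(x,y)\le0$, so the $\smd$-dependent piece $\int_0^1 G(x,y)\smd(y)\,dy$ has a favorable sign and can simply be discarded — see \eqref{59.3}. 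You are then left only with $|D|_0$, which is how the explicit constants $B,b$ arise. A straight pointwise maximum-principle evaluation, as you propose, does not see that sign, and the argument does not go through.

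Second, your plan to obtain the unweighted $H^2$ bound by ``comparing with the limit solution $\pcsmd{0}$ of Lemma~\ref{lem1}'' is circular in the paper's logical order: the error estimate $\|\smd-\pcsmd{0}\|_1=O(\pc)$ is precisely Theorem~\ref{thm3}, and its proof uses the uniform bound \eqref{145.1} (it is invoked as \eqref{ss9.2}). The paper instead derives $\|u_x\|\le C$, $\|u_{xx}\|\le C$, $\|\pc\pd{x}{3}u\|\le C$, $\|\pc^2\pd{x}{4}u\|\le C$ directly by testing the reduced momentum equation against $u_x$ and against $(u_{xx}/u)_x$ — see \eqref{117.1}, \eqref{118.0} — with no reference to the $\pc=0$ problem. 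A related structural difference: you freeze both $(\hat n,\hat\theta)$ and solve a linearized problem inside a single Schauder iteration, whereas the paper's map $\mathcal T$ is on temperature profiles only; inside it, problem $(P1)$ keeps the density equation nonlinear and is solved by a truncated Leray--Schauder argument, which is exactly what makes the Moser iteration (and the sign of $\pc^2\rsmd\rsmd_{xx}$) applicable. With a frozen LHS $\pc^2(\hat w w_{xx}-\hat w_xw_x)$ the extremum sign argument you rely on is no longer available. Your observations about the role of the bohmenian boundary condition, the subsonic coercivity of the temperature equation, and the $J=O(\delta)$ smallness of the dispersive source are all correct and match the paper; the gaps are in the density $L^\infty$ bound and the unweighted $H^2$ bound.
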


The asymptotic stability of the quantum stationary solution is stated in the next theorem.
\begin{theorem}[Stability of the quantum stationary solution]\label{thm2}
Assume that the doping profile and the boundary data satisfy conditions \eqref{nonflat} and \eqref{bc}. Let the initial data $(\md_0,\dl_0,\wnd_0)\in H^4(\Omega)\times H^3(\Omega)\times H^2(\Omega)$ and satisfies the conditions \eqref{compatibility} and \eqref{ipsc}. For arbitrary positive constants $n_l$ and $\theta_{L}$, there exist four positive constants $\delta_4$, $\pc_2$, $\gamma_2$ and $C$ such that if $0<\pc\leq\pc_2$ and $\delta+\|(\md_0-\pcsmd{\pc},\dl_0-\pcsdl{\pc},\wnd_0-\pcswnd{\pc})\|_2+\|(\pc\pd{x}{3}(\md_0-\pcsmd{\pc}),\pc\pd{x}{3}(\dl_0-\pcsdl{\pc}),\pc^2\pd{x}{4}(\md_0-\pcsmd{\pc}))\|\leq\delta_4$, then the IBVP \eqref{1dfqhd}$\sim$\eqref{bc} has a unique global solution $(\pcmd{\pc},\pcdl{\pc},\pcwnd{\pc},\pcdws{\pc})$ satisfying the condition \eqref{psc} in $\big[\mathfrak{Y}_4([0,\infty))\cap H_{loc}^2(0,\infty;H^1(\Omega))\big]\times\big[\mathfrak{Y}_3([0,\infty))\cap H_{loc}^2(0,\infty;L^2(\Omega))\big]\times\big[\mathfrak{Y}_2([0,\infty))\cap H_{loc}^1(0,\infty;H^1(\Omega))\big]\times\mathfrak{Z}([0,\infty))$. Moreover, the solution verifies the additional regularity $\pcdws{\pc}-\pcsdws{\pc}\in\mathfrak{Y}_4^2([0,\infty))$ and the decay estimate
\begin{align}
&\|(\pcmd{\pc}-\pcsmd{\pc},\pcdl{\pc}-\pcsdl{\pc},\pcwnd{\pc}-\pcswnd{\pc})(t)\|_2 \notag\\
&+\|(\pc\pd{x}{3}(\pcmd{\pc}-\pcsmd{\pc}),\pc\pd{x}{3}(\pcdl{\pc}-\pcsdl{\pc}),\pc^2\pd{x}{4}(\pcmd{\pc}-\pcsmd{\pc}))(t)\|+\|(\pcdws{\pc}-\pcsdws{\pc})(t)\|_4 \notag\\
&\leq C\Big(\|(\md_0-\pcsmd{\pc},\dl_0-\pcsdl{\pc},\wnd_0-\pcswnd{\pc})\|_2 \notag\\
&\qquad\qquad\quad+\|(\pc\pd{x}{3}(\md_0-\pcsmd{\pc}),\pc\pd{x}{3}(\dl_0-\pcsdl{\pc}),\pc^2\pd{x}{4}(\md_0-\pcsmd{\pc}))\|\Big)\,e^{-\gamma_2 t},\quad\forall t\in [0,\infty).\label{de}
\end{align}
\end{theorem}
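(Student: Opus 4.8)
We follow the classical three-step program for asymptotic stability of a stationary state: local-in-time solvability of the perturbation system; a uniform-in-$\pc$, exponentially decaying a priori estimate; and a continuation argument. Throughout we use freely the uniform bounds of Theorem~\ref{thm1} on the quantum stationary solution $(\pcsmd{\pc},\pcsdl{\pc},\pcswnd{\pc},\pcsdws{\pc})$, in particular the $\pc$-independent lower bounds $\pcsmd{\pc}\ge b^2$, $\pcswnd{\pc}\ge\tfrac12\theta_L$, $S[\pcsmd{\pc},\pcsdl{\pc},\pcswnd{\pc}]\ge c>0$, and the smallness $|\pcsdl{\pc}|+\|\pcswnd{\pc}-\theta_L\|_3\le C\delta$. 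Introduce the perturbation $(\prmd,\pdl,\pwnd,\pdws):=(\pcmd{\pc}-\pcsmd{\pc},\,\pcdl{\pc}-\pcsdl{\pc},\,\pcwnd{\pc}-\pcswnd{\pc},\,\pcdws{\pc}-\pcsdws{\pc})$ and subtract \eqref{1dsfqhd} from \eqref{1dfqhd}: this gives the continuity relation $\prmd_t+\pdl_x=0$, a momentum equation for $\pdl$ carrying the linear damping $-\pdl$ and the third-order Bohm term, a quasilinear parabolic equation for $\pwnd$ with the relaxation $-\pcmd{\pc}\pwnd$, the diffusion $-\tfrac23\pwnd_{xx}$ and the dispersive velocity term, and Poisson's equation $\pdws_{xx}=\prmd$, together with homogeneous Dirichlet data for $\prmd,\pwnd,\pdws$, the vanishing-Bohmian condition $(\sqrt{\pcmd{\pc}})_{xx}=0$ on $\partial\Omega$, and $\prmd_t=0$ on $\partial\Omega$ (from $\pcmd{\pc}_t=-\pcdl{\pc}_x$ and the boundary conditions). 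The governing structure is dispersive-hyperbolic for the pair $(\prmd,\pdl)$ --- the subsonic condition $S>0$ making the leading energy form coercive, the $\pc^2$-Bohm term supplying the dispersive top-order structure (eliminating $\pdl$ reveals a damped dispersive wave equation for $\prmd$ whose highest-order term is $\tfrac{\pc^2}{2}\prmd_{xxxx}$), and the damping together with the Poisson coupling providing dissipation --- and parabolic for $\pwnd$; this underlies every estimate below.

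\textbf{Local existence.} We solve the perturbation system by a linearized iteration: given the current iterate, solve the linear fourth-order dispersive wave problem for the next $\prmd$ under the Dirichlet and vanishing-Bohmian data and the compatibility $\prmd_t|_{\partial\Omega}=0$ (Galerkin plus energy estimates with the $\pc$-weights built in), recover $\pdl$ from $\pdl_x=-\prmd_t$ together with a boundary value obtained by integrating the momentum equation at $x=0$, solve the linear parabolic problem for $\pwnd$, and update $\pdws$ by the explicit formula \eqref{efep}. Uniform bounds for the linear subproblems plus a contraction in a lower-order norm give a solution on a short interval $[0,T_0]$ in the function spaces named in the statement (with $[0,\infty)$ replaced by $[0,T_0]$), on which the positivity conditions \eqref{psc} persist; the compatibility conditions \eqref{compatibility} are exactly what guarantees that the iterates lie in these spaces.

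\textbf{A priori estimate.} Put $N(T):=\sup_{0\le t\le T}\big(\|(\prmd,\pdl,\pwnd)(t)\|_2^2+\|(\pc\pd{x}{3}\prmd,\pc\pd{x}{3}\pdl,\pc^2\pd{x}{4}\prmd)(t)\|^2\big)^{1/2}$ and assume $\delta+N(T)$ is small. The estimates are assembled in increasing order of derivatives. For the basic estimate, multiply the momentum and energy perturbation equations by weights of the form $\pdl/\pcmd{\pc}$ and (a multiple of) $\pwnd$, use continuity $\prmd_t=-\pdl_x$ to turn the pressure and Bohm terms into $\tfrac{d}{dt}$ of an energy functional plus genuine dissipation, integrate over $\Omega$ and add; after discarding via \eqref{bc-b} the boundary contributions of the third-order terms, one gets $\tfrac{d}{dt}\mathcal{E}_0+c\,\mathcal{D}_0\le\cdots$ with $\mathcal{E}_0$ coercive in $\|(\prmd,\pdl,\pwnd)\|^2+\pc^2\|\pd{x}{}\prmd\|^2$ (coercivity using $S[\pcsmd{\pc},\pcsdl{\pc},\pcswnd{\pc}]>0$) and $\mathcal{D}_0$ controlling $\|\pdl\|^2+\|\pwnd_x\|^2$ and, after combination with the momentum equation, $\pc^2\|\pd{x}{2}\prmd\|^2$; the electric coupling $\prmd\pdws_x$ is absorbed through $\pdws_{xx}=\prmd$ and Poincar\'e's inequality. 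Applying $\pd{x}{}$, $\pd{x}{2}$ and (for the parabolic $\pwnd$) $\pd{t}{}$ and repeating produces the higher-order estimates; here the dispersive velocity term feeds $\pd{x}{3}\pdl$ into the $\pwnd$-estimate, forcing the $\pc$-weighted $H^3$-bound on $\pdl$, while the top-order Bohm estimate needs the $\pc^2$-weighted $H^4$-bound on $\prmd$, so these two highest pieces must be closed simultaneously inside one combined functional. To recover dissipation for $\prmd$ itself, read $\|\prmd_x\|_1$ off the momentum equation written as $(S\,\prmd)_x=-\pdl_t-\pdl+\cdots$ (and its $x$-derivatives, the extra $\prmd$-derivatives coming from the Bohm dissipation) together with $\prmd_t=-\pdl_x$. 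A suitable linear combination $\mathcal{E}(t)$ of all these, equivalent to $N(t)^2+\|\pdws(t)\|_4^2$, then satisfies $\tfrac{d}{dt}\mathcal{E}+c_1\mathcal{E}\le C(\delta+N(T))\,\mathcal{E}$, so smallness of $\delta+N(T)$ gives $\mathcal{E}(t)\le\mathcal{E}(0)e^{-2\gamma_2 t}$, whence $N(T)\le CN(0)$ and the decay \eqref{de}. It is essential that every constant here depends only on $n_l,\theta_L,|D|_0$ and not on $\pc$, so the estimate degenerates smoothly into that of Lemma~\ref{lem2} as $\pc\to0$.

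\textbf{Globalization and the main obstacle.} Local existence, the a priori bound and the smallness hypothesis on the data together close a standard continuation argument, producing the global solution with the stated regularity; the extra regularity $\pcdws{\pc}-\pcsdws{\pc}\in\mathfrak{Y}_4^2([0,\infty))$ follows from elliptic regularity applied to \eqref{efep}, and uniqueness follows from an $L^2$-type estimate for the difference of two solutions. The main obstacle is the uniform-in-$\pc$ closure of the high-order estimates: the Bohm potential and the dispersive velocity term are genuine third-order terms that couple $\prmd$ and $\pdl$ at top order, and a careless integration by parts either destroys the $\pc$-weights or produces constants that blow up as $\pc\to0$. Overcoming this requires (i) exploiting the exact divergence/antisymmetric structure of these terms so that their worst contributions become boundary terms annihilated by \eqref{bc-b}, and (ii) careful accounting of the powers of $\pc$ so that every surviving contribution is either part of the good dissipation or of relative size $O(\pc)$. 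Making the decay rate $\gamma_2$ and the constant $C$ genuinely independent of $\pc$ is the delicate heart of the argument.
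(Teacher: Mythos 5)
Your overall architecture (perturbation system, local existence via linear iteration, weighted a priori estimate, continuation) matches the paper, and your observations about the $\sqrt{n}$ transformation, the role of the subsonic condition, the vanishing-Bohmian boundary condition, and the need for $\pc$-uniform constants are all on target. However, the way you propose to obtain the higher-order estimates --- ``Applying $\pd{x}{}$, $\pd{x}{2}$ and (for the parabolic $\pwnd$) $\pd{t}{}$ and repeating'' --- is precisely the route the paper avoids, and for a reason that would make your strategy fail.

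The obstruction is that spatial differentiation destroys the homogeneous boundary data. The perturbations satisfy $\prmd=\prmd_{xx}=\pwnd=0$ on $\partial\Omega$, and these conditions propagate under $\pd{t}{k}$ for every $k$, so energy estimates obtained by multiplying $\pd{t}{k}$-differentiated equations by $\pd{t}{k}\prmd$, $\pd{t}{k}\prmd_t$, $\pd{t}{k}\pwnd_t$, etc., produce no uncontrollable boundary terms when integrating by parts. By contrast, $\prmd_x$, $\prmd_{xxx}$, $\prmd_{xxxx}$ and $\pwnd_x$ do \emph{not} vanish on $\partial\Omega$, so multiplying the $\pd{x}{}$- or $\pd{x}{2}$-differentiated momentum equation (which carries the fourth-order $\pc^2\prmd_{xxxx}$) by $\pd{x}{k}\prmd$ or $\pd{x}{k}\pdl$ and integrating by parts generates boundary traces of odd $x$-derivatives that \eqref{bc-b} does not kill and that cannot be absorbed uniformly in $\pc$. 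The paper's resolution is the opposite of yours: it establishes \emph{only} time-derivative estimates $\pd{t}{k}$ ($k=0,1$), and then uses the equations themselves as elliptic constraints (the equivalence \eqref{46.5}) to convert temporal control into spatial control of $\pcn$ --- in particular $\|\prmd_{xx}\|$, $\|\pc\pd{x}{3}\prmd\|$, $\|\pc^2\pd{x}{4}\prmd\|$ are read off from the momentum equation \eqref{12.3} after $\|\pd{t}{k}\prmd_t\|$ and $\|\pwnd_t\|$ are controlled. Your proposal only recovers $\|\prmd_x\|_1$ from $(S\prmd)_x=-\pdl_t-\pdl+\cdots$, which is a piece of this picture but is not a substitute for the systematic time-derivative hierarchy.

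A second, smaller gap: you identify that the dispersive velocity term feeds $\pd{x}{3}\pdl$ into the $\pwnd$-estimate, but the decisive structural fact in the paper is that this term supplies an \emph{extra quantum dissipation} $\|\pc\pd{t}{k}\prmd_{tx}\|^2$ (estimate \eqref{115.2}, via the integral $\mathfrak{T}^{(k)}_4$ in \eqref{100.2}), parallel to the dissipation $\|\pc\pd{t}{k}\prmd_{xx}\|^2$ from the Bohm term (estimate \eqref{115.1}). Without explicitly exhibiting this additional good sign, the mixed third-order terms coming from the dispersive velocity term in the $\pd{t}{k}$-energy identity cannot be closed with $\pc$-independent constants. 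So the two key ideas you would need to add are: (i) run the higher-order energy hierarchy in $t$, not $x$, and convert back via the equations; (ii) extract and exploit the dissipation $\|\pc\pd{t}{k}\prmd_{tx}\|^2$ produced by the dispersive velocity term.
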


It is naturally expected that the solution $(\pcmd{\pc},\pcdl{\pc},\pcwnd{\pc},\pcdws{\pc})$ of the quantum system \eqref{1dfqhd} approaches the solution $(\pcmd{0},\pcdl{0},\pcwnd{0},\pcdws{0})$ of the limit system \eqref{1dfhd} as $\pc$ tends to zero. To justify this expectation, we first consider the convergence of the stationary solutions. Precisely, we show that the quantum stationary solution $(\pcsmd{\pc},\pcsdl{\pc},\pcswnd{\pc},\pcsdws{\pc})$ of the BVP \eqref{1dsfqhd}$\sim$\eqref{sbc} converges to the limit stationary solution $(\pcsmd{0},\pcsdl{0},\pcswnd{0},\pcsdws{0})$ of the BVP \eqref{1dsfhd}$\sim$\eqref{0sbc} as $\pc$ tends to zero. Then, we further study the convergence of the global solutions. The former result is summarized in the following theorem.
\begin{theorem}[Semi-classical limit of the stationary solutions]\label{thm3}
Suppose that the same conditions in Lemma \ref{lem1} and Theorem \ref{thm1} hold. For arbitrary positive constants $n_l$ and $\theta_{L}$, there exist two positive constants $\delta_5$ and $C$ such that if $\delta\leq\delta_5$, then for all $0<\pc\leq\pc_1$ (where $\pc_1$ is given in Theorem \ref{thm1}) the following convergence estimate
\begin{subequations}\label{ss39.1}
\begin{equation}\label{ss39.1a}
\|\pcsmd{\pc}-\pcsmd{0}\|_1+|\pcsdl{\pc}-\pcsdl{0}|+\|\pcswnd{\pc}-\pcswnd{0}\|_2+\|\pcsdws{\pc}-\pcsdws{0}\|_3\leq C\pc,
\end{equation}
holds true. Furthermore,
\begin{equation}\label{ss39.1b}
\big\|\big(\pd{x}{2}(\pcsmd{\pc}-\pcsmd{0}),\pc\pd{x}{3}\pcsmd{\pc},\pc^2\pd{x}{4}\pcsmd{\pc},\pd{x}{3}(\pcswnd{\pc}-\pcswnd{0}),\pd{x}{4}(\pcsdws{\pc}-\pcsdws{0})\big)\big\|\rightarrow0,\quad\text{as}\ \pc\rightarrow0.
\end{equation}
\end{subequations}
\end{theorem}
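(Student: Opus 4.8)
The plan is to prove Theorem~\ref{thm3} by combining the uniform-in-$\pc$ a priori estimates from Theorem~\ref{thm1} with a compactness argument, and then upgrading weak convergence to the strong rate $O(\pc)$ by an energy estimate on the difference system. First I would exploit the uniform bounds \eqref{145.1}: from \eqref{m145.1b} the family $\{\pcsmd{\pc}\}$ is bounded in $H^2(\Omega)$, the family $\{\pcsdws{\pc}\}$ in $C^2(\overline{\Omega})$, and \eqref{m145.1c} gives $\{\pcswnd{\pc}\}$ bounded in $H^3(\Omega)$ and $\{\pcsdl{\pc}\}$ bounded uniformly (in fact $O(\delta)$). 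Hence along a subsequence $\pc\to0$ there is a limit $(n^*,j^*,\theta^*,\phi^*)$ with $\pcsmd{\pc}\rightharpoonup n^*$ weakly in $H^2$ and strongly in $C^{1,\alpha}$, $\pcswnd{\pc}\rightharpoonup\theta^*$ weakly in $H^3$ and strongly in $C^2$, $\pcsdl{\pc}\to j^*$ (a constant, by \eqref{1dsfqhd1}), $\pcsdws{\pc}\to\phi^*$ in $C^{1,\alpha}$. Because the quantum terms carry explicit factors $\pc^2$ while $\pc^2\pd{x}{3}\pcsmd{\pc}=\pc\cdot(\pc\pd{x}{3}\pcsmd{\pc})\to0$ and similarly $\pc^2\pd{x}{4}\pcsmd{\pc}\to0$ in the relevant norms by \eqref{m145.1b}, passing to the limit in the weak formulation of \eqref{1dsfqhd2}--\eqref{1dsfqhd4} kills the Bohm and dispersive-velocity contributions, and one checks the limit solves \eqref{1dsfhd}--\eqref{0sbc} while still satisfying \eqref{psc}; by the uniqueness part of Lemma~\ref{lem1} we identify $(n^*,j^*,\theta^*,\phi^*)=(\pcsmd{0},\pcsdl{0},\pcswnd{0},\pcsdws{0})$, so the whole family converges, which already yields \eqref{ss39.1b} once we note the strong convergence of second derivatives follows from the difference equation.

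Next, to obtain the quantitative rate \eqref{ss39.1a}, I would set up the difference system for $(N,J,\Theta,\Sigma):=(\pcsmd{\pc}-\pcsmd{0},\pcsdl{\pc}-\pcsdl{0},\pcswnd{\pc}-\pcswnd{0},\pcsdws{\pc}-\pcsdws{0})$. Subtracting \eqref{1dsfhd} from \eqref{1dsfqhd} produces, schematically, a linear (in the differences) elliptic-type system with source terms that are either quadratic in $(N,J,\Theta,\Sigma)$ and their derivatives (hence absorbable for small $\delta$ and small differences) or the leftover quantum terms $\pc^2\pcsmd{\pc}[(\sqrt{\pcsmd{\pc}})_{xx}/\sqrt{\pcsmd{\pc}}]_x$ and $\frac{\pc^2}{3}[\pcsmd{\pc}(\pcsdl{\pc}/\pcsmd{\pc})_{xx}]_x$ evaluated at the quantum solution. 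The latter, using \eqref{m145.1b} and the bounds on $\pc\pd{x}{3}\pcsmd{\pc}$, $\pc^2\pd{x}{4}\pcsmd{\pc}$, are $O(\pc)$ in $H^{-1}$ (or $L^2$ after integrating by parts against a test function), since each such term equals $\pc$ times something uniformly bounded. I would then run the same weighted energy method used for Theorem~\ref{thm1} / Lemma~\ref{lem1}: test the momentum-difference equation against $N$ (or a suitable combination with $\Theta$), the energy-difference equation against $\Theta$, use the Poisson equation to control $\Sigma$ in terms of $N$, and exploit the subsonic lower bound on $S[\cdot]$ and the smallness of $\delta$ to close a coercive estimate of the form $\|N\|_1^2+|J|^2+\|\Theta\|_2^2+\|\Sigma\|_3^2\le C\delta(\text{same})+C\pc^2$, whence \eqref{ss39.1a}.

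The step I expect to be the main obstacle is controlling the third-order quantum terms in the difference estimate without losing a power of $\pc$: naively the Bohm term $\pc^2\pcsmd{\pc}[(\sqrt{\pcsmd{\pc}})_{xx}/\sqrt{\pcsmd{\pc}}]_x$ contains $\pc^2\pd{x}{3}\pcsmd{\pc}$, and a priori we only know $\pc\pd{x}{3}\pcsmd{\pc}$ and $\pc^2\pd{x}{4}\pcsmd{\pc}$ are bounded, so the natural estimate gives only $O(\pc)$ and one must be careful that this single remaining power of $\pc$ is not eaten by an inverse power when integrating by parts against $N$ (whose $H^1$ norm is what we are estimating). The resolution is to integrate by parts so that the highest derivative falls on the quantum solution in the combination $\pc\cdot(\pc\pd{x}{3}\pcsmd{\pc})$ rather than on the difference, pairing the other $\pc$ with the test function; since the test functions $N,\Theta,\Sigma$ are exactly the unknowns being bounded, a Cauchy--Schwarz/Young split yields $\pc^2\cdot(\text{uniform bound})+\frac{1}{4}(\text{coercive norm})$, and the $\pc^2$ term is precisely the source of the claimed $O(\pc)$ rate. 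A secondary technical point is verifying that the limit $(n^*,\theta^*)$ retains the strict positivity and subsonic bounds in \eqref{psc} so that Lemma~\ref{lem1}'s uniqueness applies — but this is immediate from the uniform lower bounds $b^2\le\pcsmd{\pc}$, $\frac{1}{2}\theta_L\le\pcswnd{\pc}$ in \eqref{m145.1a}, which pass to the uniform limit.
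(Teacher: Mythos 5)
Your overall scheme — compactness plus uniqueness for qualitative convergence, and an energy estimate on the difference system for the rate — is the right shape, and you correctly locate the source of the single power of $\pc$: integrate by parts so that one $\pc$ pairs with the test function and the other with a quantity that \eqref{m145.1b} bounds uniformly. But there are two genuine gaps.

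For \eqref{ss39.1a}, your claim that the non-quantum source terms in the difference system are ``quadratic in the differences, hence absorbable'' misses a linear term with an $O(1)$ coefficient. The energy equation carries $-\tfrac{2}{3}\sdl\swnd(\ln\smd)_x$; its difference, tested against $\pcpswnd$, produces a contribution whose leading piece is $\theta_L\pcpsdl\int_0^1\pcpswnd_x\pcszmd{\pc}\,dx$. Here $\pcszmd{\pc}=\ln\pcsmd{\pc}$ is $O(1)$ because the doping profile is non-flat, and $|\pcpsdl|$ is comparable to $\|\pcpswnd_x\|$ by \eqref{ss10.2}, so naively this contribution is $\sim\|\pcpswnd_x\|\,\|\pcpswnd\|$ with no small prefactor and no a priori sign; the coercivity from $\tfrac{2}{3}\|\pcpswnd_x\|^2+c\|\pcpswnd\|^2$ does not absorb it. The paper's resolution is the $I_2$ estimate \eqref{ss17.2}: both $\pcsdl{\pc}$ and $\pcsdl{0}$ satisfy the \emph{same} explicit current formula \eqref{104.1} (a consequence of the vanishing bohmenian boundary condition \eqref{sbc-b}), and substituting it exhibits the dominant piece as $-\tfrac{4\theta_L}{3\pcsK{\pc}}\big(\int_0^1\pcpswnd_x\pcszmd{\pc}dx\big)^2\le 0$, which can simply be discarded, leaving only $O(\delta)$ absorbable remainders. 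Your proposal does not detect the need for this sign/structure argument; the same obstacle persists whether or not you pass to the logarithmic variables, so merely working with $N=\pcsmd{\pc}-\pcsmd{0}$ directly does not help.

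For \eqref{ss39.1b}, compactness only gives weak-$L^2$ convergence $\pd{x}{2}\pcsmd{\pc}\rightharpoonup\pd{x}{2}\pcsmd{0}$, not strong, and the uniform bounds $\|\pc\pd{x}{3}\pcsmd{\pc}\|,\|\pc^2\pd{x}{4}\pcsmd{\pc}\|\le C$ say nothing about these norms going to zero. Your sentence ``the strong convergence of second derivatives follows from the difference equation'' is not justified: solving the once-differentiated momentum difference equation for $\pd{x}{2}(\pcsmd{\pc}-\pcsmd{0})$ reintroduces $\pc^2\pd{x}{4}\pcsmd{\pc}$, which is only bounded. The paper's route is considerably more delicate: it derives the energy identity \eqref{ss26.1} balancing $\int_0^1\pcsS{\pc}(\pcszmd{\pc}_{xx})^2dx$ against the quantum energy, proves the right-hand side converges to $\int_0^1\pcsS{0}(\pcszmd{0}_{xx})^2dx$ using the rate already established, and then upgrades weak to strong convergence by the ``weak convergence plus convergence of norms'' principle in the weighted space $L^2_{\pcsS{0}}(\Omega)$. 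Only after that does \eqref{ss26.1} force $\|\pc\pcszmd{\pc}_{xxx}\|\to0$, and a further identity \eqref{ss36.2} gives $\|\pc^2\pcszmd{\pc}_{xxxx}\|\to0$. None of this is replaceable by the compactness remark in your proposal.
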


The semi-classical limit of the transient problem is stated in the next theorem.
\begin{theorem}[Semi-classical limit of the global solutions]\label{thm4}
Assume that the same conditions in Lemma \ref{lem2} and Theorem \ref{thm2} hold. For arbitrary positive constants $n_l$ and $\theta_{L}$, there exist four positive constants $\delta_6$, $\gamma_3$, $\gamma_4$ and $C$ such that if
\begin{multline}\label{sic}
\pc+\delta+\|(\md_0-\pcsmd{\pc},\dl_0-\pcsdl{\pc},\wnd_0-\pcswnd{\pc})\|_2\\
+\big\|\big(\pc\pd{x}{3}(\md_0-\pcsmd{\pc}),\pc\pd{x}{3}(\dl_0-\pcsdl{\pc}),\pc^2\pd{x}{4}(\md_0-\pcsmd{\pc})\big)\big\|\leq\delta_6,
\end{multline}
then the following convergence estimates
\begin{subequations}\label{gs}
\begin{equation}\label{gs164.5}
\|(\pcmd{\pc}-\pcmd{0},\pcdl{\pc}-\pcdl{0},\pcwnd{\pc}-\pcwnd{0})(t)\|_1+\|(\pcdws{\pc}-\pcdws{0})(t)\|_3\leq Ce^{\gamma_3 t}\pc^{1/2},\quad\forall t\in[0,\infty),
\end{equation}
and
\begin{equation}\label{gs1}
\sup_{t\in[0,\infty)}\Big(\|(\pcmd{\pc}-\pcmd{0},\pcdl{\pc}-\pcdl{0},\pcwnd{\pc}-\pcwnd{0})(t)\|_1+\|(\pcdws{\pc}-\pcdws{0})(t)\|_3\Big)\leq C\pc^{\gamma_4}
\end{equation}
hold true.
\end{subequations}
\end{theorem}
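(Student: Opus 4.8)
The plan is to work with the difference functions
\[
N:=\pcmd{\pc}-\pcmd{0},\quad J:=\pcdl{\pc}-\pcdl{0},\quad \Theta:=\pcwnd{\pc}-\pcwnd{0},\quad \Phi:=\pcdws{\pc}-\pcdws{0},
\]
which, upon subtracting \eqref{1dfhd} from \eqref{1dfqhd}, solve a perturbation of the FHD system linearized (say, around the classical solution) that is forced by the two quantum correction terms evaluated at the quantum solution, namely $R_1:=\pc^2\pcmd{\pc}[(\sqrt{\pcmd{\pc}})_{xx}/\sqrt{\pcmd{\pc}}]_x$ in the momentum equation and $R_2:=\tfrac{\pc^2}{3}[\pcmd{\pc}(\pcdl{\pc}/\pcmd{\pc})_{xx}]_x$ in the energy equation. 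Since the quantum and classical problems share the same initial data \eqref{ic}$=$\eqref{0ic} and the same ohmic-contact boundary data \eqref{bc}, \eqref{0bc}, the difference functions start from zero, satisfy homogeneous Dirichlet conditions $N|_{x=0,1}=\Theta|_{x=0,1}=\Phi|_{x=0,1}=0$, and (since $\pcmd{\pc}_t$ and $\pcmd{0}_t$ vanish on the boundary) also $\pd{x}{}J|_{x=0,1}=0$; moreover $\Phi_{xx}=N$, so $\|\Phi\|_{k+2}\le C\|N\|_k$ and $\Phi$ is slaved to $N$. Before estimating, I would record from Theorems~\ref{thm1}--\ref{thm2} and Lemmas~\ref{lem1}--\ref{lem2} the a priori bounds to be used: $\|(\pcmd{\pc},\pcdl{\pc},\pcwnd{\pc})\|_2+\|(\pcmd{0},\pcdl{0},\pcwnd{0})\|_2\le C$ uniformly in $t$ and $\pc$, the uniform lower bounds on the densities, temperatures, and on the sound speeds $S[\pcmd{\pc},\pcdl{\pc},\pcwnd{\pc}],\,S[\pcmd{0},\pcdl{0},\pcwnd{0}]\ge c>0$ (valid once $\delta$ and the initial deviations are small, because then $S[\pcmd{0},\ldots]-S[\pcsmd{0},\ldots]$ is small), together with $\|\pc\pd{x}{3}\pcmd{\pc}\|+\|\pc^2\pd{x}{4}\pcmd{\pc}\|\le C$; the fact that $\pcsdl{\pc}$ and $\pcsdl{0}$ are spatial constants simplifies several terms.

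The core is a refined energy estimate for the $(N,J,\Theta)$-system, carried out at the level of $\|(N,J,\Theta)\|$ in $L^2$ augmented by the standard mixed multiplier (the momentum-difference equation tested against $\pd{x}{}N/\pcmd{\pc}$), which produces density dissipation through the subsonic coefficient, together with the parabolic dissipation of $\Theta$ coming from $-\tfrac23\wnd_{xx}$ and the momentum dissipation coming from the relaxation term $-\dl$. The delicate point is the quantum forcing: testing $R_1$ and $R_2$ against the multipliers and integrating by parts, the boundary contributions of these third-order dispersive terms vanish thanks to the vanishing-bohmenian condition \eqref{bc-b} (i.e.\ $(\sqrt{\pcmd{\pc}})_{xx}|_{x=0,1}=0$) and to $N|_\partial=\Theta|_\partial=\pd{x}{}J|_\partial=0$, after which the surviving integrals are bounded by $C\pc^2$ using $\|\pc\pd{x}{3}\pcmd{\pc}\|\le C$, $\|\pcmd{\pc}\|_2\le C$, $\|(\pcdl{\pc},\pcdl{0})\|_1\le C$ and Young's inequality. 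The nonlinear (quadratic-in-difference) terms are absorbed by a continuity argument premised on the smallness of $\|(N,J,\Theta)\|_1$ that the estimate itself delivers. One thus obtains a Gr\"onwall inequality $\frac{d}{dt}\mathcal E(t)\le C\mathcal E(t)+C\pc^2$ with $\mathcal E(t)$ equivalent to $\|(N,J,\Theta)(t)\|^2$ (plus a controlled correction); since $\mathcal E(0)=0$ this yields $\|(N,J,\Theta)(t)\|\le C\pc\,e^{Ct}$. Interpolating against the uniform $H^2$ bound, $\|f\|_1\le C\|f\|^{1/2}\|f\|_2^{1/2}$, and using $\|\Phi\|_3\le C\|N\|_1$, gives $\|(N,J,\Theta)(t)\|_1+\|\Phi(t)\|_3\le C\pc^{1/2}e^{\gamma_3 t}$ on the time window $t\lesssim|\ln\pc|$ on which the continuity argument is valid; beyond that window this same bound still holds trivially because its right-hand side has already exceeded the uniform size of the difference. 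This is \eqref{gs164.5}.

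To obtain the uniform-in-time estimate \eqref{gs1}, I would complement the above with the triangle decomposition $\pcmd{\pc}-\pcmd{0}=(\pcmd{\pc}-\pcsmd{\pc})-(\pcmd{0}-\pcsmd{0})+(\pcsmd{\pc}-\pcsmd{0})$, and similarly for $\dl,\wnd,\dws$. The decay estimates \eqref{de} and \eqref{0de} bound the first two groups by $Ce^{-\gamma t}$ with $\gamma:=\min\{\gamma_1,\gamma_2\}$ (the initial deviations being $\le\delta_6$), while Theorem~\ref{thm3} bounds the stationary difference by $C\pc$ in the relevant norms, so that $\|(N,J,\Theta)(t)\|_1+\|\Phi(t)\|_3\le Ce^{-\gamma t}+C\pc$ for all $t\ge0$. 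Splitting at $T_\pc:=\tfrac{1}{2(\gamma_3+\gamma)}|\ln\pc|$ and using \eqref{gs164.5} on $[0,T_\pc]$ and the decay bound on $[T_\pc,\infty)$, both pieces are $\le C\pc^{\gamma/(2(\gamma_3+\gamma))}+C\pc$, which gives \eqref{gs1} with $\gamma_4:=\min\{1,\tfrac{\gamma}{2(\gamma_3+\gamma)}\}$.

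\textbf{Main obstacle.} I expect the hard part to be the energy estimate for the difference system, specifically keeping the quantum forcing terms at size $O(\pc^2)$. This forces the whole scheme to stay at the lowest differentiation level: applying one more spatial derivative to the momentum/energy difference equations would bring in $\pc^2\pd{x}{4}\pcmd{\pc}$ (only $O(1)$ uniformly in $\pc$) and even higher derivatives of $\pcdl{\pc}$ not controlled uniformly in $\pc$, so no genuine gain is available at the $H^1$ level for the equations themselves; this is precisely why one cannot do better than $\pc^{1/2}$ for $\|(N,J,\Theta)\|_1$, recovered only a posteriori by interpolation against the uniform $H^2$ bounds. The accompanying technical points are the verification that the weighted/subsonic multipliers genuinely close (this is where the smallness of $\delta$ and of the initial deviations enters) and the careful treatment of the boundary terms generated by the third-order dispersive terms, for which condition \eqref{bc-b} is indispensable.
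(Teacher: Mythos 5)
Your high-level strategy --- subtract the FHD from the FQHD system, exploit that the difference variables $(\pcpmd,\pcpdl,\pcpwnd,\pcpdws)$ have zero initial data and homogeneous Dirichlet boundary conditions, derive a Gr\"onwall inequality, and obtain \eqref{gs1} from \eqref{gs164.5} by splitting time at $T_\pc\sim|\ln\pc|$ and using the decay estimates \eqref{de}, \eqref{0de} together with the stationary estimate \eqref{ss39.1a} --- matches the paper exactly for both the skeleton of \eqref{gs164.5} and, essentially verbatim, the whole of \eqref{gs1}. The gap is in the energy estimate itself.

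You claim the $(N,J,\Theta)$-estimate can be carried out at the $L^2$ level with the quantum forcing contributing $O(\pc^2)$, giving $\|(N,J,\Theta)(t)\|\le C\pc e^{Ct}$, after which interpolation against the uniform $H^2$ bound produces the $\pc^{1/2}$ rate in $H^1$. This does not close. First, the nonlinear couplings $\int\pcmd{0}\pcpwnd_x\pcpdl\,dx$, $\int\pcpmd_x\pcwnd{\pc}\pcpdl\,dx$, $\int\mathcal H_2\,\pcpwnd_t\,dx$ etc. inevitably produce $H^1$ factors $\|(N,J,\Theta)\|_1$ (or $\|\pcpwnd_t\|$, which from the parabolic equation is again an $H^1$-type quantity), so a genuinely $L^2$-only functional cannot absorb them. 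Second, once you introduce the mixed multiplier $N_x/\pcmd{\pc}$ to produce the density dissipation $c\|N_x\|^2$, the time-derivative term $\int (N_x/\pcmd{\pc})\,\pcpdl_t\,dx$ generates, after using $N_{tx}=-\pcpdl_{xx}$, a piece $-\int\pcpdl_x^2/\pcmd{\pc}\,dx$ of the wrong sign; absorbing it requires $\|\pcpdl_x\|^2$ dissipation, which in turn forces you to the $H^1$ level. And at the $H^1$ level the dispersive forcing cannot be kept at $O(\pc^2)$: the relevant integral is $\Lambda_5 = \int\pc^2\pcmd{\pc}\big[(\sqrt{\pcmd{\pc}})_{xx}/\sqrt{\pcmd{\pc}}\big]_x\pcpmd_{tx}\,dx$, where $\big\|\pd{x}{3}\pcmd{\pc}\big\|=O(1/\pc)$ and $\|\pcpmd_{tx}\|$ is only uniformly bounded (it is $\|\pcpdl_{xx}\|$, which is not part of the dissipation), so the best one gets is $C\pc$, not $C\pc^2$. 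The paper's proof therefore builds the $H^1$-type functional $\mathcal E^\pc(t)$ of \eqref{163.2} directly, arrives at $\frac{d}{dt}\mathcal E^\pc\le 2\gamma_3\mathcal E^\pc+C\pc$, and reads off $\|\cdot\|_1\le C\pc^{1/2}e^{\gamma_3 t}$ without any interpolation step; your proposed intermediate $L^2$ rate $\|\cdot\|\le C\pc e^{Ct}$ is not established and is not what drives the result. Also note the paper's estimates use only the uniform bounds \eqref{138.1}--\eqref{138.2} on the two solutions, so no bootstrap/continuity argument on the smallness of $(N,J,\Theta)$ is required, and \eqref{gs164.5} holds for all $t\ge0$ without the time-window caveat you introduce.
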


Now, we illustrate the main ideas and the key technical points in the proofs of the above theorems. Firstly, we apply the Schauder fixed-point theorem to solve the stationary problem \eqref{1dsfqhd}$\sim$\eqref{sbc}. To this end, we heuristically construct a fixed-point mapping $\mathcal{T}$, see \eqref{110.1}, through a careful observation on the structure of the stationary FQHD model \eqref{1dsfqhd}. Roughly speaking, in order to deal with the Bohm potential term in the stationary momentum equation \eqref{1dsfqhd2}, we introduce a transformation $\rsmd:=\sqrt{\smd}$ and reduce the stationary momentum equation to a parameter-dependent semilinear elliptic equation of the second order with the nonlocal terms by using the vanishing bohmenian-type boundary condition \eqref{sbc-b}. In order to treat the dispersive velocity term in the stationary energy equation \eqref{1dsfqhd3}, the desired mapping $\mathcal{T}$ has to be defined by solving two carefully designed nonlocal problems $(P1)$ and $(P2)$ in turn, see \eqref{109.1} and \eqref{109.2}.  The unique solvability of both $(P1)$ and $(P2)$ can be proved by using the Leray-Schauder fixed-point theorem and energy estimates. This ensure that the mapping $\mathcal{T}$ is well-defined. During the proof,  the main difficulty is to establish the uniform (in $\pc$) estimates \eqref{eu} and \eqref{eQ}.

Secondly, the existence of the global-in-time solution and the asymptotic stability of the stationary solution can be proved by the standard continuation argument based on the local existence and the uniform a priori estimate. Similar to the stationary problem, we also introduce a transformation $\rmd:=\sqrt{\md}$ to conveniently deal with the Bohm potential term in the momentum equation \eqref{1dfqhd2}. The local existence result is proved by combining the iteration method with the energy estimates. The unique solvability of the linearized problem \eqref{a3.1}$\sim$\eqref{a3.3} used to design the iteration scheme is shown in Appendix by Galerkin method, where we have used the existence result in \cite{NS08} for a fourth order wave equation. The uniform a priori estimate is established by refined energy method. The proof is very complicated due to the non-flatness of the stationary density and the appearance of the dispersive velocity term in the perturbed energy equation \eqref{10.2c}. During the proof, we find that the spatial derivatives of the perturbations $(\prmd,\pdl,\pwnd,\pdws)$ can be bounded by the temporal derivatives of the perturbations $(\prmd,\pdl,\pwnd)$ with the help of the special structure of the perturbed system \eqref{10.2}, see \eqref{46.5}. Therefore, we only need to establish the estimates of the temporal derivatives of the perturbations $(\prmd,\pdl,\pwnd)$ by using the homogeneous boundary condition \eqref{pbc}. We also find the interplay of the dissipative-dispersive effects in the FQHD model. Roughly speaking, the Bohm potential term in the perturbed momentum equation \eqref{10.2b} contributes the quantum dissipation rate $\|\pc\pd{t}{k}\prmd_{xx}(t)\|$, see \eqref{115.1}. The dispersive velocity term in the perturbed energy equation \eqref{10.2c} contributes the extra quantum dissipation rate $\|\pc\pd{t}{k}\prmd_{tx}(t)\|$, see \eqref{115.2}. The dissipative property of the dispersive velocity term plays a crucial role to close the uniform a priori estimate \eqref{127.2}.

Finally, we justify the semi-classical limit for both the stationary solutions and global solutions by using the energy method and compactness argument. For stationary solutions, in order to overcome the difficulties arising from the non-flatness of the stationary density, we need to introduce the transformations $\pcszmd{\pc}:=\ln \pcsmd{\pc}$ and $\pcszmd{0}:=\ln \pcsmd{0}$. In addition, we also have to technically estimate a bad integral term $I_2$ during establishing the error estimate of the stationary temperature error variable $\pcpswnd$. Actually, we find that the quantum stationary current density $\pcsdl{\pc}$ and the limit stationary current density $\pcsdl{0}$ possess the same explicit formula \eqref{104.1} due to the vanishing bohmenian-type boundary condition. Based on this fact, we can successfully overcome the difficulty in estimating the integral term $I_2$, see \eqref{ss17.2}. For global solutions, we have to pay more attention on the influences of the quantum corrected energy equation \eqref{157.2}, see \eqref{152.1} and \eqref{161.6} for example, the computations are very complicated. In the proof, the semi-classical limit of the stationary solutions plays important role.

The paper is organized as follows. In Section \ref{Sect.2}, we prove the existence and uniqueness of the stationary solution. Section \ref{Sect.3} is devoted to the global existence and stability analysis. In Subsection \ref{Subsect.3.1}, we show the local existence. In Subsections \ref{Subsect.3.2}$\sim$\ref{Subsect.3.5}, we reformulate the problem and establish the uniform a priori estimate. Section \ref{Sect.4} is devoted to the verification of the semi-classical limit. In Subsection \ref{Subsect.4.1}, we discuss the stationary case. In Subsection \ref{Subsect.4.2}, we study the non-stationary case.

\section{Existence and uniqueness of the stationary solution}\label{Sect.2}
In this section, we show Theorem \ref{thm1}. The proof is based on the Schauder fixed-point theorem (see Corollary 11.2 in \cite{GT98}), the Leray-Schauder fixed-point theorem (see Theorem 11.3 in \cite{GT98}) and the energy method.

\begin{proof}[\textbf{Proof of Theorem \ref{thm1}.}]
Since the proof is complicated, we divide it into several steps for clarification.

\emph{Step 1. Reformulation of the problem .}
It is convenient to make use of the transformation $\rsmd:=\sqrt{\smd}$. Inserting this transformation into the system \eqref{1dsfqhd}, dividing the the equation \eqref{1dsfqhd2} by $\rsmd^2$ and integrating the resultant on $[0,x)$ and then using the boundary condition \eqref{sbc}, applying the Green formula to the equation \eqref{1dsfqhd4} together with the boundary condition \eqref{sbc-d}, via the necessary calculations, the above procedures yield the following BVP with a constant current density $\sdl$ (which will be determined later, see \eqref{104.1} below),
\begin{subequations}\label{103.2}
\begin{numcases}{}
\pc^2\rsmd_{xx}=h(\rsmd,\swnd), \label{103.2a}\\
\frac{2}{3}\swnd_{xx}-\sdl\swnd_{x}+\frac{2}{3}\sdl\swnd(\ln \rsmd^2)_x-\rsmd^2(\swnd-\theta_L)=g(\rsmd,\swnd;\pc), \quad x\in\Omega,\label{103.2b}
\end{numcases}
\end{subequations}
with boundary conditions
\begin{subequations}\label{103.3}
\begin{gather}
\rsmd(0)=\rmd_{l},\qquad \rsmd(1)=\rmd_{r},\label{103.3a}\\
\swnd(0)=\wnd_{l},\qquad \swnd(1)=\wnd_{r},\label{103.3b}
\end{gather}
\end{subequations}
where
\begin{subequations}
\begin{gather}
F(a_1,a_2,a_3):=\frac{a_2^2}{2a_1^2}+a_3+a_3\ln a_1,\quad\rmd_{l}:=\sqrt{\md_{l}}, \quad\rmd_{r}:=\sqrt{\md_{r}},\label{103.5}\\
\sdws(x)=G[\rsmd^2](x):=\int_0^1G(x,y)(\rsmd^2-D)(y)dy+\phi_rx,\quad G(x,y):=\begin{cases}x(y-1),\ x<y\\y(x-1),\ x>y\end{cases},\label{104.2}\\
h(\rsmd,\swnd):=\rsmd\bigg[F(\rsmd^2,\sdl,\swnd)-F(\md_{l},\sdl,\theta_l)-\sdws-\int_0^x\swnd_{x}\ln\rsmd^2dy+\sdl\int_0^x\rsmd^{-2}dy\bigg],\label{103.4}\\
g(\rsmd,\swnd;\pc):=-\frac{1}{3}\frac{\sdl^2}{\rsmd^2}+\frac{\pc^2}{3}\sdl\bigg(\frac{12\rsmd_x^3}{\rsmd^3}-\frac{14\rsmd_x\rsmd_{xx}}{\rsmd^2}+\frac{2\rsmd_{xxx}}{\rsmd}\bigg).\label{103}
\end{gather}
\end{subequations}

Next, taking value $x=1$ in the equation \eqref{103.2a} and using the boundary condition \eqref{sbc}, we obtain the current-voltage relation
\begin{equation}\label{ucvr}
F(\md_{r},\sdl,\theta_r)-F(\md_{l},\sdl,\theta_l)-\phi_r-\int_0^1\swnd_{x}\ln\rsmd^2dy+\sdl\int_0^1\rsmd^{-2}dy=0.
\end{equation}
Easy to see, the the equation \eqref{ucvr} is a quadratic equation on $\sdl$. Based on the subsonic condition \eqref{psc}, we can uniquely solve $\sdl$ provided $\rsmd$, $\swnd$ are given and the strength parameter $\delta$ is small enough. Precisely, the constant stationary current density $\sdl$ satisfies the following explicit formula
\begin{align}\label{104.1}
&\sdl=J[\rsmd^2,\swnd]:=2\Big(\bar{b}+\int_0^1\swnd_{x}\ln\rsmd^2dy\Big)K[\rsmd^2,\swnd]^{-1},\\
&K[\rsmd^2,\swnd]:=\int_0^1\rsmd^{-2}dy+\sqrt{\Big(\int_0^1\rsmd^{-2}dy\Big)^2+2\Big(\bar{b}+\int_0^1\swnd_{x}\ln\rsmd^2dy\Big)\Big(n_{r}^{-2}-n_{l}^{-2}\Big)},\notag\\
&\bar{b}:=\phi_r-\theta_r+\theta_l-\theta_r\ln n_r+\theta_l\ln n_l.\notag
\end{align}

It is obvious that the BVP \eqref{103.2}$\sim$\eqref{103.3} combined with the explicit formulas \eqref{104.1} and \eqref{104.2} is equivalent to the original BVP \eqref{1dsfqhd}$\sim$\eqref{sbc} under the transformation $\smd=\rsmd^2$ for positive smooth solution $(\rsmd,\swnd)$. 

\emph{Step 2. Construction of the fixed-point mapping.} From now on, we focus on the unique solvability of the BVP \eqref{103.2}$\sim$\eqref{103.3}. The system \eqref{103.2} is a one-dimensional semilinear nonlocal elliptic system with a singular parameter $\pc\in(0,1]$ in the principal part of its first component equation \eqref{103.2a}. To solve it, we adopt the conventional framework based on the Schauder fixed-point theorem.

Observing the structure of the system \eqref{103.2}, we can construct the fixed-point mapping appropriately by the following procedure.

Firstly, we introduce a closed convex subset $\mathcal{U}[N_1,N_2]$ in the Banach space $C^2(\overline{\Omega})$ below, where $N_1$ and $N_2$ are positive constants to be determined later (see \eqref{N12} below),
\begin{equation}\label{120.1}
\mathcal{U}[N_1,N_2]:=\Big\{q\in C^2(\overline{\Omega})\,\Big|\, \|q-\theta_L\|_1\leq N_1\delta,\quad\|q_{xx}\|\leq N_2\delta,\quad q(0)=\theta_l,\ q(1)=\theta_r \Big\}.
\end{equation}

Next, we define the fixed-point mapping
\begin{align}\label{110.1}
\mathcal{T}:\ \mathcal{U}[N_1,N_2]&\longrightarrow H^3(\Omega)\notag\\
q&\longmapsto Q
\end{align}
by solving the following two problems in turn. For any fixed $q\in \mathcal{U}[N_1,N_2]$, we firstly solve the problem $(P1)$:
\begin{subequations}\label{109.1}
\begin{numcases}{(P1)\quad}
\pc^2u_{xx}=h(u,q), \quad x\in\Omega,\label{109.1a}\\
u(0)=w_l,\quad u(1)=w_r.
\end{numcases}
\end{subequations}
For problem $(P1)$, we claim the following fact, its proof will be given in the next step.
\begin{quotation}
\emph{Claim 1. For given $q\in \mathcal{U}[N_1,N_2]$, if $\delta$ and $\pc$ are small enough, then $(P1)$ has a unique solution $u=u[q]\in H^4(\Omega)$ satisfying the following uniform estimate with respect to $\pc$, that is,
\begin{subequations}\label{eu}
\begin{gather}
0<b\leq u(x)\leq B,\label{eu-a}\\
\|u\|_2+\|(\pc\pd{x}{3}u, \pc^2\pd{x}{4}u)\|\leq C,\label{eu-b}
\end{gather}
\end{subequations}
where the positive constants $b$ and $B$ are given by \eqref{75.1}, and the positive constant $C$ only depends on $n_l$, $\theta_L$ and $|D|_0$.}
\end{quotation}
Based on the Claim 1, for given function pair $(u,q)$, we further solve the problem $(P2)$:
\begin{subequations}\label{109.2}
\begin{numcases}{(P2)\quad}
\frac{2}{3}Q_{xx}-JQ_{x}+\frac{2}{3}J_*(\ln u^2)_x\theta_L+\frac{2}{3}J(\ln u^2)_x(Q-\theta_L)\notag\\
\qquad\qquad\qquad\qquad\qquad\qquad\ \,-u^2(Q-\theta_L)=g(u,q;\pc), \quad x\in\Omega,\label{109.2a}\\
Q(0)=\theta_l,\quad Q(1)=\theta_r,
\end{numcases}
\end{subequations}
where $J:=J[u^2,q]$ and $J_*:=2\big(\bar{b}+\int_0^1Q_x\ln u^2dx\big)K[u^2,q]^{-1}$. For problem $(P2)$, we also have a claim, its proof will be given in the Step 4.
\begin{quotation}
\emph{Claim 2. For given $(u,q)$ in Claim 1, if $\delta$ is small enough, then $(P2)$ has a unique solution $Q\in H^3(\Omega)$ satisfying the following uniform estimate with respect to $\pc$, that is,
\begin{subequations}\label{eQ}
\begin{align}
&\|Q-\theta_L\|_1\leq C_1\delta+C_2(b,B,N_1)\delta^2,\label{eQa}\\
&\|Q_{xx}\|\leq C_3(b,B,N_1)\delta,\label{eQb}\\
&\|Q_{xxx}\|\leq C_4(b,B,N_1,N_2)\delta,\label{eQc}
\end{align}
\end{subequations}
where the positive constant $C_1$ only depends on $n_l$, $\theta_L$ and $|D|_0$.}
\end{quotation}

\emph{Step 3. Proof of Claim 1.} 
Now, we begin to solve $(P1)$. In order to avoid vacuum $u=0$, we consider a truncation problem $(tP)$ induced by $(P1)$:
\begin{subequations}\label{111.1}
\begin{numcases}{(tP)\quad}
\pc^2u_{xx}=h(u_{\alpha\beta},q), \quad x\in\Omega,\label{111.1a}\\
u(0)=w_l,\quad u(1)=w_r,\label{111.1b}
\end{numcases}
\end{subequations}
where
\begin{equation*}
u_{\alpha\beta}:=\max\big\{\beta,\min\{\alpha,u\}\big\},\quad 0<\frac{1}{2}b=:\beta<\alpha:=2B.
\end{equation*}
This problem can be solved by Leray-Schauder fixed-point theorem. To this end, we define a fixed-point mapping $\mathcal{T}_1: r\mapsto R$ over $H^1(\Omega)$ by solving the linear problem:
\begin{subequations}\label{112.1}
\begin{numcases}{}
\pc^2R_{xx}=h(r_{\alpha\beta},q), \quad x\in\Omega,\label{112.1a}\\
R(0)=w_l,\quad R(1)=w_r.\label{112.1b}
\end{numcases}
\end{subequations}
In fact, for given $q\in\mathcal{U}[N_1,N_2]$ and $r\in H^1(\Omega)$, the right-side $h(r_{\alpha\beta},q)\in H^1(\Omega)$. Thus, the linear BVP \eqref{112.1} has a unique solution $R=:\mathcal{T}_1r\in H^3(\Omega)$ by the standard theory of the elliptic equations. In addition, the mapping $\mathcal{T}_1$ is a continuous and compact mapping from $H^1(\Omega)$ into itself. Next, we show that there exists a positive constant $M_1$ such that $\|v\|_1\leq M_1$ for an arbitrary $v\in\big\{f\in H^1(\Omega)\,|\,f=\lambda\mathcal{T}_1f, \ \forall\lambda\in[0,1] \big\}$. We may assume $\lambda>0$ as the case $\lambda=0$ is trivial. It is sufficient to show that $\|v\|_1\leq M_1$ for $v$ satisfying the following problem
\begin{subequations}\label{113.3}
\begin{numcases}{}
\pc^2v_{xx}=\lambda h(v_{\alpha\beta},q), \quad x\in\Omega,\label{113.3a}\\
v(0)=\lambda w_l,\quad v(1)=\lambda w_r.\label{113.3b}
\end{numcases}
\end{subequations}
Performing the procedure
\begin{equation*}\label{153.2}
\int_0^1\eqref{113.3a}\times(v-\lambda\bar{w})dx,\quad\text{where}\ \bar{w}(x):=w_l(1-x)+w_rx,
\end{equation*}
and using the Young inequality, the mean value theorem, the formula \eqref{104.1} and the estimate $|h(v_{\alpha\beta},q)|\leq C$, where $C$ is a positive constant which only depends on $\alpha$, $\beta$, $n_l$, $\theta_L$, and $|D|_0$. If $\delta$ is small enough, then these computations yield the desired estimate
\begin{equation}\label{153.4}
\|v\|_1\leq C\bigg(1+\frac{1}{\pc}\bigg)=:M_1.
\end{equation}
Based on the estimate \eqref{153.4}, we can directly apply the Leray-Schauder fixed-point theorem to the mapping $\mathcal{T}_1$, and see that $\mathcal{T}_1$ has a fixed-point $u=\mathcal{T}_1u\in H^3(\Omega)$ which is a strong solution to the truncation problem $(tP)$.

Next, we can further provide a maximum principle argument for any strong solution $u$ to the truncation problem $(tP)$. Consequently, this result can help us to remove the truncation in $(tP)$ and show that the solution $u$ to the truncation problem $(tP)$ exactly is a solution to the problem $(P1)$.

We first establish the upper bound of $u_{\alpha\beta}$. Before doing this, we note that if $\delta$ is small enough, then $q\in\mathcal{U}[N_1,N_2]$ implies
\begin{equation}\label{154.2}
0<\frac{1}{2}\theta_L\leq q(x)\leq\frac{3}{2}\theta_L,\quad\|q_x\|\leq N_1\delta,\quad\|q_{xx}\|\leq N_2\delta.
\end{equation} 
Now, we can establish the upper bound of $u_{\alpha\beta}$ by choosing the appropriate test functions in $H_0^1(\Omega)$. To this end, we define $\bar{n}:=\max\{n_l,n_r\}>0$, and perform the procedure
\begin{equation}\label{154.1}
\int_0^1-\eqref{111.1a}\times\bigg(\ln\frac{u_{\alpha\beta}^2}{\bar{n}}\bigg)_+^kdx,\quad k=1,2,3,\cdots,\quad\text{where}\ (\cdot)_+:=\max\{0,\cdot\}.
\end{equation}
The computations in terms of this procedure yield that
\begin{equation}\label{56.1}
\int_0^1-\pc^2u_{xx}\bigg(\ln\frac{u_{\alpha\beta}^2}{\bar{n}}\bigg)_+^kdx=\int_0^1-h(u_{\alpha\beta},q)\bigg(\ln\frac{u_{\alpha\beta}^2}{\bar{n}}\bigg)_+^kdx.
\end{equation}
The left-side of \eqref{56.1} can be estimated as follows by integration by parts,
\begin{align}
\eqref{56.1}_l&=\int_0^1\pc^2u_x\Bigg[\bigg(\ln\frac{u_{\alpha\beta}^2}{\bar{n}}\bigg)_+^k\Bigg]_xdx \notag\\
&=\int_0^12\pc^2k\frac{[(u_{\alpha\beta})_x]^2}{u_{\alpha\beta}}\bigg(\ln\frac{u_{\alpha\beta}^2}{\bar{n}}\bigg)_+^{k-1}dx\geq0.\label{57.1}
\end{align}
Based on the expression \eqref{103.4}, the estimate \eqref{154.2} and the Young inequality, we can estimate the right-side of \eqref{56.1} as follows, where $\bar{J}:=J[u_{\alpha\beta}^{2},q]$ satisfying the estimate $|\bar{J}|\leq C(\alpha,\beta,N_1)\delta$. Namely,
\allowdisplaybreaks
\begin{align}
\eqref{56.1}_r=&\int_0^1-u_{\alpha\beta}\bigg[F(u_{\alpha\beta}^2,\bar{J},q)-F(\md_{l},\bar{J},\theta_l)-G[u_{\alpha\beta}^{2}]\notag\\
&\qquad\qquad\qquad\qquad-\int_0^xq_{x}\ln u_{\alpha\beta}^2dy+\bar{J}\int_0^xu_{\alpha\beta}^{-2}dy\bigg]\bigg(\ln\frac{u_{\alpha\beta}^2}{\bar{n}}\bigg)_+^kdx\notag\\
=&\int_0^1-u_{\alpha\beta}\bigg[F(u_{\alpha\beta}^2,\bar{J},q)-q\ln\bar{n}+q\ln\bar{n}-F(\md_{l},\bar{J},\theta_l)-G[u_{\alpha\beta}^{2}]\notag\\
&\qquad\qquad\qquad\qquad\qquad\qquad\qquad\ -\int_0^xq_{x}\ln u_{\alpha\beta}^2dy+\bar{J}\int_0^xu_{\alpha\beta}^{-2}dy\bigg]\bigg(\ln\frac{u_{\alpha\beta}^2}{\bar{n}}\bigg)_+^kdx\notag\\
=&-\int_0^1u_{\alpha\beta}q\bigg(\ln\frac{u_{\alpha\beta}^2}{\bar{n}}\bigg)_+^{k+1}dx\notag\\
&+\int_0^1\bigg(\phi_rx-\int_0^1G(x,y)D(y)dy-\bar{J}\int_0^xu_{\alpha\beta}^{-2}dy+\frac{\bar{J}^2}{2n_l}\bigg)u_{\alpha\beta}\bigg(\ln\frac{u_{\alpha\beta}^2}{\bar{n}}\bigg)_+^kdx\notag\\
&+\int_0^1\bigg(\theta_l\ln n_l-q\ln\bar{n}+\theta_l-q+\int_0^xq_x\ln u_{\alpha\beta}^2dy \bigg)u_{\alpha\beta}\bigg(\ln\frac{u_{\alpha\beta}^2}{\bar{n}}\bigg)_+^kdx\notag\\
&+\int_0^1\bigg(\int_0^1\underbrace{G(x,y)}_{\leq0}u_{\alpha\beta}^2(y)dy-\frac{\bar{J}^2}{2u_{\alpha\beta}^4}\bigg)u_{\alpha\beta}\bigg(\ln\frac{u_{\alpha\beta}^2}{\bar{n}}\bigg)_+^kdx\notag\\
\leq&-\int_0^1\frac{1}{2}\theta_Lu_{\alpha\beta}\bigg(\ln\frac{u_{\alpha\beta}^2}{\bar{n}}\bigg)_+^{k+1}dx+\int_0^1\big(C(N_1)\delta+|D|_0\big)u_{\alpha\beta}\bigg(\ln\frac{u_{\alpha\beta}^2}{\bar{n}}\bigg)_+^kdx\notag\\
&+\int_0^1C(N_1)\delta u_{\alpha\beta}\bigg(\ln\frac{u_{\alpha\beta}^2}{\bar{n}}\bigg)_+^kdx+0\notag\\
\leq&-\int_0^1\frac{1}{2}\theta_Lu_{\alpha\beta}\bigg(\ln\frac{u_{\alpha\beta}^2}{\bar{n}}\bigg)_+^{k+1}dx+\int_0^12|D|_0u_{\alpha\beta}\bigg(\ln\frac{u_{\alpha\beta}^2}{\bar{n}}\bigg)_+^kdx \qquad\text{if}\ \delta\ll1\notag\\
=&-\int_0^1\frac{1}{2}\theta_Lu_{\alpha\beta}\bigg(\ln\frac{u_{\alpha\beta}^2}{\bar{n}}\bigg)_+^{k+1}dx+\int_0^12|D|_0\frac{2}{\theta_L}\frac{\theta_L}{2}u_{\alpha\beta}\bigg(\ln\frac{u_{\alpha\beta}^2}{\bar{n}}\bigg)_+^kdx\notag\\
=&-\int_0^1\frac{1}{2}\theta_Lu_{\alpha\beta}\bigg(\ln\frac{u_{\alpha\beta}^2}{\bar{n}}\bigg)_+^{k+1}dx+\int_0^1\frac{1}{2}\theta_Lu_{\alpha\beta}\underbrace{\bigg(\ln\frac{u_{\alpha\beta}^2}{\bar{n}}\bigg)_+^k\frac{4|D|_0}{\theta_L}}_{\text{by Young inequality}}dx\notag\\
\leq&-\int_0^1\frac{1}{2}\theta_Lu_{\alpha\beta}\bigg(\ln\frac{u_{\alpha\beta}^2}{\bar{n}}\bigg)_+^{k+1}dx\notag\\
&\qquad\qquad\qquad+\int_0^1\frac{1}{2}\theta_Lu_{\alpha\beta}\Bigg[\frac{k}{k+1}\bigg(\ln\frac{u_{\alpha\beta}^2}{\bar{n}}\bigg)_+^{k+1}+\frac{1}{k+1}\bigg(\frac{4|D|_0}{\theta_L}\bigg)^{k+1}\Bigg]dx\notag\\
=&-\frac{1}{k+1}\frac{1}{2}\theta_L\int_0^1u_{\alpha\beta}\bigg(\ln\frac{u_{\alpha\beta}^2}{\bar{n}}\bigg)_+^{k+1}dx+\frac{1}{k+1}\frac{1}{2}\theta_L\bigg(\frac{4|D|_0}{\theta_L}\bigg)^{k+1}\int_0^1\underbrace{u_{\alpha\beta}}_{\leq\alpha}dx\notag\\
\leq&\frac{\theta_L}{2(k+1)}\Bigg[-\int_0^1u_{\alpha\beta}\bigg(\ln\frac{u_{\alpha\beta}^2}{\bar{n}}\bigg)_+^{k+1}dx+\alpha\bigg(\frac{4|D|_0}{\theta_L}\bigg)^{k+1}\Bigg]. \label{59.3}
\end{align}
Inserting \eqref{57.1} and \eqref{59.3} into \eqref{56.1}, we have the estimate
\begin{equation}\label{60.2}
\int_0^1\sqrt{\bar{n}}\bigg(\ln\frac{u_{\alpha\beta}^2}{\bar{n}}\bigg)_+^{k+1}dx\leq\int_0^1u_{\alpha\beta}\bigg(\ln\frac{u_{\alpha\beta}^2}{\bar{n}}\bigg)_+^{k+1}dx\leq\alpha\bigg(\frac{4|D|_0}{\theta_L}\bigg)^{k+1},
\end{equation}
which implies 
\begin{equation}\label{61.2}
\bigg\|\bigg(\ln\frac{u_{\alpha\beta}^2}{\bar{n}}\bigg)_+\bigg\|_{L^{k+1}(\Omega)}\leq\bigg(\frac{\alpha}{\sqrt{\bar{n}}}\bigg)^{\frac{1}{k+1}}\frac{4|D|_0}{\theta_L},\quad k=1,2,3,\cdots.
\end{equation}
Let $k\rightarrow\infty$ in \eqref{61.2}, we immediately obtain
\begin{equation}\label{61.3}
\bigg\|\bigg(\ln\frac{u_{\alpha\beta}^2}{\bar{n}}\bigg)_+\bigg\|_{L^{\infty}(\Omega)}\leq\frac{4|D|_0}{\theta_L}.
\end{equation}
Note that $[\ln(u_{\alpha\beta}^2/\bar{n})]_+$ is nonnegative, then the estimate \eqref{61.3} implies that 
\begin{equation}\label{62.1}
u_{\alpha\beta}\leq\sqrt{\bar{n}}e^{2|D|_0/\theta_L}\leq B,\quad\text{if}\ \delta\ll1.
\end{equation}

Using the similar argument, we can establish the lower bound of $u_{\alpha\beta}$.  To this end, we define $\underbar{n}:=\min\{n_l,n_r\}>0$, and perform the procedure
\begin{equation}\label{159.1}
\int_0^1-\eqref{111.1a}\times\frac{1}{u_{\alpha\beta}}\bigg(\ln\frac{u_{\alpha\beta}^2}{\underbar{n}}\bigg)_-^{2k-1}dx,\quad k=1,2,3,\cdots,\quad\text{where}\ (\cdot)_-:=\min\{0,\cdot\}.
\end{equation}
The computations in terms of this procedure yield that
\begin{equation}\label{63.1}
\int_0^1-\pc^2\frac{u_{xx}}{u_{\alpha\beta}}\bigg(\ln\frac{u_{\alpha\beta}^2}{\underbar{n}}\bigg)_-^{2k-1}dx=\int_0^1-\frac{h(u_{\alpha\beta},q)}{u_{\alpha\beta}}\bigg(\ln\frac{u_{\alpha\beta}^2}{\underbar{n}}\bigg)_-^{2k-1}dx.
\end{equation}
The left-side of \eqref{63.1} can be estimated as follows by integration by parts,
\begin{align}
\eqref{63.1}_l&=\int_0^1\pc^2u_x\Bigg[\frac{1}{u_{\alpha\beta}}\bigg(\ln\frac{u_{\alpha\beta}^2}{\underbar{n}}\bigg)_-^{2k-1}\Bigg]_xdx \notag\\
&=\int_0^1\pc^2u_x\bigg(\frac{1}{u_{\alpha\beta}}\bigg)_x\bigg(\ln\frac{u_{\alpha\beta}^2}{\underbar{n}}\bigg)_-^{2k-1}dx+\int_0^1\pc^2\frac{u_x}{u_{\alpha\beta}}\Bigg[\bigg(\ln\frac{u_{\alpha\beta}^2}{\underbar{n}}\bigg)_-^{2k-1}\Bigg]_x\notag\\
&=-\int_0^1\pc^2\bigg[\frac{(u_{\alpha\beta})_x}{u_{\alpha\beta}}\bigg]^2\Bigg[\bigg(\ln\frac{u_{\alpha\beta}^2}{\underbar{n}}\bigg)_-^{2k-1}-2(2k-1)\bigg(\ln\frac{u_{\alpha\beta}^2}{\underbar{n}}\bigg)_-^{2k-2}\Bigg]dx\notag\\
&\geq0.\label{64.1}
\end{align}
The right-side of \eqref{63.1} can be estimated as follows,
\allowdisplaybreaks
\begin{align}
\eqref{63.1}_r=&\int_0^1-\bigg[F(u_{\alpha\beta}^2,\bar{J},q)-F(\md_{l},\bar{J},\theta_l)-G[u_{\alpha\beta}^{2}]\notag\\
&\qquad\qquad\qquad\qquad-\int_0^xq_{x}\ln u_{\alpha\beta}^2dy+\bar{J}\int_0^xu_{\alpha\beta}^{-2}dy\bigg]\bigg(\ln\frac{u_{\alpha\beta}^2}{\underbar{n}}\bigg)_-^{2k-1}dx\notag\\
=&\int_0^1-\bigg[F(u_{\alpha\beta}^2,\bar{J},q)-q\ln\underbar{n}+q\ln\underbar{n}-F(\md_{l},\bar{J},\theta_l)-G[u_{\alpha\beta}^{2}]\notag\\
&\qquad\qquad\qquad\qquad\qquad\qquad-\int_0^xq_{x}\ln u_{\alpha\beta}^2dy+\bar{J}\int_0^xu_{\alpha\beta}^{-2}dy\bigg]\bigg(\ln\frac{u_{\alpha\beta}^2}{\underbar{n}}\bigg)_-^{2k-1}dx\notag\\
=&-\int_0^1q\bigg(\ln\frac{u_{\alpha\beta}^2}{\underbar{n}}\bigg)_-^{2k}dx\notag\\
&+\int_0^1\Bigg[\phi_rx+\int_0^1G(x,y)(u_{\alpha\beta}^2-D)(y)dy-\bar{J}\int_0^xu_{\alpha\beta}^{-2}dy+\frac{\bar{J}^2}{2n_l}\notag\\
&\qquad\qquad+\theta_l\ln n_l-q\ln\underbar{n}+\theta_l-q+\int_0^xq_x\ln u_{\alpha\beta}^2dy-\frac{\bar{J}^2}{2u_{\alpha\beta}^4}\Bigg]\bigg(\ln\frac{u_{\alpha\beta}^2}{\underbar{n}}\bigg)_-^{2k-1}dx\notag\\
\leq&-\int_0^1\frac{1}{2}\theta_L\bigg(\ln\frac{u_{\alpha\beta}^2}{\underbar{n}}\bigg)_-^{2k}dx-\int_0^1\big(C(N_1)\delta+B^2+|D|_0\big)\bigg(\ln\frac{u_{\alpha\beta}^2}{\underbar{n}}\bigg)_-^{2k-1}dx\notag\\
\leq&-\int_0^1\frac{1}{2}\theta_L\bigg(\ln\frac{u_{\alpha\beta}^2}{\underbar{n}}\bigg)_-^{2k}dx-\int_0^1\big(B^2+2|D|_0\big)\bigg(\ln\frac{u_{\alpha\beta}^2}{\underbar{n}}\bigg)_-^{2k-1}dx\qquad\text{if}\ \delta\ll1 \notag\\
=&-\int_0^1\frac{1}{2}\theta_L\bigg(\ln\frac{u_{\alpha\beta}^2}{\underbar{n}}\bigg)_-^{2k}dx-\int_0^1\big(B^2+2|D|_0\big)\frac{2}{\theta_L}\frac{\theta_L}{2}\bigg(\ln\frac{u_{\alpha\beta}^2}{\underbar{n}}\bigg)_-^{2k-1}dx\notag\\
=&-\int_0^1\frac{1}{2}\theta_L\bigg(\ln\frac{u_{\alpha\beta}^2}{\underbar{n}}\bigg)_-^{2k}dx+\int_0^1\frac{1}{2}\theta_L\underbrace{\Bigg[-\bigg(\ln\frac{u_{\alpha\beta}^2}{\underbar{n}}\bigg)_-\Bigg]^{2k-1}\frac{2\big(B^2+2|D|_0\big)}{\theta_L}}_{\text{by Young inequality}}dx\notag\\
\leq&-\int_0^1\frac{1}{2}\theta_L\bigg(\ln\frac{u_{\alpha\beta}^2}{\underbar{n}}\bigg)_-^{2k}dx\notag\\
&\quad+\int_0^1\frac{1}{2}\theta_L\Bigg\{\frac{2k-1}{2k}\Bigg[-\bigg(\ln\frac{u_{\alpha\beta}^2}{\underbar{n}}\bigg)_-\Bigg]^{(2k-1)\cdot\frac{2k}{2k-1}}+\frac{1}{2k}\bigg[\frac{2\big(B^2+2|D|_0\big)}{\theta_L}\bigg]^{2k}\Bigg\}dx\notag\\
=&\frac{\theta_L}{4k}\Bigg\{-\int_0^1\bigg(\ln\frac{u_{\alpha\beta}^2}{\underbar{n}}\bigg)_-^{2k}dx+\bigg[\frac{2\big(B^2+2|D|_0\big)}{\theta_L}\bigg]^{2k}\Bigg\}. \label{66.3}
\end{align}
Inserting \eqref{64.1} and \eqref{66.3} into \eqref{63.1}, we have the estimate
\begin{equation}\label{67.1}
\bigg\|\bigg(\ln\frac{u_{\alpha\beta}^2}{\underbar{n}}\bigg)_-\bigg\|_{L^{2k}(\Omega)}\leq\frac{2\big(B^2+2|D|_0\big)}{\theta_L},\quad k=1,2,3,\cdots.
\end{equation}
Let $k\rightarrow\infty$ in \eqref{67.1}, we immediately obtain
\begin{equation}\label{67.2}
\bigg\|\bigg(\ln\frac{u_{\alpha\beta}^2}{\underbar{n}}\bigg)_-\bigg\|_{L^{\infty}(\Omega)}\leq\frac{2\big(B^2+2|D|_0\big)}{\theta_L}.
\end{equation}
Note that $[\ln(u_{\alpha\beta}^2/\underbar{n})]_-$ is nonpositive, then the estimate \eqref{67.2} implies that 
\begin{equation}\label{68.1}
u_{\alpha\beta}\geq\sqrt{\underbar{n}}e^{-(B^2+2|D|_0)/\theta_L}\geq b,\quad\text{if}\ \delta\ll1.
\end{equation}
Combining \eqref{62.1} with \eqref{68.1}, we have $b\leq u_{\alpha\beta}\leq B$, which means $u_{\alpha\beta}=u$. This gives the existence result of the problem $(P1)$. Differentiating the equation \eqref{109.1a} and using the regularity $u\in H^3(\Omega)$ to obtain the desired regularity $u\in H^4(\Omega)$ in Claim 1.

Before proving the uniqueness result of the problem $(P1)$, we need to establish the uniform estimate \eqref{eu} with respect to $\pc\in(0,1]$ for any $H^4$-solution $u$ of $(P1)$. To this end, the estimate \eqref{eu-a} can be proved similarly as the above derivation of $u_{\alpha\beta}=u$. 

Furthermore, performing the procedure
\begin{equation}\label{117.1}
\int_0^1\bigg[\eqref{109.1a}\times\frac{1}{u}\bigg]_x\times u_xdx,
\end{equation}
we have
\begin{equation}\label{117.1r}
\int_0^1\pc^2\bigg(\frac{u_{xx}}{u}\bigg)_xu_xdx=\int_0^1\bigg[2\bigg(q-\frac{J^2}{u^4}\bigg)\frac{u_x}{u}+q_x-\varphi_x+\frac{J}{u^2}\bigg]u_xdx,
\end{equation}
where $J:=J[u^2,q]$ and $\varphi(x):=G[u^2](x)$. The left-side of \eqref{117.1r} can be estimated by using integration by parts,
\begin{equation}\label{117.1r-l}
\eqref{117.1r}_l=-\int_0^1\pc^2\frac{(u_{xx})^2}{u}dx\leq0.
\end{equation}
Based on the estimates \eqref{eu-a}, \eqref{154.2} and $|J|\leq C(b,B,N_1)\delta$, the right-side of \eqref{117.1r} can be estimated as follows provided $\delta$ is small enough,
\begin{equation}\label{117.1r-r}
\eqref{117.1r}_r\geq\frac{\theta_L}{2B}\|u_x\|^2-C(B,\theta_L)\|(q_x,\varphi_x)\|^2-J\bigg(\frac{1}{\rmd_{r}}-\frac{1}{\rmd_{l}}\bigg)\geq\frac{\theta_L}{2B}\|u_x\|^2-C,
\end{equation}
where the positive constant $C$ only depends on $n_l$, $\theta_L$ and $|D|_0$. We have used the elliptic estimate $\|\varphi\|_2\leq C(\|u^2-D\|+\|\phi_rx\|)$ in the last inequality of \eqref{117.1r-r}. Inserting \eqref{117.1r-l} and \eqref{117.1r-r} into \eqref{117.1r}, we get
\begin{equation}\label{117.3}
\|u_x\|\leq C,
\end{equation}
where the positive constant $C$ only depends on $n_l$, $\theta_L$ and $|D|_0$ and is independent of $\pc\in(0,1]$.

Performing the procedure
\begin{equation}\label{118.0}
\int_0^1\bigg[\eqref{109.1a}\times\frac{1}{u}\bigg]_x\times\bigg(\frac{u_{xx}}{u}\bigg)_xdx,
\end{equation}
and using integration by parts, we get
\begin{multline}\label{80.1}
\int_0^1\pc^2\bigg[\bigg(\frac{u_{xx}}{u}\bigg)_x\bigg]^2dx+\int_0^12S\bigg(\frac{u_{xx}}{u}\bigg)^2dx\\
=-\int_0^1\bigg(\frac{2S}{u}\bigg)_x\frac{u_xu_{xx}}{u}dx-\int_0^1q_{xx}\frac{u_{xx}}{u}dx+\int_0^1(u^2-D)\frac{u_{xx}}{u}dx+\int_0^1\frac{2J}{u^4}u_xu_{xx}dx,
\end{multline}
where $S:=q-J^2/u^4$. The left-side of \eqref{80.1} can be estimated as
\begin{equation}\label{80.2}
\eqref{80.1}_l\geq\frac{\theta_L}{2B^2}\|u_{xx}\|^2.
\end{equation}
The right-side of \eqref{80.1} can be estimated by H\"older, Sobolev and Cauchy-Schwarz inequalities as
\begin{align}
\eqref{80.1}_r&\leq C|u_x|_0\|u_x\|\|u_{xx}\|+C(\|q_{xx}\|+\|u^2-D\|+\|u_x\|)\|u_{xx}\|\notag\\
&\leq C(\|u_x\|^2+2\|u_x\|\|u_{xx}\|)^{1/2}\|u_{xx}\|+C\|u_{xx}\|\notag\\
&\leq\mu\|u_{xx}\|^2+C_\mu(\|u_x\|^2+2\|u_x\|\|u_{xx}\|)+C\|u_{xx}\|\notag\\
&\leq\mu\|u_{xx}\|^2+C_\mu(1+\|u_{xx}\|)\label{81.3}
\end{align}
where we have used the estimate \eqref{117.3}. The positive constant $C_\mu$ only depends on $n_l$, $\theta_L$, $|D|_0$ and $\mu$, where $\mu$ is a small number and will be determined later. Inserting \eqref{80.2} and \eqref{81.3} into \eqref{80.1}, and let $\mu\ll1$, we obtain $\|u_{xx}\|^2\leq C(1+\|u_{xx}\|)$. Solving this inequality with respect to $\|u_{xx}\|$ to obtain the estimate
\begin{equation}\label{82.4}
\|u_{xx}\|\leq C,
\end{equation}
where the positive constant $C$ only depends on $n_l$, $\theta_L$ and $|D|_0$ and is independent of $\pc\in(0,1]$.

Substituting the uniform estimates \eqref{eu-a}, \eqref{117.3} and \eqref{82.4} in the equality \eqref{80.1}, we have the estimate $\pc\|(u_{xx}/u)_x\|\leq C$. Note that
\[
\pc\pd{x}{3}u=\pc u\bigg(\frac{u_{xx}}{u}\bigg)_x+\pc\frac{u_xu_{xx}}{u}.
\]
We immediately get the following uniform estimate
\begin{equation}\label{85.2}
\|\pc\pd{x}{3}u\|\leq C.
\end{equation}

Furthermore, applying $\pd{x}{2}$ to the equation \eqref{109.1a} and taking the $L^2$-norm of the resultant equality, we finally have the following uniform estimate
\begin{equation}\label{86.1}
\|\pc^2\pd{x}{4}u\|\leq C.
\end{equation}

From the estimates \eqref{eu-a}, \eqref{117.3}, \eqref{82.4}, \eqref{85.2} and \eqref{86.1}, we have established the desired uniform estimate \eqref{eu-b} with respect to $\pc\in(0,1]$ for any strong solution $u$ to $(P1)$.

Based on the uniform estimate \eqref{eu}, now we can prove the uniqueness of solution to $(P1)$ by the energy method. To this end, we assume that $u_1$ and $u_2$ are two solutions to $(P1)$. Let $z_i:=\ln u_i^2$, $J_i:=J[e^{z_i},q]$, $S_i:=q-J_i^2/e^{2z_i}$, $\varphi_i:=G[e^{z_i}]$, $i=1,2$. Taking the difference $J_1$ and $J_2$, and applying the mean value theorem and \eqref{eu-a} to the explicit formula \eqref{104.1}, we have
\begin{equation}\label{138.0}
|J_i|\leq C(b,B,N_1)\delta,\qquad |J_1-J_2|\leq C\delta\|z_x\|,
\end{equation}
where $z:=z_1-z_2$ and $C$ is a positive constant which only depends on $n_l$, $\theta_L$ and $|D|_0$. Due to the procedure 
\begin{equation}
\bigg[\eqref{109.1a}\times\frac{1}{u_1}\bigg]_x-\bigg[\eqref{109.1a}\times\frac{1}{u_2}\bigg]_x
\end{equation}
and the transformation $u_i=e^{z_i/2}$, the difference $z$ satisfies
\begin{equation}\label{95.3}
-\bigg(\frac{J_1^2}{e^{2z_1}}-\frac{J_2^2}{e^{2z_2}}\bigg)z_{1x}+S_2z_x-\frac{\pc^2}{2}\bigg[z_{xx}+\frac{z_{1x}^2}{2}-\frac{z_{2x}^2}{2}\bigg]_x=(\varphi_1-\varphi_2)_x-\bigg(\frac{J_1}{e^{z_1}}-\frac{J_2}{e^{z_2}}\bigg).
\end{equation}
Multiplying \eqref{95.3} by $z_x$, integrating the resultant equality and using the boundary conditions
\begin{equation}\label{94.5}
z_i(0)=\ln\md_l,\quad z_i(1)=\ln\md_r,\quad \bigg(z_{ixx}+\frac{z_{ix}^2}{2}\bigg)(0)=\bigg(z_{ixx}+\frac{z_{ix}^2}{2}\bigg)(1)=0
\end{equation} 
to obtain that
\begin{multline}\label{96.1}
\int_0^1S_2z_x^2dx+\int_0^1\frac{\pc^2}{2}z_{xx}^2dx+\int_0^1\underbrace{(e^{z_1}-e^{z_2})z}_{\geq0}dx\\
=\int_0^1\bigg(\frac{J_1^2}{e^{2z_1}}-\frac{J_2^2}{e^{2z_2}}\bigg)z_{1x}z_xdx-\int_0^1\frac{\pc^2(z_{1x}+z_{2x})}{4}z_xz_{xx}dx-\int_0^1\bigg(\frac{J_1}{e^{z_1}}-\frac{J_2}{e^{z_2}}\bigg)z_xdx.
\end{multline}
The left-side of \eqref{96.1} can be estimated as
\begin{equation}\label{96.2}
\eqref{96.1}_l\geq\frac{\theta_L}{4}\|z_x\|^2+\frac{\pc^2}{2}\|z_{xx}\|^2,
\end{equation}
where we have used the estimates \eqref{154.2}, \eqref{eu} and \eqref{138.0}. The right-side of \eqref{96.1} can be estimated by H\"older, Poincar\'e and Cauchy-Schwarz inequalities and by the estimates \eqref{eu} and \eqref{138.0} as
\begin{align}
\eqref{96.1}_r&\leq C\Big(|J_1-J_2|\|z_x\|+|J_2|\|z\|\|z_x\|\Big)+C\pc^2\|z_x\|\|z_{xx}\|\notag\\
&\leq C(B,b,N_1)\delta\|z_x\|^2+\frac{\pc^2}{4}\|z_{xx}\|^2+C\pc^2\|z_x\|^2\notag\\
&=\Big(C(B,b,N_1)\delta+C\pc^2\Big)\|z_x\|^2+\frac{\pc^2}{4}\|z_{xx}\|^2.\label{97.3}
\end{align}
Substituting \eqref{96.2} and \eqref{97.3} in \eqref{96.1}, we see from letting $\delta$ and $\pc$ small enough that $\|z\|^2\leq0$. Thus we have shown $u_1\equiv u_2$. This complete the proof of Claim 1.

\emph{Step 4. Proof of Claim 2.}
For given function pair $(u,q)$ in the Claim 1, we discuss the unique solvability of the problem $(P2)$ by Leray-Schauder fixed-point theorem and energy method again. To this end, we define a fixed-point mapping $\mathcal{T}_2: q_1\mapsto Q_1$ over $H^1(\Omega)$ by solving the linear problem,
\begin{subequations}\label{126.1}
\begin{numcases}{}
\frac{2}{3}Q_{1xx}-JQ_{1x}+\frac{2}{3}J_{1*}(\ln u^2)_x\theta_L+\frac{2}{3}J(\ln u^2)_x(q_1-\theta_L)\notag\\
\qquad\qquad\qquad\qquad\qquad\qquad\ \,-u^2(Q_1-\theta_L)=g(u,q;\pc), \quad x\in\Omega,\label{126.1a}\\
Q_1(0)=\theta_l,\quad Q_1(1)=\theta_r,
\end{numcases}
\end{subequations}
where $J:=J[u^2,q]$ and $J_{1*}:=2\big(\bar{b}+\int_0^1q_{1x}\ln u^2dx\big)K[u^2,q]^{-1}$. In fact, for given $(u,q)$ in the Claim 1 and $q_1\in H^1(\Omega)$, the linear problem \eqref{126.1} is uniquely solvable in $H^3(\Omega)$ owing to the standard theory of the elliptic equations. By using the standard argument, we can further show that the mapping $\mathcal{T}_2$ is continuous and compact from $H^1(\Omega)$ into itself. Hence, it is sufficient to show that there exists a positive constant $M_2$ such that $\|\Theta\|_1\leq M_2$ for any $\Theta\in\big\{f\in H^1(\Omega)\,|\,f=\lambda\mathcal{T}_2f, \ \forall\lambda\in[0,1] \big\}$. We may assume $\lambda>0$ as the case $\lambda=0$ is trivial. Namely, for the function $\Theta$ verifying
\begin{subequations}\label{se127.2}
\begin{numcases}{}
\frac{2}{3}\Theta_{xx}-J\Theta_{x}+\frac{2}{3}\lambda J_*(\ln u^2)_x\theta_L+\frac{2}{3}\lambda J(\ln u^2)_x(\Theta-\theta_L)\notag\\
\qquad\qquad\qquad\qquad\qquad\qquad\ \,-u^2(\Theta-\lambda\theta_L)=\lambda g(u,q;\pc), \quad x\in\Omega, \label{127.2a}\\
\Theta(0)=\lambda\theta_l,\quad \Theta(1)=\lambda\theta_r,\qquad\forall\lambda\in[0,1],\label{127.2b}\\
J:=J[u^2,q],\quad J_*:=2\bigg(\bar{b}+\int_0^1\Theta_x\ln u^2dx\bigg)K[u^2,q]^{-1},\notag
\end{numcases}
\end{subequations}
we need to show the estimate $\|\Theta\|_1\leq M_2$.

Substituting $\Theta_\lambda:=\Theta-\lambda\bar{\theta}$ into the equation \eqref{127.2a}, where $\bar{\theta}(x):=\theta_l(1-x)+\theta_rx$, multiplying the resultant equation by $\Theta_\lambda$ and integrating it by parts over the domain $\Omega$ give
\begin{align}
&\frac{2}{3}\|\Theta_{\lambda x}\|^2+b^2\|\Theta_\lambda\|^2\notag\\
\leq&-\frac{2}{3}\lambda\theta_LJ_*\int_0^1\Theta_{\lambda x}\ln u^2dx-\int_0^1\bigg[\frac{2}{3}\lambda J\ln u^2\big(\Theta_\lambda^2\big)_x+J\Theta_{\lambda x}\Theta_\lambda\bigg]dx\notag\\
&-\frac{2}{3}\lambda J\int_0^1\ln u^2\Big[(\lambda\bar{\theta}-\theta_L)\Theta_\lambda\Big]_xdx-\lambda\int_0^1\Big[J\bar{\theta}_x+u^2(\bar{\theta}-\theta_L)+g(u,q;\pc)\Big]\Theta_\lambda dx\notag\\ 
\leq&-\frac{4\lambda\theta_L}{3K[u^2,q]}\bigg(\bar{b}+\int_0^1\big(\Theta_{\lambda x}+\lambda\bar{\theta}_x\big)\ln u^2dx\bigg)\int_0^1\Theta_{\lambda x}\ln u^2dx+C(b,B,N_1)\delta\|\Theta_{\lambda}\|_1^2\notag\\
&+\mu\|\Theta_{\lambda}\|_1^2+C(\mu,b,B,N_1)\delta^2\big(\delta^2+\|\lambda\bar{\theta}-\theta_L\|^2\big)+C(\mu,b,B)\delta^2\notag\\
\leq&\underbrace{-\frac{4\lambda\theta_L}{3K[u^2,q]}\bigg(\int_0^1\Theta_{\lambda x}\ln u^2dx\bigg)^2}_{\leq0}+\big[\mu+C(b,B,N_1)\delta\big]\|\Theta_{\lambda}\|_1^2\notag\\
&+C(\mu,b,B,N_1)\delta^2\big(\delta^2+\|\lambda\bar{\theta}-\theta_L\|^2\big)+C(\mu,b,B,\theta_L)\delta^2\notag\\
\leq&\big[\mu+C(b,B,N_1)\delta\big]\|\Theta_{\lambda}\|_1^2+C(\mu,b,B,N_1)\delta^2\big(\delta^2+\|\lambda\bar{\theta}-\theta_L\|^2\big)\notag\\
&+C(\mu,b,B,\theta_L)\delta^2,\label{133.0}
\end{align}
where we have used the expression \eqref{103}, the estimates \eqref{eu} and $|J|\leq C(b,B,N_1)\delta$, and the Young inequality. Taking $\mu$ and $\delta$ small enough in \eqref{133.0}, we have
\begin{equation}\label{133.3}
\|\Theta_{\lambda}\|_1^2\leq C(b,B,\theta_L)\delta^2+C(b,B,N_1)\delta^2\big(\delta^2+\|\lambda\bar{\theta}-\theta_L\|^2\big),
\end{equation}
which immediately means that
\begin{align}
\|\Theta\|_1=&\|\Theta_\lambda+\lambda\bar{\theta}\|_1\leq\|\Theta_\lambda\|_1+\lambda\|\bar{\theta}\|_1\notag\\
\leq&\sqrt{C(b,B,\theta_L)+C(b,B,N_1)(1+\theta_L^2)}+2\theta_L=:M_2.\label{es134.1}
\end{align}
Thus, the mapping $\mathcal{T}_2$ has a fixed point $Q=\mathcal{T}_2Q\in H^3(\Omega)$ by Leray-Schauder fixed-point theorem and the elliptic regularity theory. Hence, we have shown the existence of the solution $Q$ to the problem $(P2)$.

The uniqueness of the solution $Q$ follows from the energy method. Let $Q_i\in H^1(\Omega)$, $i=1,2$ be two solutions to $(P2)$ corresponding to the same function pair $(u,q)$. Define $\bar{Q}:=Q_1-Q_2$, which satisfies
\begin{subequations}\label{135.2}
\begin{numcases}{}
\frac{2}{3}\bar{Q}_{xx}-J\bar{Q}_{x}+\frac{4\theta_L}{3K[u^2,q]}\bigg(\int_0^1\bar{Q}_x\ln u^2dx\bigg)(\ln u^2)_x\notag\\
\qquad\qquad\qquad\qquad\qquad\qquad\ \,+\frac{2}{3}J(\ln u^2)_x\bar{Q}-u^2\bar{Q}=0, \quad x\in\Omega,\label{135.2a}\\
\bar{Q}(0)=\bar{Q}(1)=0.\label{135.2b}
\end{numcases}
\end{subequations}
Multiplying the equation \eqref{135.2a} by $-\bar{Q}$ and integrating the resultant equality over $\Omega$. In a similar way as the derivation of \eqref{133.0}, we have
\begin{align}
\frac{2}{3}\|\bar{Q}_x\|^2+b^2\|\bar{Q}\|^2\leq&-\frac{4\theta_L}{3K[u^2,q]}\bigg(\int_0^1\bar{Q}_x\ln u^2dx\bigg)^2+C(b,B,N_1)\delta\|\bar{Q}\|_1^2\notag\\
\leq&C(b,B,N_1)\delta\|\bar{Q}\|_1^2.\label{136.0}
\end{align}
Taking $\delta$ small enough in \eqref{136.0}, we see that $\|\bar{Q}\|_1^2\leq0$. Thus we have proven $Q_1\equiv Q_2$.

On the other hand, letting $\Theta=Q$ and $\lambda=1$ in the estimate \eqref{133.3}, we obtain
\begin{align}
\|Q-\theta_L\|_1\leq&\|Q-\bar{\theta}\|_1+\|\bar{\theta}-\theta_L\|_1\notag\\
\leq&C(b,B,\theta_L)\delta+C(b,B,N_1)\delta\big(\delta+\|\bar{\theta}-\theta_L\|\big)+\|\bar{\theta}-\theta_L\|_1\notag\\
\leq&C(b,B,\theta_L)\delta+C(b,B,N_1)\delta^2\notag\\
=&C_1\delta+C_2(b,B,N_1)\delta^2,\label{133.5}
\end{align}
which exactly is the desired estimate \eqref{eQa}. Solving the equation $\pd{x}{k}\eqref{109.2a}$ with respect to $\pd{x}{k}Q_{xx}$ for $k=0,1$ and directly taking the $L^2$-norm, we get the desired estimates \eqref{eQb} and \eqref{eQc} with the aid of the estimates \eqref{eu}, \eqref{133.5} and $|J|, |J_*|\leq C(b,B,N_1)\delta$. Consequently, the proof of Claim 2 is completed.

\emph{Step 5. End of the proof.} 
Firstly, based on the estimate \eqref{eQ} we can determine the constants $N_1$ and $N_2$ by letting
\begin{equation}\label{N12}
N_1:=2C_1,\quad N_2:=C_3(b,B,2C_1).
\end{equation}
If $\delta$ is small enough, that is,
\begin{equation*}
\delta\leq\frac{C_1}{C_2(b,B,2C_1)},
\end{equation*}
then we see from the estimate \eqref{eQ} that $\mathcal{T}$ maps $\mathcal{U}[N_1,N_2]$ into itself. Combining the estimates \eqref{eu} and \eqref{eQ} with the Sobolev compact embedding theorem, via a standard argument, we see that the mapping $\mathcal{T}$ is continuous in the norm of $C^2(\overline{\Omega})$ and the image $\mathcal{T}\big(\mathcal{U}[N_1,N_2]\big)$ is precompact in $C^2(\overline{\Omega})$. Therefore, applying the Schauder fixed-point theorem to the mapping $\mathcal{T}:\mathcal{U}[N_1,N_2]\rightarrow\mathcal{U}[N_1,N_2]$, we obtain a fixed-point $\swnd\in\mathcal{U}[N_1,N_2]$ of the mapping $\mathcal{T}$. According to the construction of the mapping $\mathcal{T}$ above, we can easily see that $(\rsmd:=u[\swnd],\swnd)$ is a desired solution to the BVP \eqref{103.2}$\sim$\eqref{103.3}.

In addition, the solution $(\smd,\sdl,\swnd,\sdws)$ to the original BVP \eqref{1dsfqhd}$\sim$\eqref{sbc} is constructed from the the solution $(\rsmd,\swnd)$ to the BVP \eqref{103.2}$\sim$\eqref{103.3}. In fact, we define a function $\smd:=\rsmd^2$, the constant $\sdl:=J[\rsmd^2,\swnd]$ and a function $\sdws:=G[\rsmd^2]$, where $J[\cdot,\cdot]$ and $G[\cdot]$ are given in \eqref{104.1} and \eqref{104.2}. Then, we see that $(\smd,\sdl,\swnd,\sdws)\in H^4(\Omega)\times H^4(\Omega)\times H^3(\Omega)\times C^2(\overline{\Omega})$ is a desired solution to the BVP \eqref{1dsfqhd}$\sim$\eqref{sbc}. Moreover, this stationary solution satisfies the condition \eqref{psc} and the estimate \eqref{145.1}, thanks to the estimates \eqref{eu} and \eqref{eQ}.

Finally, using the same methods in Step 3 and Step 4, we can prove the local uniqueness of the stationary solution $(\smd,\sdl,\swnd,\sdws)$ to the BVP \eqref{1dsfqhd}$\sim$\eqref{sbc} if the parameters $\delta$ and $\pc$ are small enough and the solution additionally satisfies \eqref{psc} and \eqref{145.1}. The computations are standard but tedious, we omit the details.
\end{proof}

\section{Asymptotic stability of the stationary solution}\label{Sect.3}
In this section, we show Theorem \ref{thm2} by applying the standard continuation argument based on the local existence result and the uniform a priori estimate. To simplify the notations, we remove the superscript $\pc$ and denote the solution $(\pcmd{\pc},\pcdl{\pc},\pcwnd{\pc},\pcdws{\pc})$ in Theorem \ref{thm2} as $(\md,\dl,\wnd,\dws)$.

\subsection{Local existence}\label{Subsect.3.1}
In this subsection, we discuss the existence of the local-in-time solution. The proof is based on the iteration method and the energy estimates.

It is also convenient to make use of the transformation $\rmd:=\sqrt{\md}$ in the IBVP \eqref{1dfqhd}$\sim$\eqref{bc}. Then, we derive the equivalent IBVP for $(\rmd,\dl,\wnd,\dws)$ as follows
\begin{subequations}\label{a2.2}
\begin{numcases}{}
2\rmd\rmd_t+\dl_x=0, \label{a2.2-1}\\
\dl_t+2S[\rmd^2,\dl,\wnd]\rmd\rmd_x+\frac{2\dl}{\rmd^2}\dl_x+\rmd^2\wnd_x-\pc^2\rmd^2\Bigg(\frac{\rmd_{xx}}{\rmd}\Bigg)_x=\rmd^2\dws_x-\dl, \label{a2.2-2}\\
\rmd^2\wnd_t+\dl\wnd_x+\frac{2}{3}\rmd^2\wnd\Bigg(\frac{\dl}{\rmd^2}\Bigg)_x-\frac{2}{3}\wnd_{xx}-\frac{\pc^2}{3}\Bigg[\rmd^2\Bigg(\frac{\dl}{\rmd^2}\Bigg)_{xx}\Bigg]_x=\frac{1}{3}\frac{\dl^2}{\rmd^2}-\rmd^2(\wnd-\theta_{L}), \label{a2.2-3}\\
\dws_{xx}=\rmd^2-D(x), \qquad\forall t>0,\ \forall x\in\Omega,\label{a2.2-4}
\end{numcases}
\end{subequations}
with the initial condition
\begin{equation}\label{a2.3}
(\rmd,\dl,\wnd)(0,x)=(\rmd_0,\dl_0,\wnd_0)(x),\quad \rmd_0:=\sqrt{\md_0},
\end{equation}
and the boundary conditions
\begin{subequations}\label{a2.4}
\begin{gather}
\rmd(t,0)=\rmd_{l},\qquad \rmd(t,1)=\rmd_{r},\label{a2.4-1}\\
\rmd_{xx}(t,0)=\rmd_{xx}(t,1)=0,\label{a2.4-2}\\
\wnd(t,0)=\wnd_{l},\qquad \wnd(t,1)=\wnd_{r},\label{a2.4-3}\\
\dws(t,0)=0,\qquad \dws(t,1)=\phi_r.\label{a2.4-4}
\end{gather}
\end{subequations}

In the following discussion, we borrow the ideas in the papers \cite{NS08,NS09} which have shown the local existence theorems for the isothermal QHD model and the FHD model. Also, see \cite{KNN99,KNN03} for the general hyperbolic-elliptic coupled systems.

Now we are in the position to state the local existence.
\begin{lemma}\label{lema2}
Suppose that the initial data $(\rmd_0,\dl_0,\wnd_0)\in H^4(\Omega)\times H^3(\Omega)\times H^2(\Omega)$ and the boundary data satisfy the compatible condition
\begin{gather}
\rmd_0(0)=\rmd_l,\quad \rmd_0(1)=\rmd_r,\quad \wnd_0(0)=\theta_l,\quad \wnd_0(1)=\theta_r,\notag\\
\dl_{0x}(0)=\dl_{0x}(1)=\rmd_{0xx}(0)=\rmd_{0xx}(1)=0\label{rcompatibility}
\end{gather}
and the condition
\begin{equation}\label{ripsc}
\inf_{x\in\Omega}\rmd_{0}>0,\qquad \inf_{x\in\Omega}\wnd_0>0, \qquad\inf_{x\in\Omega}S[\rmd_{0}^2,\dl_{0},\wnd_0]>0.
\end{equation}
Then there exists a constant $T_*>0$ such that the IBVP \eqref{a2.2}$\sim$\eqref{a2.4} has a unique solution $(\rmd,\dl,\wnd,\dws)\in\big[\mathfrak{Y}_4([0,T_*])\cap H^2(0,T_*;H^1(\Omega))\big]\times\big[\mathfrak{Y}_3([0,T_*])\cap H^2(0,T_*;L^2(\Omega))\big]\times\big[\mathfrak{Y}_2([0,T_*])\cap H^1(0,T_*;H^1(\Omega))\big]\times\mathfrak{Z}([0,T_*])$ satisfying
\begin{equation}\label{rpsc}
\inf_{x\in\Omega}\rmd>0,\qquad \inf_{x\in\Omega}\wnd>0, \qquad\inf_{x\in\Omega}S[\rmd^2,\dl,\wnd]>0.
\end{equation}
\end{lemma}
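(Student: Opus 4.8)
The plan is to build the local solution by a standard iteration argument, leaving the solvability of each linear step to the Appendix and concentrating the effort here on the uniform bounds and the passage to the limit. First I would fix the iteration scheme: after replacing the initial data by a smooth approximation if necessary, define $(\rmd^{k+1},\dl^{k+1},\wnd^{k+1},\dws^{k+1})$ from $(\rmd^{k},\dl^{k},\wnd^{k},\dws^{k})$ as the solution of the linear problem \eqref{a3.1}$\sim$\eqref{a3.3} obtained by freezing at the $k$-th iterate all the coefficients of the nonlinear terms in \eqref{a2.2}. Structurally this decouples in a natural order: eliminating $\dl$ between the linearized versions of \eqref{a2.2-1} and \eqref{a2.2-2} turns the density--momentum block into a linear fourth-order wave equation for $\rmd^{k+1}$ (the same mechanism Nishibata--Suzuki used for the isothermal QHD model); once $\rmd^{k+1}$ is known, $\dl^{k+1}$ is recovered by integrating \eqref{a2.2-1} in $x$, the temperature $\wnd^{k+1}$ solves the linear parabolic equation coming from \eqref{a2.2-3} with now-known coefficients, and $\dws^{k+1}=\Phi[(\rmd^{k+1})^2]$ from the Poisson equation. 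The unique solvability of this linear system in the classes $\mathfrak{Y}_4^0\times\mathfrak{Y}_3^0\times\mathfrak{Y}_2^0\times\mathfrak{Z}$ is exactly the content of the Appendix lemma, which rests on the Galerkin method together with the fourth-order wave equation result of \cite{NS08}.

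Next I would carry out the uniform estimates. Choose $M$ larger than the norm of the initial data in $H^4(\Omega)\times H^3(\Omega)\times H^2(\Omega)$ and work inside the set of iterates whose $\mathfrak{Y}$-norms on $[0,T]$ are bounded by $2M$ and which stay in the physical region \eqref{rpsc}. Performing the energy method on the linearized equations --- differentiating in $t$ and testing with the appropriate temporal derivatives, so that the constancy in $t$ of the boundary data makes all boundary contributions vanish, and using $\rmd_{xx}=0$, $\dl_x=0$ on $\partial\Omega$ to kill the boundary terms generated by the two third-order dispersive terms --- yields, for $T=T_*$ sufficiently small, that the $(k+1)$-th iterate again lies in this set. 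The points that make the estimates close without loss of derivatives are that the Bohm term $-\pc^2\rmd^2(\rmd_{xx}/\rmd)_x$ supplies the coercivity controlling $\rmd$ up to fourth order in space, and the dispersive velocity term $-\tfrac{\pc^2}{3}[\rmd^2(\dl/\rmd^2)_{xx}]_x$ supplies the corresponding control of $\dl$; since only a short-time bound is needed, every quantity built from the frozen iterate is harmless and the smallness of $T_*$ absorbs the resulting growth. Repeating the computation for the differences of two consecutive iterates, measured in the weaker norm $H^3(\Omega)\times H^2(\Omega)\times H^1(\Omega)$, shows that $k\mapsto k+1$ is a contraction on $[0,T_*]$ after shrinking $T_*$ once more, so the sequence converges in that topology.

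Finally I would identify the limit and verify the stated properties. The uniform bounds give weak-$*$ convergence of (subsequences of) the top-order derivatives, upgrading the limit to the class $[\mathfrak{Y}_4([0,T_*])\cap H^2(0,T_*;H^1(\Omega))]\times[\mathfrak{Y}_3([0,T_*])\cap H^2(0,T_*;L^2(\Omega))]\times[\mathfrak{Y}_2([0,T_*])\cap H^1(0,T_*;H^1(\Omega))]\times\mathfrak{Z}([0,T_*])$; the strong convergence in the weaker norm lets one pass to the limit in every (at worst quadratic) nonlinear term, so the limit solves \eqref{a2.2}$\sim$\eqref{a2.4}. Since the initial data satisfy the strict inequalities \eqref{ripsc} and the solution lies in $C([0,T_*];H^2(\Omega))\hookrightarrow C([0,T_*]\times\overline{\Omega})$, together with the corresponding continuity for the derivatives entering $S[\rmd^2,\dl,\wnd]$, the conditions \eqref{rpsc} persist on a possibly smaller interval, which we simply rename $[0,T_*]$; uniqueness comes from the same difference estimate applied to two solutions. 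I expect the main obstacle to be precisely the uniform energy estimate in the presence of the two third-order quantum terms: one must arrange the multipliers and the order in which the density, momentum and temperature estimates are combined so that the dispersive boundary terms cancel by \eqref{a2.4} and the apparent derivative loss in the coupling between \eqref{a2.2-2} and \eqref{a2.2-3} (the velocity $\dl/\rmd^2$ enters the temperature equation with three spatial derivatives) is compensated by the Bohm dissipation --- the short-time analogue of the dissipative--dispersive interplay exploited later for the a priori estimate.
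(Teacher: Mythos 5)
Your proposal follows the same route as the paper: the identical frozen-coefficient iteration scheme \eqref{a5.1}$\sim$\eqref{a5.3}, linear solvability deferred to the Appendix (Galerkin for the parabolic temperature equation plus the fourth-order wave-equation lemma of \cite{NS08}, i.e.\ Lemma~\ref{lema3}), uniform bounds keeping the iterates in the invariant set $X(T;m,M)$, a contraction estimate for differences in a weaker norm, and passage to the limit with the open conditions \eqref{rpsc} persisting by continuity. One small inversion worth flagging: in the linearization \eqref{a3.1} the momentum equation contains $\rmd^2\hat{\wnd}_x$ while the temperature equation \eqref{a3.1-3} involves only the frozen $(\rmd,\dl,\wnd)$, so the parabolic step for $\hat{\wnd}$ must be carried out \emph{before} the fourth-order wave equation for $\hat{\rmd}$ — not after, as your sketch of the decoupling suggests.
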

To show Lemma \ref{lema2}, we first study the linear IBVP for the unknowns $(\hat{\rmd},\hat{\dl},\hat{\wnd})$
\begin{subequations}\label{a3.1}
\begin{numcases}{}
2\rmd\hat{\rmd}_t+\hat{\dl}_x=0, \label{a3.1-1}\\
\hat{\dl}_t+2S[\rmd^2,\dl,\wnd]\rmd\hat{\rmd}_x+\frac{2\dl}{\rmd^2}\hat{\dl}_x+\rmd^2\hat{\wnd}_x-\pc^2\rmd^2\Bigg(\frac{\hat{\rmd}_{xx}}{\rmd}\Bigg)_x=\rmd^2\dws_x-\dl, \label{a3.1-2}\\
\rmd^2\hat{\wnd}_t+\dl\hat{\wnd}_x+\frac{2}{3}\Bigg(\frac{\dl}{\rmd^2}\Bigg)_x\rmd^2\hat{\wnd}-\frac{2}{3}\hat{\wnd}_{xx}-\frac{\pc^2}{3}\Bigg[\rmd^2\Bigg(\frac{\dl}{\rmd^2}\Bigg)_{xx}\Bigg]_x=\frac{1}{3}\frac{\dl^2}{\rmd^2}-\rmd^2(\hat{\wnd}-\theta_{L}), \label{a3.1-3}\\
\dws:=\Phi[\rmd^2], \qquad\forall t>0,\ \forall x\in\Omega,\label{a3.1-4}
\end{numcases}
\end{subequations}
with the initial condition
\begin{equation}\label{a3.2}
(\hat{\rmd},\hat{\dl},\hat{\wnd})(0,x)=(\rmd_0,\dl_0,\wnd_0)(x),
\end{equation}
and the boundary conditions
\begin{subequations}\label{a3.3}
\begin{gather}
\hat{\rmd}(t,0)=\rmd_{l},\qquad \hat{\rmd}(t,1)=\rmd_{r},\label{a3.3-1}\\
\hat{\rmd}_{xx}(t,0)=\hat{\rmd}_{xx}(t,1)=0,\label{a3.3-2}\\
\hat{\wnd}(t,0)=\wnd_{l},\qquad \hat{\wnd}(t,1)=\wnd_{r},\label{a3.3-3}
\end{gather}
\end{subequations}
where the function $\dws$ is defined by \eqref{efep}. Let the functions $(\rmd,\dl,\wnd)$ in the coefficients in \eqref{a3.1} satisfy
\begin{subequations}\label{a4.1}
\begin{gather}
(\rmd,\dl,\wnd)(0,x)=(\rmd_0,\dl_0,\wnd_0)(x),\quad\forall x\in\Omega, \label{a4.1-1}\\
\rmd\in\mathfrak{Y}_4([0,T])\cap H^2(0,T;H^1(\Omega)),\quad \dl\in\mathfrak{Y}_3([0,T])\cap H^2(0,T;L^2(\Omega)),\notag\\
\wnd\in\mathfrak{Y}_2([0,T])\cap H^1(0,T;H^1(\Omega)),\label{a4.1-2}\\
\rmd(t,x),\  \wnd(t,x),\  S[\rmd^2,\dl,\wnd](t,x)\geq m,\quad\forall (t,x)\in[0,T]\times\Omega,\label{a4.1-3}\\
\|\rmd(t)\|_4^2+\|\dl(t)\|_3^2+\|(\rmd_t,\wnd)(t)\|_2^2+\|\dl_t(t)\|_1^2+\|(\rmd_{tt},\wnd_t)(t)\|^2\notag\\
+\int_0^t\|(\rmd_{ttx},\dl_{tt},\wnd_{tx})(\tau)\|^2d\tau\leq M,\quad\forall t\in[0,T],\label{a4.1-4}
\end{gather}
\end{subequations}
where $T$, $m$ and $M$ are positive constants. We denote by $X(T;m,M)$ the set of functions $(\rmd,\dl,\wnd)$ satisfying \eqref{a4.1}, and we abbreviate $X(T;m,M)$ by $X(\cdot)$ without confusion. The property of $\dws$ is that
\[
\dws\in\mathfrak{Z}([0,T]),\quad \|\pd{t}{i}\dws(t)\|_2^2\leq M,\quad\forall t\in[0,T], \ i=0,1,2.
\]
Then the next lemma means that for suitably chosen constants $T$, $m$ and $M$, the set $X(\cdot)$ is invariant under the mapping $(\rmd,\dl,\wnd)\mapsto(\hat{\rmd},\hat{\dl},\hat{\wnd})$ defined by solving the linear IBVP \eqref{a3.1}$\sim$\eqref{a3.3}. We discuss the solvability of this linear problem in Appendix. Since the next lemma is proved similarly as in \cite{NS08,NS09}, we omit the proof.
\begin{lemma}\label{lema3}
Under the same assumptions in Lemma \ref{lema2}, there exist positive constants $T$, $m$ and $M$ with the following property: If $(\rmd,\dl,\wnd)\in X(\cdot)$, then the linear IBVP \eqref{a3.1}$\sim$\eqref{a3.3} admits a unique solution $(\hat{\rmd},\hat{\dl},\hat{\wnd})$ in the same set $X(\cdot)$.
\end{lemma}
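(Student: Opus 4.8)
The plan is to treat Lemma \ref{lema3} as an invariance statement for the solution map $(\rmd,\dl,\wnd)\mapsto(\hat{\rmd},\hat{\dl},\hat{\wnd})$ of the linear problem \eqref{a3.1}$\sim$\eqref{a3.3}, in the spirit of \cite{NS08,NS09}. First I would invoke the solvability of \eqref{a3.1}$\sim$\eqref{a3.3} proved in the Appendix (Galerkin approximation together with the existence theory for the fourth-order wave equation obtained by coupling \eqref{a3.1-1} and \eqref{a3.1-2}): for any $(\rmd,\dl,\wnd)\in X(\cdot)$ it produces a unique $(\hat{\rmd},\hat{\dl},\hat{\wnd})$ with enough regularity to justify all the integrations by parts below, while the accompanying potential $\dws=\Phi[\rmd^2]$ of \eqref{a3.1-4}, \eqref{efep} obeys $\|\pd{t}{i}\dws(t)\|_2\le C(\|\pd{t}{i}(\rmd^2-D)(t)\|+|\phi_r|)$ for $i=0,1,2$ by the elementary elliptic estimate, hence is controlled by $M$ through \eqref{a4.1-4}. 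All constants below are allowed to depend on $\pc$, since Lemma \ref{lema3} is a fixed-$\pc$ statement; uniformity in $\pc$ is postponed to the a priori estimate of Section \ref{Sect.3}.

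The core is the energy estimate reproducing the norm appearing in \eqref{a4.1-4}. I would apply $\pd{t}{k}$, $k=0,1,2$, to \eqref{a3.1-1}$\sim$\eqref{a3.1-3}. For the dispersive subsystem \eqref{a3.1-1}$\sim$\eqref{a3.1-2} I would use \eqref{a3.1-1} to eliminate $\hat{\dl}_x$ (so that $\pd{t}{k+1}\hat{\rmd}$ is tied to $\pd{t}{k}\hat{\dl}_x$), multiply $\pd{t}{k}$\eqref{a3.1-2} by $\pd{t}{k}\hat{\dl}$ and integrate over $\Omega$. The Bohm term $-\pc^2\rmd^2(\hat{\rmd}_{xx}/\rmd)_x$, after integration by parts using the vanishing-bohmenian boundary condition \eqref{a3.3-2} (legitimately differentiated in $t$, since the boundary data are time-independent and \eqref{rcompatibility} holds), yields a time derivative of a positive quadratic form comparable to $\pc^2\|\pd{t}{k}\hat{\rmd}_{xx}\|^2$ up to lower order, while the convective and pressure terms produce the time derivatives of $\|\pd{t}{k}\hat{\dl}\|^2$ and $\|\pd{t}{k}\hat{\rmd}_x\|^2$, with all commutators and coefficient derivatives bounded by $C(m,M)$ thanks to \eqref{a4.1}. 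A Gronwall argument then gives control of $\sup_{[0,T]}\big(\|\pd{t}{k}\hat{\rmd}_x\|^2+\pc^2\|\pd{t}{k}\hat{\rmd}_{xx}\|^2+\|\pd{t}{k}\hat{\dl}\|^2\big)$. For the parabolic equation \eqref{a3.1-3}, multiplying $\pd{t}{k}$\eqref{a3.1-3} by $\pd{t}{k}\hat{\wnd}$ (and by $\pd{t}{k}\hat{\wnd}_t$ for the top estimate) exploits the dissipation of $-\tfrac{2}{3}\hat{\wnd}_{xx}$ to produce $\int_0^t\|\pd{t}{k}\hat{\wnd}_x\|^2d\tau$; the only genuinely new term is the dispersive velocity source $-\tfrac{\pc^2}{3}[\rmd^2(\dl/\rmd^2)_{xx}]_x$, which involves $\pd{x}{3}\dl\in L^2$ and is therefore controlled by $M$ precisely because $\dl\in H^3$ in $X(\cdot)$. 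Finally I would recover the missing spatial regularity algebraically: $\|\hat{\dl}\|_3$ from $\hat{\dl}_x=-2\rmd\hat{\rmd}_t$ in \eqref{a3.1-1}, $\|\hat{\rmd}\|_4$ by solving \eqref{a3.1-2} for $(\hat{\rmd}_{xx}/\rmd)_x$ and differentiating once more, $\|\hat{\wnd}\|_2$ and $\int_0^t\|\hat{\wnd}_{tx}\|^2d\tau$ from \eqref{a3.1-3}, and $\|\hat{\rmd}_{tt},\hat{\wnd}_t\|$ together with $\int_0^t\|\hat{\rmd}_{ttx},\hat{\dl}_{tt},\hat{\wnd}_{tx}\|^2d\tau$ from the $\pd{t}{}$- and $\pd{t}{2}$-differentiated equations.

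Collecting everything yields $\mathcal{E}(t)\le (C_0+1)\exp\!\big(C_1(m,M,\pc)\,t\big)$ for $t\in[0,T]$, where $\mathcal{E}(t)$ is the left-hand side of \eqref{a4.1-4} evaluated along $(\hat{\rmd},\hat{\dl},\hat{\wnd})$ and $C_0$ depends only on the fixed initial norms $\|\rmd_0\|_4$, $\|\dl_0\|_3$, $\|\wnd_0\|_2$. I would then close the argument in the usual order: set $m:=\tfrac{1}{2}\min\{\inf_\Omega\rmd_0,\inf_\Omega\wnd_0,\inf_\Omega S[\rmd_0^2,\dl_0,\wnd_0]\}>0$ (positive by \eqref{ripsc}), next fix $M:=2(C_0+1)$, and finally shrink $T$ so that $\exp(C_1(m,M,\pc)T)\le 2$, which forces $\mathcal{E}(t)\le M$ on $[0,T]$, i.e.\ \eqref{a4.1-4}; shrinking $T$ once more, the continuity in $t$ of $\hat{\rmd},\hat{\wnd},S[\hat{\rmd}^2,\hat{\dl},\hat{\wnd}]$ with values in $C(\overline{\Omega})$ (from the regularity just obtained and Sobolev embedding), together with their agreement with the $\rmd_0$-data at $t=0$, guarantees \eqref{a4.1-3}. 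Hence $(\hat{\rmd},\hat{\dl},\hat{\wnd})\in X(T;m,M)$, and uniqueness is part of the Appendix solvability. The step I expect to be the main obstacle is the bookkeeping of this coupled dispersive--parabolic energy hierarchy: capturing the $\pc^2$-Bohm term as a coercive contribution, recovering the full (non-$\pc$-weighted) $H^4$ norm of $\hat{\rmd}$ from the evolved lower-order quantities and the equations, and verifying that every boundary term generated by integrating $\pd{t}{k}$ of \eqref{a3.1-2}, \eqref{a3.1-3} by parts is annihilated by \eqref{a3.3} and the compatibility \eqref{rcompatibility} --- all routine in principle but delicate, exactly as in \cite{NS08,NS09}.
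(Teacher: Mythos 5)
The paper omits the proof, deferring solvability to the Appendix and citing \cite{NS08,NS09}; your strategy — Appendix solvability, $\pd{t}{k}$-energy estimates for $k=0,1,2$, algebraic recovery of the spatial regularity, Gronwall, and fixing $m$, then $M$, then $T$ — is the intended one and matches the references.

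There is, however, a concrete error in the energy step that would actually block the recovery of the $X(\cdot)$ norms. You claim that multiplying $\pd{t}{k}\eqref{a3.1-2}$ by $\pd{t}{k}\hat{\dl}$ turns the Bohm term, after integration by parts with \eqref{a3.3-2} and substitution of \eqref{a3.1-1}, into a time derivative comparable to $\pc^2\|\pd{t}{k}\hat{\rmd}_{xx}\|^2$. It does not. One obtains $-2\pc^2\int_\Omega\rmd^2\,\pd{t}{k}\hat{\rmd}_{xx}\,\pd{t}{k}\hat{\rmd}_t\,dx$ plus lower order, and a further integration by parts (legal because $\pd{t}{k}\hat{\rmd}_t$ vanishes on $\partial\Omega$) converts this into $\pc^2\tfrac{d}{dt}\int_\Omega\rmd^2(\pd{t}{k}\hat{\rmd}_x)^2\,dx$ plus commutators — that is, only $\pc^2\|\pd{t}{k}\hat{\rmd}_x\|^2$, one spatial derivative short of what you need. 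This weaker control is not enough for your algebraic bootstrap: solving \eqref{a3.1-2} for $\hat{\rmd}_{xxx}$ requires $\|\hat{\rmd}_{xx}\|$ as input, and $\|\hat{\dl}_{xxx}\|$ (needed for $\|\hat{\dl}\|_3$ in \eqref{a4.1-4}) requires $\|\hat{\rmd}_{txx}\|$ via \eqref{a3.1-1}; neither is produced by the $\hat{\dl}$-multiplication at any $k$. The correct device — and the one the Appendix and Section \ref{Sect.3} rely on — is to first pass to the second-order wave form for $\hat{\rmd}$, namely $\pd{x}{}\eqref{a3.1-2}/(-2\rmd)$ combined with \eqref{a3.1-1}, which is \eqref{a8.1} with principal part $\tfrac{\pc^2}{2}\pd{x}{4}\hat{\rmd}$, and then multiply by $\pd{t}{k}\hat{\rmd}_t$ or by $\pd{t}{k}\hat{\rmd}$; these multipliers do yield coercive $\pc^2\|\pd{t}{k}\hat{\rmd}_{xx}\|^2$ contributions (cf.\ the analogous nonlinear computations \eqref{48.1} and \eqref{60.1} built on \eqref{12.3}), and with them the recovery of $\|\hat{\rmd}\|_4$, $\|\hat{\dl}\|_3$, and the time-integral terms in \eqref{a4.1-4} goes through. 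Apart from this, your parabolic energy for $\hat{\wnd}$, the continuity argument for \eqref{a4.1-3}, and the ordering $m\to M\to T$ are sound.
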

Using Lemma \ref{lema3}, we can show Lemma \ref{lema2}.
\begin{proof}[\textbf{Proof of Lemma \ref{lema2}}]
We define the approximation sequence $\{(\rmd^k,\dl^k,\wnd^k)\}_{k=0}^\infty$ by letting $(\rmd^0,\dl^0,\wnd^0)=(\rmd_0,\dl_0,\wnd_0)$ and solving
\begin{subequations}\label{a5.1}
\begin{numcases}{}
2\rmd^k\rmd^{k+1}_t+\dl^{k+1}_x=0, \label{a5.1-1}\\
\dl^{k+1}_t+2S[(\rmd^k)^2,\dl^k,\wnd^k]\rmd^k\rmd^{k+1}_x+\frac{2\dl^k}{(\rmd^k)^2}\dl^{k+1}_x+(\rmd^k)^2\wnd^{k+1}_x\notag\\
\qquad\qquad\qquad\qquad\qquad\qquad\qquad\quad-\pc^2(\rmd^k)^2\Bigg(\frac{\rmd^{k+1}_{xx}}{\rmd^k}\Bigg)_x=(\rmd^k)^2\dws^k_x-\dl^k, \label{a5.1-2}\\
(\rmd^k)^2\wnd^{k+1}_t+\dl^k\wnd^{k+1}_x+\frac{2}{3}\Bigg(\frac{\dl^k}{(\rmd^k)^2}\Bigg)_x(\rmd^k)^2\wnd^{k+1}-\frac{2}{3}\wnd^{k+1}_{xx}\notag\\
\qquad\qquad\qquad\quad-\frac{\pc^2}{3}\Bigg[(\rmd^k)^2\Bigg(\frac{\dl^k}{(\rmd^k)^2}\Bigg)_{xx}\Bigg]_x=\frac{1}{3}\frac{(\dl^k)^2}{(\rmd^k)^2}-(\rmd^k)^2(\wnd^{k+1}-\theta_{L}), \label{a5.1-3}\\
\dws^k:=\Phi[(\rmd^k)^2], \qquad\forall t>0,\ \forall x\in\Omega,\label{a5.1-4}
\end{numcases}
\end{subequations}
with the initial condition
\begin{equation}\label{a5.2}
(\rmd^{k+1},\dl^{k+1},\wnd^{k+1})(0,x)=(\rmd_0,\dl_0,\wnd_0)(x),
\end{equation}
and the boundary conditions
\begin{subequations}\label{a5.3}
\begin{gather}
\rmd^{k+1}(t,0)=\rmd_{l},\qquad \rmd^{k+1}(t,1)=\rmd_{r},\label{a5.3-1}\\
\rmd^{k+1}_{xx}(t,0)=\rmd^{k+1}_{xx}(t,1)=0,\label{a5.3-2}\\
\wnd^{k+1}(t,0)=\wnd_{l},\qquad \wnd^{k+1}(t,1)=\wnd_{r},\label{a5.3-3}
\end{gather}
\end{subequations}

Thanks to Lemma \ref{lema3}, the sequence $\{(\rmd^k,\dl^k,\wnd^k)\}_{k=0}^\infty$ is well defined and contained in $X(\cdot)$. Consequently, $(\rmd^k,\dl^k,\wnd^k)$ satisfies the estimates \eqref{a4.1-3} and \eqref{a4.1-4}. Next, applying the standard energy method to the system satisfied by the difference $(\rmd^{k+1}-\rmd^k,\dl^{k+1}-\dl^k,\wnd^{k+1}-\wnd^k)$, we see that $\{(\rmd^k,\dl^k,\wnd^k)\}_{k=0}^\infty$ is the Cauchy sequence in $\mathfrak{Y}_2([0,T_*])\times\mathfrak{Y}_1([0,T_*])\times\big[\mathfrak{Y}_2([0,T_*])\cap H^1(0,T_*;H^1(\Omega))\big]$ for small enough $0<T_*\leq T$. In showing this fact, we obtain the estimates of the higher-order derivatives in the time variable $t$ and then rewrite them into those in the spatial variable $x$ by using the linear equations. Thus, there exists a function $(\rmd,\dl,\wnd)\in\mathfrak{Y}_2([0,T_*])\times\mathfrak{Y}_1([0,T_*])\times\big[\mathfrak{Y}_2([0,T_*])\cap H^1(0,T_*;H^1(\Omega))\big]$ such that $(\rmd^k,\dl^k,\wnd^k)\rightarrow(\rmd,\dl,\wnd)$ strongly in $\mathfrak{Y}_2([0,T_*])\times\mathfrak{Y}_1([0,T_*])\times\big[\mathfrak{Y}_2([0,T_*])\cap H^1(0,T_*;H^1(\Omega))\big]$ as $k\rightarrow\infty$. Moreover, it holds $(\rmd,\dl)\in\big[\mathfrak{Y}_4([0,T_*])\cap H^2(0,T_*;H^1(\Omega))\big]\times\big[\mathfrak{Y}_3([0,T_*])\cap H^2(0,T_*;L^2(\Omega))\big]$ by the standard argument (see \cite{NS08,NS09} for example). Define $\dws:=\Phi[\rmd^2]$ by the limit function $\rmd$ and the explicit formula \eqref{efep}, we see that $(\rmd,\dl,\wnd,\dws)$ is the desired solution to the IBVP \eqref{a2.2}$\sim$\eqref{a2.4}. Notice that this solution also satisfies \eqref{rpsc}.
\end{proof}

\subsection{A priori estimate}\label{Subsect.3.2}
To show the asymptotic stability of the stationary solution $(\rsmd,\sdl,\swnd,\sdws)$, we introduce the perturbations around the stationary solution $(\rsmd,\sdl,\swnd,\sdws)$ below
\begin{align}
\prmd(t,x)&:=\rmd(t,x)-\rsmd(x),&\pdl(t,x)&:=\dl(t,x)-\sdl,\notag\\
\pwnd(t,x)&:=\wnd(t,x)-\swnd(x),&\pdws(t,x)&:=\dws(t,x)-\sdws(x).\label{10.1}
\end{align}

Taking the difference between the transient system \eqref{1dfqhd} and the stationary system \eqref{1dsfqhd} via the following procedure
\begin{equation*}\label{10.0}
\eqref{1dfqhd1}-\eqref{1dsfqhd1},\quad\eqref{1dfqhd2}/\rmd^2-\eqref{1dsfqhd2}/\rsmd^2,\quad\eqref{1dfqhd3}-\eqref{1dsfqhd3},\quad\eqref{1dfqhd4}-\eqref{1dsfqhd4},
\end{equation*}
we can derive the perturbed system for the perturbations $(\prmd,\pdl,\pwnd,\pdws)$ as
\begin{subequations}\label{10.2}
\begin{numcases}{}
2(\prmd+\rsmd)\prmd_t+\pdl_x=0, \label{10.2a}\\
\bigg[\frac{\pdl+\sdl}{(\prmd+\rsmd)^2}\bigg]_t+\frac{1}{2}\Bigg\{\bigg[\frac{\pdl+\sdl}{(\prmd+\rsmd)^2}\bigg]^2-\bigg(\frac{\sdl}{\rsmd^2}\bigg)^2\Bigg\}_x+\pwnd\Big[\ln(\prmd+\rsmd)^2\Big]_x\notag\\
\quad+\swnd\Big[\ln(\prmd+\rsmd)^2-\ln\rsmd^2\Big]_x+\pwnd_x-\pc^2\bigg[\frac{(\prmd+\rsmd)_{xx}}{\prmd+\rsmd}-\frac{\rsmd_{xx}}{\rsmd}\bigg]_x\notag\\
\qquad\qquad\qquad\qquad\qquad\qquad\qquad\qquad\qquad\qquad=\pdws_x-\bigg[\frac{\pdl+\sdl}{(\prmd+\rsmd)^2}-\frac{\sdl}{\rsmd^2}\bigg], \label{10.2b}\\
(\prmd+\rsmd)^2\pwnd_t-\frac{2}{3}\pwnd_{xx}+\frac{2}{3}\swnd\pdl_x-\frac{4\sdl\swnd}{3\rsmd}\prmd_x\notag\\
\qquad\qquad\quad-\frac{\pc^2}{3}\Bigg\{(\prmd+\rsmd)^2\bigg[\frac{\pdl+\sdl}{(\prmd+\rsmd)^2}\bigg]_{xx}-\rsmd^2\bigg(\frac{\sdl}{\rsmd^2}\bigg)_{xx}\Bigg\}_x=H(t,x), \label{10.2c}\\
\pdws_{xx}=(\prmd+2\rsmd)\prmd,\label{10.2d}
\end{numcases}
\end{subequations}
where the right-side term of the perturbed energy equation \eqref{10.2c} is defined by
\begin{equation}\label{11.1}
H(t,x):=\frac{4\swnd(\prmd+\rsmd)_x}{3(\prmd+\rsmd)}\pdl-\rsmd^2\pwnd+H_1(t,x),
\end{equation}
and
\begin{align}
H_1(t,x):=&\frac{4\pwnd(\prmd+\rsmd)_x}{3(\prmd+\rsmd)}\pdl-(\prmd+2\rsmd)(\pwnd+\swnd-\theta_L)\prmd\notag\\
&-(\pwnd+\swnd)_x\pdl-\sdl\pwnd_x-\frac{2\pdl_x}{3}\pwnd+\frac{4\sdl\swnd(\prmd+\rsmd)_x}{3\rsmd^2}\prmd+\frac{4\sdl(\prmd+\rsmd)_x}{3(\prmd+\rsmd)}\pwnd\notag\\
&+\frac{4\sdl\swnd(\prmd+2\rsmd)(\prmd+\rsmd)_x}{3\rsmd^2(\prmd+\rsmd)}\prmd+\frac{\pdl+2\sdl}{3(\prmd+\rsmd)^2}\pdl-\frac{\sdl^2(\prmd+2\rsmd)}{3\rsmd^2(\prmd+\rsmd)^2}\prmd.\label{11.0}
\end{align}
The initial and the boundary conditions to the system \eqref{10.2} are derived from \eqref{ic}, \eqref{bc} and \eqref{sbc} as
\begin{gather}
\prmd(0,x)=\prmd_0(x):=\rmd_0(x)-\rsmd(x),\quad\pdl(0,x)=\pdl_0(x):=\dl_0(x)-\sdl,\notag\\
\pwnd(0,x)=\pwnd_0(x):=\wnd_0(x)-\swnd(x),\label{pic}
\end{gather}
and
\begin{subequations}\label{pbc}
\begin{gather}
\prmd(t,0)=\prmd(t,1)=0,\label{pbc1}\\
\prmd_{xx}(t,0)=\prmd_{xx}(t,1)=0,\label{pbc2}\\
\pwnd(t,0)=\pwnd(t,1)=0,\label{pbc3}\\
\pdws(t,0)=\pdws(t,1)=0.\label{pbc4}
\end{gather}
\end{subequations}

Theorem \ref{thm1} and Lemma \ref{lema2} ensure the local existence of the solution $(\prmd,\pdl,\pwnd,\pdws)$ to the IBVP \eqref{10.2}$\sim$\eqref{pbc}. It is summarized in the next corollary.
\begin{corollary}\label{cor1}
Suppose that $(\prmd_0,\pdl_0,\pwnd_0)\in H^4(\Omega)\times H^3(\Omega)\times H^2(\Omega)$ and $(\prmd_0+\rsmd,\pdl_0+\sdl,\pwnd_0+\swnd)$ satisfies \eqref{rcompatibility} and \eqref{ripsc}. Then there exists a constant $T_*>0$ such that the IBVP \eqref{10.2}$\sim$\eqref{pbc} has a unique solution $(\prmd,\pdl,\pwnd,\pdws)\in\big[\mathfrak{Y}_4([0,T_*])\cap H^2(0,T_*;H^1(\Omega))\big]\times\big[\mathfrak{Y}_3([0,T_*])\cap H^2(0,T_*;L^2(\Omega))\big]\times\big[\mathfrak{Y}_2([0,T_*])\cap H^1(0,T_*;H^1(\Omega))\big]\times\mathfrak{Y}_4^2([0,T_*])$ with the property that $(\prmd+\rsmd,\pdl+\sdl,\pwnd+\swnd)$ satisfies \eqref{rpsc}.
\end{corollary}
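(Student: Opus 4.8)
The plan is to \emph{transplant} the local existence already established for the original problem (Lemma \ref{lema2}) onto the perturbed problem via the affine change of unknowns \eqref{10.1}, using Theorem \ref{thm1} to supply the background stationary state. Concretely, once a stationary solution $(\rsmd,\sdl,\swnd,\sdws)$ is fixed, solving the IBVP \eqref{10.2}$\sim$\eqref{pbc} for $(\prmd,\pdl,\pwnd,\pdws)$ is exactly equivalent to solving the IBVP \eqref{a2.2}$\sim$\eqref{a2.4} for $(\rmd,\dl,\wnd,\dws):=(\prmd+\rsmd,\pdl+\sdl,\pwnd+\swnd,\pdws+\sdws)$: the system \eqref{10.2} is precisely \eqref{1dfqhd1}$-$\eqref{1dsfqhd1}, $\eqref{1dfqhd2}/\rmd^2-\eqref{1dsfqhd2}/\rsmd^2$, \eqref{1dfqhd3}$-$\eqref{1dsfqhd3}, \eqref{1dfqhd4}$-$\eqref{1dsfqhd4}, with nonlinearities \eqref{11.1} and \eqref{11.0}; since the stationary state is $t$-independent it contributes nothing to the time derivatives, and since the boundary data in \eqref{bc} and \eqref{sbc} coincide, subtracting produces exactly the homogeneous conditions \eqref{pbc}.

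First I would record that, for $\delta\leq\delta_3$ and $0<\pc\leq\pc_1$, Theorem \ref{thm1} furnishes $\rsmd=\sqrt{\smd}\in H^4(\Omega)$ (here $\smd\geq b^2>0$, so taking the square root preserves the $H^4$ regularity), $\sdl\in\mathbb{R}$, $\swnd\in H^3(\Omega)$, $\sdws\in C^2(\overline{\Omega})$, all satisfying \eqref{psc}. The hypotheses on $(\prmd_0,\pdl_0,\pwnd_0)$ then translate into $\rmd_0:=\prmd_0+\rsmd\in H^4(\Omega)$, $\dl_0:=\pdl_0+\sdl\in H^3(\Omega)$, $\wnd_0:=\pwnd_0+\swnd\in H^2(\Omega)$, and the assumed compatibility and positivity of $(\prmd_0+\rsmd,\pdl_0+\sdl,\pwnd_0+\swnd)$ are precisely conditions \eqref{rcompatibility} and \eqref{ripsc}. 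Hence Lemma \ref{lema2} applies and yields, on some $[0,T_*]$, a unique solution $(\rmd,\dl,\wnd,\dws)$ in the stated class satisfying \eqref{rpsc}. Setting $\prmd:=\rmd-\rsmd$, $\pdl:=\dl-\sdl$, $\pwnd:=\wnd-\swnd$, $\pdws:=\dws-\sdws$ then produces a solution of \eqref{10.2}$\sim$\eqref{pbc}, and uniqueness transfers back through the same (affine, hence bijective) correspondence.

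It remains to verify the regularity class, which is the only place demanding a little care. For $\prmd$, $\pdl$, $\pwnd$ the time-regularity is inherited verbatim from $\rmd$, $\dl$, $\wnd$ (the subtracted stationary quantities are $t$-independent) and the spatial regularity at each fixed time is unchanged since $\rsmd\in H^4$, $\sdl$ is constant and $\swnd\in H^3\subset H^2$; thus $\prmd\in\mathfrak{Y}_4([0,T_*])\cap H^2(0,T_*;H^1(\Omega))$, $\pdl\in\mathfrak{Y}_3([0,T_*])\cap H^2(0,T_*;L^2(\Omega))$, $\pwnd\in\mathfrak{Y}_2([0,T_*])\cap H^1(0,T_*;H^1(\Omega))$. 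For $\pdws$ one wants the \emph{better} space $\mathfrak{Y}_4^2([0,T_*])$, not merely $\mathfrak{Z}([0,T_*])$ as given for $\dws$ in Lemma \ref{lema2}; this gain comes from the cancellation of the merely continuous doping profile. Indeed, by \eqref{10.2d}, $\pdws_{xx}=(\prmd+2\rsmd)\prmd=\rmd^2-\rsmd^2$, and since $H^2(\Omega)$ and $H^4(\Omega)$ are Banach algebras while $\rmd\in C^0([0,T_*];H^4)\cap C^1([0,T_*];H^2)\cap C^2([0,T_*];L^2)$ (and $\rsmd\in H^4$ is $t$-independent), one gets $\pdws_{xx}\in C^0([0,T_*];H^4)\cap C^1([0,T_*];H^2)\cap C^2([0,T_*];L^2)$; combined with the homogeneous Dirichlet condition \eqref{pbc4}, one-dimensional elliptic regularity raises the spatial order by two, giving $\pdws\in C^0([0,T_*];H^6)\cap C^1([0,T_*];H^4)\cap C^2([0,T_*];H^2)=\mathfrak{Y}_4^2([0,T_*])$.

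In summary, Corollary \ref{cor1} is essentially bookkeeping layered on Theorem \ref{thm1} and Lemma \ref{lema2}; there is no genuine analytic obstacle, since the substantive work — the iteration scheme, the solvability of the linearized problem \eqref{a3.1}$\sim$\eqref{a3.3} by the Galerkin method, and the Cauchy-sequence estimate — is carried out in the proof of Lemma \ref{lema2} and in the Appendix. The one step that genuinely requires attention is the function-space algebra described above, in particular checking that the perturbed equations \eqref{10.2b}, \eqref{10.2c} make sense in the chosen class and that the two-derivative gain for $\pdws$ is legitimate via the cancellation of $D(x)$.
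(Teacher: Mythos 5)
Your proof is correct and follows exactly the route the paper intends: the paper does not prove Corollary~\ref{cor1} explicitly but simply asserts, in the sentence preceding it, that it is "ensured" by Theorem~\ref{thm1} (to supply the stationary background) and Lemma~\ref{lema2} (local existence for the unperturbed IBVP), and your affine change of variables is precisely that transfer. The one genuinely non-trivial point — the upgrade of $\pdws$ from the $\mathfrak{Z}$-class delivered by Lemma~\ref{lema2} to $\mathfrak{Y}_4^2$, which you obtain by noting that $D(x)$ cancels in $\pdws_{xx}=\rmd^2-\rsmd^2$ so that 1D elliptic regularity applies — is handled correctly and is the detail the paper silently relies on when it later claims $\pcdws{\pc}-\pcsdws{\pc}\in\mathfrak{Y}_4^2$ in Theorem~\ref{thm2}.
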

To show the global existence of the solution, the key step is to derive the a priori estimate \eqref{127.2} for the local solution in Corollary \ref{cor1}. The next three subsections are devoted to the proof of Proposition \ref{prop1}, where the following notations are frequently used.
\begin{equation}\label{16.1}
\pcN:=\sup_{t\in[0,T]}\pcn,\quad \pcn:=\|(\prmd,\pdl,\pwnd)(t)\|_2+\|(\pc\pd{x}{3}\prmd,\pc\pd{x}{3}\pdl,\pc^2\pd{x}{4}\prmd)(t)\|.
\end{equation}
\begin{proposition}\label{prop1}
Let $(\prmd,\pdl,\pwnd,\pdws)$ be a solution to the IBVP \eqref{10.2}$\sim$\eqref{pbc} which belongs to $\big[\mathfrak{Y}_4([0,T])\cap H^2(0,T;H^1(\Omega))\big]\times\big[\mathfrak{Y}_3([0,T])\cap H^2(0,T;L^2(\Omega))\big]\times\big[\mathfrak{Y}_2([0,T])\cap H^1(0,T;H^1(\Omega))\big]\times\mathfrak{Y}_4^2([0,T])$. Then there exist positive constants $\delta_0$, $C$ and $\gamma$ such that if $\pcN+\delta+\pc\leq\delta_0$, then the following estimate holds for $t\in[0,T]$,
\begin{equation}\label{127.2}
\pcn+\|\pdws(t)\|_4\leq Cn_\pc(0)e^{-\gamma t},
\end{equation}
where $C$ and $\gamma$ are two positive constants independent of $\delta$, $\pc$ and $T$.
\end{proposition}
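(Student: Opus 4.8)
The plan is to establish the uniform a priori estimate \eqref{127.2} by the refined energy method, proving an a priori bound of the form
\[
\pcn^2+\int_0^t\mathcal{D}(\tau)d\tau\leq C\pcn(0)^2+C(\pcN+\delta+\pc)\int_0^t\mathcal{D}(\tau)d\tau
\]
together with a differential inequality $\frac{d}{dt}\mathcal{E}(t)+c\mathcal{E}(t)\leq0$ for a suitable energy functional $\mathcal{E}(t)\sim\pcn(t)^2$, from which \eqref{127.2} follows by Gronwall once $\pcN+\delta+\pc\leq\delta_0$ is small enough and the elliptic estimate $\|\pdws(t)\|_4\leq C\|\prmd(t)\|_2+C\|\pc^2\pd{x}{4}\prmd(t)\|$ (from \eqref{10.2d}) is invoked. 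First I would fix notation: work with the perturbed system \eqref{10.2}, use the smallness of $\pcN$, $\delta$, $\pc$ to freeze all coefficients near the (uniformly bounded, by Theorem \ref{thm1}) stationary state, and record that $\rsmd$, $\swnd$, $S[\smd,\sdl,\swnd]$ are bounded above and below while $|\sdl|+\|\swnd-\theta_L\|_3\leq C\delta$ and $\|\pc\pd{x}{3}\rsmd\|+\|\pc^2\pd{x}{4}\rsmd\|\leq C$.

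The key structural observation, highlighted in the introduction, is that spatial derivatives of the perturbations are controlled by temporal derivatives: from the continuity equation \eqref{10.2a} one gets $\pdl_x\sim\prmd_t$, and then differentiating \eqref{10.2b} and \eqref{10.2c} in $x$ and solving for the top spatial derivatives one obtains an estimate like \eqref{46.5} bounding $\|(\prmd,\pdl,\pwnd)(t)\|_2+\|(\pc\pd{x}{3}\prmd,\pc\pd{x}{3}\pdl,\pc^2\pd{x}{4}\prmd)(t)\|$ by $\|(\prmd,\prmd_t,\prmd_{tt},\pdl_t,\pwnd_t)(t)\|$ plus lower-order terms. Therefore the plan reduces to estimating temporal derivatives $\pd{t}{k}(\prmd,\pdl,\pwnd)$ for $k=0,1,2$, which is advantageous because $\pd{t}{k}$ preserves the homogeneous Dirichlet boundary conditions \eqref{pbc}. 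For each $k$ I would apply $\pd{t}{k}$ to the system, then perform the basic energy procedure: multiply the ($k$-times time-differentiated) momentum equation by $\pd{t}{k}\pdl/(\prmd+\rsmd)^2$-type weights, the energy equation by $\pd{t}{k}\pwnd$, and use the continuity equation to convert $\pd{t}{k}\pdl_x$-terms; integrate by parts over $\Omega$. The dissipative terms $-\frac23\pwnd_{xx}$ and $-\pdl$ (relaxation) produce $\|\pd{t}{k}\pwnd_x\|^2$ and $\|\pd{t}{k}\pdl\|^2$; crucially the Bohm potential term $-\pc^2[\,\cdot\,]_x$ in \eqref{10.2b}, after integration by parts using \eqref{pbc2}, yields the quantum dissipation $\|\pc\pd{t}{k}\prmd_{xx}\|^2$ (cf.\ \eqref{115.1}), and the dispersive velocity term $-\frac{\pc^2}{3}[\,\cdot\,]_x$ in \eqref{10.2c} yields the extra dissipation $\|\pc\pd{t}{k}\prmd_{tx}\|^2$ (cf.\ \eqref{115.2}). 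All remaining terms — the quadratic nonlinearities in $H$, $H_1$, the coupling with $\pdws$ via the Poisson equation \eqref{10.2d}, and the commutators from differentiating variable coefficients — are cubic or are linear-times-small-coefficient, hence absorbable into the dissipation times $C(\pcN+\delta+\pc)$ after Cauchy–Schwarz, Young, Poincaré (legal because of homogeneous boundary data) and Sobolev inequalities. Summing over $k=0,1,2$, adding a small multiple of an interaction functional to recover $\|(\prmd,\pdl)(t)\|$ from the dissipation (the relaxation gives $\pdl$ directly, and $\prmd$ comes via $\prmd_t$ through continuity plus a $\pdws$-Poincaré argument), one closes the estimate.

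I expect the main obstacle to be the bookkeeping of the highest-order quantum terms, specifically making the dissipation rates $\|\pc\pd{t}{k}\prmd_{xx}\|$ and $\|\pc\pd{t}{k}\prmd_{tx}\|$ genuinely control the top-order quantities $\|\pc\pd{x}{3}\prmd\|$, $\|\pc^2\pd{x}{4}\prmd\|$, $\|\pc\pd{x}{3}\pdl\|$ appearing in $\pcn(t)$ without losing powers of $\pc$; this is where the interplay of the dissipative–dispersive effects and the spatial-vs-temporal derivative trade-off \eqref{46.5} must be arranged very carefully, and where the error terms generated by the non-flat stationary density $\rsmd$ (which only has the weighted bounds on $\pc\pd{x}{3}\rsmd$, $\pc^2\pd{x}{4}\rsmd$) threaten to be too large unless they are multiplied by a genuinely small factor. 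A secondary but delicate point is handling the term $\frac43\sdl\swnd\rsmd^{-1}\prmd_x$ and the various $\sdl$-weighted terms in \eqref{10.2b}–\eqref{10.2c}: these are linear in the perturbation with coefficient $O(\delta)$, so they are fine, but one must consistently track that no hidden $O(1)$ linear term survives. Once all time-derivative energy estimates are summed and the spatial estimates \eqref{46.5} and the elliptic bound for $\pdws$ are added with small weights, the resulting inequality $\frac{d}{dt}\mathcal{E}+c\mathcal{E}\le0$ with $\mathcal{E}\simeq\pcn^2$ gives \eqref{127.2}, and the constants $C$, $\gamma$ are manifestly independent of $\delta$, $\pc$, $T$ because every place $\pc$ enters it multiplies a quantity that is itself part of the controlled energy.
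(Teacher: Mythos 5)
Your overall strategy matches the paper's: reduce everything to time derivatives (preserving the homogeneous boundary data), prove a spatial-vs-temporal derivative trade-off (the paper's Lemma~\ref{lem5}, estimate~\eqref{46.5}), harvest the quantum dissipation rates $\|\pc\pd{t}{k}\prmd_{xx}\|$ and $\|\pc\pd{t}{k}\prmd_{tx}\|$ from the Bohm and dispersive-velocity terms, and close with a weighted sum of estimates plus the elliptic bound on $\pdws$. That is indeed the skeleton of Subsections~\ref{Subsect.3.2}--\ref{Subsect.3.5}.

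However, there is a concrete gap in the mechanism you describe for obtaining the quantum dissipation. Multiplying the $k$-times time-differentiated momentum equation~\eqref{10.2b} by a $\pd{t}{k}\pdl/(\prmd+\rsmd)^2$-type weight and converting $\pd{t}{k}\pdl_x$ via~\eqref{10.2a} does \emph{not} produce the dissipation $\|\pc\pd{t}{k}\prmd_{xx}\|^2$: after integrating the Bohm term by parts (using~\eqref{pbc2}) and substituting $\pdl_x=-2(\prmd+\rsmd)\prmd_t$, the leading quantum contribution is $-2\pc^2\int(\prmd+\rsmd)_{xx}\pd{t}{k}\prmd_t\,dx\approx \frac{d}{dt}\|\pc\pd{t}{k}\prmd_x\|^2$, i.e.\ a (good-signed) \emph{energy} term, not a dissipation rate. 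This is exactly what happens in the paper's basic estimate (Lemma~\ref{be}), where $\pc^2\prmd_x^2$ appears inside $\Xi(t)$ in~\eqref{25.2}, not inside $\Pi(t)$. To obtain $\|\pc\pd{t}{k}\prmd_{xx}\|^2$ and $\|\pc\pd{t}{k}\prmd_{tx}\|^2$ as dissipation rates, the paper first eliminates $\pdl$ entirely: it differentiates~\eqref{10.2b} in $x$, divides by $\rmd$, substitutes~\eqref{10.2a} and~\eqref{10.2d}, and arrives at a second-order-in-time, fourth-order-in-space equation~\eqref{12.3} in $\prmd$ alone. Testing~\eqref{12.3} with $\pd{t}{k}\prmd$ makes the term $\pc^2\pd{t}{k}\prmd_{xxxx}\cdot\pd{t}{k}\prmd$ integrate (via~\eqref{pbc2}) to $\|\pc\pd{t}{k}\prmd_{xx}\|^2$, giving~\eqref{115.1}; testing with $\pd{t}{k}\prmd_t$ gives~\eqref{115.2}, where the dissipation $\|\pc\pd{t}{k}\prmd_{tx}\|^2$ emerges not directly from the Bohm term (which now gives $\frac{d}{dt}\frac12\|\pc\pd{t}{k}\prmd_{xx}\|^2$) but from the cross term $-\int\rsmd\,\pd{t}{k}\pwnd_{xx}\,\pd{t}{k}\prmd_t$ after substituting the $\pwnd$-equation~\eqref{15.1}, wherein the dispersive-velocity contribution $\pd{x}{}\mathcal{V}_k$ contains $\frac{\pc^2}{3}\pd{t}{k}\pdl_{xx}$ which, after integration by parts and $\pdl_x\leftrightarrow\prmd_t$ conversion, yields $\int\rmd^2(\pc\pd{t}{k}\prmd_{tx})^2$ (see~\eqref{100.2}). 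So your high-level attribution of $\|\pc\pd{t}{k}\prmd_{tx}\|$ to the dispersive-velocity term is right, but the multiplier producing it and the chain of substitutions needed are more intricate than your sketch suggests and require the wave-equation reformulation you do not mention. Without that reformulation, the estimate does not close, because the spatial--temporal trade-off~\eqref{46.5} alone only bounds $\|\prmd_{xx}\|$ and $\|\pc\pd{x}{3}\prmd\|$ in terms of $A_1(t)+\|\pwnd_t(t)\|$, and these must already appear as dissipated quantities in the energy inequality to run Gronwall.

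A smaller remark: you suggest applying $\pd{t}{k}$ for $k=0,1,2$ uniformly to $(\prmd,\pdl,\pwnd)$. The paper applies $k=0,1$ to the \emph{second-order} equation~\eqref{12.3} and to~\eqref{15.1}, which effectively reaches $\prmd_{tt}$ but only $\pdl_t$ and $\pwnd_t$; going to $\pdl_{tt},\pwnd_{tt}$ is unnecessary and would demand more initial regularity than $H^4\times H^3\times H^2$. Your instinct about recovering $\|\prmd\|$ via a $\pdws$-Poincar\'e argument is correct and corresponds to the $-\alpha\pdws_x$-multiplier in~\eqref{22.1}--\eqref{23.1}, needed precisely because $\rsmd$ is non-flat.
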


Using the Sobolev inequality, the estimate \eqref{145.1}, the perturbed system \eqref{10.2} and the notation \eqref{16.1}, we can derive some frequently used estimates in the next lemma. Since the proof is straightforward and tedious, we omit the details.
\begin{lemma}
Under the same assumptions as in Proposition \ref{prop1}, the following estimates hold for $t\in[0,T]$,
\begin{gather}
|\rsmd|_1+\big|\big(\pc^{1/2}\rsmd_{xx},\pc^{3/2}\rsmd_{xxx}\big)\big|_0\leq C,\quad |\sdl|+|\swnd-\theta_L|_2\leq C\delta,\label{17.1}\\
|(\prmd,\pdl,\pwnd)(t)|_1+\big|\big(\pc^{1/2}\prmd_{xx},\pc^{1/2}\pdl_{xx},\pc^{3/2}\prmd_{xxx},\prmd_t,\pdl_t\big)(t)\big|_0\leq C\pcN,\label{16.3+16.5}\\
\|\pd{t}{i}\pdws(t)\|_2\leq C\bigg[\|\pd{t}{i}\prmd(t)\|+\frac{i(i-1)}{2}\pcN\|\prmd_t(t)\|\bigg],\quad i=0,1,2,\label{17.2}\\
\|\pdws_{tx}(t)\|\leq\|\pdl(t)\|,\quad\|\pdws(t)\|_4\leq C\|\prmd(t)\|_2,\label{17.3}\\
\|\pd{x}{l}\pdl_x(t)\|\leq C\|\prmd_t(t)\|_l,\quad\|\pd{x}{l}\prmd_t(t)\|\leq C\|\pdl_x(t)\|_l,\quad l=0,1,2,\label{32.4+44.2}\\
\|\pdl_{tx}(t)\|\leq C\|(\prmd_t,\prmd_{tt})(t)\|,\quad\|\pdl_{txx}(t)\|\leq C\|(\prmd_{tt},\prmd_{tx},\prmd_{ttx})(t)\|,\label{38.2c+38.2d}
\end{gather}
where the positive constant $C$ is independent of $\delta$, $\pc$ and $T$.
\end{lemma}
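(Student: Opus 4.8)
The plan is to establish \eqref{17.1}--\eqref{38.2c+38.2d} block by block, starting from the stationary bounds of Theorem \ref{thm1} (in particular \eqref{m145.1a}--\eqref{m145.1c}), the algebraic identities furnished by the perturbed system \eqref{10.2a}--\eqref{10.2d}, and the one-dimensional Sobolev and Gagliardo--Nirenberg inequalities, keeping careful track of the powers of $\varepsilon$ at each step. The governing principle is that $\tilde w=\sqrt{\tilde n}$ and the perturbation $\psi$ enjoy uniform-in-$\varepsilon$ $L^2$ bounds only up to second order, so any $L^\infty$ control on a second (resp.\ third) $x$-derivative must be purchased with a factor $\varepsilon^{1/2}$ (resp.\ $\varepsilon^{3/2}$) through $|f|_0\le C\|f\|^{1/2}\|f_x\|^{1/2}$, balancing a uniformly bounded $L^2$ norm against an $O(\varepsilon^{-1})$ or $O(\varepsilon^{-2})$ norm of one more derivative. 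With this in mind, \eqref{17.1} is quick: $|\tilde w|_1\le C$ follows from $\|\tilde n\|_2\le C$, the embedding $H^2\hookrightarrow C^1$ and $b\le\tilde w\le B$; the chain rule turns $\|(\varepsilon\partial_x^3\tilde n,\varepsilon^2\partial_x^4\tilde n)\|\le C$ into the analogous weighted $L^2$ bounds for $\tilde w$ (the product terms carrying spare $\varepsilon$-factors), whence $|(\varepsilon^{1/2}\tilde w_{xx},\varepsilon^{3/2}\tilde w_{xxx})|_0\le C$ by the interpolation inequality just described; and $|\tilde j|+|\tilde\theta-\theta_L|_2\le C\delta$ is immediate from \eqref{m145.1c} and $H^3\hookrightarrow C^2$.

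For \eqref{16.3+16.5} I would argue likewise: $n_\varepsilon(t)$ bounds $\|(\psi,\eta,\chi)\|_2$ and $\|(\varepsilon\psi_{xxx},\varepsilon\eta_{xxx},\varepsilon^2\psi_{xxxx})\|$, so $|(\psi,\eta,\chi)|_1\le Cn_\varepsilon(t)$ and $|(\varepsilon^{1/2}\psi_{xx},\varepsilon^{1/2}\eta_{xx},\varepsilon^{3/2}\psi_{xxx})|_0\le Cn_\varepsilon(t)$ by the same interpolation. The bound on $\psi_t$ is read off the continuity relation \eqref{10.2a}, $\psi_t=-\eta_x/[2(\psi+\tilde w)]$, using $\psi+\tilde w\ge b/2$ (valid once $n_\varepsilon(t)+\delta+\varepsilon$ is small). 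For $\eta_t$ I would solve the momentum equation \eqref{10.2b} for $\eta_t$: once the bound on $\psi_t$ and $|\sigma_x|_0\le C\|\sigma\|_2\le C\|\psi\|$ (which follows from \eqref{10.2d} and \eqref{pbc4} alone) are in hand, every term there is $O(n_\varepsilon(t))$, provided the quantum bracket is first recast in its difference form $\varepsilon^2\big[(\psi_{xx}\tilde w-\tilde w_{xx}\psi)/((\psi+\tilde w)\tilde w)\big]_x$, so that each derivative of $\tilde w$ appears multiplied by $\psi$ or $\psi_x$ while each top-order derivative of $\psi$ carries its spare $\varepsilon^{3/2}$.

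The identities \eqref{32.4+44.2} and \eqref{38.2c+38.2d} are then direct consequences of $\eta_x=-2(\psi+\tilde w)\psi_t$ from \eqref{10.2a}. Since $\psi+\tilde w$ is bounded above and below, with its first $x$-derivative bounded in $L^\infty$ and its second in $L^2$ (all uniformly in $\varepsilon$), Leibniz together with the one-dimensional Moser product estimates gives $\|\partial_x^l\eta_x\|\le C\|\psi_t\|_l$ and, after dividing by $\psi+\tilde w$, $\|\partial_x^l\psi_t\|\le C\|\eta_x\|_l$ for $l=0,1,2$; the point is that differentiating this identity up to twice in $x$ never reaches $\tilde w_{xxx}$, so no $\varepsilon$-weight intervenes. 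Differentiating $\eta_x=-2(\psi+\tilde w)\psi_t$ once and twice in $t$ (and, for the second, once more in $x$) and using $|\psi_t|_0+|(\psi+\tilde w)_x|_0\le C$ produces \eqref{38.2c+38.2d}.

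For the Poisson estimates \eqref{17.2}--\eqref{17.3} I would work from \eqref{10.2d}, $\sigma_{xx}=(\psi+2\tilde w)\psi$, and the homogeneous Dirichlet condition \eqref{pbc4}: the elliptic estimate gives $\|\sigma\|_2\le C\|\psi\|$ and, after two $x$-differentiations together with $\|\tilde w_{xx}\|\le C$, $\|\sigma\|_4\le C\|\psi\|_2$. Differentiating the Poisson equation in $t$ and using \eqref{10.2a} to replace $2(\psi+\tilde w)\psi_t$ by $-\eta_x$ yields $\sigma_{txx}=-\eta_x$ with $\sigma_t(t,0)=\sigma_t(t,1)=0$, so integration gives $\sigma_{tx}=-\eta+\int_0^1\eta\,dy$ and hence $\|\sigma_{tx}\|^2=\|\eta\|^2-\big(\int_0^1\eta\,dy\big)^2\le\|\eta\|^2$, the sharp bound of \eqref{17.3}, together with $\|\sigma_t\|_2\le C\|\psi_t\|$; one further $t$-derivative gives $\sigma_{ttxx}=2\psi_t^2+2(\psi+\tilde w)\psi_{tt}$, whence $\|\sigma_{tt}\|_2\le C\|\psi_{tt}\|+C|\psi_t|_0\|\psi_t\|$, which by \eqref{16.3+16.5} matches \eqref{17.2} with the factor $i(i-1)/2$. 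I expect the only real obstacle to be the uniform-in-$\varepsilon$ accounting of derivative weights, backed by the systematic use of the difference structure of the quantum terms; once these are respected, each inequality collapses to routine applications of the product, interpolation and elliptic estimates, which is why the detailed computations can be omitted.
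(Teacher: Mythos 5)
The paper states this lemma with ``Since the proof is straightforward and tedious, we omit the details,'' so there is no reference proof to compare against; your argument supplies exactly the kind of proof the authors evidently had in mind, and I find it sound. You correctly track the $\varepsilon$-weights by interpolating a uniformly bounded $L^2$ norm against one more (unweighted) derivative that costs $\varepsilon^{-1}$ or $\varepsilon^{-2}$; the difference form $\varepsilon^2\big[(\tilde w\psi_{xx}-\psi\tilde w_{xx})/((\psi+\tilde w)\tilde w)\big]_x$ of the Bohm bracket is indeed the step that makes the $\eta_t$ bound close, since without it the bare $\varepsilon^2\tilde w_{xxx}$ would not pick up the factor $\psi$ needed to produce $N_\varepsilon(T)$; and the observation that $\eta_x=-2(\psi+\tilde w)\psi_t$ never sees a third $x$-derivative of $\tilde w$ when differentiated twice in $x$ is what keeps \eqref{32.4+44.2} free of $\varepsilon$-weights. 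The Poisson estimates also check out, in particular the exact identity $\sigma_{tx}=-\eta+\int_0^1\eta\,dy$ and the resulting $\|\sigma_{tx}\|\le\|\eta\|$, and the $i=2$ case correctly produces the $i(i-1)/2$ factor from the quadratic term $2\psi_t^2$. Two small remarks: (i) several of your bounds (e.g.\ $\|\psi_{xx}\psi_t\|\le C\|\psi_t\|_1$) tacitly absorb a factor $N_\varepsilon(T)$ into the constant, which is legitimate only under the smallness hypothesis $N_\varepsilon(T)+\delta+\varepsilon\le\delta_0$ of Proposition \ref{prop1}; this should be said explicitly since the lemma claims $C$ independent of $\delta,\varepsilon,T$. (ii) On the bounded interval the interpolation reads $|f|_0\le C(\|f\|^{1/2}\|f_x\|^{1/2}+\|f\|)$; the extra $\|f\|$ term is harmless here because $\varepsilon\le1$, but it is worth noting.
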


\subsection{Basic estimate}\label{Subsect.3.3}
In this subsection, we derive the following basic estimate.
\begin{lemma}\label{be}
Suppose the same assumptions as in Proposition \ref{prop1} hold. Then there exist positive constants $\delta_0$, $c$ and $C$ such that if $\pcN+\delta+\pc\leq\delta_0$, it holds that for $t\in[0,T]$,
\begin{equation}\label{25.1}
\frac{d}{dt}\Xi(t)+c\Pi(t)\leq C\Gamma(t),
\end{equation}
where
\begin{equation}\label{25.2}
\Xi(t):=\int_0^1\bigg\{\bigg[\frac{1}{2\rmd^2}\pdl^2+\swnd\rmd^2\Psi\bigg(\frac{\rsmd^2}{\rmd^2}\bigg)+\pc^2\prmd_x^2+\frac{1}{2}\pdws_x^2\bigg]+\frac{3\rmd^2}{4\swnd}\pwnd^2-\alpha\bigg(\frac{\dl}{\rmd^2}-\frac{\sdl}{\rsmd^2}\bigg)\pdws_x\bigg\}dx,
\end{equation}
here $\alpha\in(0,1)$ is a small constant which will be determined later and $\Psi(s):=s-1-\ln s$ for $s>0$,
\begin{equation}\label{D} 
\Pi(t):=\|(\prmd,\pc\prmd_x,\pdl,\pwnd,\pwnd_x)(t)\|^2,
\end{equation}
and
\begin{equation}\label{R} 
\Gamma(t):=\big(\pcN+\delta+\pc^{3/2}\big)\|(\prmd_x,\pdl_x)(t)\|^2+\pc^3\|(\prmd_{xx},\pdl_{xx})(t)\|^2.
\end{equation}
Furthermore, if $\alpha$ is small enough, then the following equivalent relation holds true,
\begin{equation}\label{25.3}
c\|(\prmd,\pdl,\pwnd,\pc\prmd_x)(t)\|^2\leq\Xi(t)\leq C\|(\prmd,\pdl,\pwnd,\pc\prmd_x)(t)\|^2,
\end{equation}
where the constants $c$ and $C$ are independent of $\delta$, $\pc$ and $T$.
\end{lemma}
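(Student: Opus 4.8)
The plan is to run a standard energy argument on the perturbed system \eqref{10.2}, but organized around the weighted functional $\Xi(t)$, whose structure is dictated by the relative-entropy pair $\swnd\rmd^2\Psi(\rsmd^2/\rmd^2)$ for the mass/momentum block, the quadratic weight $\tfrac{3\rmd^2}{4\swnd}\pwnd^2$ for the temperature, the Bohm term $\pc^2\prmd_x^2$, the electrostatic term $\tfrac12\pdws_x^2$, and the cross term $-\alpha(\dl/\rmd^2-\sdl/\rsmd^2)\pdws_x$ that will generate the missing dissipation in $\prmd$. First I would compute $\frac{d}{dt}\Xi(t)$ term by term. For the momentum-type contribution I multiply \eqref{10.2b} by $\pdl/\rmd^2$ (equivalently by $\dl/\rmd^2-\sdl/\rsmd^2$) and use the mass equation \eqref{10.2a} to convert time derivatives of the density weight; the convex function $\Psi$ is chosen precisely so that this produces $\frac{d}{dt}\int \swnd\rmd^2\Psi(\rsmd^2/\rmd^2)\,dx$ up to lower-order commutators involving $\swnd_t$, which are controlled by $\|\pwnd_t\|$ and hence (via \eqref{32.4+44.2}, \eqref{38.2c+38.2d}) by the right-hand side $\Gamma$. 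The Bohm term $-\pc^2[\,\cdot\,]_x$ integrates by parts against $\pdl/\rmd^2$; using $\pdl_x=-2\rmd\prmd_t$ from \eqref{10.2a} and the vanishing-bohmenian boundary condition \eqref{pbc2}, it yields $\frac{d}{dt}\int \pc^2\prmd_x^2\,dx$ plus terms absorbed into $\Gamma$ (these carry the $\pc^{3/2}$ and $\pc^3$ weights). The Poisson equation \eqref{10.2d}, together with $\pdws_{tx}$ and integration by parts, supplies $\frac{d}{dt}\int\tfrac12\pdws_x^2\,dx$ and the dissipation $\int(\prmd+2\rsmd)\prmd\,(\dl/\rmd^2-\sdl/\rsmd^2)\,dx$, which will be handled by the cross term.

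Next I treat the energy equation \eqref{10.2c}: multiply by $\tfrac{3}{2\swnd}\pwnd$, integrate by parts in the $-\tfrac23\pwnd_{xx}$ term to get the dissipation $\|\pwnd_x\|^2$, and use the mass equation again to turn the time derivative of the weight into $\frac{d}{dt}\int\tfrac{3\rmd^2}{4\swnd}\pwnd^2\,dx$. The dispersive velocity term $-\tfrac{\pc^2}{3}\{\cdots\}_x$ is the delicate one here: after integration by parts it pairs with $(\pwnd/\swnd)_x$, and one must recognize that $[\,(\pdl+\sdl)/(\prmd+\rsmd)^2\,]_{xx}$ expands, modulo $\pc$-uniformly bounded coefficients and lower-order pieces, as $-\prmd_{txx}/\rmd$-type expressions via \eqref{10.2a}; all resulting terms should be bounded by $\big(\pcN+\delta\big)\|\pdl_x\|^2$ or by the $\pc^3\|\pdl_{xx}\|^2$ slot in $\Gamma$, using \eqref{16.3+16.5}, \eqref{17.1} and Cauchy–Schwarz with small weights. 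The inhomogeneity $H(t,x)$ splits as in \eqref{11.1}: the linear-looking part $\tfrac{4\swnd(\prmd+\rsmd)_x}{3(\prmd+\rsmd)}\pdl-\rsmd^2\pwnd$ is partly absorbed into the good $\|\pdl\|^2+\|\pwnd\|^2$ dissipation and partly treated by Young's inequality against $\|\pwnd_x\|^2$ (using $|\rsmd_x|\le C$ from \eqref{17.1} and Poincaré for $\pwnd$), while $H_1$ is genuinely quadratic, so every term carries a factor $\pcN$ or $\delta$ and lands in $\Gamma$.

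Then I differentiate the cross term $-\alpha\int(\dl/\rmd^2-\sdl/\rsmd^2)\pdws_x\,dx$: using \eqref{10.2b} to replace $(\dl/\rmd^2)_t$ and \eqref{10.2a} for $\pdws_{tx}=-\pdl=-(\dl/\rmd^2)\rmd^2+\sdl$... more precisely $\pdws_{tx}$ equals $-\pdl$ up to the Poisson relation, this produces the crucial positive term $\alpha\,c\,\|\prmd\|^2$ (coming from the factor $\pwnd_x+\swnd[\ln(\prmd+\rsmd)^2-\ln\rsmd^2]_x$ paired against $\pdws_x$ and the relaxation term $-[\dl/\rmd^2-\sdl/\rsmd^2]$ paired against $\pdws_x$, combined with $\pdws_{xx}=(\prmd+2\rsmd)\prmd$), at the cost of terms $\le C\alpha\|(\pdl,\pwnd_x,\pc\prmd_{xx})\|^2$ and $\le C\alpha\|\pdl\|\,\|\prmd\|$. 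Summing all identities, the dissipation from the main blocks gives $c_0\|(\pdl,\pwnd,\pwnd_x)\|^2$ and $\alpha c_1\|\prmd\|^2$; I then recover $\|\pc\prmd_x\|^2$: since $\prmd$ vanishes at the boundary and $\pc^2\prmd_{xx}=\rmd\,h$-type bounded quantity is not directly available here, instead I use the already-present $\pc^2\prmd_x^2$ inside $\Xi$ together with the equation \eqref{103.2a}/\eqref{10.2b} structure — more simply, the Bohm dissipation appears as $\pc^2\|\prmd_{tx}\|^2$-free but the term $\|(\prmd,\pc\prmd_x,\pdl,\pwnd,\pwnd_x)\|^2$ in $\Pi$ is obtained by combining $\alpha c_1\|\prmd\|^2$ with a Poincaré-type interpolation $\|\pc\prmd_x\|^2\le \varepsilon\|\prmd\|^2+C\|\pdl_x\|^2$... \emph{the main obstacle} is exactly this: choosing $\alpha$ small enough that the cross-term's bad contributions $C\alpha\|(\pdl,\pwnd_x,\pc\prmd_{xx})\|^2$ are swallowed by $c_0$, yet keeping $\alpha c_1\|\prmd\|^2$ and the Bohm term strong enough to dominate, while simultaneously keeping all the dispersive-term remainders inside $\Gamma$ rather than inside $\Pi$; this is a delicate ordering of smallness ($\alpha$ fixed first, then $\pcN+\delta+\pc$ small). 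Finally, the equivalence \eqref{25.3} follows because $\Psi(s)\sim(s-1)^2$ near $s=1$ gives $\int\swnd\rmd^2\Psi(\rsmd^2/\rmd^2)\sim\|\prmd\|^2$ by the uniform bounds \eqref{145.1}, the terms $\tfrac{1}{2\rmd^2}\pdl^2$, $\tfrac{3\rmd^2}{4\swnd}\pwnd^2$, $\pc^2\prmd_x^2$ are manifestly equivalent to $\|\pdl\|^2$, $\|\pwnd\|^2$, $\|\pc\prmd_x\|^2$, the term $\tfrac12\pdws_x^2$ is $\le C\|\prmd\|^2$ by \eqref{17.3}, and the cross term is bounded by $C\alpha\|\pdl\|\,\|\prmd\|$, hence harmless for $\alpha\ll1$.
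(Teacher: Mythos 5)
Your overall architecture is the same as the paper's: you test the (already-normalized) momentum equation \eqref{10.2b} against the current perturbation, test the energy equation \eqref{10.2c} against $\tfrac{3}{2\swnd}\pwnd$, test \eqref{10.2b} against $-\pdws_x$ to pick up density dissipation, and combine the three with a small weight $\alpha$. That is exactly the paper's strategy (its equations \eqref{18.1}, \eqref{19.1}, \eqref{22.1}), and your remarks about the cancellation of the cross terms $\pm\frac{2\rsmd_x}{\rmd}\pdl\pwnd\pm\pdl\pwnd_x$, about $H_1$ being genuinely quadratic, and about the ordering $\pcN+\delta+\pc\ll\alpha\ll1$ are all consistent with the paper's calculation. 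The equivalence argument \eqref{25.3} is also the same.

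The genuine gap is in how you recover the $\|\pc\prmd_x\|^2$ term in $\Pi$. You treat the Bohm potential in the $-\pdws_x$--tested identity as producing a \emph{bad} remainder $C\alpha\|\pc\prmd_{xx}\|^2$, and then you try to conjure $\|\pc\prmd_x\|^2$ back by some interpolation of the form $\|\pc\prmd_x\|^2\le\varepsilon\|\prmd\|^2+C\|\pdl_x\|^2$. This cannot work: $\|\pdl_x\|^2$ is not a dissipated quantity in $\Pi$, only $(\pcN+\delta+\pc^{3/2})\|\pdl_x\|^2$ is allowed on the right-hand side $\Gamma$, and in any case the inequality as written goes in the wrong direction (it upper-bounds $\|\pc\prmd_x\|^2$, it does not show the dissipation \emph{contains} $\|\pc\prmd_x\|^2$). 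What actually happens — and what the paper exploits in its identity \eqref{22.1} — is that when you pair the Bohm term $-\pc^2\big[\tfrac{(\prmd+\rsmd)_{xx}}{\prmd+\rsmd}-\tfrac{\rsmd_{xx}}{\rsmd}\big]_x$ against $-\pdws_x$, integrate by parts and substitute the Poisson relation $\pdws_{xx}=(\prmd+2\rsmd)\prmd$, the leading contribution is the \emph{signed} term $\int\tfrac{\rmd+\rsmd}{\rmd}\pc^2\prmd_x^2\,dx$, i.e.\ a positive multiple of $\|\pc\prmd_x\|^2$ (after a further integration by parts in $x$; the boundary terms vanish thanks to \eqref{pbc1}, \eqref{pbc2}, \eqref{pbc4} and the stationary condition $\rsmd_{xx}(0)=\rsmd_{xx}(1)=0$). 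The remainder is collected in $R_6$ and never involves $\pc\prmd_{xx}$; it only involves $\pdws_x,\prmd,\prmd_x,\pdl,\pdl_x,\pwnd,\pwnd_x,\pdws_{tx}$ and $\pc\prmd_x$, all of which are absorbable for small $\alpha,\mu$ and $\pcN+\delta+\pc$. So the key idea you are missing is that the Bohm potential in this third multiplier step is a \emph{source of dissipation} for $\pc\prmd_x$, not a remainder to be dominated. Once this sign is observed, the lower bound $\eqref{23.0}$ gives $c\|(\prmd,\pc\prmd_x)\|^2$ directly, and the $\alpha$-weighted sum closes without any interpolation.
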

\begin{proof}
Firstly, multiplying the equation \eqref{10.2b} by $\pdl$ and applying Leibniz formula to the resultant equality together with the equations \eqref{10.2a} and \eqref{10.2d}, we obtain
\begin{equation}\label{18.1}
\pd{t}{}\bigg[\frac{1}{2\rmd^2}\pdl^2+\swnd\rmd^2\Psi\bigg(\frac{\rsmd^2}{\rmd^2}\bigg)+\pc^2\prmd_x^2+\frac{1}{2}\pdws_x^2\bigg]+\frac{1}{\rsmd^2}\pdl^2+\frac{2\rsmd_x}{\rmd}\pwnd\pdl+\pwnd_x\pdl=\pd{x}{}R_{1}(t,x)+R_2(t,x),
\end{equation}
where
\begin{equation}\label{18.3a}
R_1(t,x):=\pdws\pdws_{tx}+\pdws\pdl-\swnd\Big(\ln\rmd^2-\ln\rsmd^2\Big)\pdl+\pc^2\bigg[\bigg(\frac{\rmd_{xx}}{\rmd}-\frac{\rsmd_{xx}}{\rsmd}\bigg)\pdl+2\prmd_t\prmd_x\bigg],
\end{equation}
\begin{align}
R_2(t,x):=&-\frac{\pdl+2\sdl}{2\rmd^4}\pdl\pdl_x-\frac{1}{2}\Bigg[\bigg(\frac{\dl}{\rmd^2}\bigg)^2-\bigg(\frac{\sdl}{\rsmd^2}\bigg)^2\Bigg]_x\pdl\notag\\
&-\frac{2\prmd_x}{\rmd}\pwnd\pdl+\swnd_x\Big(\ln\rmd^2-\ln\rsmd^2\Big)\pdl+\frac{(\rmd+\rsmd)\dl}{\rmd^2\rsmd^2}\prmd\pdl+\pc^2\frac{\rsmd_{xx}}{\rsmd\rmd}\prmd\pdl_x.\label{18.3b}
\end{align}
Applying the estimates \eqref{17.1}$\sim$\eqref{16.3+16.5} and Cauchy-Schwarz inequality to \eqref{18.3b}, we have the following pointwise estimate
\begin{equation}\label{18.4b}
R_2(t,x)\leq C\big(\pcN+\delta+\pc^{3/2}\big)|(\prmd,\pdl,\prmd_x,\pdl_x)(t,x)|^2.
\end{equation}

In addition, multiplying the equation \eqref{10.2c} by $3\pwnd/(2\swnd)$ and applying Leibniz formula to the resultant equality, we obtain
\begin{equation}\label{19.1}
\pd{t}{}\bigg(\frac{3\rmd^2}{4\swnd}\pwnd^2\bigg)+\frac{3\rsmd^2}{2\swnd}\pwnd^2+\frac{1}{\swnd}\pwnd_x^2-\frac{2\rsmd_x}{\rmd}\pdl\pwnd-\pdl\pwnd_x=\pd{x}{}R_3(t,x)+R_4(t,x),
\end{equation}
where
\begin{equation}\label{19.2a}
R_3(t,x):=\frac{1}{\swnd}\pwnd\pwnd_x-\pdl\pwnd+\frac{\pc^2}{2\swnd}\bigg[\rmd^2\bigg(\frac{\dl}{\rmd^2}\bigg)_{xx}-\rsmd^2\bigg(\frac{\sdl}{\rsmd^2}\bigg)_{xx}\bigg]\pwnd,
\end{equation}
\begin{align}
R_4(t,x):=&\frac{2\prmd_x}{\rmd}\pdl\pwnd-\frac{3\pdl_x}{4\swnd}\pwnd^2+\frac{\swnd_x}{\swnd^2}\pwnd\pwnd_x+\frac{2\sdl}{\rsmd}\prmd_x\pwnd+\frac{3}{2\swnd}H_1(t,x)\pwnd\notag\\
&+\frac{\pc^2\swnd_x}{2\swnd^2}\bigg[\rmd^2\bigg(\frac{\dl}{\rmd^2}\bigg)_{xx}-\rsmd^2\bigg(\frac{\sdl}{\rsmd^2}\bigg)_{xx}\bigg]\pwnd-\frac{\pc^2}{2\swnd}\bigg[\rmd^2\bigg(\frac{\dl}{\rmd^2}\bigg)_{xx}-\rsmd^2\bigg(\frac{\sdl}{\rsmd^2}\bigg)_{xx}\bigg]\pwnd_x.\label{19.2b}
\end{align}
Applying the estimates \eqref{17.1}$\sim$\eqref{16.3+16.5} and Cauchy-Schwarz inequality to \eqref{19.2b} together with \eqref{32.1+33.1} for $k=0$, we have the following pointwise estimate
\begin{align}
R_4(t,x)\leq&C\big(\pcN+\delta+\pc^3\big)|(\prmd,\pdl,\prmd_x,\pdl_x)(t,x)|^2\notag\\
&+C\pc|(\pwnd,\pwnd_x)(t,x)|^2+C\pc^3|(\prmd_{xx},\pdl_{xx})(t,x)|^2.\label{20.2b}
\end{align}

Since the stationary density $\rsmd$ is non-flat, see \eqref{17.1}, we have to capture the dissipation rate of the perturbed density $\prmd$ in establishing the basic estimate. To this end, multiplying the equation \eqref{10.2b} by $-\pdws_x$ and applying Leibniz formula to the resultant equality, we obtain
\begin{equation}\label{22.1}
-\pd{t}{}\bigg[\bigg(\frac{\dl}{\rmd^2}-\frac{\sdl}{\rsmd^2}\bigg)\pdws_x\bigg]+\swnd(\rmd+\rsmd)\Big(\ln\rmd^2-\ln\rsmd^2\Big)\prmd+\frac{\rmd+\rsmd}{\rmd}\pc^2\prmd_x^2+\pdws_x^2=\pd{x}{}R_5(t,x)+R_6(t,x),
\end{equation}
where
\begin{equation}\label{22.2a}
R_5(t,x):=\swnd\Big(\ln\rmd^2-\ln\rsmd^2\Big)\pdws_x-\pc^2\bigg(\frac{\rmd_{xx}}{\rmd}-\frac{\rsmd_{xx}}{\rsmd}\bigg)\pdws_x+\pc^2\frac{\rmd+\rsmd}{\rmd}\prmd_x\prmd,
\end{equation}
\begin{align} 
R_6:=&-\bigg(\frac{\dl}{\rmd^2}-\frac{\sdl}{\rsmd^2}\bigg)\pdws_{tx}+\frac{1}{2}\Bigg[\bigg(\frac{\dl}{\rmd^2}\bigg)^2-\bigg(\frac{\sdl}{\rsmd^2}\bigg)^2\Bigg]_x\pdws_x+\Big(\ln\rmd^2\Big)_x\pwnd\pdws_x\notag\\
&-\swnd_x\Big(\ln\rmd^2-\ln\rsmd^2\Big)\pdws_x+\pwnd_x\pdws_x+\bigg(\frac{\dl}{\rmd^2}-\frac{\sdl}{\rsmd^2}\bigg)\pdws_x+\pc^2\frac{(\rmd+\rsmd)\prmd}{\rmd^2}\prmd_x^2\notag\\
&-\pc^2\frac{\rsmd_{xx}(\rmd+\rsmd)}{\rmd\rsmd}\prmd^2+\pc^2\frac{(\rmd+\rsmd)\rsmd_x}{\rmd^2}\prmd_x\prmd-\pc^2\frac{(\rmd+\rsmd)_x}{\rmd}\prmd_x\prmd.\label{22.2b}
\end{align}
Similarly, we also have the pointwise estimate
\begin{align}
R_6(t,x)\leq&(\mu+C\delta)|\pdws_x(t,x)|^2+C\big(\pcN+\delta\big)|(\prmd,\prmd_x,\pdl_x)(t,x)|^2\notag\\
&+C_\mu|(\pwnd,\pwnd_x,\pdl)(t,x)|^2+C|(\pdl,\pdws_{tx})(t,x)|^2+C\big(\pcN+\pc\big)|(\prmd,\pc\prmd_x)(t,x)|^2,\label{22.3b}
\end{align}
In particular, applying the mean value theorem to the second term on the left-side of \eqref{22.1}, and using the estimates \eqref{17.1}$\sim$\eqref{16.3+16.5}, the second and the third terms on the left-side of \eqref{22.1} can be further treated as
\begin{equation}\label{23.0}
\text{(the 2nd and 3rd terms)}\geq c|(\prmd,\pc\prmd_x)(t,x)|^2,
\end{equation}
and the quantity in the first term on the left-side of \eqref{22.1} can be estimated as
\begin{equation}\label{23.2b}
\bigg|-\bigg(\frac{\dl}{\rmd^2}-\frac{\sdl}{\rsmd^2}\bigg)\pdws_x\bigg|\leq C|(\prmd,\pdl,\pdws_x)(t,x)|^2.
\end{equation}
Substituting \eqref{22.3b} and \eqref{23.0} into \eqref{22.1}, letting $\mu$ and $\pcN+\delta+\pc$ be small enough, we have
\begin{align}
-\pd{t}{}\bigg[\bigg(\frac{\dl}{\rmd^2}-\frac{\sdl}{\rsmd^2}\bigg)\pdws_x\bigg]+c|(\prmd,\pc\prmd_x)(t,x)|^2\leq&\pd{x}{}R_5(t,x)+C|(\pdws_{tx},\pdl,\pwnd,\pwnd_x)(t,x)|^2\notag\\
&+C\big(\pcN+\delta\big)|(\prmd_x,\pdl_x)(t,x)|^2.\label{23.1}
\end{align}

Finally, from the following procedure
\begin{equation*}
\int_0^1\Big[\eqref{18.1}+\eqref{19.1}+\alpha\eqref{23.1}\Big]dx,
\end{equation*}
where $\alpha$ is an arbitrary positive constant to be determined, we obtain
\begin{multline}\label{24.2}
\frac{d}{dt}\Xi(t)+\int_0^1\bigg(\frac{1}{\rsmd^2}\pdl^2+\frac{3\rsmd^2}{2\swnd}\pwnd^2+\frac{1}{\swnd}\pwnd_x^2\bigg)dx+c\alpha\|(\prmd,\pc\prmd_x)(t)\|^2\\
\leq\int_0^1\Big[R_2(t,x)+R_4(t,x)\Big]dx+C\alpha\|(\pdws_{tx},\pdl,\pwnd,\pwnd_x)(t)\|^2+C\alpha\big(\pcN+\delta\big)\|(\prmd_x,\pdl_x)(t)\|^2,
\end{multline}
where we have used the fact
\begin{equation}\label{18.4a+20.1b+23.2a}
\int_0^1\pd{x}{}\Big[R_1(t,x)+R_3(t,x)+\alpha R_5(t,x)\Big]=0,
\end{equation}
which follows from the boundary conditions \eqref{pbc}. Applying the estimates \eqref{17.1}, \eqref{17.3}, \eqref{18.4b}, \eqref{20.2b} and \eqref{23.2b} to the inequality \eqref{24.2}, and then letting $\alpha$ and $\pcN+\delta+\pc$ be sufficiently small. These procedures yield the desired estimates \eqref{25.1} and \eqref{25.3}.
\end{proof}

\subsection{Higher order estimates}\label{Subsect.3.4}
This subsection is devoted to the derivation of the higher order estimates. In order to use the homogeneous boundary condition \eqref{pbc}, we first establish the estimates of the temporal derivatives of the perturbations $(\prmd,\pdl,\pwnd)$. And then, we find that the spatial derivatives of the perturbations $(\prmd,\pdl,\pwnd,\pdws)$ can be bounded by the temporal derivatives of the perturbations $(\prmd,\pdl,\pwnd)$ with the help of the special structure of the perturbed system \eqref{10.2}. To justify the above mentioned computations, we need to use the mollifier arguments with respect to the time variable $t$ because the regularity of the local solution $(\prmd,\pdl,\pwnd)$ is insufficient. However, we omit these arguments since they are standard.

It is convenient to intoduce the notations
\begin{gather*}\label{39.1}
A_{-1}(t):=\|(\prmd,\pdl,\pwnd,\pwnd_x)(t)\|,\\
A_k(t):=A_{-1}(t)+\sum_{i=0}^k\|(\pd{t}{i}\prmd_t,\pd{t}{i}\prmd_x,\pc\pd{t}{i}\prmd_{xx})(t)\|,\quad k=0,1.
\end{gather*}

Differentiating \eqref{1dfqhd2} with respect to $x$ and multiplying the result by $1/\rmd$. Similarly, differentiating \eqref{1dsfqhd2} with respect to $x$ and multiplying the result by $1/\rsmd$. Furthermore, taking the difference between the two resultant  equalities and substituting the equations \eqref{10.2a} and \eqref{10.2d} in the resultant equation. Then applying the operator $\pd{t}{k}$ for $k=0,1$ to the result, we obtain the equation
\begin{align}
2\pd{t}{k}\prmd_{tt}-&2\swnd\pd{t}{k}\prmd_{xx}+\pc^2\pd{t}{k}\prmd_{xxxx}+2\pd{t}{k}\prmd_t-\rsmd\pd{t}{k}\pwnd_{xx}-2\rsmd_{xx}\pd{t}{k}\pwnd\notag\\
=&\frac{2(\pdl+\sdl)}{(\prmd+\rsmd)^3}\pd{t}{k}\pdl_{xx}-\frac{2(\pdl+\sdl)^2}{(\prmd+\rsmd)^4}\pd{t}{k}\prmd_{xx}+2\pwnd\pd{t}{k}\prmd_{xx}+\prmd\pd{t}{k}\pwnd_{xx}\notag\\
&+\pc^2\frac{(k+1)\prmd_{xx}+2\rsmd_{xx}}{\prmd+\rsmd}\pd{t}{k}\prmd_{xx}+\pd{t}{k}P(t,x)+O_k(t,x),\quad k=0,1,\label{12.3}
\end{align}
where
\begin{align*}
P(t,x):=&-(\prmd+\rsmd)(\prmd+2\rsmd)\prmd-(\rsmd^2-D)\prmd-\frac{2(\prmd+\rsmd)_x^2(\pwnd+\swnd)}{(\prmd+\rsmd)\rsmd}\prmd\notag\\
&+\frac{2\rsmd_x^2}{\rsmd}\pwnd+4\rsmd_x\pwnd_x-2(\prmd+\rsmd)_x\pdws_x\notag\\ 
&+\swnd_{xx}\prmd+\frac{6(\prmd+\rsmd)_x^2(\pdl+\sdl)^2}{(\prmd+\rsmd)^5\rsmd^5}\big[\rsmd^5-(\prmd+\rsmd)^5\big]+\frac{6\rsmd_x^2(\pdl+2\sdl)}{\rsmd^5}\pdl\notag\\
&-\frac{2(\pdl+\sdl)^2\big[\rsmd^4-(\prmd+\rsmd)^4\big]}{(\prmd+\rsmd)^4\rsmd^4}\rsmd_{xx}-\frac{2(\pdl+2\sdl)\pdl}{\rsmd^4}\rsmd_{xx}-\frac{\pc^2\rsmd_{xx}^2}{(\prmd+\rsmd)\rsmd}\prmd\\ 
&+\frac{6(\prmd+2\rsmd)_x(\pdl+\sdl)^2}{\rsmd^5}\prmd_x+\frac{2(\pwnd+\swnd)}{\rsmd}\prmd_x^2+\frac{2(\prmd+2\rsmd)_x\pwnd}{\rsmd}\prmd_x+4(\pwnd+\swnd)_x\prmd_x\notag\\
&-\frac{2}{\prmd+\rsmd}\prmd_t^2+\frac{2}{(\prmd+\rsmd)^3}\pdl_x^2-\frac{8(\prmd+\rsmd)_x(\pdl+\sdl)}{(\prmd+\rsmd)^4}\pdl_x+2\bigg(\frac{2\rsmd_x\swnd}{\rsmd}-\sdws_x\bigg)\prmd_x, 
\end{align*}
and 
\begin{align*}
&O_0(t,x):=0,& O_1(t,x):=&-\frac{6(\pdl+\sdl)}{(\prmd+\rsmd)^4}\prmd_t\pdl_{xx}+\frac{2}{(\prmd+\rsmd)^3}\pdl_t\pdl_{xx}+\frac{8(\pdl+\sdl)^2}{(\prmd+\rsmd)^5}\prmd_t\prmd_{xx}\notag\\
&&&-\frac{4(\pdl+\sdl)}{(\prmd+\rsmd)^4}\pdl_t\prmd_{xx}+2\pwnd_t\prmd_{xx}+\prmd_t\pwnd_{xx}-\frac{\pc^2(\prmd_{xx}+2\rsmd_{xx})\prmd_{xx}}{(\prmd+\rsmd)^2}\prmd_t. 
\end{align*}
According to \eqref{17.1}$\sim$\eqref{38.2c+38.2d}, we show the estimate
\begin{align}
\|\pd{t}{k}P(t)\|+\|O_k(t)\|\leq&C\|(\pd{t}{k}\prmd,\pd{t}{k}\pwnd,\pd{t}{k}\pwnd_{x})(t)\|\notag\\
&+C\big(\pcN+\delta+\pc^{1/2}\big)\|(\pd{t}{k}\prmd_{t},\pd{t}{k}\prmd_{x},\pd{t}{k}\pdl)(t)\|,\quad k=0,1,\label{26.1+26.4+27.2+27.3+29.1}
\end{align}
where $C$ is a positive constant independent of $\delta$, $\pc$ and $T$. In deriving the $L^2$-norm estimate of $\|\pd{t}{k}P(t)\|$, we have used the equation \eqref{1dsfqhd2} and the estimate \eqref{17.1} to deal with the coefficient of the last term in the expression of $P(t,x)$ as
\begin{equation*}
\Bigg|2\bigg(\frac{2\rsmd_x\swnd}{\rsmd}-\sdws_x\bigg)\Bigg|=\Bigg|2\bigg[\frac{2\sdl^2}{\rsmd^5}\rsmd_x-\swnd_x+\pc^2\bigg(\frac{\rsmd_{xx}}{\rsmd}\bigg)_x-\frac{\sdl}{\rsmd^2}\bigg]\Bigg|\leq C\big(\delta+\pc^{1/2}\big).
\end{equation*}

Applying the operator $\pd{t}{k}$ for $k=0,1$ to \eqref{10.2c}, we have
\begin{multline}\label{15.1}
(\prmd+\rsmd)^2\pd{t}{k}\pwnd_t-\frac{2}{3}\pd{t}{k}\pwnd_{xx}+\frac{2}{3}\swnd\pd{t}{k}\pdl_x-\frac{4\sdl\swnd}{3\rsmd}\pd{t}{k}\prmd_x\\
=\pd{x}{}\mathcal{V}_k(t,x)+\pd{t}{k}H(t,x)+L_k(t,x),\quad k=0,1,
\end{multline}
where
\begin{align}
\mathcal{V}_k(t,x):=&\frac{\pc^2}{3}\pd{t}{k}\Bigg\{(\prmd+\rsmd)^2\bigg[\frac{\pdl+\sdl}{(\prmd+\rsmd)^2}\bigg]_{xx}-\rsmd^2\bigg(\frac{\sdl}{\rsmd^2}\bigg)_{xx}\Bigg\}\notag\\ 
=&\frac{\pc^2}{3}\pd{t}{k}\pdl_{xx}-\frac{2\pc^2\sdl}{3\rsmd}\pd{t}{k}\prmd_{xx}+\pd{t}{k}\mathcal{K}(t,x),\quad k=0,1,\label{32.1+33.1}
\end{align}
\begin{align}
\mathcal{K}(t,x):=\frac{\pc^2}{3}\bigg[-\frac{4(\prmd+\rsmd)_x}{\prmd+\rsmd}&\pdl_x+\frac{6(\prmd+\rsmd)_x^2}{(\prmd+\rsmd)^2}\pdl-\frac{6\sdl(\prmd+2\rsmd)(\prmd+\rsmd)_x^2}{(\prmd+\rsmd)^2\rsmd^2}\prmd\notag\\
+&\frac{6\sdl(\prmd+2\rsmd)_x}{\rsmd^2}\prmd_x-\frac{2(\prmd+\rsmd)_{xx}}{\prmd+\rsmd}\pdl+\frac{2\sdl(\prmd+\rsmd)_{xx}}{(\prmd+\rsmd)\rsmd}\prmd\bigg],\label{32.2a}
\end{align}
\begin{equation}
L_0(t,x):=0,\quad L_1(t,x):=-2(\prmd+\rsmd)\prmd_t\pwnd_t.
\end{equation}
For convenience, we further calculate the $\pd{x}{}\mathcal{V}_k(t,x)$ as
\begin{equation}\label{34.1}
\pd{x}{}\mathcal{V}_0(t,x)=\frac{\pc^2}{3}\pdl_{xxx}-\frac{2\pc^2\sdl}{3\rsmd}\prmd_{xxx}+\underbrace{\frac{2\pc^2\sdl\rsmd_x}{\rsmd^2}\prmd_{xx}+\pd{x}{}\mathcal{K}(t,x)}_{=:\mathcal{K}_1(t,x)}, 
\end{equation}
and
\begin{equation}\label{37.2}
\pd{x}{}\mathcal{V}_1(t,x)=\frac{\pc^2}{3}\pdl_{txxx}-\frac{2\pc^2(\pdl+\sdl)}{3(\prmd+\rsmd)}\prmd_{txxx}+\mathcal{K}_2(t,x),
\end{equation}
where
\allowdisplaybreaks
\begin{align}
\mathcal{K}_2(t,x):=\frac{\pc^2}{3}\bigg[-&\frac{4(\prmd+\rsmd)_x}{\prmd+\rsmd}\pdl_{txx}+\frac{4(\prmd+\rsmd)_x\pdl_{xx}}{(\prmd+\rsmd)^2}\prmd_t-\frac{4\pdl_{xx}}{\prmd+\rsmd}\prmd_{tx}\notag\\ 
&+\frac{10(\prmd+\rsmd)_x^2}{(\prmd+\rsmd)^2}\pdl_{tx}-\frac{20(\prmd+\rsmd)_x^2\pdl_x}{(\prmd+\rsmd)^3}\prmd_t+\frac{20(\prmd+\rsmd)_x\pdl_x}{(\prmd+\rsmd)^2}\prmd_{tx}\notag\\ 
&-\frac{6(\prmd+\rsmd)_{xx}}{\prmd+\rsmd}\pdl_{tx}+\frac{6(\prmd+\rsmd)_{xx}\pdl_x}{(\prmd+\rsmd)^2}\prmd_t-\frac{6\pdl_x}{\prmd+\rsmd}\prmd_{txx}\notag\\
&-\frac{12(\prmd+\rsmd)_x^3}{(\prmd+\rsmd)^3}\pdl_t+\frac{36(\pdl+\sdl)(\prmd+\rsmd)_x^3}{(\prmd+\rsmd)^4}\prmd_t-\frac{36(\pdl+\sdl)(\prmd+\rsmd)_x^2}{(\prmd+\rsmd)^3}\prmd_{tx}\notag\\ 
&+\frac{14(\prmd+\rsmd)_{x}(\prmd+\rsmd)_{xx}}{(\prmd+\rsmd)^2}\pdl_t-\frac{28(\pdl+\sdl)(\prmd+\rsmd)_{x}(\prmd+\rsmd)_{xx}}{(\prmd+\rsmd)^3}\prmd_t\notag\\
&\qquad\qquad\qquad+\frac{14(\pdl+\sdl)(\prmd+\rsmd)_{xx}}{(\prmd+\rsmd)^2}\prmd_{tx}+\frac{14(\pdl+\sdl)(\prmd+\rsmd)_x}{(\prmd+\rsmd)^2}\prmd_{txx}\notag\\ 
&-\frac{2(\prmd+\rsmd)_{xxx}}{\prmd+\rsmd}\pdl_t+\frac{2(\pdl+\sdl)(\prmd+\rsmd)_{xxx}}{(\prmd+\rsmd)^2}\prmd_t\bigg].\label{37.1} 
\end{align}
According to \eqref{17.1}$\sim$\eqref{38.2c+38.2d}, we further show the estimates
\begin{gather}
\|H(t)\|\leq C\|(\pdl,\pwnd)(t)\|+C\big(\pcN+\delta\big)\|(\prmd,\pwnd_x)(t)\|,\label{29.2a}\\
\|\pd{t}{}H(t)\|+\|L_1(t)\|\leq C\|(\pdl_t,\pwnd_t)(t)\|+C\big(\pcN+\delta\big)\|(\prmd_t,\prmd_{tt},\prmd_{tx},\pwnd_{tx})(t)\|,\label{30.1a+30.2}\\
\|\mathcal{K}(t)\|\leq C\pc^{3/2}\|(\prmd,\prmd_t,\prmd_x,\pdl)(t)\|,\label{32.2b}\\
\|\pd{t}{}\mathcal{K}(t)\|\leq C\pc^{3/2}\|\prmd_t(t)\|+C\pc^2\|(\prmd_{tt},\prmd_{tx},\pdl_t)(t)\|+C\big(\pcN+\delta\big)\pc\|\pc\prmd_{txx}(t)\|,\label{33.2b}\\
\|\mathcal{K}_1(t)\|\leq C\pc^{1/2}\|(\prmd,\pdl)(t)\|+C\pc^{3/2}\|(\prmd_t,\prmd_x)(t)\|+C\pc^2\|(\prmd_{tx},\prmd_{xx})(t)\|,\label{34.2c}\\
\|\mathcal{K}_2(t)\|\leq C\pc^{1/2}\|(\prmd_t,\pdl_t)(t)\|+C\pc^{3/2}\|(\prmd_{tt},\prmd_{tx})(t)\|+C\pc\|(\pc\prmd_{ttx},\pc\prmd_{txx})(t)\|,\label{37.1b}
\end{gather}
where the positive constant $C$ is independent of $\delta$, $\pc$ and $T$.

The following lemma is important for the strategy in which we establish the a priori estimate \eqref{127.2}. This means that the spatial derivatives of the perturbations $(\prmd,\pdl,\pwnd)$ can be controlled by their temporal derivatives.
\begin{lemma}\label{lem5} 
Under the same assumptions as in Proposition \ref{prop1}, the following equivalent relation holds for $t\in[0,T]$,
\begin{equation}\label{46.5}
c\big(A_1(t)+\|\pwnd_t(t)\|\big)\leq\pcn\leq C\big(A_1(t)+\|\pwnd_t(t)\|\big),
\end{equation}
where the two positive constants $c$ and $C$ are independent of $\delta$, $\pc$ and $T$.
\end{lemma}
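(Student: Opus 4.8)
The plan is to establish the two inequalities in \eqref{46.5} separately, the lower bound $A_1(t)+\|\pwnd_t(t)\|\leq C\pcn$ being routine and the upper bound $\pcn\leq C\big(A_1(t)+\|\pwnd_t(t)\|\big)$ being the substantive one. For the lower bound, every term of $A_{-1}(t)$ is visibly $\leq\pcn$, while $\prmd_t$, $\prmd_{tx}$, $\pc\prmd_{txx}$ are bounded through the mass equation \eqref{10.2a} and its first two $x$-derivatives, quantified by \eqref{32.4+44.2}; $\prmd_{tt}$ is obtained by solving \eqref{12.3} with $k=0$ for $\prmd_{tt}$, all of whose right-hand side (including $\|P(t)\|$) is $\leq C\pcn$ by \eqref{17.1}--\eqref{38.2c+38.2d} and \eqref{26.1+26.4+27.2+27.3+29.1}; and $\pwnd_t$ is obtained from \eqref{15.1} with $k=0$, where $\|\pd{x}{}\mathcal{V}_0(t)\|+\|H(t)\|\leq C\pcn$ by \eqref{34.1}, \eqref{34.2c} and \eqref{29.2a} --- here one writes $\pc^2\pd{x}{3}\pdl=\pc\cdot\pc\pd{x}{3}\pdl$ and $\pc^2\sdl\,\pd{x}{3}\prmd=\pc\sdl\cdot\pc\pd{x}{3}\prmd$ and uses $|\sdl|\leq C\delta$, $\pc\leq1$. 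This yields $A_1(t)+\|\pwnd_t(t)\|\leq C\pcn$.

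The upper bound is obtained by reconstructing the spatial derivatives in \eqref{16.1} from the temporal ones, in three stages. First, \eqref{32.4+44.2} together with $\pc\leq1$ gives $\|\pdl\|_2+\|\pc\pd{x}{3}\pdl\|\leq C\big(\|\pdl\|+\|\prmd_t\|_1+\|\prmd_{tx}\|+\|\pc\prmd_{txx}\|\big)\leq C A_1(t)$. Second, solving \eqref{15.1} with $k=0$ for $\pwnd_{xx}$ gives $\tfrac{2}{3}\pwnd_{xx}=(\prmd+\rsmd)^2\pwnd_t+\tfrac{2}{3}\swnd\pdl_x-\tfrac{4\sdl\swnd}{3\rsmd}\prmd_x-\pd{x}{}\mathcal{V}_0-H$, where by \eqref{34.1} and \eqref{34.2c} the dispersive remainder satisfies $\|\pd{x}{}\mathcal{V}_0(t)\|\leq C\pc^{1/2}A_1(t)+C\pc\delta\,\pcn$ (again using $|\sdl|\leq C\delta$) and $\|H(t)\|\leq C A_1(t)$ by \eqref{29.2a}; hence $\|\pwnd(t)\|_2\leq C\big(A_1(t)+\|\pwnd_t(t)\|\big)+C(\pc+\delta)\pcn$. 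Third, we test \eqref{12.3} with $k=0$ against $-\prmd_{xx}$ and integrate over $\Omega$: the bohmenian boundary condition \eqref{pbc2} kills all boundary terms, so the left-hand side furnishes $\pc^2\|\prmd_{xxx}\|^2+2\int_0^1\swnd\prmd_{xx}^2\,dx\geq\pc^2\|\prmd_{xxx}\|^2+\theta_L\|\prmd_{xx}\|^2$. On the right-hand side the genuinely dangerous term is $(\prmd+\rsmd)\pwnd_{xx}$, which is treated by Young's inequality and the stage-two bound on $\|\pwnd_{xx}\|$; every other term carries a small coefficient (a factor $\mu$ from Young, or $|\sdl|\leq C\delta$, or $\pcN$, or a positive power of $\pc$), or is $\|\pd{t}{k}P\|$ controlled by \eqref{26.1+26.4+27.2+27.3+29.1}, or is $\|\prmd_{tt}\|,\|\prmd_t\|\leq A_1(t)$; here one also uses $\|\rsmd_{xx}\|_{L^2}\leq C$ (a consequence of \eqref{145.1}) so that $\|\rsmd_{xx}\pwnd\|\leq C\|\pwnd\|_1\leq A_1(t)$, and $|\pc^{1/2}\rsmd_{xx}|_0\leq C$, $|\pc^{1/2}\prmd_{xx}|_0\leq C\pcN$ from \eqref{17.1}--\eqref{16.3+16.5} for the cubic/quadratic terms. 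This gives $\|\prmd_{xx}\|+\|\pc\pd{x}{3}\prmd\|\leq C\big(A_1(t)+\|\pwnd_t(t)\|\big)+C(\pc+\delta+\pcN)\pcn$, and solving \eqref{12.3} with $k=0$ algebraically for $\pc^2\prmd_{xxxx}$ controls the last piece of $\pcn$ the same way. Summing the three stages, $\pcn\leq C\big(A_1(t)+\|\pwnd_t(t)\|\big)+C(\pc+\delta+\pcN)\pcn$, and since $\pcN+\delta+\pc\leq\delta_0$ with $\delta_0$ small the last term is absorbed on the left, giving the claim.

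The main obstacle is the coupling term $\rsmd\pwnd_{xx}$ --- equivalently $2\rsmd_{xx}\pwnd$ --- in the fourth-order equation \eqref{12.3}: its coefficient is bounded below away from zero, so $\pwnd_{xx}$ cannot be absorbed as a perturbation when estimating $\prmd_{xx}$ and $\pc\pd{x}{3}\prmd$. The remedy, reflected in the staged order above, is that $\pwnd_{xx}$ is the \emph{principal} part of the parabolic energy equation \eqref{15.1}, which carries no $\pc^2$ in front of its top $\pwnd$-derivative; one therefore extracts $\pwnd_{xx}$ from \eqref{15.1} first, the only cost being the dispersive velocity remainder $\pd{x}{}\mathcal{V}_0$, which crucially comes with an extra power of $\pc$ (or of $\delta$, through $\sdl=J[\rsmd^2,\swnd]$), and then feeds this bound into the momentum-type equation. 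The second, ubiquitous difficulty is the bookkeeping of the small parameters $\pc$, $\delta$, $\pcN$: every spatial derivative reintroduced during the reconstruction must appear multiplied by one of them, so that the closing inequality holds by absorption --- this is where the smallness of the stationary current $\sdl$ and the $\pc$-weights built into \eqref{16.1} enter repeatedly. Finally, since the local solution is not regular enough to differentiate twice in $t$, the manipulations are first carried out on time-mollified quantities and then passed to the limit; these standard steps are suppressed, as announced at the beginning of this subsection.
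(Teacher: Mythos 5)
Your proposal is correct and follows essentially the same strategy as the paper's proof. The paper also establishes only the upper bound in detail (remarking that the lower bound is similar and easier), uses the energy equation \eqref{15.1} with $k=0$ to express $\pwnd_{xx}$ in terms of $\pwnd_t$, $\pdl_x$, $\prmd_x$ and the dispersive remainder $\pd{x}{}\mathcal{V}_0$, tests \eqref{12.3} with $k=0$ against $-\prmd_{xx}$ with integration by parts using \eqref{pbc2} to extract $\theta_L\|\prmd_{xx}\|^2 + \|\pc\pd{x}{3}\prmd\|^2$, solves \eqref{12.3} algebraically for $\pc^2\pd{x}{4}\prmd$, and closes by absorption using $\pcN+\delta+\pc\ll1$. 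The only difference is a cosmetic reordering of the stages (the paper peels off $\pc^2\pd{x}{4}\prmd$ first, then $\pwnd_{xx}$, then the testing argument, whereas you begin with the $\pdl$-derivatives via \eqref{32.4+44.2}), and some of your coefficient bookkeeping differs slightly in the exact powers of $\pc$ and $\delta$ attached to $\pcn$ — for instance $\|\mathcal{K}_1\|$ carries a $\pc^2\|\prmd_{xx}\|$ piece rather than a $\pc\delta\,\pcn$ piece — but all such terms are small and absorbable, so the argument goes through unchanged.
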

\begin{proof}
We only show the right-side inequality in \eqref{46.5} because the left-side inequality in \eqref{46.5} can be established by the similar method and the corresponding computations are much easier than the right-side one. According to the definitions of the notations $\pcn$, $A_{-1}(t)$, $A_0(t)$ and $A_1(t)$, and using the estimates \eqref{17.1}$\sim$\eqref{38.2c+38.2d}, \eqref{26.1+26.4+27.2+27.3+29.1} with $k=0$, \eqref{29.2a} and \eqref{34.2c}, the equation \eqref{12.3} with $k=0$, the equation \eqref{15.1} with $k=0$ and the equality \eqref{34.1}, we have
\begin{align}
\pcn=&\|(\prmd,\pdl,\pwnd)(t)\|_2+\|(\pc\pd{x}{3}\prmd,\pc\pd{x}{3}\pdl,\pc^2\pd{x}{4}\prmd)(t)\|\notag\\
\leq&CA_1(t)+\|\pc^2\pd{x}{4}\prmd(t)\|+\|\pwnd_{xx}(t)\|+\|\prmd_{xx}(t)\|+\|\pc\pd{x}{3}\prmd(t)\|\notag\\
=&CA_1(t)+\|\pwnd_{xx}(t)\|+\|\prmd_{xx}(t)\|+\|\pc\pd{x}{3}\prmd(t)\|\notag\\
&+\Big\|\big[\eqref{12.3}_r|_{k=0}-\big(2\prmd_{tt}-2\swnd\prmd_{xx}+2\prmd_t-\rsmd\pwnd_{xx}-2\rsmd_{xx}\pwnd\big)\big](t)\Big\|\notag\\
\leq&CA_1(t)+\|\pwnd_{xx}(t)\|+\|\prmd_{xx}(t)\|+\|\pc\pd{x}{3}\prmd(t)\|\notag\\
&+C\|(\prmd_t,\prmd_{tt},\pdl_{xx},P,\pwnd,\pwnd_{x},\pwnd_{xx},\prmd_{xx})(t)\|\notag\\ 
\leq&C\big(A_1(t)+\|\pwnd_{xx}(t)\|+\|\prmd_{xx}(t)\|+\|\pc\pd{x}{3}\prmd(t)\|\big)\notag\\
=&C\big(A_1(t)+\|\prmd_{xx}(t)\|+\|\pc\pd{x}{3}\prmd(t)\|\big)\notag\\
&+C\bigg\|-\frac{3}{2}\bigg[-(\prmd+\rsmd)^2\pwnd_t-\frac{2\swnd}{3}\pdl_x+\frac{4\sdl\swnd}{3\rsmd}\prmd_x+\frac{\pc^2}{3}\pd{x}{3}\pdl-\frac{2\pc^2\sdl}{3\rsmd}\pd{x}{3}\prmd+\mathcal{K}_1+H\bigg](t)\bigg\|\notag\\
\leq&C\big(A_1(t)+\|\prmd_{xx}(t)\|+\|\pc\pd{x}{3}\prmd(t)\|\big)+C\big(A_1(t)+\|\pwnd_t(t)\|+\pc\|\pc\pd{x}{3}\prmd(t)\|\big)\notag\\ 
\leq&C\big(A_1(t)+\|\pwnd_t(t)\|+\|\prmd_{xx}(t)\|+\|\pc\pd{x}{3}\prmd(t)\|\big).\label{42.2}
\end{align}

Moreover, multiplying the equation \eqref{12.3} with $k=0$ by $-\prmd_{xx}$ and integrating by parts with using the boundary condition \eqref{pbc2}, we obtain
\begin{align}
&\theta_{L}\|\prmd_{xx}(t)\|^2+\|\pc\pd{x}{3}\prmd(t)\|^2\notag\\
\leq&-\int_0^1\Big[\eqref{12.3}_r|_{k=0}-(2\prmd_{tt}+2\prmd_t-\rsmd\pwnd_{xx}-2\rsmd_{xx}\pwnd)\Big]\prmd_{xx}dx\notag\\
\leq&\big[\mu+C\big(\pcN+\delta+\pc^{3/2}\big)\big]\|\prmd_{xx}(t)\|^2+C_\mu\|(\prmd_t,\prmd_{tt},\prmd_{tx},\pwnd,\pwnd_x,\pwnd_{xx},P)(t)\|^2\notag\\
\leq&\big[\mu+C\big(\pcN+\delta+\pc^{3/2}\big)\big]\|\prmd_{xx}(t)\|^2+C_\mu\big(A_1^2(t)+\|\pwnd_{xx}(t)\|^2\big)\notag\\
\leq&\big[\mu+C\big(\pcN+\delta+\pc^{3/2}\big)\big]\|\prmd_{xx}(t)\|^2+C_\mu\big(A_1^2(t)+\|\pwnd_t(t)\|^2+\pc^2\|\pc\pd{x}{3}\prmd(t)\|^2\big),\label{42.0}
\end{align}
Let $\mu$ and $\pcN+\delta+\pc$ small enough, the inequality \eqref{42.0} implies
\begin{equation}\label{42.1}
\|\prmd_{xx}(t)\|+\|\pc\pd{x}{3}\prmd(t)\|\leq C\big(A_1(t)+\|\pwnd_t(t)\|\big)
\end{equation}
Substituting \eqref{42.1} into \eqref{42.2}, we have $\pcn\leq C(A_1(t)+\|\pwnd_t(t)\|)$.
\end{proof}

For convenience of later use, we estimate the $L^2$-norm of $\pd{t}{k}\pdl_t$ for $k=0,1$ in the next lemma.
\begin{lemma} 
Under the same assumptions as in Proposition \ref{prop1}, the following estimates hold for $t\in[0,T]$,
\begin{equation}\label{P79-70.3a}
\|\pdl_t(t)\|\leq C\|(\prmd,\pdl,\pwnd,\pwnd_x,\prmd_x)(t)\|+C\big(\pcN+\delta\big)\|\prmd_t(t)\|+C\pc^{1/2}\big(A_1(t)+\|\pwnd_t(t)\|\big),
\end{equation}
and
\begin{subequations}\label{P79-72.2+P79-78.1a}
\begin{equation}\label{P79-72.2}
\pd{t}{}\pdl_t=\pc^2\rmd\prmd_{txxx}+Y_1(t,x),
\end{equation}
\begin{align}
\|Y_1(t)\|\leq&C\|(\prmd,\pdl,\pwnd,\pwnd_x,\prmd_t,\prmd_x,\prmd_{tx},\pwnd_{tx})(t)\|\notag\\
&+C\big(\pcN+\delta+\pc^{1/2}\big)\|(\prmd_{tt},\pc\prmd_{xx},\pc\prmd_{txx})(t)\|,\label{P79-78.1a}
\end{align}
\end{subequations}
where $Y_1(t,x)$ is given by \eqref{72.3} and the positive constant $C$ is independent of $\delta$, $\pc$ and $T$.
\end{lemma}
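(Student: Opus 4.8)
The plan is to read $\pdl_t$ off directly from the perturbed momentum equation \eqref{10.2b}. Multiplying \eqref{10.2b} by $(\prmd+\rsmd)^2=\rmd^2$ and using $\pd{t}{}\rmd=\prmd_t$ to rewrite the leading term as $\rmd^2[(\pdl+\sdl)/\rmd^2]_t=\pdl_t-2(\pdl+\sdl)\prmd_t/\rmd$, one obtains an identity of the form
\[
\pdl_t=\pc^2\rmd^2\Big[\frac{\rmd_{xx}}{\rmd}-\frac{\rsmd_{xx}}{\rsmd}\Big]_x+W(t,x),
\]
where $W$ collects the convective, pressure, electric-field and relaxation contributions, all already in ``difference'' form in \eqref{10.2b} and hence vanishing when $(\prmd,\pdl)\equiv0$. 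The first key point is to treat the dispersive term through the manifestly perturbative identity $\frac{\rmd_{xx}}{\rmd}-\frac{\rsmd_{xx}}{\rsmd}=\frac{\rsmd\prmd_{xx}-\prmd\rsmd_{xx}}{\rmd\rsmd}$: after differentiating in $x$ and multiplying back by $\pc^2\rmd^2$, every resulting term contains a factor $\prmd$, $\prmd_x$, $\prmd_{xx}$ or $\pd{x}{3}\prmd$, and the only one without an extra power of $\pc$ is $\pc^2\rmd\pd{x}{3}\prmd$. In all the others one extracts, via \eqref{17.1}, a bounded factor $\pc^{1/2}\rsmd_{xx}$ or $\pc^{3/2}\rsmd_{xxx}$, so their $L^2$ norms are $\le C\|(\prmd,\prmd_x)(t)\|+C\pc\|\pc\prmd_{xx}(t)\|$.

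For the surviving top-order term I would bound $\|\pc^2\rmd\pd{x}{3}\prmd\|\le C\pc\|\pc\pd{x}{3}\prmd\|\le C\pc\,\pcn$ and then invoke the equivalence \eqref{46.5}, which gives $\pcn\le C(A_1(t)+\|\pwnd_t(t)\|)$; since $\|\pc\prmd_{xx}\|\le A_1(t)$, this already produces the term $C\pc^{1/2}(A_1(t)+\|\pwnd_t(t)\|)$ of \eqref{P79-70.3a}. The rest of $W$ is routine: the quadratic pressure/convective block yields only products of the small quantities $\sdl,\pdl,\prmd$ with $\pdl_x$ or $\prmd_x$ (mean value theorem together with \eqref{16.3+16.5}), $\|\pdl_x\|$ is traded for $\|\prmd_t\|$ by \eqref{32.4+44.2}, $\|\pdws_x\|$ for $\|\prmd\|$ by \eqref{17.2}, and $\pwnd_x$ appears linearly; collecting the bounds gives \eqref{P79-70.3a}.

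For \eqref{P79-72.2} I would differentiate the displayed identity in $t$. Because $\pd{t}{}\rmd=\prmd_t$, the dispersive term produces $\pc^2\rmd\,\prmd_{txxx}+\pc^2\prmd_t\pd{x}{3}\prmd$, so the natural definition is $Y_1:=\pd{t}{}\pdl_t-\pc^2\rmd\,\prmd_{txxx}=\pc^2\prmd_t\pd{x}{3}\prmd+W_t$, which is the explicit formula \eqref{72.3}. One has $\|\pc^2\prmd_t\pd{x}{3}\prmd\|\le C|\prmd_t(t)|_0\,\pc\|\pc\pd{x}{3}\prmd\|\le C\pcN\,\pc\,\pcn\le C\pcN(A_1(t)+\|\pwnd_t(t)\|)$; expanding $A_1(t)$, the $\prmd_{tt}$, $\pc\prmd_{xx}$ and $\pc\prmd_{txx}$ parts retain the small prefactor $\pcN$ and fall in the second group of \eqref{P79-78.1a}, while $\|\pwnd_t\|\le C\|\pwnd_{tx}\|$ by Poincaré, since $\pwnd(t,0)=\pwnd(t,1)=0$ forces $\pwnd_t(t,0)=\pwnd_t(t,1)=0$. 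Differentiating $W$ term by term produces: time derivatives of the coefficients, giving $\prmd_t$ times the already-controlled $\|\pdl_t(t)\|$ from \eqref{P79-70.3a}; $\prmd_{tt}$ terms arising only from differentiating the factor $\prmd_t$, each carrying a coefficient $O(\pcN+\delta)$; $\pdl_{tx}$ terms controlled by $\|(\prmd_t,\prmd_{tt})(t)\|$ via \eqref{38.2c+38.2d}; the boundary-vanishing $\pwnd_t$ (Poincaré again) and $\pwnd_{tx}$; $\pdws_{tx}$, bounded by $\|\pdl(t)\|$ via \eqref{17.3}; and the subleading dispersive terms differentiated in $t$, which give $\pc^2\prmd_{txx}$ with one spare power of $\pc$ (hence $\le C\pc^{1/2}\|\pc\prmd_{txx}\|$) plus lower-order products. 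Assembling these bounds yields \eqref{P79-78.1a}.

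The only genuine difficulty here is the volume of bookkeeping: $W$ is a long nonlinear expression and $W_t$ longer still, and one must continually track which terms keep the small factors $\pcN+\delta$ and which gain a power of $\pc$ from the stationary $\rsmd$-derivatives. The two ideas that make the argument run smoothly are (a) writing the dispersive term in the form $(\rsmd\prmd_{xx}-\prmd\rsmd_{xx})/(\rmd\rsmd)$ so that no ``standalone'' $\pc^2\rsmd_{xxx}$ term ever survives and every dispersive contribution is truly a perturbation, and (b) invoking $\|\pwnd_t\|\le C\|\pwnd_{tx}\|$ every time $\pwnd_t$ appears without a small coefficient, so that $\|\pwnd_t\|$ never enters the final estimates. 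The time-mollifier arguments needed to justify the differentiations at the regularity of the local solution are standard and will be suppressed.
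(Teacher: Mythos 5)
Your proposal is correct and follows essentially the same route as the paper: solve \eqref{10.2b} for $\pdl_t$, take $L^2$-norms using the estimates \eqref{17.1}, \eqref{16.3+16.5}, \eqref{17.3} and \eqref{42.1} (equivalently, \eqref{46.5}), then differentiate in $t$ and repeat with \eqref{38.2c+38.2d} and \eqref{P79-70.3a}. Your rewriting of the dispersive term as $\pc^2\rmd^2\big[(\rsmd\prmd_{xx}-\prmd\rsmd_{xx})/(\rmd\rsmd)\big]_x$ and your explicit use of the Poincar\'e inequality for $\pwnd_t$ (which is left implicit in the paper) are minor but clean presentational improvements, not a different argument.
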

\begin{proof} 
Solving the equation \eqref{10.2b} with respect to $\pdl_t$, we have
\begin{equation}\label{es68.1}
\pdl_t=\pc^2\rmd^2\bigg(\frac{\rmd_{xx}}{\rmd}-\frac{\rsmd_{xx}}{\rsmd}\bigg)_x+Y(t,x),
\end{equation}
where
\begin{align}
Y(t,x):=&\frac{2\dl}{\rmd}\prmd_t-\frac{\rmd^2}{2}\Bigg[\bigg(\frac{\dl}{\rmd^2}\bigg)^2-\bigg(\frac{\sdl}{\rsmd^2}\bigg)^2\Bigg]_x-\rmd^2\pwnd\big(\ln\rmd^2\big)_x\notag\\
&-\rmd^2\swnd\big(\ln\rmd^2-\ln\rsmd^2\big)_x-\rmd^2\pwnd_x+\rmd^2\pdws_x-\rmd^2\bigg(\frac{\dl}{\rmd^2}-\frac{\sdl}{\rsmd^2}\bigg).
\end{align}
Taking the $L^2$-norm of \eqref{es68.1} directly, and applying the estimates \eqref{17.1}, \eqref{16.3+16.5}, \eqref{17.3} and \eqref{42.1} to the resultant equality, we obtain the desired estimate \eqref{P79-70.3a}.

Next, differentiating the equation \eqref{es68.1} with respect to the time variable $t$, we get the equality \eqref{P79-72.2}, where $Y_1(t,x)$ is defined by
\begin{align}
Y_1(t,x):=&-\pc^2\rmd_x\prmd_{txx}-\pc^2\rmd_{xxx}\prmd_t+\frac{2\pc^2\rmd_{xx}\rmd_x}{\rmd}\prmd_t\notag\\
&-\pc^2\rmd_{xx}\prmd_{tx}+2\pc^2\rmd\prmd_t\bigg(\frac{\rmd_{xx}}{\rmd}-\frac{\rsmd_{xx}}{\rsmd}\bigg)_x+\pd{t}{}Y(t,x).\label{72.3}
\end{align}
Similarly, taking the $L^2$-norm of \eqref{72.3} directly, and applying the estimates \eqref{17.1}, \eqref{16.3+16.5}, \eqref{17.3}, \eqref{38.2c+38.2d}, \eqref{42.1} and \eqref{P79-70.3a} to the resultant equality, we have the desired estimate \eqref{P79-78.1a}.
\end{proof}

Now, we begin to derive the higher order estimates to complete the a priori estimate \eqref{127.2}. From the following lemma, we can see that the Bohm potential term in the momentum equation contributes the quantum dissipation rate $\|\pc\pd{t}{k}\prmd_{xx}(t)\|$.
\begin{lemma} 
Suppose the same assumptions as in Proposition \ref{prop1} hold. Then there exist positive constants $\delta_0$, $c$ and $C$ such that if $\pcN+\delta+\pc\leq\delta_0$, it holds that for $t\in[0,T]$,
\begin{equation}\label{115.1}
\frac{d}{dt}\Xi_1^{(k)}(t)+c\Pi_1^{(k)}(t)\leq C\Gamma_1^{(k)}(t),\quad k=0,1,
\end{equation}
where
\begin{gather}
\Xi_1^{(k)}(t):=\int_0^1\Big[\big(\pd{t}{k}\prmd\big)^2+2\pd{t}{k}\prmd_t\pd{t}{k}\prmd\Big]dx,\quad \Pi_1^{(k)}(t):=\|(\pd{t}{k}\prmd_x,\pc\pd{t}{k}\prmd_{xx})(t)\|^2,\notag\\
\Gamma_1^{(k)}(t):=\|(\pd{t}{k}\prmd_t,\pd{t}{k}\prmd,\pd{t}{k}\pwnd,\pd{t}{k}\pwnd_x)(t)\|^2+\big(\pcN+\delta+\pc^{1/2}\big)A_k^2(t),\label{115.1a}
\end{gather}
and the constants $c$ and $C$ are independent of $\delta$, $\pc$ and $T$.
\end{lemma}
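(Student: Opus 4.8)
The plan is to test the reformulated density equation \eqref{12.3} against $\pd{t}{k}\prmd$ and integrate over $\Omega=(0,1)$, for $k=0,1$. The two terms $2\pd{t}{k}\prmd_{tt}$ and $2\pd{t}{k}\prmd_t$ on the left of \eqref{12.3}, paired with $\pd{t}{k}\prmd$, produce exactly $\frac{d}{dt}\Xi_1^{(k)}(t)-2\|\pd{t}{k}\prmd_t(t)\|^2$ (the cross terms cancel), which supplies the time derivative of the functional in \eqref{115.1a} together with a term belonging to $\Gamma_1^{(k)}$. The term $-2\swnd\pd{t}{k}\prmd_{xx}$, after one integration by parts using $\pd{t}{k}\prmd|_{x=0,1}=0$ (from \eqref{pbc1}), gives $\int_0^1 2\swnd(\pd{t}{k}\prmd_x)^2\,dx\geq\theta_L\|\pd{t}{k}\prmd_x(t)\|^2$ by \eqref{m145.1a}, plus a lower-order piece $\int_0^1 2\swnd_x\pd{t}{k}\prmd_x\pd{t}{k}\prmd\,dx$ handled via \eqref{17.1} and Young's inequality. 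The crucial quantum term $\pc^2\pd{t}{k}\prmd_{xxxx}$, integrated by parts twice using both \eqref{pbc1} and \eqref{pbc2} to annihilate the two boundary contributions, yields the quantum dissipation rate $\pc^2\|\pd{t}{k}\prmd_{xx}(t)\|^2$. Together with $\int_0^1 2\swnd(\pd{t}{k}\prmd_x)^2\,dx$ these form the term $c\,\Pi_1^{(k)}(t)$ on the left of \eqref{115.1}; this is the structural core and exhibits the advertised dissipative role of the Bohm potential term.

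It then remains to absorb all the remaining terms into $C\Gamma_1^{(k)}(t)+\mu\,\Pi_1^{(k)}(t)$, the last of which is sent to the left-hand side once $\mu$ is small. The coupling terms $-\rsmd\pd{t}{k}\pwnd_{xx}$ and $-2\rsmd_{xx}\pd{t}{k}\pwnd$ are treated by integration by parts; note that $\rsmd_{xx}$ is a priori only bounded in $L^\infty$ by $C\pc^{-1/2}$ via \eqref{17.1}, so for the second term one integrates by parts against the whole product $\pd{t}{k}\pwnd\,\pd{t}{k}\prmd$, transferring the derivative onto $\rsmd_x$ (uniformly bounded by \eqref{17.1}), the boundary term vanishing because $\pd{t}{k}\prmd|_{x=0,1}=0$. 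The right-hand terms $2\pwnd\pd{t}{k}\prmd_{xx}$ and $\prmd\pd{t}{k}\pwnd_{xx}$ are again integrated by parts (boundary terms vanish since $\pd{t}{k}\prmd|_{x=0,1}=0$), after which \eqref{16.3+16.5} makes their coefficients $O(\pcN)$; the quantum term $\pc^2\frac{(k+1)\prmd_{xx}+2\rsmd_{xx}}{\prmd+\rsmd}\pd{t}{k}\prmd_{xx}$ and the remainder $O_k$ are estimated by carefully splitting the $\pc$-weights (for instance $\pc^2\rsmd_{xx}=\pc^{3/2}(\pc^{1/2}\rsmd_{xx})=O(\pc^{3/2})$ and $\pc^2\prmd_{xx}=\pc^{3/2}(\pc^{1/2}\prmd_{xx})=O(\pc^{3/2}\pcN)$ in $L^\infty$, and $\pd{t}{k}\prmd_{xx}=\pc^{-1}(\pc\,\pd{t}{k}\prmd_{xx})$), so that after Young's inequality each such term produces either a genuine factor $\pcN+\delta+\pc^{1/2}$ in front of $A_k^2(t)$ or a $\mu$-multiple of $\pc^2\|\pd{t}{k}\prmd_{xx}(t)\|^2$. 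Finally, the forcing $\pd{t}{k}P$ together with $O_k$ is dominated directly by \eqref{26.1+26.4+27.2+27.3+29.1}.

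For $k=1$ there is one additional technical point: some right-hand terms of \eqref{12.3} and of $O_1$ hide third-order derivatives of $\pdl$ (namely $\pdl_{txx}$, entering through $\frac{2(\pdl+\sdl)}{(\prmd+\rsmd)^3}\pd{t}{}\pdl_{xx}$ and the corresponding $O_1$-terms), which are not controlled by $A_1(t)$. I would handle these by one further integration by parts in $x$, using $\prmd_t|_{x=0,1}=0$ (obtained by differentiating \eqref{pbc1} in $t$) to kill the boundary terms and to lower $\pdl_{txx}$ to $\pdl_{tx}$, which is bounded by $\|(\prmd_t,\prmd_{tt})(t)\|\leq CA_1(t)$ via \eqref{38.2c+38.2d}; the quantities $\|\pdl_t\|$ and $\|\pwnd_t\|$ that emerge are reduced using \eqref{P79-70.3a} and are already present in $\Gamma_1^{(1)}$. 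The main obstacle of the whole argument is exactly this bookkeeping: one must ensure that after all integrations by parts and applications of Young's inequality, every error term is either of the admissible form $\|(\pd{t}{k}\prmd_t,\pd{t}{k}\prmd,\pd{t}{k}\pwnd,\pd{t}{k}\pwnd_x)(t)\|^2$, or carries a true smallness factor $\pcN+\delta+\pc^{1/2}$ multiplying $A_k^2(t)$, or is a small multiple of the dissipation $\Pi_1^{(k)}(t)$. Choosing $\mu$ and then $\delta_0$ small enough then finishes the proof of \eqref{115.1}.
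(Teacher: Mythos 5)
Your proposal is correct and follows essentially the same route as the paper: multiply the reformulated density equation \eqref{12.3} by $\pd{t}{k}\prmd$, integrate over $\Omega$ using the boundary conditions \eqref{pbc1}--\eqref{pbc2} to kill all boundary terms, extract $\frac{d}{dt}\Xi_1^{(k)}$ and the dissipation $2\int\swnd(\pd{t}{k}\prmd_x)^2\,dx+\|\pc\pd{t}{k}\prmd_{xx}\|^2$ from the left, and absorb the remainder into $C\Gamma_1^{(k)}+\mu\,\Pi_1^{(k)}$ via the catalogue \eqref{17.1}--\eqref{38.2c+38.2d}, \eqref{26.1+26.4+27.2+27.3+29.1} and Young's inequality; in particular the integration by parts that lowers the $k=1$ term $\frac{2(\pdl+\sdl)}{(\prmd+\rsmd)^3}\pdl_{txx}$ to $\pdl_{tx}$ is exactly what the paper's expression \eqref{48.0} encodes. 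The only tactical deviation is your extra integration by parts of $\int 2\rsmd_{xx}\pd{t}{k}\pwnd\,\pd{t}{k}\prmd\,dx$ onto $\rsmd_x$: the paper keeps this term as is and bounds it using the uniform $L^2$ bound $\|\rsmd_{xx}\|\leq C$ inherited from \eqref{m145.1b} together with Sobolev embedding, rather than relying on the $O(\pc^{-1/2})$ $L^\infty$ bound from \eqref{17.1}. Both variants close equally well, and the parenthetical remark about ``corresponding $O_1$-terms'' is slightly off (there $\pdl_{xx}$, not $\pdl_{txx}$, appears, and $\|\pdl_{xx}\|\leq C\|\prmd_{tx}\|$ is already controlled by $A_1$ via \eqref{32.4+44.2}), but this does not affect the argument.
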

\begin{proof} 
Multiplying the equation \eqref{12.3} by $\pd{t}{k}\prmd$ and integrating the resultant equality by parts over the domain $\Omega=(0,1)$ together with the homogeneous boundary conditions \eqref{pbc}, we get
\begin{equation}\label{48.1}
\frac{d}{dt}\Xi_1^{(k)}(t)+\int_0^1\Big[2\swnd\big(\pd{t}{k}\prmd_x\big)^2+\big(\pc\pd{t}{k}\prmd_{xx}\big)^2\Big]dx=\mathcal{I}_1^{(k)}(t),\quad k=0,1,
\end{equation}
where the integral term $\mathcal{I}_1^{(k)}(t)$ is defined by
\begin{align}
\mathcal{I}_1^{(k)}(t):=\int_0^1\bigg\{&2\big(\pd{t}{k}\prmd_t\big)^2-2\swnd_x\pd{t}{k}\prmd_x\pd{t}{k}\prmd-\Big(\rsmd_x\pd{t}{k}\pwnd_x\pd{t}{k}\prmd+\rsmd\pd{t}{k}\pwnd_x\pd{t}{k}\prmd_x\Big)\notag\\
&+2\rsmd_{xx}\pd{t}{k}\pwnd\pd{t}{k}\prmd+\bigg(\frac{6\rmd_x\dl}{\rmd^4}\pd{t}{k}\pdl_x\pd{t}{k}\prmd-\frac{2\pdl_x}{\rmd^3}\pd{t}{k}\pdl_x\pd{t}{k}\prmd-\frac{2\dl}{\rmd^3}\pd{t}{k}\pdl_x\pd{t}{k}\prmd_x\bigg)\notag\\
&-\bigg[\frac{8\rmd_x\dl^2}{\rmd^5}\pd{t}{k}\prmd_x\pd{t}{k}\prmd-\frac{4\dl\pdl_x}{\rmd^4}\pd{t}{k}\prmd_x\pd{t}{k}\prmd-\frac{2\dl^2}{\rmd^4}\big(\pd{t}{k}\prmd_x\big)^2\bigg]\notag\\
&-\Big[2\pwnd_x\pd{t}{k}\prmd_x\pd{t}{k}\prmd+2\pwnd\big(\pd{t}{k}\prmd_x\big)^2\Big]-\Big(\prmd_x\pd{t}{k}\pwnd_x\pd{t}{k}\prmd+\prmd\pd{t}{k}\pwnd_x\pd{t}{k}\prmd_x\Big)\notag\\
&\qquad\quad+\pc^2\frac{(k+1)\prmd_{xx}+2\rsmd_{xx}}{\rmd}\pd{t}{k}\prmd_{xx}\pd{t}{k}\prmd+\Big(\pd{t}{k}P+O_k\Big)\pd{t}{k}\prmd\bigg\}dx.\label{48.0}
\end{align}
According to the estimates \eqref{17.1}, \eqref{16.3+16.5}, \eqref{38.2c+38.2d}, \eqref{26.1+26.4+27.2+27.3+29.1} and \eqref{P79-70.3a}, and then using Cauchy-Schwarz, Young and H\"older inequalities, we have the estimates
\begin{equation}\label{52.1}
\int_0^1\Big[2\swnd\big(\pd{t}{k}\prmd_x\big)^2+\big(\pc\pd{t}{k}\prmd_{xx}\big)^2\Big]dx\geq c\Pi_1^{(k)}(t),
\end{equation}
and
\begin{align}
\mathcal{I}_1^{(k)}(t)\leq&2\|\pd{t}{k}\prmd_t(t)\|^2+C\delta\|(\pd{t}{k}\prmd_x,\pd{t}{k}\prmd)(t)\|^2+\mu\|\pd{t}{k}\prmd_x(t)\|^2+C_\mu\|(\pd{t}{k}\pwnd_x,\pd{t}{k}\prmd)(t)\|^2\notag\\
&+C\|(\pd{t}{k}\pwnd,\pd{t}{k}\pwnd_x,\pd{t}{k}\prmd)(t)\|^2+C\big(\pcN+\delta\big)\|(\pd{t}{k}\prmd,\pd{t}{k}\prmd_x,\pd{t}{k}\pdl_x)(t)\|^2\notag\\
&+C\pc^{1/2}\|(\pd{t}{k}\prmd,\pc\pd{t}{k}\prmd_{xx})(t)\|^2+C\|(\pd{t}{k}P,O_k,\pd{t}{k}\prmd)(t)\|^2\notag\\
\leq&\mu\|\pd{t}{k}\prmd_x(t)\|^2+C_\mu\|(\pd{t}{k}\prmd_t,\pd{t}{k}\prmd,\pd{t}{k}\pwnd,\pd{t}{k}\pwnd_x)(t)\|^2\notag\\
&+C\big(\pcN+\delta+\pc^{1/2}\big)A_k^2(t).\label{52.2}
\end{align}
Substituting \eqref{52.1} and \eqref{52.2} into \eqref{48.1} and taking $\mu$ small enough, we obtain the desired estimate \eqref{115.1}.
\end{proof}

Next, the following lemma is the most difficult part in establishing the higher order estimates. From this lemma, we can find that the dispersive velocity term in the energy equation contributes the extra quantum dissipation rate $\|\pc\pd{t}{k}\prmd_{tx}(t)\|$, see \eqref{100.2}. It plays a similar role like the additional dissipation rate $\|\pwnd_{tx}(t)\|$ contributed by the diffusion term in the energy equation, see \eqref{126.2}.
\begin{lemma} 
Suppose the same assumptions as in Proposition \ref{prop1} hold. Then there exist positive constants $\delta_0$, $c$ and $C$ such that if $\pcN+\delta+\pc\leq\delta_0$, it holds that for $t\in[0,T]$,
\begin{equation}\label{115.2}
\frac{d}{dt}\Xi_2^{(k)}(t)+c\Pi_2^{(k)}(t)\leq C\Gamma_2^{(k)}(t),\quad k=0,1,
\end{equation}
where
\begin{multline*}
\Xi_2^{(k)}(t):=\int_0^1\Bigg\{\big(\pd{t}{k}\prmd_t\big)^2+\bigg(\wnd-\frac{\dl^2}{\rmd^4}\bigg)\big(\pd{t}{k}\prmd_x\big)^2+\frac{1}{2}\big(\pc\pd{t}{k}\prmd_{xx}\big)^2\\
-\frac{3\rmd^3}{2}\pd{t}{k}\pwnd\pd{t}{k}\prmd_t-k\bigg[\frac{9\rmd^5\pc}{8}\pwnd_t\big(\pc\prmd_{txx}\big)+\frac{3\rmd^4\pc^2}{8}\big(\pc\prmd_{txx}\big)^2\bigg]\Bigg\}dx,
\end{multline*}
\begin{equation*}
\Pi_2^{(k)}(t):=\|(\pd{t}{k}\prmd_t,\pc\pd{t}{k}\prmd_{tx})(t)\|^2,\quad \Gamma_2^{(k)}(t):=\big(\mu+\pcN+\delta+\pc^{1/2}\big)\|\pd{t}{k}\prmd_x(t)\|^2+\Upsilon^{(k)}(t),
\end{equation*}
\begin{equation*}
\Upsilon^{(0)}(t):=C_\mu\|(\prmd,\pdl,\pwnd,\pwnd_x)(t)\|^2+\big(\pcN+\delta+\pc^{1/2}\big)\|(\prmd_{tt},\prmd_{tx},\pc\prmd_{xx},\pc\prmd_{txx},\pwnd_t)(t)\|^2,
\end{equation*}
\begin{align}
\Upsilon^{(1)}(t):=C_\mu&\|(\pwnd_t,\pwnd_{tx})(t)\|^2+\|(\prmd,\pdl,\pwnd,\pwnd_x,\prmd_t,\prmd_x)(t)\|^2\notag\\
&+\big(\pcN+\delta+\pc^{1/2}\big)\|(\prmd_{tt},\pc\prmd_{xx},\pc\prmd_{txx})(t)\|^2,\label{115.2a}
\end{align}
and the constants $c$ and $C$ are independent of $\delta$, $\pc$ and $T$. Here $\mu$ is an arbitrary positive constant to be determined and $C_\mu$ is a generic constant which only depends on $\mu$.
\end{lemma}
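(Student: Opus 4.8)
The plan is to run a coupled higher-order energy estimate: test the reduced density equation \eqref{12.3} and the perturbed energy equation \eqref{15.1} against weighted temporal-derivative multipliers, the weights being dictated by the shape of $\Xi_2^{(k)}$ (the formal computations being justified, as usual here, by standard time-mollification). First I would multiply \eqref{12.3} by $\pd{t}{k}\prmd_t$ and integrate over $\Omega$, using \eqref{pbc} — in particular that $\prmd$ and $\prmd_{xx}$, hence their $t$-derivatives, vanish at $x=0,1$ — to discard the boundary terms from the two integrations by parts on $\pc^2\pd{t}{k}\prmd_{xxxx}$. The term $2\pd{t}{k}\prmd_{tt}$ contributes $\tfrac{d}{dt}\|\pd{t}{k}\prmd_t\|^2$, the fourth-order term contributes $\tfrac{d}{dt}\tfrac12\|\pc\pd{t}{k}\prmd_{xx}\|^2$, and $-2\swnd\pd{t}{k}\prmd_{xx}$ together with the right-hand side terms $2\pwnd\pd{t}{k}\prmd_{xx}$ and $-\tfrac{2(\pdl+\sdl)^2}{(\prmd+\rsmd)^4}\pd{t}{k}\prmd_{xx}$ yields, after one integration by parts, $\tfrac{d}{dt}\!\int(\wnd-\dl^2/\rmd^4)(\pd{t}{k}\prmd_x)^2$ modulo terms in which $\pd{t}{}$ hits a coefficient (controlled by \eqref{10.2} and \eqref{17.1}--\eqref{38.2c+38.2d}); the damping term $2\pd{t}{k}\prmd_t$ gives the first half $2\|\pd{t}{k}\prmd_t\|^2$ of $\Pi_2^{(k)}$. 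The coupling terms $-\rsmd\pd{t}{k}\pwnd_{xx}-2\rsmd_{xx}\pd{t}{k}\pwnd$ and the nonlinear sources $\pd{t}{k}P+O_k$ are carried to the right-hand side.

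The decisive second step is to test \eqref{15.1}, with $\pd{t}{k}$ applied, against $-\tfrac32\rmd\,\pd{t}{k}\prmd_t$; the weight is chosen so that after multiplication by $(\prmd+\rsmd)^2=\rmd^2$ the parabolic term carries the $\rmd^3$-weight of the cross term in $\Xi_2^{(k)}$. Using the continuity relation \eqref{10.2a} in the form $\pdl_x=-2(\prmd+\rsmd)\prmd_t$, the dispersive terms of $\pd{x}{}\mathcal{V}_k$ — namely $\pdl_{xxx},\prmd_{xxx}$ for $k=0$ (cf. \eqref{32.1+33.1}, \eqref{34.1}) and $\pdl_{txxx},\prmd_{txxx}$ for $k=1$ (cf. \eqref{37.2}, \eqref{37.1}) — reduce to $(\prmd+\rsmd)\pd{t}{k}\prmd_{txx}$ plus lower-order terms, and one integration by parts turns $\pd{x}{}\mathcal{V}_k$ into $-\|\pc(\prmd+\rsmd)\pd{t}{k}\prmd_{tx}\|^2$, producing the extra quantum dissipation $c\|\pc\pd{t}{k}\prmd_{tx}\|^2$ on the left. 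The term $(\prmd+\rsmd)^2\pd{t}{k}\pwnd_t\cdot(-\tfrac32\rmd\pd{t}{k}\prmd_t)$, combined with $-\rsmd\pd{t}{k}\pwnd_{xx}$ from the first step after one integration by parts and a substitution of \eqref{12.3} for $\pd{t}{k}\prmd_{tt}$, reproduces $-\tfrac{d}{dt}\!\int\tfrac32\rmd^3\pd{t}{k}\pwnd\,\pd{t}{k}\prmd_t$ up to controllable remainders. For $k=1$ that substitution also pairs the fourth-order term $\pc^2\prmd_{txxxx}$ against $\pwnd_t$; two more integrations by parts convert the leftover $\pc^2$-weighted products into the functional corrections $-\tfrac{9\rmd^5\pc}{8}\pwnd_t(\pc\prmd_{txx})-\tfrac{3\rmd^4\pc^2}{8}(\pc\prmd_{txx})^2$. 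Since each correction carries an extra power of $\pc$, $\Xi_2^{(1)}$ stays comparable to the natural energy and the coefficient of $(\pc\prmd_{txx})^2$ in it remains positive for small $\pc$.

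Adding the two weighted identities, the left side becomes $\tfrac{d}{dt}\Xi_2^{(k)}(t)+2\|\pd{t}{k}\prmd_t(t)\|^2+c\|\pc\pd{t}{k}\prmd_{tx}(t)\|^2$ plus the accumulated errors, which I would estimate using the pointwise and $L^2$ bounds \eqref{17.1}--\eqref{38.2c+38.2d}, \eqref{26.1+26.4+27.2+27.3+29.1}, \eqref{29.2a}--\eqref{37.1b}, \eqref{P79-70.3a}, \eqref{P79-78.1a}, the one-dimensional Sobolev inequality and Young's inequality with a parameter $\mu$; the $\pwnd$-couplings and nonlinear terms split into a $\mu\|\pd{t}{k}\prmd_x\|^2$-part (the $\mu$-term of $\Gamma_2^{(k)}$, later absorbed against the dissipation of \eqref{115.1}) and $\Upsilon^{(k)}$-type parts. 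Here Lemma \ref{lem5} is essential: it lets every occurrence of $A_k$ be read back as $n_\pc$, so that the products carrying the smallness factor $\pcN+\delta+\pc^{1/2}$ genuinely fit the form of $\Gamma_2^{(k)}$. Choosing $\mu$ small and then $\pcN+\delta+\pc$ small yields \eqref{115.2}.

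The main obstacle is the $k=1$ instance of the second step. The term $\pd{x}{}\mathcal{V}_1$ contains $\pdl_{txxx}$ and $\prmd_{txxx}$, one order beyond the regularity measured by $n_\pc$; the task is to extract the good-sign dissipation $+c\|\pc\prmd_{ttx}\|^2$ while showing that the residual highest-order pieces are exact $t$-derivatives of quantities which remain lower-order in the energy — precisely the role of the two $\pc$-weighted corrections in $\Xi_2^{(1)}$ — and to verify that every boundary contribution generated along the way vanishes by the differentiated conditions in \eqref{pbc}. A secondary but computation-heavy difficulty is the bookkeeping: applying $\pd{t}{}$ to the variable-coefficient products in \eqref{12.3}, \eqref{15.1} and in $\pd{x}{}\mathcal{V}_1$ produces a long list of commutator terms, each of which must be checked to lie in $\Gamma_2^{(1)}$, relying again on \eqref{P79-70.3a}, \eqref{P79-78.1a} and Lemma \ref{lem5}.
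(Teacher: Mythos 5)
Your proposal is correct and follows essentially the same route as the paper. The combination of testing \eqref{12.3} against $\pd{t}{k}\prmd_t$ and then eliminating the $\rmd\pd{t}{k}\pwnd_{xx}\pd{t}{k}\prmd_t$ coupling by invoking \eqref{15.1} (what you phrase as adding the $\pd{t}{k}$-differentiated energy equation tested against $-\tfrac32\rmd\pd{t}{k}\prmd_t$; the paper instead substitutes $\pd{t}{k}\pwnd_{xx}$ solved from \eqref{15.1} into the coupling term, arriving at the same five integrals $\mathfrak{T}_1^{(k)},\dots,\mathfrak{T}_5^{(k)}$) is exactly the mechanism in the paper's proof, including the extraction of the quantum dissipation $c\|\pc\pd{t}{k}\prmd_{tx}\|^2$ from the term $\mathfrak{T}_4^{(k)}$ via \eqref{10.2a}, and the time integration by parts on $\mathfrak{T}_1^{(k)}$ that produces the cross term $-\tfrac{3\rmd^3}{2}\pd{t}{k}\pwnd\pd{t}{k}\prmd_t$ in $\Xi_2^{(k)}$ together with, for $k=1$, the two $\pc$-weighted corrections. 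The only cosmetic deviation is that you route the appearance of the $k=1$ corrections through a direct substitution of \eqref{12.3} for $\prmd_{ttt}$, while the paper goes through \eqref{10.2a}/\eqref{65.3} to $\pdl_{ttx}$ and then uses \eqref{P79-72.2} (i.e.\ the momentum equation) to surface $\pc^2\rmd\prmd_{txxx}$; since \eqref{12.3} is itself built from those same equations, the two bookkeeping routes are equivalent and both terminate in the same functional corrections and remainders controlled by $\Gamma_2^{(1)}$.
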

\begin{proof} 
Multiplying the equation \eqref{12.3} by $\pd{t}{k}\prmd_t$ and integrating the resultant equality by parts over the domain $\Omega$ together with the homogeneous boundary conditions \eqref{pbc}, we get
\begin{multline}\label{60.1}
\frac{d}{dt}\int_0^1\bigg[\big(\pd{t}{k}\prmd_t\big)^2+\bigg(\wnd-\frac{\dl^2}{\rmd^4}\bigg)\big(\pd{t}{k}\prmd_x\big)^2+\frac{1}{2}\big(\pc\pd{t}{k}\prmd_{xx}\big)^2\bigg]dx\\
+2\|\pd{t}{k}\prmd_t(t)\|^2-\int_0^1\rmd\pd{t}{k}\pwnd_{xx}\pd{t}{k}\prmd_tdx=\mathcal{I}_2^{(k)}(t),\quad k=0,1,
\end{multline}
where the integral term $\mathcal{I}_2^{(k)}(t)$ is given by
\begin{align}
\mathcal{I}_2^{(k)}(t):=\int_0^1\bigg\{&\bigg[-2\bigg(\wnd-\frac{\dl^2}{\rmd^4}\bigg)_x\pd{t}{k}\prmd_x\pd{t}{k}\prmd_t+\bigg(\wnd-\frac{\dl^2}{\rmd^4}\bigg)_t\big(\pd{t}{k}\prmd_x\big)^2\bigg]\notag\\
&+2\rsmd_{xx}\pd{t}{k}\pwnd\pd{t}{k}\prmd_t+\frac{2\dl}{\rmd^3}\pd{t}{k}\pdl_{xx}\pd{t}{k}\prmd_t+\pc^2\frac{(k+1)\prmd_{xx}+2\rsmd_{xx}}{\rmd}\pd{t}{k}\prmd_{xx}\pd{t}{k}\prmd_t\notag\\
&+\Big[\pd{t}{k}P(t,x)+O_k(t,x)\Big]\pd{t}{k}\prmd_t\bigg\}dx
\end{align}
and can be estimated by the standard method as follows
\begin{align}
\mathcal{I}_2^{(k)}(t)\leq&C(\pcN+\delta)\big(\|(\pd{t}{k}\prmd_t,\pd{t}{k}\prmd_x)(t)\|^2+\|\pwnd_t(t)\|_k^2\big)\notag\\ 
&+\mu\|\pd{t}{k}\prmd_t(t)\|^2+C_\mu\|\pd{t}{k}\pwnd(t)\|_1^2\notag\\ 
&+C(\pcN+\delta)\|(\pd{t}{k}\prmd_t,k\prmd_{tx})(t)\|^2\notag\\ 
&+C\pc^{1/2}\|(\pd{t}{k}\prmd_t,\pc\pd{t}{k}\prmd_{xx})(t)\|^2\notag\\ 
&+\mu\|\pd{t}{k}\prmd_t(t)\|^2+C_\mu\|(\pd{t}{k}\prmd,\pd{t}{k}\pwnd,\pd{t}{k}\pwnd_x)(t)\|^2\notag\\
&\qquad+C_\mu\big(\pcN+\delta+\pc^{1/2}\big)\|(\pd{t}{k}\pdl,\pd{t}{k}\prmd_t,\pd{t}{k}\prmd_x)(t)\|^2\notag\\ 
\leq&2\mu\|\pd{t}{k}\prmd_t(t)\|^2+C_\mu\|(\pd{t}{k}\prmd,\pd{t}{k}\pwnd,\pd{t}{k}\pwnd_x)(t)\|^2\notag\\
&+C_\mu\big(\pcN+\delta+\pc^{1/2}\big)\Big(\|(\pd{t}{k}\pdl,\pd{t}{k}\prmd_t,\pd{t}{k}\prmd_x,\pc\pd{t}{k}\prmd_{xx})(t)\|^2+\|\pwnd_t(t)\|_k^2\Big),\label{107.2r}
\end{align}
with the aid of the estimates \eqref{17.1}, \eqref{16.3+16.5}, \eqref{38.2c+38.2d} and \eqref{26.1+26.4+27.2+27.3+29.1}, and the H\"older, Young and Sobolev inequalities.

Now, we have to deal with the last integral term on the left-side of the equality \eqref{60.1}. It is the most difficult part in the proof due to the dispersive velocity term in the energy equation and the Bohm potential term in the momentum equation. Precisely, solving the equation \eqref{15.1} with respect to $\pd{t}{k}\pwnd_{xx}$ and substituting the result in the last integral term on the left-side of \eqref{60.1} give
\begin{align}
-\int_0^1\rmd\pd{t}{k}\pwnd_{xx}\pd{t}{k}\prmd_tdx=&\int_0^1\rmd\frac{3}{2}\bigg[-\rmd^2\pd{t}{k}\pwnd_t-\frac{2}{3}\swnd\pd{t}{k}\pdl_x+\frac{4\sdl\swnd}{3\rsmd}\pd{t}{k}\prmd_x\notag\\
&\qquad\qquad\qquad+\pd{x}{}\mathcal{V}_k(t,x)+\pd{t}{k}H(t,x)+L_k(t,x)\bigg]\pd{t}{k}\prmd_tdx\notag\\
=&-\int_0^1\frac{3}{2}\rmd^3\pd{t}{k}\pwnd_t\pd{t}{k}\prmd_tdx-\int_0^1\rmd\swnd\pd{t}{k}\pdl_x\pd{t}{k}\prmd_tdx\notag\\
&+\int_0^1\frac{2\rmd\sdl\swnd}{\rsmd}\pd{t}{k}\prmd_x\pd{t}{k}\prmd_tdx+\int_0^1\frac{3}{2}\rmd\pd{x}{}\mathcal{V}_k(t,x)\pd{t}{k}\prmd_tdx\notag\\
&+\int_0^1\frac{3}{2}\rmd\Big[\pd{t}{k}H(t,x)+L_k(t,x)\Big]\pd{t}{k}\prmd_tdx\notag\\
=&\mathfrak{T}^{(k)}_1(t)+\mathfrak{T}^{(k)}_2(t)+\mathfrak{T}^{(k)}_3(t)+\mathfrak{T}^{(k)}_4(t)+\mathfrak{T}^{(k)}_5(t). \label{64.2}
\end{align}
The integrals $\mathfrak{T}^{(k)}_2(t)$, $\mathfrak{T}^{(k)}_3(t)$ and $\mathfrak{T}^{(k)}_5(t)$ are relatively easier to be estimated than to deal with the integrals $\mathfrak{T}^{(k)}_1(t)$ and $\mathfrak{T}^{(k)}_4(t)$. Before treating them one by one, we first derive the following equality which follows from the equation \eqref{10.2a},
\begin{equation}\label{65.3}
\pd{t}{k}\prmd_{tt}=-\frac{1}{2\rmd}\pd{t}{k}\pdl_{tx}+\mathcal{B}_k(t,x),\quad k=0,1,
\end{equation}
where
\begin{equation*}\label{65.2}
\mathcal{B}_0(t,x):=\frac{1}{2\rmd^2}\prmd_t\pdl_x,\quad \mathcal{B}_1(t,x):=\frac{1}{\rmd^2}\prmd_t\pdl_{tx}-\frac{1}{\rmd^3}\prmd_t^2\pdl_x+\frac{1}{2\rmd^2}\prmd_{tt}\pdl_x,
\end{equation*}
satisfying the estimate
\begin{equation}\label{66.2}
\|\mathcal{B}_k(t)\|\leq C\pcN\|(\pd{t}{k}\prmd_t,\prmd_t)(t)\|.
\end{equation}

Now, we begin to estimate $\mathfrak{T}^{(k)}_l(t)$, $l=1,\cdots,5$. Firstly, using the estimates \eqref{17.1}, \eqref{16.3+16.5}, \eqref{29.2a} and \eqref{30.1a+30.2}, the equation \eqref{10.2a} and the Young inequality, via the standard computations, we have
\begin{align}
\mathfrak{T}^{(k)}_2(t)+\mathfrak{T}^{(k)}_3(t)+\mathfrak{T}^{(k)}_5(t)\geq&\int_0^1\rmd\swnd\big(2\rmd\pd{t}{k}\prmd_t+k2\prmd_t^2\big)\pd{t}{k}\prmd_tdx-C\delta\|(\pd{t}{k}\prmd_t,\pd{t}{k}\prmd_x)(t)\|^2\notag\\
&-\mu\|\pd{t}{k}\prmd_t(t)\|^2-C_\mu\|(\pd{t}{k}\prmd,\pd{t}{k}\pdl,\pd{t}{k}\pwnd,\pd{t}{k}\pwnd_x)(t)\|^2\notag\\
&-kC_\mu(\pcN+\delta)\|(\prmd_{tt},\prmd_{tx})(t)\|^2\notag\\
\geq&c\|\pd{t}{k}\prmd_t(t)\|^2-C\|(\pd{t}{k}\prmd,\pd{t}{k}\pdl,\pd{t}{k}\pwnd,\pd{t}{k}\pwnd_x)(t)\|^2\notag\\
&-C(\pcN+\delta)\|\pd{t}{k}\prmd_x(t)\|^2.\label{92.2+93.1+101.1} 
\end{align}

In addition, we continue to estimate $\mathfrak{T}^{(k)}_1(t)$ by using \eqref{65.3}, \eqref{66.2} and the integration by parts.
\begin{align}
\mathfrak{T}^{(k)}_1(t)=&-\frac{d}{dt}\int_0^1\frac{3\rmd^3}{2}\pd{t}{k}\pwnd\pd{t}{k}\prmd_tdx+\int_0^1\frac{9\rmd^2\prmd_t}{2}\pd{t}{k}\pwnd\pd{t}{k}\prmd_tdx+\int_0^1\frac{3\rmd^3}{2}\pd{t}{k}\pwnd\pd{t}{k}\prmd_{tt}dx\notag\\
\geq&-\frac{d}{dt}\int_0^1\frac{3\rmd^3}{2}\pd{t}{k}\pwnd\pd{t}{k}\prmd_tdx-C\pcN\|(\pd{t}{k}\prmd_t,\pd{t}{k}\pwnd)(t)\|^2\notag\\
&+\int_0^1\frac{3\rmd^3}{2}\pd{t}{k}\pwnd\bigg[-\frac{1}{2\rmd}\pd{t}{k}\pdl_{tx}+\mathcal{B}_k(t,x)\bigg]dx\notag\\
\geq&-\frac{d}{dt}\int_0^1\frac{3\rmd^3}{2}\pd{t}{k}\pwnd\pd{t}{k}\prmd_tdx-C\pcN\|(\pd{t}{k}\prmd_t,\prmd_t,\pd{t}{k}\pwnd)(t)\|^2\notag\\
&+\int_0^1\frac{3\rmd^2}{4}\pd{t}{k}\pwnd_x\pd{t}{k}\pdl_tdx+\int_0^1\frac{3\rmd\rmd_x}{2}\pd{t}{k}\pwnd\pd{t}{k}\pdl_tdx,\quad k=0,1.\label{ge67.1}
\end{align}
Moreover, we have to separately deal with the last two terms on the right-side of \eqref{ge67.1} for $k=0$ and $k=1$. In fact,
\begin{subequations}\label{ge80.1+90.3}
\begin{equation}\label{ge80.1}
\int_0^1\frac{3\rmd^2}{4}\pwnd_x\pdl_tdx\geq-\mu\|\pdl_t(t)\|^2-C_\mu\|\pwnd_x(t)\|^2,\qquad\text{for}\ k=0,
\end{equation}
\begin{align}
\int_0^1\frac{3\rmd^2}{4}\pwnd_{tx}\pd{t}{}\pdl_tdx=&\int_0^1\frac{3\rmd^2}{4}\pwnd_{tx}\Big[\pc^2\rmd\prmd_{txxx}+Y_1(t,x)\Big]dx\notag\\
=&-\int_0^1\frac{3\rmd^3\pc^2}{4}\pwnd_{txx}\prmd_{txx}dx-\int_0^1\frac{9\rmd^2\rmd_x\pc}{4}\pwnd_{tx}(\pc\prmd_{txx})dx+\int_0^1\frac{3\rmd^2}{4}\pwnd_{tx}Y_1dx\notag\\
\geq&\int_0^1\frac{9\rmd^3\pc^2}{8}\bigg[-\rmd^2\pwnd_{tt}-\frac{2}{3}\swnd\pdl_{tx}+\frac{4\sdl\swnd}{3\rsmd}\prmd_{tx}+\pd{x}{}\mathcal{V}_1+\pd{t}{}H+L_1\bigg]\prmd_{txx}dx\notag\\
&-C\pc\|(\pc\prmd_{txx},\pwnd_{tx})(t)\|^2-\mu\|Y_1(t)\|^2-C_\mu\|\pwnd_{tx}(t)\|^2\notag\\
\geq&-\int_0^1\frac{9\rmd^5\pc^2}{8}\pwnd_{tt}\prmd_{txx}dx+\int_0^1\frac{9\rmd^3\pc^2}{8}\pd{x}{}\mathcal{V}_1\prmd_{txx}dx\notag\\
&-C\pc\|(\pdl_t,\prmd_t,\prmd_{tt},\prmd_{tx},\pc\prmd_{txx})(t)\|^2-\mu\|Y_1(t)\|^2-C_\mu\|\pwnd_{tx}(t)\|^2\notag\\
\geq&-\frac{d}{dt}\int_0^1\frac{9\rmd^5\pc}{8}\pwnd_t(\pc\prmd_{txx})dx-\frac{d}{dt}\int_0^1\frac{3\rmd^4\pc^2}{8}(\pc\prmd_{txx})^2dx\notag\\
&-C\pc\|(\pdl_t,\prmd_t,\prmd_{tt},\prmd_{tx},\pc\prmd_{ttx},\pc\prmd_{txx})(t)\|^2\notag\\
&-\mu\|Y_1(t)\|^2-C_\mu\|\pwnd_{tx}(t)\|^2,\quad\qquad\qquad\qquad\text{for}\ k=1,\label{90.3}
\end{align}
\end{subequations}
where we have used the equality \eqref{P79-72.2}, the equation \eqref{15.1} with $k=1$, the estimates \eqref{38.2c+38.2d} and \eqref{30.1a+30.2}, the Cauchy-Schwarz inequality, and the following computations,
\begin{align}
-\int_0^1\frac{9\rmd^5\pc^2}{8}\pwnd_{tt}\prmd_{txx}dx=&-\frac{d}{dt}\int_0^1\frac{9\rmd^5\pc}{8}\pwnd_t(\pc\prmd_{txx})dx+\int_0^1\frac{45\rmd^4\prmd_t\pc}{8}\pwnd_t(\pc\prmd_{txx})dx\notag\\
&+\int_0^1\frac{9\rmd^5\pc^2}{8}\pwnd_t\prmd_{ttxx}dx\notag\\
=&-\frac{d}{dt}\int_0^1\frac{9\rmd^5\pc}{8}\pwnd_t(\pc\prmd_{txx})dx+\int_0^1\frac{45\rmd^4\prmd_t\pc}{8}\pwnd_t(\pc\prmd_{txx})dx\notag\\
&-\int_0^1\frac{45\rmd^4\rmd_x\pc}{8}\pwnd_t(\pc\prmd_{ttx})dx-\int_0^1\frac{9\rmd^5\pc}{8}\pwnd_{tx}(\pc\prmd_{ttx})dx\notag\\
\geq&-\frac{d}{dt}\int_0^1\frac{9\rmd^5\pc}{8}\pwnd_t(\pc\prmd_{txx})dx-C\pc\|(\pc\prmd_{ttx},\pc\prmd_{txx},\pwnd_{tx})(t)\|^2,\label{a1-82.1}
\end{align}
and
\begin{align}
&\int_0^1\frac{9\rmd^3\pc^2}{8}\pd{x}{}\mathcal{V}_1\prmd_{txx}dx\notag\\
=&\int_0^1\frac{9\rmd^3\pc^2}{8}\bigg(\frac{\pc^2}{3}\pdl_{txxx}-\frac{2\pc^2\dl}{3\rmd}\prmd_{txxx}+\mathcal{K}_2\bigg)\prmd_{txx}dx\notag\\
\geq&-\int_0^1\frac{3\rmd^3\pc^2}{8}\Big(2\rmd\prmd_{ttxx}+4\rmd_x\prmd_{ttx}+4\prmd_{tx}^2+4\prmd_t\prmd_{txx}+2\rmd_{xx}\prmd_{tt}\Big)\prmd_{txx}dx\notag\\
&+\int_0^1\bigg(\frac{3\pc^4}{8}\rmd^2\dl\bigg)_x\prmd_{txx}^2dx-C\pc\|(\mathcal{K}_2,\pc\prmd_{txx})(t)\|^2\notag\\
\geq&-\frac{d}{dt}\int_0^1\frac{3\rmd^4\pc^2}{8}(\pc\prmd_{txx})^2dx-C\pc\|(\pdl_t,\prmd_t,\prmd_{tt},\prmd_{tx},\pc\prmd_{ttx},\pc\prmd_{txx})(t)\|^2,\label{a4-88.2}
\end{align}
with the aid of the equality \eqref{37.2}, the equation \eqref{10.2a}, the integration by parts, the estimates \eqref{17.1}, \eqref{16.3+16.5} and \eqref{37.1b}, and the Sobolev, H\"older and Cauchy-Schwarz inequalities. Similarly, we continue to estimate the last term on the right-side of \eqref{ge67.1} as follows
\begin{subequations}\label{91.1+91.2}
\begin{equation}\label{91.1}
\int_0^1\frac{3\rmd\rmd_x}{2}\pwnd\pdl_tdx\geq-\mu\|\pdl_t(t)\|^2-C_\mu\|\pwnd(t)\|^2, \quad\text{for}\ k=0,
\end{equation}
\begin{align}
\int_0^1\frac{3\rmd\rmd_x}{2}\pwnd_t\pd{t}{}\pdl_tdx=&\int_0^1\frac{3\rmd\rmd_x}{2}\pwnd_t\Big(\pc^2\rmd\prmd_{txxx}+Y_1\Big)dx\notag\\
\geq&-\int_0^1\bigg(\frac{3\rmd^2\rmd_x\pc^2}{2}\pwnd_t\bigg)_x\prmd_{txx}dx-\mu\|Y_1(t)\|^2-C_\mu\|\pwnd_t(t)\|^2\notag\\
\geq&-C\pc^{1/2}\|(\pc\prmd_{txx},\pwnd_{tx})(t)\|^2-\mu\|Y_1(t)\|^2-C_\mu\|\pwnd_t(t)\|^2, \quad\text{for}\ k=1.\label{91.2}
\end{align}
\end{subequations}

Next, we estimate the integral $\mathfrak{T}^{(k)}_4(t)$ by using the integration by parts and the equality \eqref{32.1+33.1},
\begin{align}
\mathfrak{T}^{(k)}_4(t)=&-\int_0^1\bigg(\frac{3}{2}\rmd\pd{t}{k}\prmd_t\bigg)_x\mathcal{V}_kdx\notag\\
=&-\int_0^1\frac{3}{2}\rmd\pd{t}{k}\prmd_{tx}\mathcal{V}_kdx-\int_0^1\frac{3}{2}\rmd_x\pd{t}{k}\prmd_t\mathcal{V}_kdx\notag\\
\geq&-\int_0^1\frac{3}{2}\rmd\pd{t}{k}\prmd_{tx}\bigg(\frac{\pc^2}{3}\pd{t}{k}\pdl_{xx}-\frac{2\pc^2\sdl}{3\rsmd}\pd{t}{k}\prmd_{xx}+\pd{t}{k}\mathcal{K}\bigg)dx\notag\\
&-C\pc\|(\pd{t}{k}\pdl,\pd{t}{k}\prmd,\pd{t}{k}\prmd_t,\pd{t}{k}\prmd_x,\pc\pd{t}{k}\prmd_{tx},\pc\pd{t}{k}\prmd_{xx})(t)\|^2\notag\\ 
\geq&-\int_0^1\frac{\rmd\pc^2}{2}\pd{t}{k}\prmd_{tx}\pd{t}{k}\pdl_{xx}dx\notag\\
&-C\big(\pcN+\delta+\pc^{1/2}\big)\|(\pd{t}{k}\pdl,\pd{t}{k}\prmd,\pd{t}{k}\prmd_t,\pd{t}{k}\prmd_x,\pc\pd{t}{k}\prmd_{tx},\pc\pd{t}{k}\prmd_{xx})(t)\|^2\notag\\
=&\int_0^1\frac{\rmd\pc^2}{2}\pd{t}{k}\prmd_{tx}\Big(2\rmd\pd{t}{k}\prmd_{tx}+2\rmd_x\pd{t}{k}\prmd_t+k4\prmd_t\prmd_{tx}\Big)dx\notag\\
&-C\big(\pcN+\delta+\pc^{1/2}\big)\|(\pd{t}{k}\pdl,\pd{t}{k}\prmd,\pd{t}{k}\prmd_t,\pd{t}{k}\prmd_x,\pc\pd{t}{k}\prmd_{tx},\pc\pd{t}{k}\prmd_{xx})(t)\|^2\notag\\
\geq&\int_0^1\rmd^2\big(\pc\pd{t}{k}\prmd_{tx}\big)^2dx\notag\\
&-C\big(\pcN+\delta+\pc^{1/2}\big)\|(\pd{t}{k}\pdl,\pd{t}{k}\prmd,\pd{t}{k}\prmd_t,\pd{t}{k}\prmd_x,\pc\pd{t}{k}\prmd_{tx},\pc\pd{t}{k}\prmd_{xx})(t)\|^2\notag\\
\geq&c\|\pc\pd{t}{k}\prmd_{tx}(t)\|^2-C\big(\pcN+\delta+\pc^{1/2}\big)\|(\pd{t}{k}\pdl,\pd{t}{k}\prmd,\pd{t}{k}\prmd_t,\pd{t}{k}\prmd_x,\pc\pd{t}{k}\prmd_{xx})(t)\|^2,\label{100.2}
\end{align}
where we have used the equation \eqref{10.2a} and the estimates \eqref{32.2b}$\sim$\eqref{33.2b}.

Finally, substituting \eqref{107.2r}, \eqref{64.2}, \eqref{92.2+93.1+101.1}, \eqref{ge67.1}, \eqref{ge80.1+90.3}, \eqref{91.1+91.2} and \eqref{100.2} into \eqref{60.1}, applying the estiamtes \eqref{P79-70.3a} and \eqref{P79-78.1a} to the resultant inequality, making $\mu$ and $\pcN+\delta+\pc$ small enough and rewriting the result as a unified form in $k=0,1$, we obtain the desired estimate \eqref{115.2}.
\end{proof}

In order to close the uniform a priori estimate, we continue to derive the higher order estimates of the perturbed temperature $\pwnd$. The dispersive velocity term in the energy equation makes the corresponding computations more complex.
\begin{lemma} 
Suppose the same assumptions as in Proposition \ref{prop1} hold. Then there exist positive constants $\delta_0$, $c$ and $C$ such that if $\pcN+\delta+\pc\leq\delta_0$, it holds that for $t\in[0,T]$,
\begin{equation}\label{116.2}
\frac{d}{dt}\Xi_3(t)+c\Pi_3(t)\leq C\Gamma_3(t),
\end{equation}
where
\begin{gather}
\Xi_3(t):=\int_0^1\bigg(\frac{1}{3}\pwnd_x^2+\frac{2\swnd}{3}\pdl_x\pwnd\bigg)dx,\quad\Pi_3(t):=\|\pwnd_t(t)\|^2,\notag\\
\Gamma_3(t):=\mu\|\prmd_x(t)\|^2+C_\mu A_{-1}^2(t)+\big(\pcN+\delta+\pc\big)\Big(A_1^2(t)+\|\pwnd_{tx}(t)\|^2\Big),\label{116.2a}
\end{gather}
and
\begin{equation}\label{117.2}
\frac{d}{dt}\Xi_4(t)+c\Pi_4(t)\leq C\Gamma_4(t),
\end{equation}
where
\begin{gather}
\Xi_4(t):=\int_0^1\frac{\rmd^2}{2}\pwnd_t^2dx,\quad\Pi_4(t):=\|\pwnd_{tx}(t)\|^2,\notag\\
\Gamma_4(t):=\|(\prmd,\pdl,\pwnd,\pwnd_x,\prmd_x,\pwnd_t)(t)\|^2+\big(\pcN+\delta+\pc\big)\Big(A_1^2(t)+\|\pc\prmd_{ttx}(t)\|^2\Big),\label{117.2a}
\end{gather}
and the constants $c$ and $C$ are independent of $\delta$, $\pc$ and $T$. Here $\mu$ is an arbitrary positive constant to be determined and $C_\mu$ is a generic constant which only depends on $\mu$.
\end{lemma}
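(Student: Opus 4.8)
The plan is to obtain \eqref{116.2} and \eqref{117.2} by the parabolic-type energy method: test the perturbed energy equation \eqref{10.2c} (equivalently \eqref{15.1} with $k=0$) against the multiplier $\pwnd_t$ for \eqref{116.2}, and test the differentiated energy equation \eqref{15.1} with $k=1$ against $\pwnd_t$ for \eqref{117.2}. In both cases the leading coefficient term $(\prmd+\rsmd)^2\pwnd_t\cdot\pwnd_t=\rmd^2\pwnd_t^2$ (using $\prmd+\rsmd=\rmd$) produces, via the lower bound in \eqref{17.1}, the dissipation $c\|\pwnd_t\|^2=c\Pi_3(t)$ in the first case; in the second case $\rmd^2\pwnd_{tt}\cdot\pwnd_t=\frac{d}{dt}\int\frac{\rmd^2}{2}\pwnd_t^2-\int\rmd\rmd_t\pwnd_t^2$ recovers $\frac{d}{dt}\Xi_4(t)$ up to a term of size $\pcN\|\pwnd_t\|^2$ by \eqref{16.3+16.5}. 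In both cases the diffusion term, $-\tfrac23\pwnd_{xx}\pwnd_t$ resp.\ $-\tfrac23\pwnd_{txx}\pwnd_t$, is integrated by parts with no boundary contribution thanks to \eqref{pbc}, yielding $\frac{d}{dt}\tfrac13\|\pwnd_x\|^2$ resp.\ the new dissipation $\tfrac23\|\pwnd_{tx}\|^2=c\Pi_4(t)$.

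For \eqref{116.2}, the remaining terms are treated as follows. The current term $\tfrac23\swnd\pdl_x\pwnd_t$ is rewritten as $\frac{d}{dt}\bigl(\tfrac23\swnd\pdl_x\pwnd\bigr)-\tfrac23\swnd\pdl_{tx}\pwnd$, the first piece accounting for the cross term in $\Xi_3(t)$ and the second, after integration by parts in $x$ (the boundary term vanishing since $\pwnd|_{\partial\Omega}=0$), becoming $-\int\tfrac23(\swnd_x\pwnd+\swnd\pwnd_x)\pdl_t$, which is estimated by \eqref{P79-70.3a} and Young's inequality; this produces the $C_\mu A_{-1}^2$, the $(\pcN+\delta)A_1^2$ and, via the $\pc^{1/2}$ factor in \eqref{P79-70.3a}, the absorbable $\pc$-small contributions in $\Gamma_3(t)$. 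The term $-\tfrac{4\sdl\swnd}{3\rsmd}\prmd_x\pwnd_t$ is small by $|\sdl|\leq C\delta$, producing (with a fresh $\mu$) the $\mu\|\prmd_x\|^2$ in $\Gamma_3$. The dispersive contribution $\pd{x}{}\mathcal{V}_0\cdot\pwnd_t$ is handled by integration by parts together with the expansion \eqref{34.1}, the equivalence $\|\pdl_x\|\sim\|\prmd_t\|$ of \eqref{32.4+44.2}, the estimate \eqref{34.2c} for $\mathcal{K}_1$, the bound $\|\pc\pd{x}{3}\prmd\|\leq C(A_1+\|\pwnd_t\|)$ from Lemma \ref{lem5}, and the quantum dissipation rate $\|\pc\prmd_{xx}\|$ from \eqref{115.1}; every resulting term carries a factor $\pc$ or $\pc^{1/2}$, hence is absorbable into $\Gamma_3$. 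The source $H(t,x)$ is bounded by \eqref{29.2a}. Collecting everything gives \eqref{116.2}.

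For \eqref{117.2}, the current term $\tfrac23\swnd\pdl_{tx}\pwnd_t$ is integrated by parts in $x$ to $-\int\tfrac23(\swnd_x\pwnd_t+\swnd\pwnd_{tx})\pdl_t$ and controlled by \eqref{P79-70.3a} with Young's inequality, the $\|\pwnd_{tx}\|^2$ appearing only with small coefficient $\pc$ as allowed by $\Gamma_4$; and $-\tfrac{4\sdl\swnd}{3\rsmd}\prmd_{tx}\pwnd_t$ is again small by $|\sdl|\leq C\delta$. The dispersive part $\pd{x}{}\mathcal{V}_1\cdot\pwnd_t$ is integrated by parts to $-\int\mathcal{V}_1\pwnd_{tx}$ (boundary-free), and then \eqref{32.1+33.1} is used to expand $\mathcal{V}_1=\tfrac{\pc^2}{3}\pdl_{txx}-\tfrac{2\pc^2\sdl}{3\rsmd}\prmd_{txx}+\pd{t}{}\mathcal{K}$; the terms $\pc^2\pdl_{txx}\pwnd_{tx}$ and $\pc^2\prmd_{txx}\pwnd_{tx}$ are closed by \eqref{38.2c+38.2d}, by the inclusion of $\|\pc\prmd_{txx}\|$ and $\|\pc\prmd_{ttx}\|$ in $A_1$ and in $\Gamma_4$, and by Young's inequality with parameter $\mu$; the term $\pd{t}{}\mathcal{K}\cdot\pwnd_{tx}$ is handled by \eqref{33.2b}, and more singular rearrangements of the dispersive part (if needed) by \eqref{37.2} and \eqref{37.1b}. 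The remaining sources $\pd{t}{}H$ and $L_1$ are estimated by \eqref{30.1a+30.2}, again producing $\|\pwnd_{tx}\|^2$ with a small coefficient $\pcN+\delta$. After choosing $\mu$ small and then $\delta_0$ small accordingly, \eqref{117.2} follows.

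The main obstacle is the bookkeeping of the dispersive velocity term: after substituting the expressions for $\mathcal{V}_k$ or $\pd{x}{}\mathcal{V}_k$ one must integrate by parts so that no genuinely uncontrollable highest-order derivative (such as $\pd{x}{3}\pdl_t$, $\prmd_{xxx}$ or $\prmd_{txxx}$) is left acting against $\pwnd_{tx}$, and then verify that each surviving term either matches a dissipation rate already produced in \eqref{115.1}--\eqref{115.2} (possibly after Young's inequality with a small fixed $\mu$) or carries a genuinely small prefactor $\pcN+\delta+\pc$. The delicate point is the coupling between the new dissipation $\|\pwnd_{tx}\|$ and the quantum quantities $\|\pc\prmd_{txx}\|$, $\|\pc\prmd_{ttx}\|$, which surface only after an integration by parts that shifts one spatial derivative onto $\pwnd_{tx}$; this is precisely why $\Gamma_3$ is allowed to retain $(\pcN+\delta+\pc)\|\pwnd_{tx}\|^2$ and $\Gamma_4$ to retain $(\pcN+\delta+\pc)\|\pc\prmd_{ttx}\|^2$, so that these terms need not be absorbed here but are closed later in the combined estimate together with \eqref{115.1}, \eqref{115.2} and Lemma \ref{lem5}.
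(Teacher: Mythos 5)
Your proposal follows the same route as the paper: test $\pd{t}{k}\eqref{10.2c}$ (i.e.\ \eqref{15.1} with $k=0,1$) against $\pwnd_t$, integrate by parts with \eqref{pbc}, fold $\tfrac23\swnd\pdl_x\pwnd_t$ into $\frac{d}{dt}\Xi_3$, extract the dissipations $\|\pwnd_t\|^2$ and $\tfrac23\|\pwnd_{tx}\|^2$ from the leading and diffusion terms, and move the dispersive contribution to $-\int\mathcal{V}_k\pwnd_{tx}$ via one integration by parts before invoking the decomposition \eqref{32.1+33.1} and the bounds \eqref{29.2a}, \eqref{30.1a+30.2}, \eqref{32.2b}--\eqref{33.2b}, \eqref{38.2c+38.2d}, \eqref{P79-70.3a}. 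This is exactly what the paper does around \eqref{53.1}--\eqref{I4}. One minor misattribution: the $\mu\|\prmd_x\|^2$ in $\Gamma_3$ does not come from the $\tfrac{4\sdl\swnd}{3\rsmd}\prmd_x\pwnd_t$ term (that term contributes $C\delta\|(\prmd_x,\pwnd_t)\|^2$, which is absorbed into the $(\pcN+\delta)A_1^2$ piece); it comes instead from $\mu\|\pdl_t\|^2$ via the $\|\prmd_x\|$ term on the right side of \eqref{P79-70.3a}. This does not affect the validity of the argument, since the constant in front of $\|\prmd_x\|^2$ is still tunable by choosing $\mu$; but it is worth attributing correctly, as the paper relies on \eqref{P79-70.3a} precisely so that the $\|\pdl_t\|^2$ contribution stays below $\Gamma_3$.
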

\begin{proof} 
Multiplying the equation \eqref{15.1} with $k=0$ by $\pwnd_t$ and integrating the resultant equality by parts over $\Omega$ together with the boundary conditions \eqref{pbc}, we get
\begin{equation}\label{53.1}
\frac{d}{dt}\Xi_3(t)+\int_0^1\rmd^2\pwnd_t^2dx=\mathcal{I}_3(t),
\end{equation}
where the integral term $\mathcal{I}_3(t)$ is defined by
\begin{equation}
\mathcal{I}_3(t):=\int_0^1\bigg[-\bigg(\frac{2\swnd_x}{3}\pdl_t\pwnd+\frac{2\swnd}{3}\pdl_t\pwnd_x\bigg)+\frac{4\sdl\swnd}{3\rsmd}\prmd_x\pwnd_t+\pd{x}{}\mathcal{V}_0(t,x)\pwnd_t+H(t,x)\pwnd_t\bigg]dx,
\end{equation}
and can be estimated as
\begin{align}
\mathcal{I}_3(t)\leq&\int_0^1\pd{x}{}\mathcal{V}_0(t,x)\pwnd_tdx+(\mu+C\delta)\|\pdl_t(t)\|^2+C_\mu\|(\pwnd,\pwnd_x)(t)\|^2\notag\\
&+C\delta\|(\prmd_x,\pwnd_t)(t)\|^2+\mu\|\pwnd_t(t)\|^2+C_\mu\|H(t)\|^2\notag\\
\leq&\int_0^1\pd{x}{}\mathcal{V}_0(t,x)\pwnd_tdx+(\mu+\delta+\pc)\|\pwnd_t(t)\|^2\notag\\
&+\mu\|\prmd_x(t)\|^2+C_\mu A_{-1}^2(t)+C\big(\pcN+\delta+\pc\big)A_1^2(t),\label{55.2}
\end{align}
with the aid of the estimates \eqref{17.1}, \eqref{29.2a} and \eqref{P79-70.3a}, the H\"older and Young inequalities. Furthermore, applying the integration by parts and the equality \eqref{32.1+33.1} with $k=0$ to the first integral term on the right-side of \eqref{55.2}, we have
\begin{align}
\int_0^1\pd{x}{}\mathcal{V}_0(t,x)\pwnd_tdx=&-\int_0^1\mathcal{V}_0(t,x)\pwnd_{tx}dx\notag\\
=&-\int_0^1\bigg(\frac{\pc^2}{3}\pdl_{xx}-\frac{2\pc^2\sdl}{3\rsmd}\prmd_{xx}+\mathcal{K}(t,x)\bigg)\pwnd_{tx}dx\notag\\
\leq&C\pc\|(\pdl_{xx},\pc\prmd_{xx},\pwnd_{tx})(t)\|^2+\|\mathcal{K}(t)\|\|\pwnd_{tx}(t)\|\notag\\
\leq&C\pc\|(\prmd,\pdl,\prmd_t,\prmd_x,\prmd_{tx},\pc\prmd_{xx},\pwnd_{tx})(t)\|^2\notag\\
\leq&C\pc\big(A_1^2(t)+\|\pwnd_{tx}(t)\|^2\big),\label{54.2}
\end{align}
with the aid of the estimates \eqref{32.4+44.2} and \eqref{32.2b}, the H\"older and Cauchy-Schwarz inequalities. Substituting \eqref{54.2} into \eqref{55.2}, we obtain
\begin{equation}\label{55.3a}
\mathcal{I}_3(t)\leq(\mu+\delta+\pc)\|\pwnd_t(t)\|^2+C\Gamma_3(t).
\end{equation}
On the other hand, it is easy to see that
\begin{equation}\label{55.3b}
\int_0^1\rmd^2\pwnd_t^2dx\geq c\Pi_3(t).
\end{equation}
Substituting \eqref{55.3a} and \eqref{55.3b} into \eqref{53.1}, and letting $\mu$ and $\pcN+\delta+\pc$ sufficiently small, we have proved the desired estimate \eqref{116.2}.

Next, multiplying the equation \eqref{15.1} with $k=1$ by $\pwnd_t$ and integrating the resultant equality by parts over $\Omega$ together with the boundary conditions \eqref{pbc}, we get
\begin{equation}\label{ge56.1}
\frac{d}{dt}\Xi_4(t)+\frac{2}{3}\Pi_4(t)=\mathcal{I}_4(t),
\end{equation}
where the integral term $\mathcal{I}_4(t)$ is given by
\begin{align}
\mathcal{I}_4(t):=\int_0^1\bigg[&\rmd\prmd_t\pwnd_t^2+\bigg(\frac{2\swnd_x}{3}\pdl_t\pwnd_t+\frac{2\swnd}{3}\pdl_t\pwnd_{tx}\bigg)\notag\\
&+\frac{4\sdl\swnd}{3\rsmd}\prmd_{tx}\pwnd_t+\pd{x}{}\mathcal{V}_1(t,x)\pwnd_t+\big(\pd{t}{}H+L_1\big)(t,x)\pwnd_t\bigg]dx,\label{56.0}
\end{align}
and can be estimated as
\begin{align}
\mathcal{I}_4(t)\leq&\int_0^1\pd{x}{}\mathcal{V}_1(t,x)\pwnd_tdx+C\big(\pcN+\delta\big)\|(\prmd_{tx},\pwnd_t)(t)\|^2\notag\\
&+\mu\|\pwnd_{tx}(t)\|^2+C_\mu\|\pdl_t(t)\|^2+\big(\|\pd{t}{}H(t)\|+\|L_1(t)\|\big)\|\pwnd_t(t)\|\notag\\
\leq&\int_0^1\pd{x}{}\mathcal{V}_1(t,x)\pwnd_tdx+\big[\mu+C(\pcN+\delta)\big]\|\pwnd_{tx}(t)\|^2\notag\\
&+C_\mu\|(\pdl_t,\pwnd_t)(t)\|^2+C(\pcN+\delta)\|(\prmd_t,\prmd_{tt},\prmd_{tx})(t)\|^2,\label{59.2}
\end{align}
with the aid of the estimates \eqref{17.1}, \eqref{16.3+16.5} and \eqref{30.1a+30.2}, the H\"older and Young inequalities. Furthermore, applying the integration by parts and the equality \eqref{32.1+33.1} with $k=1$ to the first integral term on the right-side of \eqref{59.2}, we have
\begin{align}
\int_0^1\pd{x}{}\mathcal{V}_1(t,x)\pwnd_tdx=&-\int_0^1\mathcal{V}_1(t,x)\pwnd_{tx}dx\notag\\
=&-\int_0^1\bigg(\frac{\pc^2}{3}\pdl_{txx}-\frac{2\pc^2\sdl}{3\rsmd}\prmd_{txx}+\pd{t}{}\mathcal{K}(t,x)\bigg)\pwnd_{tx}dx\notag\\
\leq&C\pc\|(\pc\pdl_{txx},\pc\prmd_{txx},\pwnd_{tx})(t)\|^2+\|\pd{t}{}\mathcal{K}(t)\|\|\pwnd_{tx}(t)\|\notag\\
\leq&C\pc\big(\|\pwnd_{tx}(t)\|^2+\|\pdl_t(t)\|^2+A_1^2(t)+\|\pc\prmd_{ttx}(t)\|^2\big),\label{58.1}
\end{align}
with the aid of the estimates \eqref{38.2c+38.2d} and \eqref{33.2b}, the H\"older and Cauchy-Schwarz inequalities. Substituting \eqref{58.1} into \eqref{59.2} together with the estimate \eqref{P79-70.3a}, we obtain
\begin{align}\label{I4}
\mathcal{I}_4(t)\leq&\big[\mu+C(\pcN+\delta+\pc)\big]\|\pwnd_{tx}(t)\|^2+C(\pcN+\delta+\pc)\big(A_1^2(t)+\|\pc\prmd_{ttx}(t)\|^2\big)\notag\\
&+C_\mu\|(\pdl_t,\pwnd_t)(t)\|^2\notag\\
\leq&\big[\mu+C(\pcN+\delta+\pc)\big]\|\pwnd_{tx}(t)\|^2+C_\mu\Gamma_4(t).
\end{align}
Substituting \eqref{I4} into \eqref{ge56.1}, and letting $\mu$ and $\pcN+\delta+\pc$ sufficiently small, we have shown the desired estimate \eqref{117.2}.
\end{proof}

\subsection{Decay estimate}\label{Subsect.3.5}
Based on the basic estimate \eqref{25.1} and the higher order estimates \eqref{115.1}, \eqref{115.2}, \eqref{116.2} and \eqref{117.2}, it is not difficult to find that we have captured the strong enough dissipation mechanism to show the decay estimate \eqref{127.2} in Proposition \ref{prop1}.
\begin{proof}[\textbf{Proof of Proposition \ref{prop1}.}]
From the procedure
\begin{equation*}\label{118.1}
\eqref{25.1}+\beta\Big[\alpha\eqref{115.1}+\eqref{115.2}\Big]\Big|_{k=0}+\beta\Big[\eqref{116.2}+\beta\eqref{117.2}\Big]+\beta^3\Big[\alpha\eqref{115.1}+\eqref{115.2}\Big]\Big|_{k=1},
\end{equation*}
where $\alpha$ is the positive constant in \eqref{25.2} and $\beta$ is another positive constant, both of them will be determined later, we have the energy inequality
\begin{equation}\label{118.2}
\frac{d}{dt}\mathbb{E}(t)+\mathbb{D}(t)\leq0,\quad\forall t\in[0,T],
\end{equation}
where the total energy $\mathbb{E}(t)$ is defined by
\begin{equation}\label{119.2}
\mathbb{E}(t):=\Xi(t)+\beta\Big[\alpha\Xi_1^{(0)}(t)+\Xi_2^{(0)}(t)\Big]+\beta\Big[\Xi_3(t)+\beta\Xi_4(t)\Big]+\beta^3\Big[\alpha\Xi_1^{(1)}(t)+\Xi_2^{(1)}(t)\Big],
\end{equation}
and the total dissipation rate $\mathbb{D}(t)$ is given by
\begin{align}
\mathbb{D}(t):=&\Big[c\Pi(t)-C\Gamma(t)\Big]+\beta\Big\{\alpha\Big[c\Pi_1^{(0)}(t)-C\Gamma_1^{(0)}(t)\Big]+\Big[c\Pi_2^{(0)}(t)-C\Gamma_2^{(0)}(t)\Big]\Big\}\notag\\
&+\beta\Big\{\Big[c\Pi_3(t)-C\Gamma_3(t)\Big]+\beta\Big[c\Pi_4(t)-C\Gamma_4(t)\Big]\Big\}\notag\\
&+\beta^3\Big\{\alpha\Big[c\Pi_1^{(1)}(t)-C\Gamma_1^{(1)}(t)\Big]+\Big[c\Pi_2^{(1)}(t)-C\Gamma_2^{(1)}(t)\Big]\Big\}.\label{122.1}
\end{align}
Substituting the specific definitions \eqref{25.2}$\sim$\eqref{R}, \eqref{115.1a}, \eqref{115.2a}, \eqref{116.2a} and \eqref{117.2a} into \eqref{119.2} and \eqref{122.1}, and then taking $\alpha$, $\mu$, $\beta$ and $\pcN+\delta+\pc$ sufficiently small in the following order $0<\pcN+\delta+\pc\ll\beta^3\ll\beta^2\ll\beta\ll\mu\ll\alpha\ll1$, via the elaborate calculations, we obtain the estimates
\begin{equation}\label{121.2}
c\big(A_1^2(t)+\|\pwnd_t(t)\|^2\big)\leq\mathbb{E}(t)\leq C\big(A_1^2(t)+\|\pwnd_t(t)\|^2\big),
\end{equation}
and
\begin{align}
\mathbb{D}(t)\geq&c\big(A_1^2(t)+\|\pwnd_t(t)\|^2+\|(\pwnd_{tx},\pc\prmd_{ttx})(t)\|^2\big)\notag\\
\geq&c\big(A_1^2(t)+\|\pwnd_t(t)\|^2\big),\label{126.2}\\
\mathbb{D}(t)\leq&C\big(A_1^2(t)+\|\pwnd_t(t)\|^2+\|(\pwnd_{tx},\pc\prmd_{ttx})(t)\|^2\big),\notag
\end{align}
where the positive constants $c$ and $C$ are independent of $\delta$, $\pc$ and $T$.

Applying \eqref{121.2} and \eqref{126.2} to the inequality \eqref{118.2}, we see that there exists a positive constant $\gamma$ which is independent of $\delta$, $\pc$ and $T$ such that the following inequality holds,
\begin{equation}\label{127.1}
\frac{d}{dt}\mathbb{E}(t)+2\gamma\mathbb{E}(t)\leq0,\quad\forall t\in[0,T].
\end{equation}
Finally, applying Gronwall inequality to \eqref{127.1} and using the elliptic estimate \eqref{17.3} and the equivalent relations \eqref{121.2} and \eqref{46.5}, we have the desired decay estimate \eqref{127.2}.
\end{proof}

Once Proposition \ref{prop1} is proved, Theorem \ref{thm2} immediately follows.
\begin{proof}[\textbf{Proof of Theorem \ref{thm2}.}]
The existence of the global-in-time solution to the initial-boundary value problem \eqref{1dfqhd}$\sim$\eqref{bc} follows from the continuation argument with Corollary \ref{cor1} and Proposition \ref{prop1}. The decay estimate \eqref{de} is derived by the transformations $\md=\rmd^2$, $\smd=\rsmd^2$ and the estimate \eqref{127.2}. 
\end{proof}

\section{Semi-classical limit}\label{Sect.4}
In this section, we prove Theorem \ref{thm3} in Subsection \ref{Subsect.4.1} and Theorem \ref{thm4} in Subsection \ref{Subsect.4.2}, respectively. 

\subsection{Stationary case}\label{Subsect.4.1}
In this subsection, we discuss the semi-classical limit of the stationary solutions based on the existence and uniqueness results in Lemma \ref{lem1} and Theorem \ref{thm1}. Since both of the quantum stationary density $\pcsmd{\pc}$ and the limit one $\pcsmd{0}$ are non-flat, it is convenient to introduce the logarithmic transformations $\pcszmd{\pc}:=\ln \pcsmd{\pc}$ and $\pcszmd{0}:=\ln \pcsmd{0}$ in the following discussion. We also introduce the error variables as follows
\begin{equation}\label{ss7.1}
\pcpszmd:=\pcszmd{\pc}-\pcszmd{0},\quad \pcpsdl:=\pcsdl{\pc}-\pcsdl{0},\quad\pcpswnd:=\pcswnd{\pc}-\pcswnd{0},\quad\pcpsdws:=\pcsdws{\pc}-\pcsdws{0}.
\end{equation}
These quantities $(\pcszmd{\pc},\pcsdl{\pc},\pcswnd{\pc},\pcsdws{\pc})$, $(\pcszmd{0},\pcsdl{0},\pcswnd{0},\pcsdws{0})$ and $(\pcpszmd,\pcpsdl,\pcpswnd,\pcpsdws)$ satisfy the following properties
\begin{equation}\label{psbc}
\pcpszmd\in H_0^1(\Omega)\cap C^2(\overline{\Omega}),\quad \pcpswnd\in H_0^1(\Omega)\cap H^3(\Omega),\quad \pcpsdws\in H_0^1(\Omega)\cap C^2(\overline{\Omega}),
\end{equation}
\begin{equation}\label{psbcxx}
\bigg[\pcszmd{\pc}_{xx}+\frac{(\pcszmd{\pc}_x)^2}{2}\bigg](0)=\bigg[\pcszmd{\pc}_{xx}+\frac{(\pcszmd{\pc}_x)^2}{2}\bigg](1)=0,
\end{equation}
the estimates
\begin{equation}\label{ss9.3}
\ln c\leq\pcszmd{0}(x)\leq\ln C,\quad 0<c\leq\pcswnd{0}(x)\leq C,\quad|\pcsdl{0}|+\|\pcswnd{0}-\theta_{L}\|_3\leq C\delta,\quad |(\pcsmd{0},\pcsdws{0})|_2\leq C,
\end{equation}
\begin{gather}
2\ln b\leq\pcszmd{\pc}(x)\leq2\ln B,\quad 0<\frac{\theta_L}{2}\leq\pcswnd{\pc}(x)\leq \frac{3\theta_l}{2},\quad |\pcsdl{\pc}|+\|\pcswnd{\pc}-\theta_{L}\|_3\leq C\delta,\notag\\
\|\pcszmd{\pc}\|_2+\|(\pc\pd{x}{3}\pcszmd{\pc},\pc^2\pd{x}{4}\pcszmd{\pc})\|+|\pcsdws{\pc}|_2\leq C,\qquad\forall\pc\in(0,\pc_1], \label{ss9.2}
\end{gather}
and the equations
\begin{equation}\label{ss8.1a}
S[e^{\pcszmd{0}},\pcsdl{0},\pcswnd{0}]\pcszmd{0}_x+\pcswnd{0}_x=\pcsdws{0}_x-\pcsdl{0}e^{-\pcszmd{0}},
\end{equation}
\begin{equation}\label{ss8.1b}
S[e^{\pcszmd{\pc}},\pcsdl{\pc},\pcswnd{\pc}]\pcszmd{\pc}_x+\pcswnd{\pc}_x-\frac{\pc^2}{2}\Bigg[\pcszmd{\pc}_{xx}+\frac{(\pcszmd{\pc}_{x})^2}{2}\Bigg]_x=\pcsdws{\pc}_x-\pcsdl{\pc}e^{-\pcszmd{\pc}},
\end{equation}
\begin{equation}\label{ss8.1c}
S[e^{\pcszmd{\pc}},\pcsdl{\pc},\pcswnd{\pc}]\pcszmd{\pc}_x-S[e^{\pcszmd{0}},\pcsdl{0},\pcswnd{0}]\pcszmd{0}_x+\pcpswnd_{x}-\pcpsdws_x-\frac{\pc^2}{2}\Bigg[\pcszmd{\pc}_{xx}+\frac{(\pcszmd{\pc}_{x})^2}{2}\Bigg]_x=-\Big(\pcsdl{\pc}e^{-\pcszmd{\pc}}-\pcsdl{0}e^{-\pcszmd{0}}\Big),
\end{equation}
\begin{multline}\label{ss8.1d}
\pcsdl{\pc}\pcswnd{\pc}_x-\pcsdl{0}\pcswnd{0}_x-\frac{2}{3}\Big(\pcsdl{\pc}\pcswnd{\pc}\pcszmd{\pc}_x-\pcsdl{0}\pcswnd{0}\pcszmd{0}_x\Big)-\frac{2}{3}\pcpswnd_{xx}+\frac{\pc^2}{3}\pcsdl{\pc}\Big(\pcszmd{\pc}_{xxx}-2\pcszmd{\pc}_{xx}\pcszmd{\pc}_{x}\Big)\\
=\frac{1}{3}\Big[(\pcsdl{\pc})^2e^{-\pcszmd{\pc}}-(\pcsdl{0})^2e^{-\pcszmd{0}}\Big]-\Big[e^{\pcszmd{\pc}}(\pcswnd{\pc}-\theta_L)-e^{\pcszmd{0}}(\pcswnd{0}-\theta_L)\Big],
\end{multline}
\begin{equation}\label{ss8.1e}
\pcpsdws_{xx}=e^{\pcszmd{\pc}}-e^{\pcszmd{0}},\qquad\forall\pc\in (0,\pc_1]
\end{equation}
due to the boundary conditions \eqref{sbc} and \eqref{0sbc}, the estimates \eqref{0se} and \eqref{145.1},  and the equations \eqref{1dsfqhd} and \eqref{1dsfhd}. 

\begin{proof}[\textbf{Proof of Theorem \ref{thm3}.}]
Firstly, we prove the convergence rate \eqref{ss39.1a} in $\pc\in (0,\pc_1]$. Note that if $\delta$ is small enough, we have known that the quantum stationary current density $\pcsdl{\pc}=J[e^{\pcszmd{\pc}},\pcswnd{\pc}]$ is defined by the explicit formula \eqref{104.1}. Furthermore, the limit stationary current density $\pcsdl{0}$ can also be written by the same formula \eqref{104.1} as $\pcsdl{0}=J[e^{\pcszmd{0}},\pcswnd{0}]$. Therefore, the following estimate 
\begin{align}
|\pcpsdl|=|\pcsdl{\pc}-\pcsdl{0}|&\leq C\Big(\delta\|\pcpszmd\|+\|\pcpswnd_x\|\Big)\notag\\
&\leq C\Big(\delta\|\pcpszmd_x\|+\|\pcpswnd_x\|\Big).\label{ss10.2}
\end{align}
follows from the straightforward computations with the formula \eqref{104.1}, the estimates \eqref{ss9.3} and \eqref{ss9.2}.

Multiplying the equation \eqref{ss8.1c} by $\pcpszmd_x$ and integrating the resultant equality over the domain $\Omega$, we obtain
\begin{multline}\label{ss10.5}
\int_0^1\Big(S[e^{\pcszmd{\pc}},\pcsdl{\pc},\pcswnd{\pc}]\pcszmd{\pc}_x-S[e^{\pcszmd{0}},\pcsdl{0},\pcswnd{0}]\pcszmd{0}_x\Big)\pcpszmd_xdx-\int_0^1\pcpsdws_x\pcpszmd_xdx\\
=\frac{\pc^2}{2}\int_0^1\Bigg[\pcszmd{\pc}_{xx}+\frac{(\pcszmd{\pc}_{x})^2}{2}\Bigg]_x\pcpszmd_xdx-\int_0^1\pcpswnd_{x}\pcpszmd_xdx-\int_0^1\Big(\pcsdl{\pc}e^{-\pcszmd{\pc}}-\pcsdl{0}e^{-\pcszmd{0}}\Big)\pcpszmd_xdx.
\end{multline}
By virtue of integration by parts, the boundary conditions \eqref{psbc} and \eqref{psbcxx}, the equation \eqref{ss8.1e}, the Young and H\"older inequalities, the mean value theorem and the estimates \eqref{ss9.3}, \eqref{ss9.2} and \eqref{ss10.2}, the left-side of \eqref{ss10.5} can be estimated as follows
\allowdisplaybreaks
\begin{align}
\eqref{ss10.5}_l&=\int_0^1\Big(\pcsS{\pc}\pcszmd{\pc}_x-\pcsS{0}\pcszmd{0}_x\Big)\pcpszmd_xdx+\int_0^1\pcpsdws_{xx}\pcpszmd dx\notag\\
&=\int_0^1\Big[\Big(\pcsS{\pc}-\pcsS{0}\Big)\pcszmd{\pc}_x+\pcsS{0}\pcpszmd_x\Big]\pcpszmd_xdx+\int_0^1\underbrace{\Big(e^{\pcszmd{\pc}}-e^{\pcszmd{0}}\Big)(\pcszmd{\pc}-\pcszmd{0})}_{\geq0} dx\notag\\
&\geq\frac{\theta_L}{4}\|\pcpszmd_x\|^2+\int_0^1\Big(\pcsS{\pc}-\pcsS{0}\Big)\pcszmd{\pc}_x\pcpszmd_xdx\notag\\
&\geq\frac{\theta_L}{4}\|\pcpszmd_x\|^2-\mu\|\pcpszmd_x\|^2-C_\mu\|\pcpswnd\|^2-C\delta\|(\pcpszmd_x,\pcpswnd_x)\|^2\notag\\
&\geq\frac{\theta_L}{8}\|\pcpszmd_x\|^2-C\|\pcpswnd\|_1^2,\label{ss11.1}
\end{align}
where we have used the notation $\pcsS{\pc}:=S[e^{\pcszmd{\pc}},\pcsdl{\pc},\pcswnd{\pc}]$ for any $\pc\in[0,\pc_1]$ and the following estimate
\begin{equation}\label{ss12.1-2-3}
|(\pcsS{\pc}-\pcsS{0})(x)|\leq |\pcpswnd(x)|+C\delta|\pcpsdl|+C\delta^2|\pcpszmd(x)|,\quad\forall x\in\Omega.
\end{equation}
Similarly, we further estimate the right-side of \eqref{ss10.5} as follows
\begin{align}
\eqref{ss10.5}_r=&\frac{\pc^2}{2}\int_0^1\Bigg[\pcszmd{\pc}_{xx}+\frac{(\pcszmd{\pc}_{x})^2}{2}\Bigg]_x\pcpszmd_xdx-\int_0^1\pcpswnd_{x}\pcpszmd_xdx-\int_0^1\Big(\pcsdl{\pc}e^{-\pcszmd{\pc}}-\pcsdl{0}e^{-\pcszmd{0}}\Big)\pcpszmd_xdx\notag\\
\leq&-\frac{\pc^2}{2}\int_0^1\Bigg[\pcszmd{\pc}_{xx}+\frac{(\pcszmd{\pc}_{x})^2}{2}\Bigg]\pcpszmd_{xx}dx+\|\pcpszmd_x\|\|\pcpswnd_{x}\|+C|\pcpsdl|\|\pcpszmd_x\|+C\delta\|\pcpszmd\|\|\pcpszmd_x\|\notag\\
\leq&C\pc^2\int_0^1\Big(|\pcszmd{\pc}_{xx}|+|\pcszmd{\pc}_{x}|^2\Big)|\pcpszmd_{xx}|dx+\big(C\delta+\mu\big)\|\pcpszmd_x\|^2+C_\mu\|\pcpswnd_{x}\|^2\notag\\
\leq&C\pc^2\underbrace{\Big(\|\pcszmd{\pc}_{xx}\|+1\Big)\|\pcpszmd_{xx}\|}_{\leq C}+\big(C\delta+\mu\big)\|\pcpszmd_x\|^2+C_\mu\|\pcpswnd_{x}\|^2\notag\\
\leq&C\pc^2+\big(C\delta+\mu\big)\|\pcpszmd_x\|^2+C_\mu\|\pcpswnd_{x}\|^2.\label{ss12.4-5-ss13.1}
\end{align}
Substituting \eqref{ss11.1} and  \eqref{ss12.4-5-ss13.1} into \eqref{ss10.5}, and letting $\delta$ and $\mu$ small enough, we have
\begin{equation}\label{ss13.3}
\|\pcpszmd\|_1^2\leq C\|\pcpswnd\|_1^2+C\pc^2,
\end{equation}
where we have used the Poincar\'e inequality $\|\pcpszmd\|\leq C\|\pcpszmd_x\|$.

Next, multiplying the equation \eqref{ss8.1d} by $\pcpswnd$ and integrating the resultant equality over the domain $\Omega$, we obtain
\begin{align}\label{ss14.1}
-\frac{2}{3}\int_0^1\pcpswnd_{xx}&\pcpswnd dx+\int_0^1\Big[e^{\pcszmd{\pc}}(\pcswnd{\pc}-\theta_L)-e^{\pcszmd{0}}(\pcswnd{0}-\theta_L)\Big]\pcpswnd dx\notag\\
&=-\frac{\pc^2}{3}\pcsdl{\pc}\int_0^1\Big(\pcszmd{\pc}_{xxx}-2\pcszmd{\pc}_{xx}\pcszmd{\pc}_{x}\Big)\pcpswnd dx+\frac{2}{3}\int_0^1\Big(\pcsdl{\pc}\pcswnd{\pc}\pcszmd{\pc}_x-\pcsdl{0}\pcswnd{0}\pcszmd{0}_x\Big)\pcpswnd dx\notag\\
&\qquad\qquad-\int_0^1\Big(\pcsdl{\pc}\pcswnd{\pc}_x-\pcsdl{0}\pcswnd{0}_x\Big)\pcpswnd dx+\frac{1}{3}\int_0^1\Big[(\pcsdl{\pc})^2e^{-\pcszmd{\pc}}-(\pcsdl{0})^2e^{-\pcszmd{0}}\Big]\pcpswnd dx\\
&=I_1+I_2+I_3+I_4.\notag
\end{align}
By the same fashion used to derive the estimate \eqref{ss13.3}, the left-side of \eqref{ss14.1} can be estimated as
\begin{align}
\eqref{ss14.1}_l&=\frac{2}{3}\|\pcpswnd_{x}\|^2+\int_0^1\Big[\Big(e^{\pcszmd{\pc}}-e^{\pcszmd{0}}\Big)(\pcswnd{\pc}-\theta_L)+e^{\pcszmd{0}}\pcpswnd\Big]\pcpswnd dx\notag\\
&\geq\frac{2}{3}\|\pcpswnd_{x}\|^2+c\|\pcpswnd\|^2-C\delta\|(\pcpszmd,\pcpswnd)\|^2\notag\\
&\geq c\|\pcpswnd\|_1^2-C\delta\|\pcpszmd\|^2.\label{ss14.2-ss18.3}
\end{align}
We further estimate the integrals $I_i$ $(i=1,2,3,4)$ on the right-side of \eqref{ss14.1} one by one,
\begin{align}
I_1&=\frac{\pc^2}{3}\pcsdl{\pc}\int_0^1\pcszmd{\pc}_{xx}\pcpswnd_xdx+\frac{2\pc^2}{3}\pcsdl{\pc}\int_0^1\pcszmd{\pc}_{x}\pcszmd{\pc}_{xx}\pcpswnd dx\notag\\
&\leq \frac{1}{3}\pc^2|\pcsdl{\pc}|\|\pcszmd{\pc}_{xx}\|\|\pcpswnd_x\|+\frac{2}{3}\pc^2|\pcsdl{\pc}||\pcszmd{\pc}_{x}|_0\|\pcszmd{\pc}_{xx}\|\|\pcpswnd\|\notag\\
&\leq C\pc^2\|\pcpswnd\|_1\leq C\pc^2\|\pcswnd{\pc}-\theta_L+\theta_L-\pcswnd{0}\|_1\notag\\
&\leq C\Big(\|\pcswnd{\pc}-\theta_L\|_1+\|\pcswnd{0}-\theta_L\|_1\Big)\pc^2\notag\\
&\leq C\pc^2.\label{ss18.1}
\end{align}
It is easy to estimate $I_3+I_4$ by the standard computations, that is,
\begin{equation}\label{ss15.1-ss18.2}
I_3+I_4\leq C\delta\|(\pcpszmd,\pcpswnd)\|_1^2.
\end{equation}
However, we need to pay more attention to the integral $I_2$ due to the non-flatness of $\pcszmd{\pc}$,
\begin{align}
I_2&=\frac{2}{3}\int_0^1\Big(\pcsdl{\pc}\pcswnd{\pc}\pcszmd{\pc}_x-\pcsdl{0}\pcswnd{0}\pcszmd{0}_x\Big)\pcpswnd dx\notag\\
&=\frac{2}{3}\int_0^1\Big(\pcpsdl\pcswnd{\pc}\pcszmd{\pc}_x+\pcsdl{0}\pcpswnd\pcszmd{\pc}_x+\pcsdl{0}\pcswnd{0}\pcpszmd_x\Big)\pcpswnd dx\notag\\
&=\frac{2}{3}\int_0^1\Big[\pcpsdl\big(\pcswnd{\pc}-\theta_L+\theta_L\big)\pcszmd{\pc}_x+\pcsdl{0}\pcszmd{\pc}_x\pcpswnd+\pcsdl{0}\pcswnd{0}\pcpszmd_x\Big]\pcpswnd dx\notag\\
&=\frac{2\theta_L}{3}\pcpsdl\int_0^1\pcpswnd\pcszmd{\pc}_xdx+\frac{2}{3}\int_0^1\Big[\big(\pcswnd{\pc}-\theta_L\big)\pcszmd{\pc}_x\pcpsdl+\pcsdl{0}\pcszmd{\pc}_x\pcpswnd+\pcsdl{0}\pcswnd{0}\pcpszmd_x\Big]\pcpswnd dx\notag\\
&=-\frac{2\theta_L}{3}\pcpsdl\int_0^1\pcpswnd_x\pcszmd{\pc}dx+\frac{2}{3}\int_0^1\Big[\big(\pcswnd{\pc}-\theta_L\big)\pcszmd{\pc}_x\pcpsdl+\pcsdl{0}\pcszmd{\pc}_x\pcpswnd+\pcsdl{0}\pcswnd{0}\pcpszmd_x\Big]\pcpswnd dx\notag\\
&\leq-\frac{2\theta_L}{3}\pcpsdl\int_0^1\pcpswnd_x\pcszmd{\pc}dx\notag\\
&\qquad+\frac{2}{3}\Big(|\pcswnd{\pc}-\theta_L|_0|\pcszmd{\pc}_x|_0|\pcpsdl|\|\pcpswnd\|+|\pcsdl{0}||\pcszmd{\pc}_x|_0\|\pcpswnd\|^2+|\pcsdl{0}||\pcswnd{0}|_0\|\pcpszmd_x\|\|\pcpswnd\|\Big)\notag\\
&\leq-\frac{2\theta_L}{3}\pcpsdl\int_0^1\pcpswnd_x\pcszmd{\pc}dx+C\delta\|(\pcpszmd,\pcpswnd)\|_1^2\notag\\
&=-\frac{2\theta_L}{3}\big(\pcsdl{\pc}-\pcsdl{0}\big)\int_0^1\pcpswnd_x\pcszmd{\pc}dx+C\delta\|(\pcpszmd,\pcpswnd)\|_1^2\notag\\
&=-\frac{2\theta_L}{3}\Bigg[\frac{2\big(\bar{b}+\int_0^1\pcswnd{\pc}_{x}\pcszmd{\pc}dx\big)}{\pcsK{\pc}}-\frac{2\big(\bar{b}+\int_0^1\pcswnd{0}_{x}\pcszmd{0}dx\big)}{\pcsK{0}}\Bigg]\int_0^1\pcpswnd_x\pcszmd{\pc}dx+C\delta\|(\pcpszmd,\pcpswnd)\|_1^2\notag\\
&=-\frac{4\theta_L}{3}\Bigg[\frac{1}{\pcsK{\pc}}\int_0^1\Big(\pcswnd{\pc}_{x}\pcszmd{\pc}-\pcswnd{0}_{x}\pcszmd{0}\Big)dx+\bigg(\bar{b}+\int_0^1\pcswnd{0}_{x}\pcszmd{0}dx\bigg)\bigg(\frac{1}{\pcsK{\pc}}-\frac{1}{\pcsK{0}}\bigg)\Bigg]\int_0^1\pcpswnd_x\pcszmd{\pc}dx\notag\\
&\qquad+C\delta\|(\pcpszmd,\pcpswnd)\|_1^2\notag\\
&=-\frac{4\theta_L}{3}\Bigg[\frac{1}{\pcsK{\pc}}\int_0^1\Big(\pcpswnd_{x}\pcszmd{\pc}+\pcswnd{0}_{x}\pcpszmd\Big)dx-\bigg(\bar{b}+\int_0^1\pcswnd{0}_{x}\pcszmd{0}dx\bigg)\frac{\pcsK{\pc}-\pcsK{0}}{\pcsK{\pc}\pcsK{0}}\Bigg]\int_0^1\pcpswnd_x\pcszmd{\pc}dx\notag\\
&\qquad+C\delta\|(\pcpszmd,\pcpswnd)\|_1^2\notag\\
&=\underbrace{-\frac{4\theta_L}{3\pcsK{\pc}}\bigg(\int_0^1\pcpswnd_{x}\pcszmd{\pc}dx\bigg)^2}_{\leq0}\notag\\
&\qquad-\frac{4\theta_L}{3\pcsK{\pc}}\Bigg[\int_0^1\pcswnd{0}_{x}\pcpszmd dx-\frac{\big(\bar{b}+\int_0^1\pcswnd{0}_{x}\pcszmd{0}dx\big)}{\pcsK{0}}\Big(\pcsK{\pc}-\pcsK{0}\Big)\Bigg]\int_0^1\pcpswnd_x\pcszmd{\pc}dx\notag\\
&\qquad+C\delta\|(\pcpszmd,\pcpswnd)\|_1^2\notag\\
&\leq-\frac{4\theta_L}{3\pcsK{\pc}}\Bigg[\int_0^1\pcswnd{0}_{x}\pcpszmd dx-\frac{\pcsdl{0}}{2}\Big(\pcsK{\pc}-\pcsK{0}\Big)\Bigg]\int_0^1\pcpswnd_x\pcszmd{\pc}dx+C\delta\|(\pcpszmd,\pcpswnd)\|_1^2\notag\\
&\leq C\delta\Big(\|\pcpszmd\|+|\pcsK{\pc}-\pcsK{0}|\Big)\|\pcpswnd_x\|+C\delta\|(\pcpszmd,\pcpswnd)\|_1^2\notag\\
&\leq C\delta\|(\pcpszmd,\pcpswnd)\|_1^2,\label{ss17.2}
\end{align}
where we have adopted the notation $\pcsK{\pc}:=K[e^{\pcszmd{\pc}},\pcswnd{\pc}]$ (see formula \eqref{104.1}) for any $\pc\in[0,\pc_1]$ and the following estimates
\begin{equation}\label{Pss4B-ss10.2bc}
0<c\leq\frac{1}{\pcsK{\pc}}\leq C,\qquad |\pcsK{\pc}-\pcsK{0}|\leq C\Big(\|\pcpszmd\|+\|\pcpswnd_x\|\Big),\qquad\forall\pc\in[0,\pc_1]
\end{equation}
which follow from the straightforward but tedious computations. Inserting the estimates \eqref{ss14.2-ss18.3}$\sim$\eqref{ss17.2} into \eqref{ss14.1}, and letting $\delta\ll1$, we have
\begin{equation}\label{ss19.2}
\|\pcpswnd\|_1^2\leq C\delta\|\pcpszmd\|_1^2+C\pc^2.
\end{equation}
Moreover, substituting \eqref{ss19.2} into \eqref{ss13.3}, and letting $\delta$ small enough, we get
\begin{equation}\label{ss19.4}
\|\pcpszmd\|_1^2\leq C\pc^2,\quad\forall\pc\in(0,\pc_1].
\end{equation}
Combining \eqref{ss19.4} with \eqref{ss19.2}, \eqref{ss10.2} and the elliptic estimate $\|\pcpsdws\|_3\leq C\|\pcpszmd\|_1$, we obtain
\begin{equation}\label{ss20.1}
\|\pcpszmd\|_1+|\pcpsdl|+\|\pcpswnd\|_1+\|\pcpsdws\|_3\leq C\pc.
\end{equation}

Next, we solve $\pcpswnd_{xx}$ from the equation \eqref{ss8.1d} and directly take the $L^2$-norm of the resultant equality, the standard but tedious computations yield the following estimate
\begin{equation}\label{ss21.1}
\|\pcpswnd_{xx}\|\leq C\pc\|\pc\pcszmd{\pc}_{xxx}\|+C\pc^2\|\pcszmd{\pc}_{xx}\|+C\Big(\|\pcpszmd\|_1+|\pcpsdl|+\|\pcpswnd\|_1\Big)\leq C\pc.
\end{equation}
Adding \eqref{ss20.1} and \eqref{ss21.1} up, we have
\begin{equation}\label{ss22.2}
\|\pcpszmd\|_1+|\pcpsdl|+\|\pcpswnd\|_2+\|\pcpsdws\|_3\leq C\pc,\quad\forall\pc\in(0,\pc_1].
\end{equation}
By using the exponential transformations $\pcsmd{\pc}=e^{\pcszmd{\pc}}$, $\pcsmd{0}=e^{\pcszmd{0}}$ and the above estimate \eqref{ss22.2}, we have showed the algebraic convergence rate \eqref{ss39.1a}.

Now, we begin to show the convergence \eqref{ss39.1b}. Firstly, differentiating the equation \eqref{ss8.1d} once and solving $\pcpswnd_{xxx}$ from the resultant equation, and taking the $L^2$-norm of the expression of $\pcpswnd_{xxx}$, then these computations yield the following estimate
\begin{align}
\|\pcpswnd_{xxx}\|\leq&C\Big(\|\pc^2\pcszmd{\pc}_{xxxx}\|+\underbrace{\pc\|\pc\pcszmd{\pc}_{xxx}\|+\pc^2\|\pcszmd{\pc}_{xx}\|+\|\pcpszmd\|_1+|\pcpsdl|+\|\pcpswnd\|_2}_{\leq C\pc}+\|\pcpszmd_{xx}\|\Big)\notag\\
\leq&C\Big(\|\pcpszmd_{xx}\|+\|\pc^2\pcszmd{\pc}_{xxxx}\|\Big) +C\pc.\label{ss22.1c}
\end{align}
Adding the elliptic estimate $\|\pcpsdws_{xxxx}\|\leq C\|\pcpszmd_{xx}\|$ to the above estimate \eqref{ss22.1c}, we have
\begin{equation}\label{wnd3+dws4}
\|(\pcpswnd_{xxx},\pcpsdws_{xxxx})\|\leq C\Big(\|\pcpszmd_{xx}\|+\|\pc^2\pcszmd{\pc}_{xxxx}\|\Big) +C\pc,\quad\forall\pc\in(0,\pc_1].
\end{equation}

In order to complete the proof, we need to establish the convergence results $\|\pcpszmd_{xx}\|\rightarrow0$, $\|\pc\pcszmd{\pc}_{xxx}\|\rightarrow0$ and $\|\pc^2\pcszmd{\pc}_{xxxx}\|\rightarrow0$ as $\pc\rightarrow0$. To this end, we first show $\|\pcpszmd_{xx}\|$ converges to zero as $\pc$ tends to zero. From the boundedness \eqref{ss9.2} of $\|\pcszmd{\pc}\|_2$ and the strong convergence \eqref{ss22.2}, we have
\begin{equation}\label{ss23.1}
\pcszmd{\pc}_{xx}\rightharpoonup\pcszmd{0}_{xx}\quad\text{in}\ L^2(\Omega)\ \text{weakly as}\ \pc\rightarrow0.
\end{equation}
However, we need to improve the above weak convergence into strong convergence. For this purpose, differentiating the equation \eqref{ss8.1b} once, multiplying the resultant equality by $\pcszmd{\pc}_{xx}+(\pcszmd{\pc}_x)^2/2$ and integrating the result over the domain $\Omega$, the integration by parts yields
\begin{equation}\label{ss26.1}
\int_0^1\pcsS{\pc}(\pcszmd{\pc}_{xx})^2dx+\frac{\pc^2}{2}\int_0^1\bigg\{\bigg[\pcszmd{\pc}_{xx}+\frac{(\pcszmd{\pc}_{x})^2}{2}\bigg]_x\bigg\}^2dx=\pcR{\pc},\quad\forall\pc\in(0,\pc_1],
\end{equation}
where
\begin{multline}\label{ss26.2}
\pcR{\pc}:=-\int_0^1\pcsS{\pc}\pcszmd{\pc}_{xx}\frac{(\pcszmd{\pc}_x)^2}{2}dx-\int_0^1\pcsS{\pc}_x\pcszmd{\pc}_x\bigg[\pcszmd{\pc}_{xx}+\frac{(\pcszmd{\pc}_{x})^2}{2}\bigg]dx\\
+\int_0^1\bigg[\pcsdws{\pc}_{xx}-\Big(\pcsdl{\pc}e^{-\pcszmd{\pc}}\Big)_x-\pcswnd{\pc}_{xx}\bigg]\bigg[\pcszmd{\pc}_{xx}+\frac{(\pcszmd{\pc}_{x})^2}{2}\bigg]dx,\qquad\forall\pc\in[0,\pc_1].
\end{multline}
Similarly, differentiating the equation \eqref{ss8.1a} once, multiplying the resultant equality by $\pcszmd{0}_{xx}+(\pcszmd{0}_{x})^2/2$ and integrating the result over $\Omega$, we have
\begin{equation}\label{ss26.4}
\int_0^1\pcsS{0}(\pcszmd{0}_{xx})^2dx=\pcR{0},\quad\text{where}\ \pcR{0}\ \text{is given by}\ \eqref{ss26.2}\ \text{with}\ \pc=0.
\end{equation}
By using the estimates \eqref{ss9.3}, \eqref{ss9.2}, \eqref{ss22.2}, the weak convergence result \eqref{ss23.1} and the standard computations, we obtain
\begin{align}
0\leq|\pcR{\pc}-\pcR{0}|\leq&C\Big(\|\pcpszmd\|_1+|\pcpsdl|+\|\pcpswnd\|_2+\|\pcpsdws_{xx}\|\Big)\notag\\
&\quad+\bigg|\int_0^1\underbrace{\bigg[\pcsS{0}\frac{(\pcszmd{0}_x)^2}{2}+\pcsS{0}\pcszmd{0}_x+\pcswnd{0}_{xx}+\pcsdws{0}_{xx}+\Big(\pcsdl{0}e^{-\pcszmd{0}}\Big)_x\bigg]}_{=:f^0\in L^2(\Omega)}\big(\pcszmd{\pc}_{xx}-\pcszmd{0}_{xx}\big)dx\bigg|\notag\\
\leq&C\pc+\Big|\big<f^0, \pcszmd{\pc}_{xx}-\pcszmd{0}_{xx}\big>_{L^2(\Omega)}\Big|\rightarrow0\quad\text{as}\ \pc\rightarrow0.\label{ss31.2}
\end{align}
Combining \eqref{ss26.4} with \eqref{ss31.2}, we have
\begin{equation}\label{ss31.3}
\lim_{\pc\rightarrow0}\pcR{\pc}=\int_0^1\pcsS{0}(\pcszmd{0}_{xx})^2dx.
\end{equation}
On the other hand, owing to the estimates \eqref{ss12.1-2-3}, \eqref{ss9.2}, the Sobolev inequality, we obtain
\begin{align}
0\leq\bigg|\int_0^1\big(\pcsS{\pc}-\pcsS{0}\big)(\pcszmd{\pc}_{xx})^2dx\bigg|\leq&\big|\big(\pcsS{\pc}-\pcsS{0}\big)\big|_0\|\pcszmd{\pc}_{xx}\|^2\notag\\
\leq&C\Big(|\pcpszmd|_0+|\pcpsdl|+|\pcpswnd|_0\Big)\notag\\
\leq&C\Big(\|\pcpszmd\|_1+|\pcpsdl|+\|\pcpswnd\|_1\Big)\notag\\
\leq&C\pc\rightarrow0\quad\text{as}\ \pc\rightarrow0.\label{ss32.2}
\end{align}
Combining the limits \eqref{ss31.3}, \eqref{ss32.2} with the equality \eqref{ss26.1}, we have
\begin{align}
\Bigg\{\limsup_{\pc\rightarrow0}\bigg[\int_0^1\pcsS{0}(\pcszmd{\pc}_{xx})^2dx\bigg]^{1/2}\Bigg\}^2&\leq\limsup_{\pc\rightarrow0}\int_0^1\pcsS{0}(\pcszmd{\pc}_{xx})^2dx\notag\\
&=\lim_{\pc\rightarrow0}\int_0^1\big(\pcsS{\pc}-\pcsS{0}\big)(\pcszmd{\pc}_{xx})^2dx+\limsup_{\pc\rightarrow0}\int_0^1\pcsS{0}(\pcszmd{\pc}_{xx})^2dx\notag\\
&=\limsup_{\pc\rightarrow0}\int_0^1\big(\pcsS{\pc}-\pcsS{0}+\pcsS{0}\big)(\pcszmd{\pc}_{xx})^2dx\notag\\
&=\limsup_{\pc\rightarrow0}\int_0^1\pcsS{\pc}(\pcszmd{\pc}_{xx})^2dx\notag\\
&\leq\limsup_{\pc\rightarrow0}\pcR{\pc}=\lim_{\pc\rightarrow0}\pcR{\pc}=\int_0^1\pcsS{0}(\pcszmd{0}_{xx})^2dx,\label{ss33.3}
\end{align}
where we have used the non-negativity of the second term on the left-side of the equality \eqref{ss26.1}. Motivated by \eqref{ss33.3}, we choose $\pcsS{0}$ as the weight to define a weighted-$L^2$ space as follows
\begin{equation}\label{ss34.1a}
L^2_{\pcsS{0}}(\Omega):=\Bigg\{f:\Omega\rightarrow\mathbb{R} \text{ is measurable}\ \Bigg|\ \int_0^1\pcsS{0}|f|^2dx<+\infty\Bigg\}
\end{equation}
with the inner product
\begin{equation}\label{ss34.1b}
\big<f,g\big>_{L^2_{\pcsS{0}}(\Omega)}:=\int_0^1\pcsS{0}fgdx
\end{equation}
and the associated norm
\begin{equation}\label{ss34.1c}
\|f\|_{L^2_{\pcsS{0}}(\Omega)}:=\Bigg(\int_0^1\pcsS{0}|f|^2\;dx\Bigg)^{1/2}.
\end{equation}
Since the weight function $\pcsS{0}$ is strictly positive and continuous, then the weighted-$L^2$ space $L^2_{\pcsS{0}}(\Omega)$ is a Hilbert space and the weak convergence \eqref{ss23.1} implies
\begin{equation}\label{ss34.2}
\pcszmd{\pc}_{xx}\rightharpoonup\pcszmd{0}_{xx}\quad\text{in}\ L^2_{\pcsS{0}}(\Omega)\ \text{weakly as}\ \pc\rightarrow0.
\end{equation}
Furthermore, we can rewrite the inequality \eqref{ss33.3} in terms of the norm defined in \eqref{ss34.1c} as
\begin{equation}\label{ss33.3wn}
\limsup_{\pc\rightarrow0}\|\pcszmd{\pc}_{xx}\|_{L^2_{\pcsS{0}}(\Omega)}\leq\|\pcszmd{0}_{xx}\|_{L^2_{\pcsS{0}}(\Omega)}.
\end{equation}
The weak convergence \eqref{ss34.2} together with \eqref{ss33.3wn} implies the strong convergence
\begin{equation}\label{ss34.3}
\pcszmd{\pc}_{xx}\rightarrow\pcszmd{0}_{xx}\quad\text{in}\ L^2_{\pcsS{0}}(\Omega)\ \text{strongly as}\ \pc\rightarrow0,
\end{equation}
which immediately implies
\begin{equation}\label{ss34.5}
\|\pcpszmd_{xx}\|\rightarrow0,\quad\text{as}\ \pc\rightarrow0.
\end{equation}

In addition, we prove $\|\pc\pcszmd{\pc}_{xxx}\|$ converges to zero as $\pc$ tends to zero. From the strong convergence \eqref{ss34.3}, we can  directly deduce that
\begin{equation}\label{ss34.6}
\lim_{\pc\rightarrow0}\int_0^1\pcsS{0}(\pcszmd{\pc}_{xx})^2dx=\int_0^1\pcsS{0}(\pcszmd{0}_{xx})^2dx.
\end{equation}
Combining \eqref{ss32.2} with \eqref{ss34.6}, we have
\begin{equation}\label{ss35.1}
\lim_{\pc\rightarrow0}\int_0^1\pcsS{\pc}(\pcszmd{\pc}_{xx})^2dx=\int_0^1\pcsS{0}(\pcszmd{0}_{xx})^2dx.
\end{equation}
Letting $\pc\rightarrow0$ in the equality \eqref{ss26.1}, and using the limit results \eqref{ss31.3} and \eqref{ss35.1}, we can easily see that
\begin{equation}\label{ss35.4}
\pc\bigg\|\bigg[\pcszmd{\pc}_{xx}+\frac{(\pcszmd{\pc}_{x})^2}{2}\bigg]_x\bigg\|\rightarrow0,\quad\text{as}\ \pc\rightarrow0.
\end{equation}
Therefore, 
\begin{align}
0\leq\|\pc\pcszmd{\pc}_{xxx}\|=&\bigg\|\pc\bigg\{\bigg[\pcszmd{\pc}_{xx}+\frac{(\pcszmd{\pc}_{x})^2}{2}\bigg]_x-\pcszmd{\pc}_{x}\pcszmd{\pc}_{xx}\bigg\}\bigg\|\notag\\
\leq&\pc\bigg\|\bigg[\pcszmd{\pc}_{xx}+\frac{(\pcszmd{\pc}_{x})^2}{2}\bigg]_x\bigg\|+C\pc\rightarrow0,\quad\text{as}\ \pc\rightarrow0.\label{ss36.1}
\end{align}

Finally, we show $\|\pc^2\pcszmd{\pc}_{xxxx}\|$ converges to zero as $\pc$ tends to zero. Differentiating the equation \eqref{ss8.1b} once, solving the quantum term from the resultant equality and taking the $L^2$-norm of this quantum term, we obtain
\begin{equation}\label{ss36.2}
\frac{\pc^2}{2}\bigg\|\bigg[\pcszmd{\pc}_{xx}+\frac{(\pcszmd{\pc}_{x})^2}{2}\bigg]_{xx}\bigg\|=\pcr{\pc},\quad\forall\pc\in(0,\pc_1],
\end{equation}
where
\begin{equation}\label{ss36.3}
\pcr{\pc}:=\Big\|\Big(\pcsS{\pc}\pcszmd{\pc}_x\Big)_x+\pcswnd{\pc}_{xx}-\pcsdws{\pc}_{xx}+\Big(\pcsdl{\pc}e^{-\pcszmd{\pc}}\Big)_x\Big\|,\quad\forall\pc\in[0,\pc_1].
\end{equation}
The standard computations yield
\begin{align}
0\leq\pcr{\pc}=\pcr{\pc}-\pcr{0}\leq&\Big\|\Big(\pcsS{\pc}\pcszmd{\pc}_x\Big)_x-\Big(\pcsS{0}\pcszmd{0}_x\Big)_x+\pcpswnd_{xx}-\pcpsdws_{xx}+\Big(\pcsdl{\pc}e^{-\pcszmd{\pc}}\Big)_x-\Big(\pcsdl{0}e^{-\pcszmd{0}}\Big)_x\Big\|\notag\\
\leq&C\Big(\|\pcpszmd\|_1+|\pcpsdl|+\|\pcpswnd_{xx}\|+\|\pcpsdws_{xx}\|\Big)+C\|\pcpszmd_{xx}\|\notag\\
\leq&C\pc+C\|\pcpszmd_{xx}\|\rightarrow0,\quad\text{as}\ \pc\rightarrow0,
\end{align}
where we have used $\pcr{0}=0$ which follows from the differentiation of the equation \eqref{ss8.1a}. Consequently,
\begin{align}
0\leq\|\pc^2\pcszmd{\pc}_{xxxx}\|=&\bigg\|\pc^2\bigg\{\bigg[\pcszmd{\pc}_{xx}+\frac{(\pcszmd{\pc}_{x})^2}{2}\bigg]_{xx}-\bigg[\frac{(\pcszmd{\pc}_{x})^2}{2}\bigg]_{xx}\bigg\}\bigg\|\notag\\
\leq&\pc^2\bigg\|\bigg[\pcszmd{\pc}_{xx}+\frac{(\pcszmd{\pc}_{x})^2}{2}\bigg]_{xx}\bigg\|+\pc^2\bigg\|\bigg[\frac{(\pcszmd{\pc}_{x})^2}{2}\bigg]_{xx}\bigg\|\notag\\
=&2\pcr{\pc}+\pc^2\big\|\big(\pcszmd{\pc}_{xx}\big)^2+\pcszmd{\pc}_x\pcszmd{\pc}_{xxx}\big\|\notag\\
\leq&2\pcr{\pc}+\pc^2\Big(|\pcszmd{\pc}_{xx}|_0\|\pcszmd{\pc}_{xx}\|+|\pcszmd{\pc}_{x}|_0\|\pcszmd{\pc}_{xxx}\|\Big)\notag\\
\leq&2\pcr{\pc}+C\pc^2\Big(\|\pcszmd{\pc}_{xx}\|_1\|\pcszmd{\pc}_{xx}\|+\|\pcszmd{\pc}_{x}\|_1\|\pcszmd{\pc}_{xxx}\|\Big)\notag\\
\leq&2\pcr{\pc}+C\pc\rightarrow0,\quad\text{as}\ \pc\rightarrow0.\label{ss37.3}
\end{align}
From \eqref{wnd3+dws4}, \eqref{ss34.5}, \eqref{ss36.1} and \eqref{ss37.3}, we know that
\begin{equation}\label{ss38.1b}
\big\|\big(\pcpszmd_{xx},\pc\pcszmd{\pc}_{xxx},\pc^2\pcszmd{\pc}_{xxxx},\pcpswnd_{xxx},\pcpsdws_{xxxx}\big)\big\|\rightarrow0,\quad\text{as}\ \pc\rightarrow0.
\end{equation}
By using the exponential transformations $\pcsmd{\pc}=e^{\pcszmd{\pc}}$ and $\pcsmd{0}=e^{\pcszmd{0}}$ again, the strong convergence \eqref{ss38.1b} implies the convergence \eqref{ss39.1b}.
\end{proof}

\subsection{Non-stationary case}\label{Subsect.4.2}
In this subsection, we continue to discuss the semi-classical limit of the global solutions based on the existence and uniqueness results in Lemma \ref{lem2} and Theorem \ref{thm2}. We introduce the error variables as follows
\begin{equation}\label{131.1}
\pcpmd:=\pcmd{\pc}-\pcmd{0},\quad \pcpdl:=\pcdl{\pc}-\pcdl{0},\quad\pcpwnd:=\pcwnd{\pc}-\pcwnd{0},\quad\pcpdws:=\pcdws{\pc}-\pcdws{0}.
\end{equation}

Since the global solutions $(\pcmd{\pc},\pcdl{\pc},\pcwnd{\pc},\pcdws{\pc})$ and $(\pcmd{0},\pcdl{0},\pcwnd{0},\pcdws{0})$ satisfy the same initial and boundary conditions, the error variables $(\pcpmd,\pcpdl,\pcpwnd,\pcpdws)$ satisfy the following initial and boundary conditions
\begin{gather}
(\pcpmd,\pcpdl,\pcpwnd,\pcpdws)(0,x)=(0,0,0,0),\label{134.1}\\
(\pd{t}{k}\pcpmd,\pd{t}{k}\pcpwnd,\pd{t}{k}\pcpdws)(t,0)=(\pd{t}{k}\pcpmd,\pd{t}{k}\pcpwnd,\pd{t}{k}\pcpdws)(t,1)=(0,0,0),\quad k=0,1.\label{134.2}
\end{gather}
Moreover, subtracting \eqref{1dfhd} from \eqref{1dfqhd}, the error variables $(\pcpmd,\pcpdl,\pcpwnd,\pcpdws)$ also satisfy the equations
\begin{gather}
\pcpmd_t+\pcpdl_x=0,\label{136.1a}\\
\pcpdl_t+\pcpdl=\mathcal{H}_1(t,x)+\pc^2\pcmd{\pc}\Bigg[\frac{\big(\sqrt{\pcmd{\pc}}\big)_{xx}}{\sqrt{\pcmd{\pc}}}\Bigg]_x,\label{136.1b}\\
\pcmd{\pc}\pcpwnd_t-\frac{2}{3}\pcpwnd_{xx}+\pcmd{0}\pcpwnd=\mathcal{H}_2(t,x;\pc),\label{157.2}\\
\pcpdws_{xx}=\pcpmd,\label{136.1d}
\end{gather}
where
\begin{align}
\mathcal{H}_1(t,x):=&-\big(\pcpmd_x\pcwnd{\pc}+\pcmd{0}_x\pcpwnd\big)-\big(\pcpmd\pcwnd{\pc}_x+\pcmd{0}\pcpwnd_x\big)+\big(\pcpmd\pcdws{\pc}_x+\pcmd{0}\pcpdws_x\big)\notag\\
&+\Bigg[\bigg(\frac{\pcdl{\pc}}{\pcmd{\pc}}\bigg)^2\pcmd{\pc}_x-\bigg(\frac{\pcdl{0}}{\pcmd{0}}\bigg)^2\pcmd{0}_x\Bigg]-2\bigg(\frac{\pcdl{\pc}}{\pcmd{\pc}}\pcdl{\pc}_x-\frac{\pcdl{0}}{\pcmd{0}}\pcdl{0}_x\bigg),\label{136.1br}
\end{align}
and
\begin{align}
\mathcal{H}_2(t,x;\pc):=&-\pcwnd{0}_t\pcpmd-\big(\pcpdl\pcwnd{\pc}_x+\pcdl{0}\pcpwnd_x\big)-\frac{2}{3}\bigg[\pcmd{\pc}\pcwnd{\pc}\bigg(\frac{\pcdl{\pc}}{\pcmd{\pc}}\bigg)_x-\pcmd{0}\pcwnd{0}\bigg(\frac{\pcdl{0}}{\pcmd{0}}\bigg)_x\bigg]\notag\\
&+\frac{1}{3}\Bigg[\frac{\big(\pcdl{\pc}\big)^2}{\pcmd{\pc}}-\frac{\big(\pcdl{0}\big)^2}{\pcmd{0}}\Bigg]-\pcpmd\big(\pcwnd{\pc}-\theta_L\big)+\frac{\pc^2}{3}\Bigg[\pcmd{\pc}\Bigg(\frac{\pcdl{\pc}}{\pcmd{\pc}}\Bigg)_{xx}\Bigg]_x.\label{157.3}
\end{align}
Differentiating the equation \eqref{136.1b} with respect to $x$ and using the equation \eqref{136.1a}, we obtain the equation
\begin{align}\label{135.1}
\pcpmd_{tt}-\pcwnd{\pc}\pcpmd_{xx}+\pcpmd_t=&\pcpmd_x\pcwnd{\pc}_x+\big(\pcmd{0}_x\pcpwnd\big)_x+\big(\pcpmd\pcwnd{\pc}_x+\pcmd{0}\pcpwnd_x\big)_x\notag\\
&-\big(\pcpmd\pcdws{\pc}_x+\pcmd{0}\pcpdws_x\big)_x-\Bigg[\bigg(\frac{\pcdl{\pc}}{\pcmd{\pc}}\bigg)^2\pcmd{\pc}_x-\bigg(\frac{\pcdl{0}}{\pcmd{0}}\bigg)^2\pcmd{0}_x\Bigg]_x\notag\\
&+2\bigg(\frac{\pcdl{\pc}}{\pcmd{\pc}}\pcdl{\pc}_x-\frac{\pcdl{0}}{\pcmd{0}}\pcdl{0}_x\bigg)_x-\pc^2\Bigg\{\pcmd{\pc}\Bigg[\frac{\big(\sqrt{\pcmd{\pc}}\big)_{xx}}{\sqrt{\pcmd{\pc}}}\Bigg]_x\Bigg\}_x.
\end{align}

From the estimates \eqref{0de} and \eqref{de}, we can deduce the following estimates
\begin{gather}
\pcmd{0}(t,x),\ \pcwnd{0}(t,x),\ \pcS{0}(t,x):=S[\pcmd{0},\pcdl{0},\pcwnd{0}]\geq c>0,\notag\\
\|(\pcmd{0},\pcdl{0},\pcwnd{0},\pcdws{0})(t)\|_2+\|(\pcmd{0}_t,\pcdl{0}_t,\pcwnd{0}_t)(t)\|_1\leq C,\label{138.1}
\end{gather}
and
\begin{gather}
\pcmd{\pc}(t,x),\ \pcwnd{\pc}(t,x),\ \pcS{\pc}(t,x):=S[\pcmd{\pc},\pcdl{\pc},\pcwnd{\pc}]\geq c>0,\notag\\
\|(\pcmd{\pc},\pcdl{\pc},\pcwnd{\pc},\pcdws{\pc})(t)\|_2+\|(\pc\pd{x}{3}\pcmd{\pc},\pc\pd{x}{3}\pcdl{\pc},\pc^2\pd{x}{4}\pcmd{\pc})(t)\|+\|(\pcmd{\pc}_t,\pcdl{\pc}_t)(t)\|_1+\|\pcwnd{\pc}_t(t)\|\leq C,\label{138.2}
\end{gather}
where $c$ and $C$ are positive constants independent of $\pc$, $\delta$, $x$ and $t$.

\begin{proof}[\textbf{Proof of Theorem \ref{thm4}.}] 
Based on Theorem \ref{thm3}, Lemma \ref{lem2} and Theorem \ref{thm2}, it is easy to see that the assumption \eqref{sic} guarantees the coexistence of the quantum and limit global solutions with the same initial and boundary data. Thus, the above facts \eqref{134.1}$\sim$\eqref{138.2} are available now.

Multiplying the equation \eqref{136.1b} by $\pcpdl$ and integrating the resultant equality over the domain $\Omega$, we have
\begin{align}
\frac{d}{dt}\int_0^1\frac{1}{2}\big(\pcpdl\big)^2dx+\big\|\pcpdl(t)\big\|^2&=\int_0^1\mathcal{H}_1\pcpdl dx+\pc^2\int_0^1\pcmd{\pc}\Bigg[\frac{\big(\sqrt{\pcmd{\pc}}\big)_{xx}}{\sqrt{\pcmd{\pc}}}\Bigg]_x\pcpdl dx \notag\\
&\leq C\big\|\big(\pcpmd,\pcpdl,\pcpwnd\big)(t)\big\|_1^2+C\pc^2.\label{142.1}
\end{align}
In the derivation of the estimate \eqref{142.1}, we have used the Cauchy-Schwarz inequality, the elliptic estimate $\|\pcpdws(t)\|_2\leq C\|\pcpmd(t)\|$ and the following estimate
\begin{equation}
\|\mathcal{H}_1(t)\|\leq C\big\|\big(\pcpmd,\pcpmd_x,\pcpdl,\pcpdl_x,\pcpwnd,\pcpwnd_x,\pcpdws_x\big)(t)\big\|\label{140.1-141.1}
\end{equation}
to control the first term on the right-side of this equality, and we have also used the integration by parts, the boundary condition \eqref{bc-b} and the estimates \eqref{138.1}$\sim$\eqref{138.2} to bound the last term on the right-side as follows
\begin{align}
&\pc^2\int_0^1\pcmd{\pc}\Bigg[\frac{\big(\sqrt{\pcmd{\pc}}\big)_{xx}}{\sqrt{\pcmd{\pc}}}\Bigg]_x\pcpdl dx\notag\\
=&-\pc^2\int_0^1\frac{\big(\sqrt{\pcmd{\pc}}\big)_{xx}}{\sqrt{\pcmd{\pc}}}\big(\pcmd{\pc}\pcpdl\big)_xdx\notag\\
\leq&\pc^2\Bigg\|\Bigg[\frac{\big(\sqrt{\pcmd{\pc}}\big)_{xx}}{\sqrt{\pcmd{\pc}}}\Bigg](t)\Bigg\|\Big\|\big(\pcmd{\pc}\pcpdl\big)_x(t)\Big\|\notag\\
\leq&\pc^2\Bigg|\frac{1}{\sqrt{\pcmd{\pc}}}(t)\Bigg|_0\Big\|\big(\sqrt{\pcmd{\pc}}\big)_{xx}(t)\Big\|\Big(\big|\pcmd{\pc}_x(t)\big|_0\big\|\pcpdl(t)\big\|+\big|\pcmd{\pc}(t)\big|_0\big\|\pcpdl_x(t)\big\|\Big)\notag\\
\leq&C\pc^2.
\end{align}

Multiplying the equation \eqref{135.1} by $\pcpmd_t$ and integrating the resultant equality over the domain $\Omega$, we obtain
\allowdisplaybreaks
\begin{align}
&\frac{d}{dt}\int_0^1\frac{1}{2}\big(\pcpdl_x\big)^2dx-\int_0^1\pcwnd{\pc}\pcpmd_{xx}\pcpmd_tdx+\big\|\pcpdl_x(t)\big\|^2\notag\\
=&\int_0^1\Big[\pcpmd_x\pcwnd{\pc}_x+\big(\pcmd{0}_x\pcpwnd\big)_x-\big(\pcpmd\pcdws{\pc}_x+\pcmd{0}\pcpdws_x\big)_x\Big]\pcpmd_tdx\notag\\
&+\int_0^1\big(\pcpmd\pcwnd{\pc}_x+\pcmd{0}\pcpwnd_x\big)_x\pcpmd_tdx-\int_0^1\Bigg[\bigg(\frac{\pcdl{\pc}}{\pcmd{\pc}}\bigg)^2\pcmd{\pc}_x-\bigg(\frac{\pcdl{0}}{\pcmd{0}}\bigg)^2\pcmd{0}_x\Bigg]_x\pcpmd_tdx\notag\\
&+2\int_0^1\bigg(\frac{\pcdl{\pc}}{\pcmd{\pc}}\pcdl{\pc}_x-\frac{\pcdl{0}}{\pcmd{0}}\pcdl{0}_x\bigg)_x\pcpmd_tdx-\int_0^1\pc^2\Bigg\{\pcmd{\pc}\Bigg[\frac{\big(\sqrt{\pcmd{\pc}}\big)_{xx}}{\sqrt{\pcmd{\pc}}}\Bigg]_x\Bigg\}_x\pcpmd_tdx\notag\\
=&\Lambda_1+\Lambda_2+\Lambda_3+\Lambda_4+\Lambda_5,\label{142.2}
\end{align}
where we have used the equation \eqref{136.1a}. Next, we respectively estimate the second term on the left-side of the equation \eqref{142.2} and the integrals $\Lambda_l$, $l=1,2,\cdots,5$ on the right-side of \eqref{142.2} by using the integration by parts, the Sobolev inequality, the H\"older inequality, the Cauchy-Schwarz inequality, the equation \eqref{136.1a}, the boundary condition \eqref{134.2}, the equation \eqref{157.2} and the estimates \eqref{138.1}$\sim$\eqref{138.2} as follows
\begin{align}
-\int_0^1\pcwnd{\pc}\pcpmd_{xx}\pcpmd_tdx=&\int_0^1\big(\pcwnd{\pc}\pcpmd_t\big)_x\pcpmd_xdx\notag\\
=&\frac{d}{dt}\int_0^1\frac{1}{2}\pcwnd{0}\big(\pcpmd_x\big)^2dx-\int_0^1\frac{1}{2}\pcwnd{0}_t\big(\pcpmd_x\big)^2dx+\int_0^1\pcpwnd\pcpmd_{tx}\pcpmd_xdx\notag\\
&-\int_0^1\pcwnd{\pc}_x\pcpdl_x\pcpmd_xdx\notag\\
\geq&\frac{d}{dt}\int_0^1\frac{1}{2}\pcwnd{0}\big(\pcpmd_x\big)^2dx\notag\\
&-C\Big(\big|\pcwnd{0}_t\big|_0\big\|\pcpmd_x\big\|^2+\big|\pcpwnd\big|_0\big\|\pcpmd_{tx}\big\|\big\|\pcpmd_x\big\|+\big|\pcwnd{\pc}_x\big|_0\big\|\pcpdl_x\big\|\big\|\pcpmd_{x}\big\|\Big)\notag\\
\geq&\frac{d}{dt}\int_0^1\frac{1}{2}\pcwnd{0}\big(\pcpmd_x\big)^2dx-C\Big(\big\|\pcpmd_x\big\|^2+\big\|\pcpwnd\big\|_1\big\|\pcpmd_x\big\|+\big\|\pcpdl_x\big\|\big\|\pcpmd_{x}\big\|\Big)\notag\\
\geq&\frac{d}{dt}\int_0^1\frac{1}{2}\pcwnd{0}\big(\pcpmd_x\big)^2dx-C\big\|\big(\pcpmd_x,\pcpdl_x,\pcpwnd,\pcpwnd_x\big)(t)\big\|^2,\label{143.2}
\end{align}
and
\begin{equation}\label{145.1-2+152.2}
\Lambda_1\leq C\big\|\big(\pcpmd,\pcpmd_x,\pcpdl_x,\pcpwnd,\pcpwnd_x\big)(t)\big\|^2,
\end{equation}
and
\begin{align}
\Lambda_2=&-\int_0^1\big(\pcpmd_x\pcwnd{\pc}_x+\pcpmd\pcwnd{\pc}_{xx}+\pcmd{0}_x\pcpwnd_x+\pcmd{0}\pcpwnd_{xx}\big)\pcpdl_xdx\notag\\
\leq&-\int_0^1\pcmd{0}\pcpwnd_{xx}\pcpdl_xdx+C\big\|\big(\pcpmd,\pcpmd_x,\pcpdl_x,\pcpwnd_x\big)(t)\big\|^2\notag\\
=&\frac{3}{2}\int_0^1\pcmd{0}\big[\mathcal{H}_2(t,x;\pc)-\pcmd{\pc}\pcpwnd_t-\pcmd{0}\pcpwnd\big]\pcpdl_xdx+C\big\|\big(\pcpmd,\pcpmd_x,\pcpdl_x,\pcpwnd_x\big)(t)\big\|^2\notag\\
\leq&\mu\big\|\pcpwnd_t(t)\big\|^2+C_\mu\big\|\pcpdl_x(t)\big\|^2+C\|\mathcal{H}_2(t;\pc)\|^2+C\big\|\big(\pcpmd,\pcpmd_x,\pcpdl_x,\pcpwnd,\pcpwnd_x\big)(t)\big\|^2\notag\\
\leq&\mu\big\|\pcpwnd_t(t)\big\|^2+C_\mu\big\|\big(\pcpmd,\pcpdl,\pcpwnd\big)(t)\big\|_1^2+C\pc^2,\label{152.1}
\end{align}
where we have used the estimate
\begin{align}
&\|\mathcal{H}_2(t;\pc)\|\notag\\
\leq&C\big\|\big(\pcpmd,\pcpdl,\pcpwnd\big)(t)\big\|_1+C\Bigg\|\pc^2\Bigg[\pcmd{\pc}\Bigg(\frac{\pcdl{\pc}}{\pcmd{\pc}}\Bigg)_{xx}\Bigg]_x\Bigg\|\notag\\
=&C\big\|\big(\pcpmd,\pcpdl,\pcpwnd\big)(t)\big\|_1\notag\\
&+C\Bigg\|\pc^2\Bigg[\pcdl{\pc}_{xxx}-2\frac{\pcmd{\pc}_x}{\pcmd{\pc}}\pcdl{\pc}_{xx}+4\bigg(\frac{\pcmd{\pc}_x}{\pcmd{\pc}}\bigg)^2\pcdl{\pc}_x-3\frac{\pcdl{\pc}_x}{\pcmd{\pc}}\pcmd{\pc}_{xx}-4\bigg(\frac{\pcmd{\pc}_x}{\pcmd{\pc}}\bigg)^3\pcdl{\pc}+5\frac{\pcdl{\pc}\pcmd{\pc}_x}{(\pcmd{\pc})^2}\pcmd{\pc}_{xx}-\frac{\pcdl{\pc}}{\pcmd{\pc}}\pcmd{\pc}_{xxx}\Bigg]\Bigg\|\notag\\
\leq&C\big\|\big(\pcpmd,\pcpdl,\pcpwnd\big)(t)\big\|_1+C\pc\big\|\big(\pc\pcdl{\pc}_{xxx},\pcdl{\pc}_{xx},\pcdl{\pc}_x,\pcmd{\pc}_{xx},\pcdl{\pc},\pcmd{\pc}_{xx},\pc\pcmd{\pc}_{xxx}\big)(t)\big\|\notag\\
\leq&C\big\|\big(\pcpmd,\pcpdl,\pcpwnd\big)(t)\big\|_1+C\pc\label{148.3-151.1} 
\end{align}
in the derivation of \eqref{152.1}. Next, we continue to estimate
\begin{align}
\Lambda_3\leq&-\int_0^1\bigg(\frac{\pcdl{0}}{\pcmd{0}}\bigg)^2\pcpmd_{xx}\pcpmd_tdx+C\big\|\big(\pcpmd,\pcpdl\big)(t)\big\|_1^2\notag\\
=&\int_0^1\bigg[\bigg(\frac{\pcdl{0}}{\pcmd{0}}\bigg)^2\pcpmd_t\bigg]_x\pcpmd_{x}dx+C\big\|\big(\pcpmd,\pcpdl\big)(t)\big\|_1^2\notag\\
=&\frac{d}{dt}\int_0^1\frac{1}{2}\bigg(\frac{\pcdl{0}}{\pcmd{0}}\bigg)^2\big(\pcpmd_x\big)^2dx-\int_0^1\frac{1}{2}\bigg[\bigg(\frac{\pcdl{0}}{\pcmd{0}}\bigg)^2\bigg]_t\big(\pcpmd_x\big)^2dx-\int_0^1\bigg[\bigg(\frac{\pcdl{0}}{\pcmd{0}}\bigg)^2\bigg]_x\pcpdl_x\pcpmd_xdx\notag\\
&+C\big\|\big(\pcpmd,\pcpdl\big)(t)\big\|_1^2\notag\\
\leq&\frac{d}{dt}\int_0^1\frac{1}{2}\bigg(\frac{\pcdl{0}}{\pcmd{0}}\bigg)^2\big(\pcpmd_x\big)^2dx+C\big\|\big(\pcpmd,\pcpdl\big)(t)\big\|_1^2,\label{153.1}
\end{align}
and
\begin{align}
\Lambda_4\leq&2\int_0^1\frac{\pcdl{0}}{\pcmd{0}}\pcpdl_{xx}\pcpmd_tdx+C\big\|\big(\pcpmd,\pcpdl\big)(t)\big\|_1^2\notag\\
=&-2\int_0^1\frac{\pcdl{0}}{\pcmd{0}}\pcpmd_{tx}\pcpmd_tdx+C\big\|\big(\pcpmd,\pcpdl\big)(t)\big\|_1^2\notag\\
=&-\int_0^1\frac{\pcdl{0}}{\pcmd{0}}\big[\big(\pcpmd_t\big)^2\big]_xdx+C\big\|\big(\pcpmd,\pcpdl\big)(t)\big\|_1^2\notag\\
=&\int_0^1\bigg(\frac{\pcdl{0}}{\pcmd{0}}\bigg)_x\big(\pcpdl_x\big)^2dx+C\big\|\big(\pcpmd,\pcpdl\big)(t)\big\|_1^2\notag\\
\leq&C\big\|\big(\pcpmd,\pcpdl\big)(t)\big\|_1^2,\label{gs154.1}
\end{align}
and
\begin{align}
\Lambda_5=&\int_0^1\pc^2\pcmd{\pc}\Bigg[\frac{\big(\sqrt{\pcmd{\pc}}\big)_{xx}}{\sqrt{\pcmd{\pc}}}\Bigg]_x\pcpmd_{tx}dx\notag\\
=&\int_0^1\pc^2\Big[\sqrt{\pcmd{\pc}}\big(\sqrt{\pcmd{\pc}}\big)_{xxx}-\big(\sqrt{\pcmd{\pc}}\big)_{x}\big(\sqrt{\pcmd{\pc}}\big)_{xx}\Big]\pcpmd_{tx}dx\notag\\
\leq&C\pc^2\Big(\big\|\big(\sqrt{\pcmd{\pc}}\big)_{xxx}(t)\big\|+\big\|\big(\sqrt{\pcmd{\pc}}\big)_{xx}(t)\big\|\Big)\big\|\pcpmd_{tx}(t)\big\|\notag\\
\leq&C\pc.\label{155.1}
\end{align}
Substituting \eqref{143.2}$\sim$\eqref{152.1} and \eqref{153.1}$\sim$\eqref{155.1} into \eqref{142.2}, we have
\begin{multline}\label{156.2}
\frac{d}{dt}\int_0^1\bigg[\frac{1}{2}\pcS{0}\big(\pcpmd_x\big)^2+\frac{1}{2}\big(\pcpdl_x\big)^2\bigg]dx+\big\|\pcpdl_x(t)\big\|^2\\
\leq\mu\big\|\pcpwnd_t(t)\big\|^2+C_\mu\big\|\big(\pcpmd,\pcpdl,\pcpwnd\big)(t)\big\|_1^2+C\pc.
\end{multline}

Multiplying the equation \eqref{157.2} by $\pcpwnd_t$ and integrating the resultant equality over the domain $\Omega$, we get
\begin{equation}\label{161.1}
\int_0^1\pcmd{\pc}\big(\pcpwnd_t\big)^2dx-\int_0^1\frac{2}{3}\pcpwnd_{xx}\pcpwnd_tdx+\int_0^1\pcmd{0}\pcpwnd\pcpwnd_tdx=\int_0^1\mathcal{H}_2(t,x;\pc)\pcpwnd_tdx.
\end{equation}
Similarly, we can use the standard computations to deal with the each term in \eqref{161.1} as follows
\begin{equation}\label{161.2}
\int_0^1\pcmd{\pc}\big(\pcpwnd_t\big)^2dx\geq 2c\big\|\pcpwnd_t(t)\big\|^2,
\end{equation}
and
\begin{equation}\label{161.3}
-\int_0^1\frac{2}{3}\pcpwnd_{xx}\pcpwnd_tdx=\int_0^1\frac{2}{3}\pcpwnd_{x}\pcpwnd_{xt}dx=\frac{d}{dt}\int_0^1\frac{1}{3}\big(\pcpwnd_{x}\big)^2dx,
\end{equation}
and
\begin{align}
\int_0^1\pcmd{0}\pcpwnd\pcpwnd_tdx=&\frac{d}{dt}\int_0^1\frac{1}{2}\pcmd{0}\big(\pcpwnd\big)^2dx-\int_0^1\frac{1}{2}\pcmd{0}_t\big(\pcpwnd\big)^2dx\notag\\
\geq&\frac{d}{dt}\int_0^1\frac{1}{2}\pcmd{0}\big(\pcpwnd\big)^2dx-C\big\|\pcpwnd(t)\big\|^2,\label{161.4}
\end{align}
and
\begin{align}
\int_0^1\mathcal{H}_2(t,x;\pc)\pcpwnd_tdx\leq&c\big\|\pcpwnd_t(t)\big\|^2+C\|\mathcal{H}_2(t;\pc)\|^2\notag\\
\leq&c\big\|\pcpwnd_t(t)\big\|^2+C\big\|\big(\pcpmd,\pcpdl,\pcpwnd\big)(t)\big\|_1^2+C\pc^2,\label{161.5}
\end{align}
where we have used the estimate \eqref{148.3-151.1} again in the last inequality of \eqref{161.5}. Substituting \eqref{161.2}$\sim$\eqref{161.5} into \eqref{161.1}, we have
\begin{equation}\label{161.6}
\frac{d}{dt}\int_0^1\bigg[\frac{1}{2}\pcmd{0}\big(\pcpwnd\big)^2+\frac{1}{3}\big(\pcpwnd_{x}\big)^2\bigg]dx+c\big\|\pcpwnd_t(t)\big\|^2\leq C\big\|\big(\pcpmd,\pcpdl,\pcpwnd\big)(t)\big\|_1^2+C\pc^2.
\end{equation}

Adding \eqref{142.1}, \eqref{156.2} and \eqref{161.6} up, and letting $\mu$ small enough, we obtain
\begin{equation}\label{163.1}
\frac{d}{dt}\mathcal{E}^\pc(t)+\underbrace{\big\|\pcpdl(t)\big\|^2+\big\|\pcpdl_x(t)\big\|^2+\frac{c}{2}\big\|\pcpwnd_t(t)\big\|^2}_{\geq0}\leq C\big\|\big(\pcpmd,\pcpdl,\pcpwnd\big)(t)\big\|_1^2+C\pc,
\end{equation}
where
\begin{equation}\label{163.2}
\mathcal{E}^\pc(t):=\int_0^1\bigg[\frac{1}{2}\pcS{0}\big(\pcpmd_x\big)^2+\frac{1}{2}\big(\pcpdl\big)^2+\frac{1}{2}\big(\pcpdl_x\big)^2+\frac{1}{2}\pcmd{0}\big(\pcpwnd\big)^2+\frac{1}{3}\big(\pcpwnd_{x}\big)^2\bigg]dx.
\end{equation}
From the estimate \eqref{138.1}, it is easy to check the following equivalent relation
\begin{equation}\label{163.3}
c\big\|\big(\pcpmd,\pcpdl,\pcpwnd\big)(t)\big\|_1^2\leq\mathcal{E}^\pc(t)\leq C\big\|\big(\pcpmd,\pcpdl,\pcpwnd\big)(t)\big\|_1^2,\quad\forall t\in[0,\infty)
\end{equation}
by using the Poincar\'e inequality. Therefore, the inequality \eqref{163.1} implies
\begin{equation}\label{164.1}
\frac{d}{dt}\mathcal{E}^\pc(t)\leq 2\gamma_3\mathcal{E}^\pc(t)+C\pc,\quad\forall t\in[0,\infty),
\end{equation}
where the positive constant $\gamma_3$ is independent of $\pc$ and $t$. Applying the Gronwall inequality to \eqref{164.1}, we have
\begin{equation}\label{164.3}
\big\|\big(\pcpmd,\pcpdl,\pcpwnd\big)(t)\big\|_1\leq Ce^{\gamma_3 t}\pc^{1/2},\quad\forall t\in[0,\infty)
\end{equation}
Combining \eqref{164.3} with the elliptic estimate $\|\pcpdws(t)\|_3\leq C\|\pcpmd(t)\|_1$, we get the desired estimate \eqref{gs164.5}.

Finally, for fixed $\pc\in(0,\delta_6)$, we define a time
\begin{equation}\label{165.1}
T_\pc:=-\frac{\ln\pc}{4\gamma_3}>0.
\end{equation}
For $t\leq T_\pc$, the estimate \eqref{gs164.5} yields that
\begin{equation}\label{165.2}
\big\|\big(\pcpmd,\pcpdl,\pcpwnd\big)(t)\big\|_1+\big\|\pcpdws(t)\big\|_3\leq Ce^{\gamma_3 T_\pc}\pc^{1/2}=C\pc^{1/4}.
\end{equation}
For $t\geq T_\pc$, using the estimates \eqref{de}, \eqref{ss39.1a} and \eqref{0de}, we obtain
\begin{align}
&\big\|\big(\pcpmd,\pcpdl,\pcpwnd\big)(t)\big\|_1+\big\|\pcpdws(t)\big\|_3\notag\\
\leq&\|(\pcmd{\pc}-\pcsmd{\pc},\pcdl{\pc}-\pcsdl{\pc},\pcwnd{\pc}-\pcswnd{\pc})(t)\|_1+\|(\pcdws{\pc}-\pcsdws{\pc})(t)\|_3\notag\\
&+\|(\pcsmd{\pc}-\pcsmd{0},\pcsdl{\pc}-\pcsdl{0},\pcswnd{\pc}-\pcswnd{0})\|_1+\|(\pcsdws{\pc}-\pcsdws{0})\|_3\notag\\
&+\|(\pcmd{0}-\pcsmd{0},\pcdl{0}-\pcsdl{0},\pcwnd{0}-\pcswnd{0})(t)\|_1+\|(\pcdws{0}-\pcsdws{0})(t)\|_3\notag\\
\leq&C\big(e^{-\gamma_2T_\pc}+\pc+e^{-\gamma_1T_\pc}\big)\notag\\
=&C\Big(\pc^{\frac{\gamma_2}{4\gamma_3}}+\pc+\pc^{\frac{\gamma_1}{4\gamma_3}}\Big)\notag\\
\leq&C\pc^{\gamma_4},\label{165.3}
\end{align}
where
\begin{equation}
\gamma_4:=\min\bigg\{\frac{\gamma_1}{4\gamma_3},\frac{\gamma_2}{4\gamma_3},\frac{1}{4}\bigg\}>0.
\end{equation}
Owing to \eqref{165.2} and \eqref{165.3}, we have
\begin{equation}\label{166.1}
\big\|\big(\pcpmd,\pcpdl,\pcpwnd\big)(t)\big\|_1+\big\|\pcpdws(t)\big\|_3\leq C\pc^{\gamma_4},\quad\forall t\in[0,\infty).
\end{equation}
Note that the right-side of \eqref{166.1} is independent of $t$, this immediately implies the estimate \eqref{gs1}.
\end{proof}

\section{Appendix}\label{A}
In this appendix, we study the unique solvability of the linear IBVP \eqref{a3.1}$\sim$\eqref{a3.3}. 

Firstly, the parabolic equation \eqref{a3.1-3} with the initial condition $\hat{\wnd}(0,x)=\wnd_0(x)$ and the boundary condition \eqref{a3.3-3} has a unique solution $\hat{\wnd}\in\mathfrak{Y}_2([0,T])\cap H^1(0,T;H^1(\Omega))$ for given function $(\rmd,\dl)\in\big[\mathfrak{Y}_4([0,T])\cap H^2(0,T;H^1(\Omega))\big]\times\big[\mathfrak{Y}_3([0,T])\cap H^2(0,T;L^2(\Omega))\big]$. This fact is proved by the Galerkin method (see \cite{T88,Z90} for example). 

Next, we only need to show the unique solvability of the following linear IBVP for given function $(\rmd,\dl,\wnd,\hat{\wnd})$, namely,
\begin{subequations}\label{a7.1}
\begin{numcases}{}
2\rmd\hat{\rmd}_t+\hat{\dl}_x=0, \label{a7.1-1}\\
\hat{\dl}_t+2S[\rmd^2,\dl,\wnd]\rmd\hat{\rmd}_x+\frac{2\dl}{\rmd^2}\hat{\dl}_x+\rmd^2\hat{\wnd}_x-\pc^2\rmd^2\Bigg(\frac{\hat{\rmd}_{xx}}{\rmd}\Bigg)_x=\rmd^2\dws_x-\dl, \label{a7.1-2}\\
\dws:=\Phi[\rmd^2], \qquad\forall t>0,\ \forall x\in\Omega:=(0,1),\label{a7.1-3}
\end{numcases}
\end{subequations}
with the initial condition
\begin{equation}\label{a7.2}
(\hat{\rmd},\hat{\dl})(0,x)=(\rmd_0,\dl_0)(x),
\end{equation}
and the boundary conditions
\begin{subequations}\label{a7.3}
\begin{gather}
\hat{\rmd}(t,0)=\rmd_{l},\qquad \hat{\rmd}(t,1)=\rmd_{r},\label{a7.3-1}\\
\hat{\rmd}_{xx}(t,0)=\hat{\rmd}_{xx}(t,1)=0.\label{a7.3-2}
\end{gather}
\end{subequations}

To this end, performing the procedure $\pd{x}{}\eqref{a7.1-2}/(-2\rmd)$ and inserting the transformation $U(t,x):=\hat{\rmd}(t,x)-\bar{w}(x)$ into the resultant system, where $\bar{w}(x):=w_l(1-x)+w_rx$, we can equivalently reduce the IBVP \eqref{a7.1}$\sim$\eqref{a7.3} to the IBVP of a fourth order wave equation satisfied by $U$,
\begin{gather}
U_{tt}+b_0\pd{x}{}U_t+b_1U_t+b_2U_x+b_3U_{xx}+a\pd{x}{4}U=f,\label{a8.1}\\
U(0,x)=\rmd_0(x)-\bar{w}(x),\quad U_t(0,x)=-\frac{\dl_{0x}}{2\rmd_0}(x),\label{a8.2}\\
U(t,0)=U(t,1)=U_{xx}(t,0)=U_{xx}(t,1)=0,\label{a8.3}
\end{gather}
where
\begin{align}
&b_0:=\frac{2j}{w^2},\qquad b_1:=\frac{1}{w}\bigg[\bigg(\frac{2j}{w}\bigg)_x+w_t\bigg],\qquad b_2:=-\frac{1}{w}\bigg[\bigg(\wnd-\frac{j^2}{w^4}\bigg)w\bigg]_x,\notag\\
&b_3:=-\bigg[\bigg(\wnd-\frac{j^2}{w^4}\bigg)+\frac{\pc^2}{2}\frac{w_{xx}}{w}\bigg],\qquad a:=\frac{\pc^2}{2},\notag\\
&f:=-\frac{1}{2w}\big(w^2\dws_x-j-w^2\hat{\wnd}_x\big)_x+\frac{1}{w}\bigg[\bigg(\wnd-\frac{j^2}{w^4}\bigg)w\bigg]_x\bar{w}_x.\label{a8.4}
\end{align}
Applying the Lemma A.1 (\cite{NS08}, P870) to the linear IBVP \eqref{a8.1}$\sim$\eqref{a8.3}, we see that this problem has a unique solution $U\in\mathfrak{Y}_4([0,T])$.

We proceed to construct the solution $(\hat{\rmd},\hat{\dl})$ to the IBVP \eqref{a7.1}$\sim$\eqref{a7.3} from $U$ as follows,
\begin{subequations}
\begin{gather}
\hat{\rmd}(t,x):=U(t,x)+\bar{w}(x),\label{a9.0}\\
\hat{j}(t,x):=-\int_0^x2w\hat{w}_t(t,y)dy+\hat{j}(t,0),\label{a9.1}\\
\hat{j}(t,0):=\int_0^t\bigg[-2\bigg(\wnd-\frac{j^2}{w^4}\bigg)w\hat{w}_x+\frac{4j}{w}\hat{w}_t-w^2\hat{\wnd}_x\notag\\
\qquad\qquad\qquad\qquad\qquad\qquad\qquad\qquad\qquad+\pc^2w^2\bigg(\frac{\hat{w}_{xx}}{w}\bigg)_x+w^2\dws_x-j\bigg](\tau,0)d\tau+j_0(0).\label{a9.2}\notag
\end{gather}
\end{subequations}
By the standard argument (see \cite{NS08,NS09} for example), we can easily see that the function $(\hat{\rmd},\hat{\dl})\in\big[\mathfrak{Y}_4([0,T])\cap H^2(0,T;H^1(\Omega))\big]\times\big[\mathfrak{Y}_3([0,T])\cap H^2(0,T;L^2(\Omega))\big]$ is a desired solution to the linear IBVP \eqref{a7.1}$\sim$\eqref{a7.3}.

\section*{Acknowledgements}
The research of KJZ was supported in part by NSFC (No.11371082) and the Fundamental Research Funds for the Central Universities (No.111065201).


\end{document}